\newtheorem{thm}{Theorem}[section]
\newtheorem{cor}[thm]{Corollary}
\newtheorem{lemma}[thm]{Lemma}
\newtheorem{prop}[thm]{Proposition}
\theoremstyle{definition}
\newtheorem{df}[thm]{Definition}
\theoremstyle{remark}
\newtheorem{rem}[thm]{Remark}
\newtheorem{ex}[thm]{Example}
\newcommand{\Z}{\mathbb{Z}}
\newcommand{\N}{\mathbb{N}}
\newcommand{\ccg}{${\rm CC}_G$}
\newcommand{\cc}{{\rm CC}}
\newcommand{\bA}{{\bar{A}}}
\newcommand{\bB}{{\bar{B}}}
\newcommand{\bH}{{\bar{H}}}
\newcommand{\bK}{{\bar{K}}}
\newcommand{\bx}{{\bar{x}}}
\newcommand{\bb}{{\bar{b}}}
\newcommand{\ba}{{\bar{a}}}
\newcommand{\bD}{{\bar{D}}}
\newcommand{\by}{{\bar{y}}}
\newcommand{\bg}{{\bar{g}}}
\newcommand{\bh}{{\bar{h}}}
\newcommand{\PT}{\mathcal{PT}}
\newcommand{\supp}{{\rm supp}}
\renewcommand{\star}{{\rm star}}
\newcommand{\FL}{{\rm FL}}
\newcommand{\LL}{{\rm LL}}
\newcommand{\vr}{{\mathcal{VR}}}
\newcommand{\avr}{{\mathcal{AVR}}}
\newcommand{\rank}{{\rm rank}}
\newcommand{\e}{{\varepsilon}}
\newcommand{\z}{{\zeta}}
\begin{document}
\title[Hereditary conjugacy separability of right angled Artin groups]{
Hereditary conjugacy separability of right angled Artin groups and its applications}

\author{Ashot Minasyan}
\address[Ashot Minasyan]{School of Mathematics,
University of Southampton, Highfield, Sout\-hamp\-ton, SO17 1BJ, United
Kingdom.}  \email{aminasyan@gmail.com}


\begin{abstract}
We prove that finite index subgroups of right angled Artin groups are conjugacy separable.
We then apply this result to establish various properties of other classes of groups.
In particular, we show that any word hyperbolic Coxeter group contains a conjugacy
separable subgroup of finite index and has a residually finite outer automorphism group.
Another consequence of the main result is that Bestvina-Brady groups are conjugacy separable
and have solvable conjugacy problem.
\end{abstract}

\keywords{Hereditary conjugacy separability,
right angled Artin groups, graph groups, partially commutative groups, Coxeter groups,
Bestvina-Brady groups.}

\subjclass[2010]{20F36, 20E26, 20F55, 20F28, 20F10}
\maketitle
\tableofcontents

\section{Introduction}
If $G$ is a group, the \textit{profinite topology} $\PT(G)$ on $G$ is the topology
whose basic open sets are cosets to finite index normal subgroups in $G$. It follows that
every finite index subgroup $K \le G$ is both closed and open in $\PT(G)$, and $G$, equipped
with $\PT(G)$, is a topological group (that is, the group operations are continuous
with respect to this topology).
This topology is Hausdorff if and only if the
intersection of all finite index normal subgroups is trivial in $G$. In this case $G$ is said to
be \textit{residually finite}.

We will say that a subset $A \subseteq G$ is \textit{separable in} $G$
if $A$ is closed in $\PT(G)$.
Suppose that for every element $g \in G$, its conjugacy class
$g^G:= \{hgh^{-1}\,|\, h\in G\} \subseteq G$ is closed
in $\PT(G)$. Then $G$ is called \textit{conjugacy separable}. In other words,
$G$ is conjugacy separable if and only if for any two non-conjugate elements $x,y \in G$
there exists a homomorphism $\varphi$ from $G$ to a finite group $Q$ such that $\varphi(x)$
is not conjugate to $\varphi(y)$ in $Q$.

Conjugacy separability is evidently stronger than residual finiteness, and is (usually)  much
harder to establish. The following classes of groups are known to be conjugacy separable:
virtually free groups (J. Dyer \cite{Dyer}); virtually surface groups (A. Martino \cite{Martino});
virtually polycyclic groups (V. Remeslennikov \cite{Rem-polyc}; E. Formanek \cite{Form});
limit groups (S. Chagas and P. Zalesskii \cite{Chag-Zal-limit}) and, more generally,
finitely presented residually free groups (S. Chagas and P. Zalesskii \cite{Chag-Zal}).

 Unfortunately, conjugacy separability does not behave very well under free constructions.
V. Remeslennikov \cite{Rem-free_prod} and P. Stebe \cite{Stebe} showed that the free product of two
conjugacy separable groups is conjugacy separable.
But so far we do not know of any global
criteria which tell when an amalgamated product (or an HNN-extension) of conjugacy separable groups
is conjugacy separable. Perhaps the most general of local results can be found in \cite{R-S-Z},
where L. Ribes, D. Segal and  P. Zalesskii define a new class of conjugacy separable
groups $\mathfrak X$, which
is closed under taking free products with amalgamation along cyclic subgroups and contains all
virtually free and virtually polycyclic groups. Note that there is no analogue of this
result for HNN-extensions with associated cyclic subgroups, because an HNN-extension of the infinite
cyclic group may fail to be residually finite, as it happens for many Baumslag-Solitar groups.

Let $\Gamma$ be a finite simplicial graph, and
let $\mathcal V$ and $\mathcal E$ be the sets of vertices and edges of $\Gamma$ respectively.
The \textit{right angled Artin group} $G$, associated to $\Gamma$, is given by the presentation

\begin{equation} \label{eq:RAAG-def} G:=\langle \mathcal V \,\|\,uv=vu, \,
\mbox{whenever } u,v \in \mathcal{V} \mbox{ and }  (u,v) \in \mathcal E \rangle.
\end{equation}

In the literature, right angled Artin groups are also called \textit{graph groups} or \textit{partially commutative groups}. These groups received a lot of attention in the recent years: they seem
to be interesting from both combinatorial and geometric viewpoints (they are fundamental groups
of compact non-positively curved cube complexes). A good overview of the current results
concerning right angled Artin groups can be found in R. Charney's paper \cite{Charney}.

In the case when the finite graph $\Gamma$ is a simplicial tree,
conjugacy separability of the associated right angled Artin group was proved by E. Green \cite{Green}.
It also
follows from the result of  Ribes, Segal and  Zalesskii \cite{R-S-Z} mentioned above,
because such \textit{tree groups} are easily seen to belong to the class $\mathfrak X$.

Following \cite{Chag-Zal}, we will say that a group $G$ is \textit{hereditarily conjugacy separable}
if every finite index subgroup of $G$ is conjugacy separable.

Note that all of the classes of conjugacy separable groups that we mentioned above (possibly, with
the exception of class $\mathfrak X$)
consist, in fact, of hereditarily conjugacy separable groups due to the obvious reason:
these classes are closed under taking subgroups of finite index.
However, there exist conjugacy separable, but not hereditarily separable groups.
The first (infinitely generated) example, demonstrating this, was constructed by Chagas
and Zalesskii in \cite{Chag-Zal}. It is also possible to find finitely
generated and finitely presented examples of this sort even among subgroups
of right angled Artin groups (see \cite{Mart-Min}).

The main result of this work is the following theorem:

\begin{thm}\label{thm:RAAG-main} Right angled Artin groups are hereditarily conjugacy separable.
\end{thm}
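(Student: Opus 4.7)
The plan is to proceed by induction on the number of vertices $n = |\mathcal V|$ of $\Gamma$. When $n = 1$ one has $G \cong \mathbb Z$, and the statement is immediate. For the inductive step I would fix a vertex $v \in \mathcal V$, let $L \subseteq \mathcal V$ be the link of $v$ (its set of neighbors in $\Gamma$), and let $\Gamma'$ be the full subgraph of $\Gamma$ on $\mathcal V \setminus \{v\}$. A standard observation gives the amalgamated decomposition
\[
G \;\cong\; G_{\Gamma'} \,*_{G_L}\, \bigl(G_L \times \langle v\rangle\bigr),
\]
in which all three of $G_{\Gamma'}$, $G_L$, and $G_L \times \langle v\rangle$ are RAAGs on graphs with strictly fewer vertices, hence hereditarily conjugacy separable by the inductive hypothesis. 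The task therefore reduces to showing that this particular splitting promotes HCS of its ingredients to HCS of $G$.

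The difficulty is that the edge group $G_L$ can be a non-cyclic RAAG, so the best available results on conjugacy separability of amalgams -- notably the Ribes--Segal--Zalesskii class $\mathfrak X$, which requires cyclic edge groups -- do not apply directly. My strategy is therefore to strengthen the inductive hypothesis by carrying along an auxiliary profinite-topological property of centralizers, a \emph{centralizer condition} \ccg\ that governs how centralizers of finitely generated subgroups sit in $\PT(G)$ (equivalently, in the profinite completion $\widehat G$). Concretely, I would prove by a single induction on $n$ that every RAAG on at most $n$ vertices is \emph{simultaneously} hereditarily conjugacy separable and satisfies \ccg. The base case $\mathbb Z$ is immediate. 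The inductive step is then a transfer lemma: given the amalgam $G \cong A *_C B$ above with $C = G_L$ the common edge subgroup arising as a centralizer inside $G$, both HCS and \ccg\ pass from $A$, $B$, $C$ to $G$.

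The heart of the argument, and what I expect to be the main obstacle, is precisely this transfer lemma. Unpacking what it would require: starting from a finite index subgroup $H \le G$ and elements $g_1, g_2 \in H$ whose images are conjugate in every finite quotient of $H$, one must produce an actual conjugator inside $H$. I would describe $H$ via Bass--Serre theory as a finite graph of subgroups of conjugates of $A$, $B$, $C$, put $g_1$ and $g_2$ into cyclically reduced normal form, and then show that profinite conjugation descends to genuine conjugation by combining (i) conjugacy separability of the vertex groups to match up the syllables, (ii) double-coset separability of (conjugates of) $C$ inside $A$ and $B$, which should follow from \ccg\ for the factors, and (iii) \ccg\ applied inside $C$ in order to lift a profinite conjugator modulo $C$ to an element of $H$. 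Each ingredient has to be verified not just for $G$ but for every finite index subgroup, which is exactly why \ccg\ must be propagated alongside HCS rather than derived only at the end, and why the Bass--Serre description of $H$ (not of $G$) is what one actually manipulates. The separate verification that \ccg\ holds for the amalgam $G$, using the fact that centralizers in a RAAG are themselves sub-RAAGs on specific subgraphs, is where the particular structure of right angled Artin groups -- rather than some purely abstract amalgam argument -- does the essential work.
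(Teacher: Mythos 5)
Your high-level strategy is close to the paper's: you correctly identify the need to carry a centralizer condition alongside conjugacy separability through an induction on the number of vertices, and the decomposition $G \cong G_{\Gamma'} *_{G_L} (G_L\times\langle v\rangle)$ is, as a group, the same as the paper's presentation of $G$ as a special HNN-extension $\langle A, t \,\|\, tht^{-1}=h,\ \forall h\in H\rangle$ with $A = G_{\Gamma'}$ and $H = G_L$. Where the two approaches diverge sharply is in the execution, and the divergence hides two real gaps in your sketch.

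First, your plan to ``describe $H$ via Bass--Serre theory as a finite graph of subgroups of conjugates of $A$, $B$, $C$'' and work syllable-by-syllable inside $H$ is precisely what the paper avoids. The paper never decomposes a finite index subgroup $H\le G$ at all. Instead it proves, by induction on rank, the two stronger statements (Lemmas \ref{lem:main-CS} and \ref{lem:main-CC}) that for \emph{every special subgroup $B$ of $G$} and every $g\in G$ the $B$-conjugacy class $g^B$ is separable in $\PT(G)$, and that $B$ satisfies {\ccg}. Taking $B=G$ gives conjugacy separability and $\cc$, and Proposition \ref{prop:her_c_s-CC} (CS $+$ {\cc} $\Rightarrow$ HCS) then handles all finite index subgroups in one stroke. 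The payoff is that all the combinatorics happen in the well-understood ambient RAAG $G$ rather than in an arbitrary Bass--Serre tree of subgroups, and the quantification over special subgroups $B$ (rather than over finite index subgroups $H$) is exactly what makes the induction close up. Your sketch has nothing playing the role of this strengthening, which is what would let you avoid the intractable case analysis of what an arbitrary finite index subgroup's graph-of-groups decomposition looks like.

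Second, and more concretely, the proposal is silent on the two technical pillars that make the paper's induction actually go through. One is the reduction to finite targets: given $g,f$ with $f\notin g^B$, the paper maps $A$ onto a suitable finite quotient $Q$ and extends to a map $\tilde\psi\colon G\to P$ onto the special HNN-extension of $Q$ with respect to $\psi(H)$; $P$ is virtually free, hence hereditarily conjugacy separable by Dyer, and this is where separation ultimately comes from. Nothing in your sketch produces a virtually free (or otherwise already-understood conjugacy separable) target; ``matching up syllables'' via normal forms in a graph of groups does not by itself produce a finite quotient separating $g$ and $f$. The other missing pillar is the machinery of commuting retractions (Section \ref{sec:comm_retr} and \ref{sec:conseq_prof_top}): the double-coset separability and centralizer-coset separability you invoke in steps (ii) and (iii) do not ``follow from {\ccg} for the factors'' in any easy way; in the paper they are derived from Lemmas \ref{lem:empty-inter}, \ref{lem:inter_of_conjug}, \ref{lem:double_coset_sep} and \ref{lem:centr-coset_sep}, which rest entirely on the fact that canonical retractions onto special subgroups pairwise commute, together with Proposition \ref{prop:comm-retr}. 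That commuting-retraction structure is the specific feature of RAAGs the paper exploits, and it has no substitute in your outline. As written, then, the key transfer lemma in your argument is a restatement of the goal rather than a reduction to things you have already established.
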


Remark that a finite index subgroup of a right angled Artin group may not be a right angled
Artin group itself. The following example was suggested to the author by M.~Bridson:

\begin{ex} Let $S$ be a finite group with a (finite) non-trivial second homology group
$H_2(S)$ (for instance, on can take $S$ to be the alternating group $A_5$,
since $H_2(A_5) \cong \Z/2\Z$). As we know, there is an epimorphism
$\psi:F \to S$ for some finitely
generated free group $F$. Let
$K:=\{(x,y)\in F \times F\,|\, \psi(x)=\psi(y)\}$ be the fibre product associated to $\psi$.
Observe that $F \times F$ is a right angled Artin group (associated to some finite
complete bipartite graph) and $K$ is a finite index subgroup in it. By \cite[Thm. A]{Brid-Mill},
$H_2(S)$ embeds into $H_1(K) \cong K/[K,K]$. Therefore $K$ is not isomorphic to any
right angled Artin group,
because the abelianization of a right angled Artin group is always a free abelian group,
and, thus, it is torsion-free.
\end{ex}

Generally speaking, we think
that hereditary conjugacy separability is a lot stronger than simply conjugacy
separability. Corollaries in the next section can be viewed as a confirmation of this.

Our proof of Theorem \ref{thm:RAAG-main} is purely combinatorial and mostly self-contained
(we use basic properties of right angled Artin groups and HNN-extensions).
The basic idea is to approximate right angled Artin groups by HNN-extensions
of finite groups (which are, of course, virtually free). This is the main step of the proof.
Once this is done, we can use known properties of virtually free groups to obtain the desired
results.

In Section \ref{sec:h_c_s_and_CC} we study the \textit{Centralizer Condition},
which, among other things, shows that a given conjugacy separable group is
hereditarily conjugacy separable. This condition
was originally introduced by Chagas and Zalesskii in \cite{Chag-Zal}, but in a different form.
In Sections \ref{sec:comm_retr}, \ref{sec:conseq_prof_top} and \ref{sec:spec_HNN}
we develop machineries of \textit{commuting retractions} and \textit{special HNN-extensions} which are
the two basic tools behind the proof of Theorem \ref{thm:RAAG-main}.

\noindent{\bf Acknowledgements.} The author is very grateful to Fr\'ed\'eric Haglund and Daniel Wise
for explaining their work in \cite{H-W_1} and \cite{H-W_2}. The author would also like to thank
Martin Bridson, Ilya Kazachkov, Graham Niblo and Pavel Zalesskii for discussions.


\section{Consequences of the main theorem}\label{sec:conseq_main_res}
Recall that a subgroup $H$ of a group $G$ is said to be a \textit{virtual retract} of $G$,
if there is a finite index subgroup $K \le G$ such that $H \le K$ and $H$ is a retract of $K$
(see Section \ref{sec:comm_retr} for the definition).

It is not difficult to show (see Lemma \ref{lem:virt_retract_h_c_s})
that a virtual retract of a hereditarily conjugacy separable group is itself
hereditarily conjugacy separable. Therefore, Theorem~\ref{thm:RAAG-main} immediately yields

\begin{cor}\label{cor:virt_retract_RAAG-h_c_s} If $G$ is a right angled Artin group and
$H$ is a virtual retract of $G$, then $H$ is hereditarily conjugacy separable.
\end{cor}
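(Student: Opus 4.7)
The plan is immediate from the two ingredients already at hand in the excerpt: Theorem~\ref{thm:RAAG-main} supplies the hereditary conjugacy separability of the ambient right angled Artin group $G$, while Lemma~\ref{lem:virt_retract_h_c_s} (referenced but not yet stated) asserts that this property passes to virtual retracts. I will concentrate on sketching the lemma, since the corollary is merely its specialisation to right angled Artin groups.

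To prove the lemma, assume $G$ is hereditarily conjugacy separable and $H\le G$ is a virtual retract, so there exist a subgroup $K\le G$ of finite index and a retraction $\rho:K\to H$ with $H\le K$. Fix an arbitrary finite index subgroup $H_0\le H$ and define $K_0:=\rho^{-1}(H_0)$. Since $\rho$ is surjective, $[K:K_0]=[H:H_0]<\infty$, so $K_0$ has finite index in $G$; one checks directly that $H_0\le K_0$ and that the restriction $\rho|_{K_0}:K_0\to H_0$ is again a retraction. By the hereditary hypothesis applied to $G$, the subgroup $K_0$ is conjugacy separable.

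It now remains to transport conjugacy separability down the retraction. If $x,y\in H_0$ are non-conjugate in $H_0$, they are also non-conjugate in $K_0$: any $k\in K_0$ with $kxk^{-1}=y$ would, upon applying $\rho$, yield an element $\rho(k)\in H_0$ conjugating $x$ to $y$ inside $H_0$, a contradiction. Conjugacy separability of $K_0$ therefore produces a homomorphism $\varphi:K_0\to Q$ to a finite group $Q$ with $\varphi(x)$ not conjugate to $\varphi(y)$ in $Q$, and the restriction $\varphi|_{H_0}:H_0\to\varphi(H_0)$ delivers the desired finite quotient of $H_0$ separating their conjugacy classes. This proves the lemma, and combining it with Theorem~\ref{thm:RAAG-main} yields the corollary at once. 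The principal difficulty --- establishing hereditary conjugacy separability of right angled Artin groups themselves --- has already been absorbed into Theorem~\ref{thm:RAAG-main}; what remains here is essentially bookkeeping, the only mildly delicate point being the use of $\rho$ to lift non-conjugacy from $H_0$ up to $K_0$.
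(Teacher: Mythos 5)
Your argument is correct and follows the same route as the paper: the paper's Lemma~\ref{lem:virt_retract_h_c_s} is proved by combining Remark~\ref{rem:f_i_sbgp_virt_retr} (your construction of $K_0=\rho^{-1}(H_0)$, showing a finite index subgroup of a virtual retract is again a virtual retract) with Lemma~\ref{lem:retr_in_c_s} (your observation that applying $\rho$ transports non-conjugacy from $H_0$ to $K_0$), and then invokes Theorem~\ref{thm:RAAG-main}. You have simply folded the two auxiliary statements into a single self-contained argument.
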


In view of the above corollary, it makes sense to define two classes of groups:
the class $\mathcal{VR}$ will consist of all groups which are virtual retracts of finitely
generated right angled Artin groups, and the class $\mathcal{AVR}$ will
consist of groups that contain finite index subgroups from the class $\mathcal{VR}$.

Looking at the definition, it might seem that the class of right angled Artin groups is not very large.
However, the class of subgroups and virtual retracts of right angled Artin groups is quite rich
and includes many interesting examples. For instance, in the famous paper \cite{B-B}
M. Bestvina and N. Brady constructed subgroups of right angled Artin groups
which have the property ${\rm FP}_2$ but are not finitely presented.

On the other hand, in the recent
work \cite{H-W_1} F. Haglund and D. Wise introduced a new class of \textit{special}
(or $A$-\textit{special}, in the terminology of \cite{H-W_1})
cube complexes, that admit a combinatorial local isometry to the Salvetti cube
complex (see \cite{Charney}) of a right angled Artin group (possibly, infinitely generated).
They proved that the  fundamental group of every special complex $\mathcal{X}$ embeds into
some right angled Artin group $G$ (if $\mathcal{X}$ is not compact and has infinitely many hyperplanes,
then  the corresponding right angled Artin group $G$
will be associated to an infinite graph $\Gamma$, and, hence, will not be finitely generated).

An important property established by Haglund and Wise in \cite{H-W_1}, states
that if $\mathcal X$ is a compact $A$-special cube complex,
then $\pi_1(\mathcal{X})$ is a virtual retract of some right angled Artin group, i.e.,
$\pi_1(\mathcal{X}) \in \vr$. Therefore, using Corollary \ref{cor:virt_retract_RAAG-h_c_s},
we immediately obtain

\begin{cor} \label{cor:A-spec-c_s} If $H$ is the fundamental group of a compact
$A$-special cube complex, then $H$ is hereditarily conjugacy separable.
\end{cor}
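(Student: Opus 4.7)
The plan is to derive the statement as a direct consequence of Corollary \ref{cor:virt_retract_RAAG-h_c_s}, using the Haglund--Wise embedding theorem already cited from \cite{H-W_1}. The key observation is that compactness of the cube complex $\mathcal{X}$ is exactly what is needed to land inside the class $\vr$ (virtual retracts of \emph{finitely generated} RAAGs), rather than merely inside a larger class associated to infinitely generated RAAGs.

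First I would invoke the Haglund--Wise result in the form recalled just before the corollary: for any compact $A$-special cube complex $\mathcal{X}$, the fundamental group $\pi_1(\mathcal{X})$ is a virtual retract of some right angled Artin group $G$. Second, I would verify that this $G$ is finitely generated. This follows from compactness: a compact cube complex has only finitely many hyperplanes, and in the Haglund--Wise construction the generators of $G$ correspond exactly to the hyperplanes of $\mathcal{X}$ (so $G$ is associated to a finite graph $\Gamma$). Thus $\pi_1(\mathcal{X}) \in \vr$ in the precise sense of the definition given in the excerpt.

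With these two facts in hand, the statement is immediate: Corollary \ref{cor:virt_retract_RAAG-h_c_s} asserts that every virtual retract of a right angled Artin group is hereditarily conjugacy separable, so applying it to $H = \pi_1(\mathcal{X}) \le G$ yields the conclusion.

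There is essentially no technical obstacle; all the substance is imported from \cite{H-W_1} (the virtual retraction) and from Theorem \ref{thm:RAAG-main} (through Corollary \ref{cor:virt_retract_RAAG-h_c_s}). The only point requiring any care is to make sure that the compactness hypothesis is used to guarantee finite generation of the ambient RAAG $G$, so that the definition of $\vr$ applies as stated. If one wanted a more self-contained proof one would also need to check the elementary fact (presumably Lemma \ref{lem:virt_retract_h_c_s}) that hereditary conjugacy separability is inherited by virtual retracts, but this is exactly what is used to establish Corollary \ref{cor:virt_retract_RAAG-h_c_s} in the first place, so no extra work is required here.
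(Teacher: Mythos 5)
Your proof is correct and follows the paper's argument exactly: invoke the Haglund--Wise virtual retraction theorem to place $H$ in $\vr$, then apply Corollary \ref{cor:virt_retract_RAAG-h_c_s}. The extra care you take in noting that compactness forces finitely many hyperplanes, hence a finitely generated ambient RAAG, is a useful clarification but does not change the route.
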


Moreover,  many other groups are \textit{virtually special},
i.e., they possess finite index subgroups that are fundamental groups of
special cube complexes. Among virtually special groups
are all Coxeter groups -- see \cite{H-W_2}, fundamental groups of compact surfaces --
see \cite{Crisp-Wiest},
fundamental groups of compact virtually clean
square $\mathcal{VH}$-complexes (introduced by Wise in \cite{Wise-polyg}) -- see  \cite{H-W_1},
graph braid groups (introduced by A. Abrams in \cite{Abrams}) -- see \cite{Crisp-Wiest},
and some hyperbolic $3$-manifold groups -- see \cite{Ches-Debl-Wilt}.

In this paper we will mainly discuss applications of Theorem \ref{thm:RAAG-main}
to Coxeter groups, even though similar corollaries can be derived for the other classes
of virtually special groups listed above.

Recall that a \textit{Coxeter group} is a group  $G$ given by the presentation
\begin{equation}\label{eq:Cox-def}
G=\langle s_1,\dots,s_n\,\|\, (s_is_j)^{m_{ij}}=1, \,\mbox{ for all $i,j$ with } m_{ij} \in \N \rangle, \end{equation} where $M:=(m_{ij})$
is a symmetric $n \times n$ matrix, whose entries satisfy the following conditions:
$m_{ii}=1$ for every $i=1,\dots,n$, $m_{ij} \in \N \sqcup \{\infty\}$ and $m_{ij} \ge 2$
whenever $1 \le i<j \le n$. In the case, when $m_{ij} \in \{2, \infty\}$ for all $i\neq j$,
$G$ is said to be a \textit{right angled Coxeter group}.

For any Coxeter group $G$, G. Niblo and L. Reeves \cite{Niblo-Reeves} constructed a
locally finite, finite dimensional ${\rm CAT}(0)$ cube complex $\mathcal{C}$
on which $G$ acts properly discontinuously.
In \cite{H-W_2} Haglund and Wise show that $G$
has a finite index subgroup $F$ such that $F$ acts freely on $\mathcal{C}$ and the quotient
$F\setminus \mathcal{C}$ is an $A$-special cube complex.
In the case when $G$ is right angled or word hyperbolic
(in Gromov's sense \cite{Gromov}), Niblo and Reeves proved that the action of $G$ on
$\mathcal{C}$ is cocompact (see \cite{Niblo-Reeves}). These results, combined with the
virtual retraction theorem of Haglund and Wise
mentioned above, imply that word hyperbolic (or right angled) Coxeter groups belong to the class $\avr$.
Therefore, using Corollary  \ref{cor:virt_retract_RAAG-h_c_s} we achieve

\begin{cor} \label{cor:Cox-sbgp-c_s} Every word hyperbolic (or right angled) Coxeter group $G$
contains a finite index subgroup $F$ which is hereditarily conjugacy separable.
\end{cor}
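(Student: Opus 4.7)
The plan is to exhibit a finite index subgroup $F \le G$ that belongs to the class $\vr$, after which hereditary conjugacy separability of $F$ will be immediate from Corollary \ref{cor:virt_retract_RAAG-h_c_s}.

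First I would invoke the Niblo-Reeves construction to produce a locally finite, finite dimensional ${\rm CAT}(0)$ cube complex $\mathcal{C}$ on which $G$ acts properly discontinuously. Under our hypotheses --- $G$ word hyperbolic or right angled --- a further theorem of Niblo and Reeves guarantees that this action is moreover cocompact. Next, the theorem of Haglund and Wise supplies a finite index subgroup $F \le G$ acting freely on $\mathcal{C}$ and such that the quotient $F \setminus \mathcal{C}$ is an $A$-special cube complex. Since $[G:F] < \infty$ and the $G$-action on $\mathcal{C}$ is cocompact (and $\mathcal{C}$ is locally finite), the quotient $F \setminus \mathcal{C}$ is compact.

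I would then apply the Haglund-Wise virtual retraction theorem to the compact $A$-special cube complex $F \setminus \mathcal{C}$, concluding that $F \cong \pi_1(F \setminus \mathcal{C})$ is a virtual retract of some finitely generated right angled Artin group. Hence $F \in \vr$ and consequently $G \in \avr$. Finally, Corollary \ref{cor:virt_retract_RAAG-h_c_s} (equivalently, Corollary \ref{cor:A-spec-c_s} applied to $F \setminus \mathcal{C}$) yields that $F$ is hereditarily conjugacy separable, which is what we wanted.

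The entire argument is simply the bookkeeping of three cited ingredients --- Niblo-Reeves, Haglund-Wise, and our Corollary \ref{cor:virt_retract_RAAG-h_c_s} --- so there is no substantial new step. The only place where the hyperbolic or right angled hypothesis is used is in securing cocompactness of the $G$-action on $\mathcal{C}$; this is the one point where the general Coxeter case cannot be handled in the same way, and it is the mild obstacle in extending the conclusion to arbitrary Coxeter groups.
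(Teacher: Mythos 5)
Your proposal is correct and follows exactly the route the paper takes in the paragraph preceding Corollary \ref{cor:Cox-sbgp-c_s}: Niblo--Reeves to produce the ${\rm CAT}(0)$ cube complex $\mathcal{C}$ with a cocompact action in the hyperbolic or right angled case, Haglund--Wise to get a finite index $F$ acting freely with $F\setminus\mathcal{C}$ compact and $A$-special, then the virtual retraction theorem and Corollary \ref{cor:virt_retract_RAAG-h_c_s}. The only (harmless) variation is that you make the passage from cocompactness of the $G$-action to compactness of $F\setminus\mathcal{C}$ explicit, which the paper leaves implicit.
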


Actually, as the paragraph above Corollary \ref{cor:Cox-sbgp-c_s} shows,
the conclusion of this corollary holds for every finitely
generated Coxeter group $G$, whose action on the corresponding Niblo-Reeves cube complex is cocompact.
Such Coxeter groups were completely characterized by P.-E. Caprace and B. M\"uhlherr
in \cite{Caprace-Muhlherr}.

The sole fact of existence of a conjugacy separable finite index subgroup $F$ in $G$ may seem somewhat
unsatisfactory. However, every Coxeter group is virtually torsion-free,
and in a given Coxeter group $G$ it is usually easy to find some torsion-free subgroup of finite index
(for instance, if $G$ is a right angled Coxeter group \eqref{eq:Cox-def}, then the kernel
of the natural homomorphism from $G$ onto
$\langle s_1 \rangle_2 \times \dots \times \langle s_n \rangle_2 \cong (\Z/2\Z)^n$ is torsion-free).
The following statement is proved in Section \ref{sec:appl_sep}:

\begin{cor}\label{cor:t_f_sbgp_hyp_Cox} If $G$ is a word hyperbolic Coxeter group,
then every torsion-free finite index subgroup $H \le G$ is hereditarily conjugacy separable.
\end{cor}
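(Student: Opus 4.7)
My approach combines Corollary \ref{cor:Cox-sbgp-c_s} with the Centralizer Condition promised in Section \ref{sec:h_c_s_and_CC}, exploiting the rigid centralizer structure of a torsion-free word hyperbolic group.

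First, I would reduce the problem. Any finite-index subgroup $H' \le H$ is itself a torsion-free finite-index subgroup of $G$, so it suffices to prove the \emph{non-hereditary} statement: every torsion-free finite-index subgroup $H$ of $G$ is conjugacy separable. The hereditary version then follows for free by applying this statement to each finite-index subgroup of $H$.

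Next, I would produce a good finite-index subgroup inside $H$. By Corollary \ref{cor:Cox-sbgp-c_s}, $G$ contains a finite-index hereditarily conjugacy separable subgroup $F_0$. Since finitely generated Coxeter groups are virtually torsion-free, after intersecting $F_0$ with a torsion-free finite-index subgroup of $G$ and then with all of its (finitely many) $G$-conjugates, I obtain a normal, torsion-free, finite-index subgroup $N \trianglelefteq G$ which is still hereditarily conjugacy separable (being a finite-index subgroup of $F_0$). Setting $L := H \cap N$, I get a normal subgroup $L \trianglelefteq H$ of finite index which is hereditarily conjugacy separable as a finite-index subgroup of $N$; in particular, $L$ is conjugacy separable.

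The final step is to lift conjugacy separability from $L$ to $H$ using the Centralizer Condition of Section \ref{sec:h_c_s_and_CC}. Here the torsion-free word hyperbolicity of $H$ is decisive: for every $1 \ne h \in H$, the centralizer $C_H(h)$ is infinite cyclic, so $C_L(h) = C_H(h) \cap L$ is a finite-index subgroup of $C_H(h)$. Combined with the residual finiteness of $G$ (which makes cyclic subgroups of $H$ separable by standard arguments for torsion-free hyperbolic groups), the Centralizer Condition for the pair $L \trianglelefteq H$ should reduce to a short check carried out inside the cyclic group $C_H(h)$, using that $L$ already satisfies the corresponding separability property.

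The main obstacle is this last step: formally verifying the Centralizer Condition for the extension $L \trianglelefteq H$. Concretely, one must show that for every $h \in H$ and every finite quotient $\pi \colon H \twoheadrightarrow Q$, the image $\pi(C_H(h))$ can be made to capture the full centralizer $C_Q(\pi(h))$ after passing to a suitable refinement of $\pi$. The cyclic structure of $C_H(h)$ together with the hereditary conjugacy separability of $L$ ought to make this verification essentially mechanical, but it is where the hypotheses on $G$ and the hypothesis that $H$ is torsion-free are genuinely used.
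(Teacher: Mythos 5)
Your first two paragraphs are fine, though slightly over-engineered: the paper just takes $K:=H\cap F$ with $F$ as in Corollary \ref{cor:Cox-sbgp-c_s}, without bothering to make $L$ normal in $H$ --- normality plays no role in what follows. Your reduction to the non-hereditary statement is also valid. The problem lies in the final step, and it is a genuine conceptual gap, not just a ``formal verification'' to be filled in.

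The Centralizer Condition machinery in Section \ref{sec:h_c_s_and_CC} works in the \emph{downward} direction: Proposition \ref{prop:her_c_s-CC} tells you that if a group $\Gamma$ is conjugacy separable and satisfies {\cc}, then its finite-index subgroups inherit conjugacy separability. You need the \emph{upward} direction --- lifting conjugacy separability from the finite-index subgroup $L$ to the overgroup $H$ --- and {\cc} does not do that. (Goryaga's example, cited after Lemma \ref{lem:UR-c_s}, shows that upward lifting fails in general even for index $2$, so some extra structural input is unavoidable.) The extra input the paper uses is not the cyclic structure of centralizers but the \emph{Unique Root property}: Lemma \ref{lem:hyp-URP} shows torsion-free word hyperbolic groups enjoy it, and Lemma \ref{lem:UR-c_s}/\ref{lem:URP-h_c_s} show that a group with the Unique Root property containing a (hereditarily) conjugacy separable finite-index subgroup is itself (hereditarily) conjugacy separable. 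Concretely, for $x,y\in H$ with $y\notin x^H$, one picks $n$ with $x^n\in L$; then $(x^n)^H$ is a finite union of $H$-translates of $(x^n)^L$ and hence closed in $\PT(H)$, and the Unique Root property guarantees $y^n\notin(x^n)^H$, so one can separate. This handles the case $x\notin L$, which your outline never addresses; the cyclic-centralizer/CC route you sketch does not reach it. Replace the last paragraph of your plan with the Unique Root argument and the proof matches the paper's.
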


The above corollary produces a lot of new examples of conjugacy separable groups.
More generally, in Corollary \ref{cor:hyp_t-f_AVR->h_c_s} we show that every torsion-free word hyperbolic
group from the class $\avr$ is hereditarily conjugacy separable.

Now, let us discuss some other consequences of the main result.
Beside being a classical subject of group theory, conjugacy separability has two main applications.
One of the applications was found by E. Grossman in
\cite{Grossman}, where she showed that the outer automorphism group $Out(G)$
of a finitely generated conjugacy separable group $G$ is residually finite,
provided that every pointwise inner automorphism of $G$ is inner (an automorphism
$\phi \in Aut(G)$ is called \textit{pointwise inner} if for every $g \in G$,
$\phi(g)$ is conjugate to $g$ in $G$). Thereafter, Grossman used this observation to prove that the
mapping class group of a compact orientable surface is residually finite.

Note that for a finitely generated residually finite group $G$, the group of outer automorphisms
$Out(G)$ need not be residually finite (this should be compared with the classical
theorem of G. Baumslag \cite{Baumslag} claiming that the automorphism group
$Aut(G)$ of a finitely generated residually finite
group $G$ is residually finite). This is a consequence of the result of I. Bumagina and D. Wise
(\cite{Bum-Wise}) which asserts that for every finitely presented group $S$ there exists
a finitely generated residually finite group $G$ such that $Out(G) \cong S$.

In Section \ref{sec:RAAG} we prove that pointwise inner automorphisms of right angled Artin groups
are inner (see Proposition \ref{prop:pi=inn}). Thus Grossman's result, combined with
Theorem \ref{thm:RAAG-main}, gives

\begin{thm}\label{thm:Out_RAAG} For any right angled Artin group $G$,
the group of outer automorphisms $Out(G)$ is residually finite.
\end{thm}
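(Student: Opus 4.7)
The plan is to deduce this theorem from E. Grossman's classical criterion in \cite{Grossman}, which asserts that a finitely generated conjugacy separable group $G$ in which every pointwise inner automorphism is inner has residually finite outer automorphism group. Thus the entire proof reduces to checking that a right angled Artin group $G$ satisfies each of these three hypotheses, and then quoting Grossman.

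Finite generation is part of the standing assumption that $\Gamma$ is a finite simplicial graph: by the presentation \eqref{eq:RAAG-def}, $G$ is generated by the finite vertex set $\mathcal V$. Conjugacy separability of $G$ is immediate from Theorem \ref{thm:RAAG-main}, which in fact gives the stronger conclusion that every finite index subgroup of $G$ is conjugacy separable, specialising to $K=G$ itself. The remaining hypothesis, that every pointwise inner automorphism of $G$ is inner, is precisely Proposition \ref{prop:pi=inn}, which will be established in Section \ref{sec:RAAG}.

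The present deduction is therefore a one-step application of Grossman's theorem once the two inputs are in hand, so the substantive work lies upstream. Theorem \ref{thm:RAAG-main} is the main result of the paper and will rest on the commuting retraction and special HNN-extension technology developed in Sections \ref{sec:comm_retr}--\ref{sec:spec_HNN}. Proposition \ref{prop:pi=inn} is a structural statement about $Aut(G)$; I would expect its proof to analyse the action of a pointwise inner automorphism $\phi$ on the standard generators $v \in \mathcal V$ and to assemble the individual conjugators witnessing $\phi(v) \sim v$ in $G$ into a single element that implements $\phi$ globally on all of $G$. The hardest part of the overall package is thus not the present deduction but establishing hereditary conjugacy separability; granted these two ingredients, the residual finiteness of $Out(G)$ is immediate.
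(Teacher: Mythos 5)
Your proposal is correct and follows exactly the paper's own deduction: apply Grossman's theorem using the conjugacy separability provided by Theorem \ref{thm:RAAG-main} together with Proposition \ref{prop:pi=inn} (pointwise inner implies inner), with finite generation coming from the finiteness of $\Gamma$.
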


Presently not much is yet known about the outer automorphisms of an arbitrary
right angled Artin group $G$. M. Laurence \cite{Laurence} showed that $Aut(G)$ (and, hence, $Out(G)$)
is finitely generated. More recently, M. Day \cite{Day} proved that $Aut(G)$ (and, hence, $Out(G)$)
is finitely presented. In \cite{Char-Vogt}  R. Charney and K. Vogtmann showed
that $Out(G)$ is virtually torsion-free and has finite virtual cohomological dimension.
Imposing additional conditions on the finite graph $\Gamma$, corresponding to $G$,
M. Gutierrez, A. Piggott and K. Ruane were able to extract more information about the structure
of $Aut(G)$ and $Out(G)$ in \cite{G-P-R}. After finishing this article the author learned that
Charney and Vogtmann gave a different proof of Theorem 2.5 in \cite{C-V-new}.

On the other hand, in Section \ref{sec:appl_out} we use a recent result
of the author with D. Osin from \cite{norm-aut} to prove the following theorem:

\begin{thm}\label{thm:rel_hyp_gp_a_v_r} If $G \in \mathcal{AVR}$ is a
relatively hyperbolic group, then $Out(G)$ is residually finite.
\end{thm}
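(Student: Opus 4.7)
The plan is to apply Grossman's theorem \cite{Grossman} to a characteristic finite-index subgroup $N \le G$, obtaining residual finiteness of $Out(N)$, and then to transfer this property to $Out(G)$ using the fact that $N$ is characteristic.

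First I would construct $N$. Since $G \in \avr$, there is a finite-index subgroup $H \le G$ with $H \in \vr$, and by Corollary~\ref{cor:virt_retract_RAAG-h_c_s} the group $H$ is hereditarily conjugacy separable. As every relatively hyperbolic group is finitely generated, $G$ has only finitely many subgroups of any fixed finite index, so $N := \bigcap\{K \le G \mid [G:K] \le [G:H]\}$ is a characteristic finite-index subgroup of $G$ contained in $H$. In particular $N$, being a finite-index subgroup of the hereditarily conjugacy separable group $H$, is itself conjugacy separable. Finite-index subgroups of relatively hyperbolic groups are themselves relatively hyperbolic with respect to the induced peripheral structure, so $N$ inherits such a structure from $G$. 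I would then invoke the main theorem of \cite{norm-aut} to conclude that every normal automorphism of $N$, and in particular every pointwise inner automorphism (which preserves every normal subgroup), is inner. Since $N$ is finitely generated and conjugacy separable, Grossman's theorem then delivers residual finiteness of $Out(N)$.

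The remaining step, which is the main obstacle, is to descend residual finiteness from $Out(N)$ to $Out(G)$. Because $N$ is characteristic in $G$, restriction to $N$ induces homomorphisms $\rho : Aut(G) \to Aut(N)$ and $\bar\rho : Out(G) \to Out(N)$. Any $\sigma \in \ker \rho$ acts trivially on $N$, which forces $g^{-1} \sigma(g) \in C_G(N)$ for every $g \in G$, so $\sigma$ is determined by a function $G/N \to C_G(N)$. When $G$ is non-elementary relatively hyperbolic, $C_G(N)$ is finite, because $N$ contains elements that are loxodromic with respect to the relatively hyperbolic structure, and the centralizer in $G$ of such an element is virtually cyclic. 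Hence $\ker \rho$, and therefore $\ker \bar\rho$, is finite. Combining residual finiteness of $Out(N)$ with finiteness of $\ker \bar\rho$ (and appealing to finite generation of $Out(G)$, which is known for relatively hyperbolic groups), the short exact sequence $1 \to \ker \bar\rho \to Out(G) \to \bar\rho(Out(G)) \to 1$ with finite kernel and residually finite quotient yields that $Out(G)$ is residually finite.

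The hard part will be the transfer in the last paragraph: making the finiteness of $C_G(N)$ rigorous, and organising the elementary relatively hyperbolic cases (such as $G$ virtually cyclic, or $G$ coinciding with one of its peripheral subgroups) which must be handled separately. Fortunately, in the elementary cases $Out(G)$ is either finite or directly tractable, so the content of the theorem lies in the non-elementary setting treated above.
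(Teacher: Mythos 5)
Your overall strategy is the same as the paper's: apply Grossman's criterion to a finite-index subgroup $N$ (which requires conjugacy separability from Corollary~\ref{cor:virt_retract_RAAG-h_c_s} and pointwise-inner-implies-inner from \cite{norm-aut}), then transfer residual finiteness from $Out(N)$ up to $Out(G)$. The paper carries out the transfer by citing Lemma~\ref{lem:out_norm_sub} (from \cite[Lemma 5.4]{G-L}). You instead attempt to prove the transfer inline, and this is where your argument has a genuine gap.

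The claim that restriction induces a homomorphism $\bar\rho : Out(G) \to Out(N)$ is false in general, even when $N$ is characteristic. For $\bar\rho$ to be well defined one needs $\rho(Inn(G)) \subseteq Inn(N)$, i.e.\ that conjugation by every $g \in G$ restricts to an inner automorphism of $N$; this holds precisely when $G = N\,C_G(N)$, which typically fails (already for $N = A_3 \lhd S_3$). So your short exact sequence $1 \to \ker\bar\rho \to Out(G) \to \bar\rho(Out(G)) \to 1$ does not exist. Moreover, even if such a sequence existed, the final inference would still be unjustified: a finitely generated group with a finite normal subgroup whose quotient is residually finite need not itself be residually finite (Deligne's non-residually-finite central extension $1 \to \Z/2 \to \widetilde{Sp}(2n,\Z) \to Sp(2n,\Z) \to 1$ is a counterexample). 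The correct assertion, as in Lemma~\ref{lem:out_norm_sub}, is that some \emph{finite-index subgroup} of $Out(G)$ is isomorphic to a quotient of a \emph{subgroup} of $Out(N)$ by a finite normal subgroup; residual finiteness of such a group follows because quotients of residually finite groups by finite normal subgroups are residually finite, and a group with a residually finite finite-index subgroup is residually finite. A secondary, easily-fixed imprecision: your finiteness argument for $C_G(N)$ only gives that $C_G(N)$ is virtually cyclic (centralizers of loxodromic elements are virtually cyclic, not finite); the clean route, which the paper uses, is to note $Z(N)=\{1\}$ via Lemma~\ref{lem:rel_hyp-fin_center} and torsion-freeness of $N\le A$, so that $C_G(N)$ injects into the finite group $G/N$.
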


Note that Theorem \ref{thm:Out_RAAG} is not a consequence of Theorem \ref{thm:rel_hyp_gp_a_v_r}:
it is not difficult to show that a (non-cyclic) right angled Artin group
$G$ 
is relatively hyperbolic
if and only if  the graph $\Gamma$, corresponding to $G$, is disconnected.

Applying Theorem \ref{thm:rel_hyp_gp_a_v_r} to our favorite class of groups from $\avr$, we achieve
\begin{cor} \label{cor:hyp_Cox-Out_rf} For any word hyperbolic Coxeter group $G$, $Out(G)$
is residually finite.
\end{cor}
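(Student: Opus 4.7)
The plan is to obtain this corollary as an immediate specialization of Theorem~\ref{thm:rel_hyp_gp_a_v_r}, by checking that a word hyperbolic Coxeter group $G$ satisfies both of its hypotheses: being relatively hyperbolic, and lying in the class $\avr$.

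First, I would observe that relative hyperbolicity is automatic: every word hyperbolic group is relatively hyperbolic in the standard sense (say, with respect to the empty collection of peripheral subgroups, or, equivalently, relative to the trivial subgroup), so this condition of Theorem~\ref{thm:rel_hyp_gp_a_v_r} requires no additional work for $G$.

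Next, I would verify $G \in \avr$. This is precisely the content of the discussion preceding Corollary~\ref{cor:Cox-sbgp-c_s} in the paper: the Niblo--Reeves ${\rm CAT}(0)$ cube complex $\mathcal C$ associated to $G$ carries a properly discontinuous $G$-action that is moreover \emph{cocompact} when $G$ is word hyperbolic (by \cite{Niblo-Reeves}); Haglund--Wise~\cite{H-W_2} then supply a finite index torsion-free subgroup $F \le G$ for which $F \setminus \mathcal C$ is a compact $A$-special cube complex, and their virtual retraction theorem yields $F \in \vr$. By definition of $\avr$, this gives $G \in \avr$.

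With both hypotheses in hand, Theorem~\ref{thm:rel_hyp_gp_a_v_r} applies directly and delivers the residual finiteness of $Out(G)$. There is no genuine obstacle in this argument; the proof is essentially a two-line composition of Theorem~\ref{thm:rel_hyp_gp_a_v_r} with the $\avr$-membership established earlier. The only point where some care is warranted is in confirming that the convention of ``relatively hyperbolic'' used in the cited theorem (from \cite{norm-aut}) subsumes the purely hyperbolic case, which it does.
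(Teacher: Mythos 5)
Your proposal is correct and follows exactly the route the paper takes: the paper derives Corollary~\ref{cor:hyp_Cox-Out_rf} precisely by noting that word hyperbolic Coxeter groups lie in $\avr$ (via Niblo--Reeves cocompactness and the Haglund--Wise virtual retraction theorem, as spelled out before Corollary~\ref{cor:Cox-sbgp-c_s}) and then invoking Theorem~\ref{thm:rel_hyp_gp_a_v_r}. Your remark on the convention for relative hyperbolicity is also consistent with the paper's usage (parabolics are required to be proper, and the trivial subgroup is proper for any non-trivial $G$, while the trivial group has trivial $Out$).
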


Unlike automorphisms of right angled Artin groups, automorphism groups of Coxeter groups have already
attracted a lot of attention. In many particular cases the structure
of the (outer) automorphism group is known: see, for instance, P. Bahls's paper
\cite{Bahls} and references therein.
However, because of its generality,
the statement of Corollary \ref{cor:hyp_Cox-Out_rf} seems to be new.

The second classical application of conjugacy separability was found by A. Mal'cev.
As Mal'cev proved in \cite{Malcev} (see also \cite{Mostow}), a
finitely presented conjugacy separable group $G$ has solvable
conjugacy problem. Observe that finite presentability of $G$ is important here,
because the set of finite quotients of an infinitely presented group does not
need to be recursively enumerable.

It follows that the conjugacy problem is solvable for
every group $G \in \vr$: $G$ is finitely presented as a retract of
a finitely presented group, and $G$ is conjugacy separable by Corollary \ref{cor:virt_retract_RAAG-h_c_s}.
However, most of the groups from the class $\vr$ that
we discussed above are already known to have solvable conjugacy problem. Moreover, J.~Crisp, E. Godelle
and B. Wiest \cite{C-G-W} showed that the conjugacy problem for fundamental groups
of $A$-special cube complexes can be solved in linear time.

Nevertheless, the property of hereditary conjugacy separability for a group $G$
turns out to be powerful enough to yield
conjugacy separability and solvability of the conjugacy problem for
many subgroups which are not virtual retracts of $G$ --
see Corollary \ref{cor:h_c_s->c_s_and_conj_prob}.

Recall that a group $G$ is called \textit{subgroup separable} (or {LERF}) if every finitely
generated subgroup $H \le G$ is separable in $G$. In Section \ref{sec:appl_conj_probl}
we prove

\begin{thm}\label{thm:norm_sbgps_RAAG-conj_probl} Let $N$ be a normal subgroup
of a right angled Artin group $G$ such that the quotient $G/N$ is subgroup separable.
Then $N$ is conjugacy separable. If, in addition,
$N$ is finitely generated, then $N$ has solvable conjugacy problem.
\end{thm}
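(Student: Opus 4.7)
The plan is, given $x, y \in N$ not conjugate in $N$, to produce a finite index normal subgroup $K \triangleleft G$ of the ambient right angled Artin group such that the images of $x$ and $y$ remain non-conjugate in the finite quotient $N/(N \cap K)$; since the latter embeds into $G/K$, this will establish conjugacy separability of $N$. The argument splits according to whether $x \sim y$ in $G$.

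If $x \not\sim y$ in $G$, then conjugacy separability of $G$ (a consequence of Theorem~\ref{thm:RAAG-main}) yields such a $K$ immediately. The substantial case is $y = g x g^{-1}$ for some $g \in G$, in which case $g \notin N \cdot C_G(x)$, for otherwise writing $g = nc$ with $c \in C_G(x)$ would give $y = n x n^{-1} \in x^N$. Here I will exploit two properties of $G$ in combination: subgroup separability of the quotient $G/N$, and the Centralizer Condition for $G$ itself (which follows from Theorem~\ref{thm:RAAG-main} via the machinery of Section~\ref{sec:h_c_s_and_CC}). Using the first, together with the fact that centralizers in a right angled Artin group are finitely generated (Servatius' centralizer theorem), the subgroup $C_G(x) N / N$ is finitely generated in the subgroup separable group $G/N$, and so is closed in its profinite topology. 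Separating $gN$ from this subgroup yields a finite index normal $L \triangleleft G$ with $N \le L$ and $g \notin C_G(x) L$. Applying the Centralizer Condition to the pair $(x, L)$ now produces a finite index normal $K \triangleleft G$ with $K \le L$ such that $[z, x] \in K$ implies $z \in C_G(x) L$.

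I claim this $K$ works. Suppose for contradiction that some $n \in N$ satisfied $\bar n \bar x \bar n^{-1} = \bar y$ in $G/K$. Then $\bar g^{-1} \bar n$ would centralize $\bar x$, giving $[g^{-1} n, x] \in K$, hence $g^{-1} n \in C_G(x) L$, and therefore $g \in n \cdot C_G(x) L \subseteq L \cdot C_G(x) L = C_G(x) L$ (using $n \in N \le L$ and normality of $L$), contradicting the choice of $L$. Thus $N$ is conjugacy separable.

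For the conjugacy problem, assume additionally that $N$ is finitely generated, and run two semi-algorithms in parallel: the first enumerates words in the generators of $N$ and, using the solvable word problem of the finitely presented ambient group $G$, tests whether any one conjugates $x$ to $y$; the second enumerates finite index normal subgroups of $G$ (effective since $G$ is finitely presented, by enumerating homomorphisms to finite groups) and, for each $K$, computes the images of $x$, $y$, and the generators of $N$ in $G/K$ and searches for a conjugator in the finite image of $N$ there. By the conjugacy separability established above, one of the two procedures halts. The main obstacle is Case 2, where the LERF hypothesis on $G/N$ and the Centralizer Condition on $G$ must be compatibly leveraged through a single $K$; the finite generation of centralizers in right angled Artin groups is precisely what bridges the two.
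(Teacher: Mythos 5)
Your proposal is correct and follows essentially the same route as the paper: you use Servatius' theorem that centralizers in a right angled Artin group are finitely generated as the bridge between subgroup separability of $G/N$ (to make $C_G(x)N$ separable in $G$) and the Centralizer Condition for $G$ (from Theorem~\ref{thm:RAAG-main} via Proposition~\ref{prop:her_c_s-CC}), and conclude with a two-semi-algorithm Mal'cev argument for the conjugacy problem. You have simply unfolded by hand what the paper packages through Corollary~\ref{cor:h_c_s->c_s_and_conj_prob} (and its ingredients Lemma~\ref{lem:CCG->sep_c_c_for_sbgps} and Lemma~\ref{lem:dec_conj_probl}).
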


Note that requiring $G/N$ to be subgroup separable cannot be dropped in the above statement:
in \cite{Miller} C. Miller gives an example of a finitely generated subgroup of
$F_2 \times F_2$ that has unsolvable conjugacy problem (here $F_2$ denotes the free group
of rank $2$, and so
$F_2 \times F_2$ is the right angled Artin group associated to a square).

The second claim of Theorem \ref{thm:norm_sbgps_RAAG-conj_probl} may seem surprising:
in general we cannot use Mal'cev's result, mentioned above, to reach the needed conclusion,
because the conditions imposed on $N$ do not constrain it to be finitely presented. Indeed,
let $G$ be the right angled Artin group associated to a finite graph $\Gamma$ and
given by \eqref{eq:RAAG-def}. Let
$N_\Gamma$ be the kernel of the homomorphism $\psi: G \to \Z$ satisfying
$\psi(v)=1$ for each $v \in \mathcal V$, and let $\mathcal{L}_\Gamma$ be the simplicial flag complex,
whose $1$-skeleton is $\Gamma$.

J. Meier and L. VanWyk \cite{M-V}
proved that the group $N_\Gamma$ is finitely generated if and only if
the graph $\Gamma$ is connected. And
in \cite{B-B} Bestvina and Brady showed that $N_\Gamma$ is finitely presented if and only if
the complex $\mathcal{L}_\Gamma$ is simply connected. In the case when $\Gamma$ is connected,
we will say that $N_\Gamma$ is the
\textit{Bestvina-Brady group} associated to $\Gamma$.

For example, if the graph $\Gamma$ is a cycle of length at least $4$,
then $N_\Gamma$ is finitely generated, but not
finitely presented. Obviously, the quotient $G/N_\Gamma \cong \Z$ is subgroup separable,
hence, by Theorem \ref{thm:norm_sbgps_RAAG-conj_probl}, $N_\Gamma$ is conjugacy separable and
has solvable conjugacy problem. More generally, we have the following corollary:

\begin{cor} \label{cor:norm_sbgps_with_ab_quot} If $N$ is a finitely generated normal subgroup
of a right angled Artin group $G$ such that $G/N$ is abelian, then $N$ is hereditarily conjugacy separable
and has solvable conjugacy problem.
In particular, Bestvina-Brady groups are hereditarily conjugacy separable and
have solvable conjugacy problem.
\end{cor}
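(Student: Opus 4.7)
The proof naturally splits into two parts. First, I address plain conjugacy separability of $N$ and solvability of its conjugacy problem. Since $G$ is a finitely generated right angled Artin group and $G/N$ is abelian, the quotient $G/N$ is a finitely generated abelian group, hence polycyclic and therefore subgroup separable by a classical theorem of Mal'cev. Applying Theorem \ref{thm:norm_sbgps_RAAG-conj_probl} immediately yields conjugacy separability of $N$, and, since $N$ is finitely generated, solvability of its conjugacy problem.

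For hereditary conjugacy separability, let $M \le N$ be an arbitrary finite index subgroup; I must show that $M$ itself is conjugacy separable. My first step is to replace $M$ by its normal core in $G$,
\[ M_0 := \bigcap_{g \in G} g M g^{-1}. \]
Since $N$ is finitely generated, it has only finitely many subgroups of index $[N:M]$, so this intersection is finite; hence $M_0$ is a subgroup of $M$ of finite index in $N$ and normal in $G$. The short exact sequence
\[ 1 \longrightarrow N/M_0 \longrightarrow G/M_0 \longrightarrow G/N \longrightarrow 1, \]
in which $N/M_0$ is finite and $G/N$ is finitely generated abelian, exhibits $G/M_0$ as a finitely generated virtually abelian group, in particular polycyclic and hence subgroup separable. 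Theorem \ref{thm:norm_sbgps_RAAG-conj_probl}, applied to $M_0 \trianglelefteq G$, then yields conjugacy separability of $M_0$.

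The remaining, and most delicate, step is to upgrade from conjugacy separability of $M_0$ to conjugacy separability of the finite overgroup $M$; this is not automatic, since finite extensions of conjugacy separable groups are not always conjugacy separable. My plan is to invoke the Centralizer Condition introduced in Section \ref{sec:h_c_s_and_CC}, which is precisely designed to bridge this gap by providing a criterion under which conjugacy separability upgrades to hereditary conjugacy separability. The core task will be to verify the Centralizer Condition for $N$, which I would deduce by transferring it from the ambient RAAG $G$: by Theorem \ref{thm:RAAG-main}, $G$ is hereditarily conjugacy separable and thus satisfies the Centralizer Condition, and subgroup separability of $G/N$ should allow one to control how finite quotients of $G$ restrict to $N$, carrying the Centralizer Condition down to $N$. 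Once that is in place, hereditary conjugacy separability of $N$ follows.

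The ``in particular'' statement on Bestvina-Brady groups is then immediate: for $\Gamma$ connected, $N_\Gamma$ is by construction a finitely generated normal subgroup of a RAAG with quotient $\Z$ (by the theorem of Meier and VanWyk cited above), so it falls under the hypotheses of the corollary. The main obstacle of the whole argument is thus not the reduction to $M_0$ but the transfer of the Centralizer Condition from $G$ to $N$ across the abelian quotient, which is where the machinery of Section \ref{sec:h_c_s_and_CC} is essential.
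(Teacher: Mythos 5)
Your first paragraph is fine, and your reduction to the normal core $M_0\lhd G$ with $G/M_0$ polycyclic is sound and in the same spirit as the paper (which instead picks a characteristic finite-index subgroup $L$ of $N$ contained in the given finite-index subgroup $K\le N$; since $N\lhd G$ any such characteristic subgroup is automatically normal in $G$, and an easy induction shows finite-by-polycyclic is polycyclic-by-finite).

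The genuine gap is in the final paragraph before the Bestvina--Brady remark: you announce a plan to ``verify the Centralizer Condition for $N$'' by ``transferring it from the ambient RAAG $G$,'' but you neither carry out this transfer nor make plausible that it can be done. This is not a minor omission, it is the crux. The condition {\cc} for $N$ is a statement about the profinite topology of $N$ and the centralizers $C_N(h)$; the condition {\cc} for $G$ concerns $\PT(G)$ and $C_G(g)$. Since $N$ has infinite index in $G$, there is no reason for $\PT(N)$ to be induced from $\PT(G)$, and relating finite-index normal subgroups of $N$ to finite-index normal subgroups of $G$ is exactly where the difficulty lies --- the subgroup separability of $G/N$ by itself does not resolve it. Moreover, if your plan were realised you would have proved $N$ hereditarily conjugacy separable by Proposition~\ref{prop:her_c_s-CC} in one stroke, rendering the whole $M_0$ reduction superfluous; the structure of the argument is therefore internally inconsistent.

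The paper takes a cleaner route that never requires verifying {\cc} for $N$ (or any of its subgroups) and stays in $\PT(G)$ throughout. For a given finite-index $K\le N$, take a characteristic finite-index subgroup $L$ of $N$ with $L\le K$, so $L\lhd G$ and $G/L$ is virtually polycyclic, hence subgroup separable. As in the proof of Theorem~\ref{thm:norm_sbgps_RAAG-conj_probl}, using that $C_G(h)$ is finitely generated by Servatius' description of centralizers, one gets that $C_G(h)L$ is separable in $\PT(G)$ for every $h\in G$. Writing $K=\bigcup_{i=1}^{k}Lx_i$ gives $C_G(h)K=\bigcup_{i=1}^{k}C_G(h)Lx_i$, a finite union of $\PT(G)$-closed sets, so $C_G(h)K$ is separable in $G$ for all $h\in K$. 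Now Corollary~\ref{cor:h_c_s->c_s_and_conj_prob}, which rests on the hereditary conjugacy separability of $G$ itself (Theorem~\ref{thm:RAAG-main} together with Proposition~\ref{prop:her_c_s-CC} and Corollary~\ref{cor:CC->_c_s_for_sbgps}), yields that $K$ is conjugacy separable and, being finitely generated, has solvable conjugacy problem. Your $M_0$ can play the role of $L$ here; what is missing is the step from the separability of $C_G(h)M_0$ to the separability of $C_G(h)M$, followed by an appeal to Corollary~\ref{cor:h_c_s->c_s_and_conj_prob}, rather than an attempt to import {\cc} into $N$.
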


Corollary \ref{cor:norm_sbgps_with_ab_quot} is a direct consequence of
Corollary \ref{cor:norm_sbgps_with_polyc_quot} (proved at the end of Section \ref{sec:appl_conj_probl}),
that covers the more general case when $G/N$ is polycyclic.
We have chosen to mention the particular situation when the quotient $G/N$ abelian in
Corollary \ref{cor:norm_sbgps_with_ab_quot},
because in this case one can tell whether or not the given normal subgroup $N$ is finitely generated,
using Bieri-Neumann-Strebel invariants, which were studied for right angled Artin groups by
Meier and  VanWyk in \cite{M-V}.

After finishing this paper the author learned that the positive solution of the conjugacy problem
for the group $N$ from Theorem \ref{thm:norm_sbgps_RAAG-conj_probl}
(or Corollary \ref{cor:norm_sbgps_with_ab_quot}) has been known before. This follows from
a more general result of M. Bridson \cite{Bridson-dec_conj_prob}, claiming that
a normal subgroup $N$ of a bicombable group $G$ has solvable conjugacy problem,
provided $G/N$ has solvable generalized word problem
(see \cite{Bridson-dec_conj_prob} for the definitions).
Indeed, any right angled Artin group $G$  acts properly and cocompactly on a ${\rm CAT}(0)$ space
(which is the universal cover of the corresponding compact
non-positively curved Salvetti cube complex), therefore $G$ is bicombable by a theorem of J. Alonso
and M. Bridson \cite{Alonso-Bridson}. And if $N \lhd G$, the subgroup separability of $G/N$
implies that $G/N$ has solvable generalized word problem, by Mal'cev's result \cite{Malcev}.

Nevertheless, the statement claiming that $N$ is conjugacy separable
in Theorem \ref{thm:norm_sbgps_RAAG-conj_probl} (resp. hereditarily conjugacy separable in
Corollary \ref{cor:norm_sbgps_with_ab_quot}) is new. Our solution of the conjugacy
problem for  $N$ uses a Mal'cev-type argument and can be viewed as another application
of hereditary conjugacy separability of $G$.


\section{Hereditary conjugacy separability and Centralizer Conditions}\label{sec:h_c_s_and_CC}
First, let us specify some notations.
If $G$ is a group, $H \le G$ is a subgroup and $g \in G$, then the $H$-\textit{conjugacy class}
of the element $g\in G$ is the subset $g^H := \{hgh^{-1}\,|\,h \in H\} \subseteq G$.
For any $A \subseteq G$, we denote $A^H:=\{hah^{-1}\,|\,a \in A, h \in H\}$.
The $H$-\textit{centralizer} of $g \in G$ is the subgroup $C_H(g):=\{h \in H~|~hg=gh\} \le G$.
For an epimorphism $\psi: G \to F$ from $G$ onto a group $F$, $\psi^{-1}: 2^{F} \to 2^G$ will denote the
corresponding \textit{full preimage map}. If $A, B$ are two subsets of $G$ then their \textit{product}
$AB$ is defined as the subset $\{ab \,|\,a\in A, b\in B\} \subseteq G$. Note that if either $A$ or
$B$ is empty, then the product $AB$ is empty as well.

\begin{df}\label{df:CC} Suppose that $G$ is a group. We will say that
$G$ satisfies the \textit{Centralizer Condition} (briefly, {\cc}), if for every finite index  normal
subgroup $K \lhd G$  and every $g \in G$ there is a finite index normal
subgroup $L \lhd G$ such that $L \le K$ and
\begin{equation}\label{eq:CC-def}
 C_{G/L} (\bar g)  \subseteq \psi \bigl(C_G(g) K\bigr) ~\mbox{ in }G/L\end{equation}(where
$\psi:G \to G/L$ is the natural epimorphism and $\bar g:=\psi(g)$).
\end{df}

Note that \eqref{eq:CC-def} is equivalent to  $\psi^{-1}\bigl(C_{G/L} (\bar g)\bigr)  \subseteq C_G(g) K$
in $G$, since $\ker(\psi)=L\le K$.

The idea behind this condition is to provide control over the growth of centralizers in finite quotients of
$G$. If the group $G$ is residually finite,
the Centralizer Condition {\cc} can be reformulated in terms of the topology on the \textit{profinite completion}
$\widehat{G}$ of the group $G$. In the Appendix
to this paper (see Corollary \ref{cor:CC<->Chag-Zal}) we prove that the condition
{\cc} from Definition \ref{df:CC} is equivalent to the following:
\begin{equation}
\label{eq:CC-profinite} \overline{C_G(g)}=C_{\widehat{G}}(g) ~\mbox{in $\widehat G$, for every } g \in G
\end{equation}
(where $\overline{C_G(g)}$ denotes the closure of ${C_G(g)}$ in $\widehat G$).

Originally the condition \eqref{eq:CC-profinite} appeared in the recent work of
Chagas and Zalesskii \cite{Chag-Zal}, where they proved that a conjugacy separable group $G$ satisfying \eqref{eq:CC-profinite} is hereditarily conjugacy separable (see \cite[Prop. 3.1]{Chag-Zal}).
We will actually show that, provided $G$ is conjugacy separable,
this condition is equivalent to hereditary conjugacy separability:

\begin{prop}\label{prop:her_c_s-CC} Let $G$ be a group. Then the following are equivalent:
\begin{itemize}
	\item[(a)] $G$ is hereditarily conjugacy separable;
	\item[(b)] $G$ is conjugacy separable and satisfies \cc.	
\end{itemize}
\end{prop}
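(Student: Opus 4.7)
The plan is to prove both implications directly. For (b)$\Rightarrow$(a) I would follow the Chagas--Zalesskii template. Let $H\le G$ have finite index; replacing $H$ by its normal core, I may assume $H\lhd G$. Pick $x,y\in H$ with $x\not\sim_H y$. If $x\not\sim_G y$, conjugacy separability of $G$ produces a finite-index normal $N\lhd G$ separating them, and $N\cap H$ gives the desired quotient of $H$. The substantive case is $y=gxg^{-1}$ for some $g\in G$, where the failure of $H$-conjugacy is precisely $g\notin H\,C_G(x)$. The set $H\,C_G(x)$ is a finite-index subgroup of $G$ (since $H$ is normal of finite index), hence closed in $\PT(G)$, so there is a finite-index normal $N_0\lhd G$ with $N_0\le H$ and $g\notin H\,C_G(x)\,N_0$. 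Applying \cc{} to $x$ and $K=N_0$ delivers a finite-index normal $L\lhd G$ with $L\le N_0$ and $\psi^{-1}(C_{G/L}(\bx))\subseteq C_G(x)\,N_0$. The rest is a short chase: any $h\in H$ with $\bh\bx\bh^{-1}=\by$ in $G/L$ would give $g^{-1}h\in\psi^{-1}(C_{G/L}(\bx))\subseteq C_G(x)N_0$, so $g\in H\,C_G(x)\,N_0$, contradicting the choice of $N_0$. Thus $H/L$ separates $\bx$ from $\by$.

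For (a)$\Rightarrow$(b), conjugacy separability of $G$ is free. To prove \cc, fix $g\in G$ and a finite-index normal $K\lhd G$, and analyse the finitely many cosets $aK\subseteq G\setminus C_G(g)K$ one at a time. If $\ba\notin C_{G/K}(\bg)$, then for any $L\le K$ no element of $aK$ can commute with $\bg$ modulo $L$, since commuting modulo $L$ forces commuting modulo $K$. The substantive case is $\ba\in C_{G/K}(\bg)$ with $aK\cap C_G(g)=\emptyset$. Setting $g_a:=a^{-1}ga$, these two conditions translate into $g_a\in gK$ and $g_a\notin g^K$ (the $K$-conjugacy class of $g$ in $G$). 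In particular both $g$ and $g_a$ lie in the finite-index subgroup $H:=K\langle g\rangle\le G$.

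The key observation is that inside $H$ one has $g^H=g^K$: every $h\in H$ factors as $kg^m$ with $k\in K$ and $m\in\Z$, and $g^m$ commutes with $g$. So $g_a\notin g^H$, and hereditary conjugacy separability provides a finite-index normal $M\lhd H$ with $\bg_a\notin\bg^{\bH}$ in $H/M$. Taking $L_a$ to be the normal core in $G$ of $M\cap K$ yields a finite-index normal subgroup of $G$ contained in $K$ for which $g_a\not\sim_{\bK}g$ in $G/L_a$; intersecting the $L_a$ over the finitely many problematic cosets produces the desired $L$. Verifying \eqref{eq:CC-def} is then direct: if $\bh\in C_{G/L}(\bg)$ with $h=ak\in aK$ and $a\notin C_G(g)K$, the congruence $hgh^{-1}\equiv g\pmod L$ rearranges to $kgk^{-1}\equiv a^{-1}ga=g_a\pmod L$, i.e.\ $\bg_a\sim_{\bK}\bg$ in $G/L$, contradicting the construction.

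The main obstacle is the (a)$\Rightarrow$(b) direction, and specifically the translation from a statement about centralizers in finite quotients (what \cc{} controls) into a statement about conjugacy separation (what conjugacy separability provides). The subgroup $H=K\langle g\rangle$ accomplishes this translation: it is large enough to absorb every $g_a$ arising from a problematic coset (since $g_a\in gK\subseteq H$), yet small enough that the $H$-conjugacy class of $g$ reduces to its $K$-conjugacy class. Without this reduction, conjugacy separability of subgroups of $G$ would not obviously give control over $\bK$-conjugacy of $\bg$ and $\bg_a$ in finite quotients of $G$.
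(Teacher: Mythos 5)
Your proof follows essentially the same route as the paper: for (b)$\Rightarrow$(a) you unwind the paper's Lemma~\ref{lem:CCG->sep_c_c_for_sbgps}/Corollary~\ref{cor:CC->_c_s_for_sbgps} directly, and for (a)$\Rightarrow$(b) the key device --- passing to $H=K\langle g\rangle$ so that $g^H=g^K$ and then using conjugacy separability of the finite-index subgroup $H$ to make $g^K$ separable in $G$ --- is precisely the paper's argument (packaged there as Lemma~\ref{lem:for_a_given_K_sep_cc->CCG}). The coset-by-coset chase in your (a)$\Rightarrow$(b) is the content of that lemma written out in full.

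There is, however, one genuine flaw in the (b)$\Rightarrow$(a) direction: the opening reduction ``replacing $H$ by its normal core, I may assume $H\lhd G$'' is not valid. Hereditary conjugacy separability demands that \emph{every} finite-index subgroup be conjugacy separable, and proving this for the normal core $H_0$ of $H$ does not yield it for $H$, since conjugacy separability need not ascend along finite-index overgroups (the paper cites Goryaga's example of a non-conjugacy-separable group containing a conjugacy separable subgroup of index $2$). Fortunately the rest of your argument does not really need normality: the only place it is invoked is to assert that $HC_G(x)$ is a finite-index subgroup. Drop that and instead observe that $HC_G(x)$ is a finite union of right cosets $Hc$ (there are at most $|G:H|$ of them), each of which is closed in $\PT(G)$ because $H$ is a closed subgroup of finite index; hence $HC_G(x)$ is closed regardless of normality. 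With that substitution your argument goes through for arbitrary finite-index $H$, and the proposal is correct.

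Two smaller remarks. First, once you insist $N_0\le H$ with $N_0\lhd G$, the condition $g\notin HC_G(x)N_0$ is automatic from $g\notin HC_G(x)$ (since $HC_G(x)N_0 = HN_0C_G(x) = HC_G(x)$), so the separability of $HC_G(x)$ is not actually being used at that step --- you may as well take $N_0$ to be the normal core of $H$ outright. Second, be careful to keep consistent whether you work with $HC_G(x)$ or $C_G(x)H$ throughout; they coincide when $H\lhd G$ but not in general, and the conjugation chase at the end places $g$ in $HC_G(x)$.
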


Before proving Proposition \ref{prop:her_c_s-CC}, let us define two more conditions.

\begin{df}\label{df:CCG} Let $G$ be a group, $H \le G$ and $g \in G$. We will say that the pair $(H,g)$
satisfies the \textit{Centralizer Condition  in $G$} (briefly, \ccg), if for every finite index  normal
subgroup $K \lhd G$
there is a finite index normal subgroup $L \lhd G$ such that $L \le K$ and
$ C_{\bar H} (\bar g)  \subseteq \psi\left (C_H(g) K\right)$ in $G/L$, where
$\psi: G \to G/L$ is the natural homomorphism, $\bar H :=\psi(H) \le G/L$, $\bar g := \psi(g) \in G/L$.

The subgroup $H$ will be said to satisfy the \textit{Centralizer Condition  in $G$} (briefly, \ccg)
if for each $g \in G$, the pair $(H,g)$ has \ccg.
\end{df}

Now, let demonstrate why the Centralizer Conditions are useful.

\begin{lemma}\label{lem:CCG->sep_c_c_for_sbgps} Suppose that $G$ is a group, $H \le G$ and $g \in G$.
Assume that the pair $(G,g)$ satisfies {\ccg} and the conjugacy class $g^G$ is separable in $G$.
If the double coset $C_G(g)H$ is separable in $G$, then the $H$-conjugacy class $g^H$ is also
separable in $G$.
\end{lemma}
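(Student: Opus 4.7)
The plan is to show that the $H$-conjugacy class $g^H$ is closed in $\PT(G)$. Given any $x \in G \setminus g^H$, it suffices to find a finite index normal subgroup $L \lhd G$ such that $xL \cap g^H = \emptyset$. I would split into two cases according to whether $x$ lies in $g^G$ or not.

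If $x \notin g^G$, then separability of $g^G$ (together with $g^H \subseteq g^G$) immediately yields such an $L$, so the interesting case is $x \in g^G \setminus g^H$. Write $x = fgf^{-1}$ for some $f \in G$. The key observation is that for this fixed $f$, the membership $x \in g^H$ is equivalent to $f \in HC_G(g)$ (directly from $hgh^{-1} = fgf^{-1} \Leftrightarrow f^{-1}h \in C_G(g)$), which in turn is equivalent to $f^{-1} \in C_G(g)H$ because $H$ and $C_G(g)$ are subgroups. Since $x \notin g^H$, we have $f^{-1} \notin C_G(g)H$, and separability of $C_G(g)H$ supplies a finite index normal subgroup $K \lhd G$ with $f^{-1}K \cap C_G(g)H = \emptyset$. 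Taking inverses and using normality of $K$, this translates into $f \notin HC_G(g)K$.

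At this point I would invoke \ccg for the pair $(G,g)$ with this $K$, obtaining a finite index normal subgroup $L \lhd G$, $L \le K$, with $C_{G/L}(\bg) \subseteq \psi(C_G(g)K)$, where $\psi: G \to G/L$ is the canonical projection. To close the argument, verify $xL \cap g^H = \emptyset$: if, contrary to this, $hgh^{-1} \in xL$ for some $h \in H$, then modulo $L$ the element $\overline{f^{-1}h}$ centralizes $\bg$ and hence lies in $\psi(C_G(g)K)$; lifting back and using $L \le K$ gives $f^{-1}h \in C_G(g)K$, whence $f \in hC_G(g)K \subseteq HC_G(g)K$, contradicting the choice of $K$.

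The only real point requiring care is the bookkeeping between $HC_G(g)$ and $C_G(g)H$: the separability hypothesis is stated for one side while the natural product emerging during the argument sits on the other. These two sets are related by inversion (since $(HC_G(g))^{-1}=C_G(g)H$), so no genuine difficulty is hidden there; the rest is a straightforward orchestration of the profinite topology with the \ccg property.
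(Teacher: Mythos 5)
Your proof is correct and follows essentially the same route as the paper: the split by whether $x$ lies in $g^G$, the translation of $x \notin g^H$ into the non-membership of a conjugator in $C_G(g)H$, the use of its separability to produce $K$, and finally the invocation of \ccg\ to get $L \le K$ controlling the centralizer in $G/L$. The only cosmetic difference is that the paper phrases the middle step as $C_G(g) \cap z^{-1}H = \emptyset$ rather than via the inversion $(HC_G(g))^{-1} = C_G(g)H$, which you handle correctly.
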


\begin{proof} Consider any element $y \in G$ with $y \notin g^H$.

If $y \notin g^G$, then,
using the separability of $g^G$, we can find a finite quotient $Q$ of $G$ and a
homomorphism $\phi: G \to Q$ so that $\phi(y) \notin \phi(g)^Q$. Hence
$\phi(y) \notin \phi(g)^{\phi(H)}=\phi(g^H)$, as required.

Therefore we can assume that $y = z g z^{-1}$ for some $z \in G$. If there existed an element
$f \in C_G(g) \cap z^{-1}H$, then $zf \in H$ and
$y=zgz^{-1}=(zf) g (zf)^{-1} \in g^H$, leading to a contradiction with our assumption on $y$.
Hence  $C_G(g) \cap z^{-1}H=\emptyset$, i.e., $z^{-1} \notin C_G(g) H$. Since $C_G(g) H$
is separable in $G$, there is $K \lhd G$ such that $|G:K|<\infty$ and $z^{-1} \notin C_G(g) HK$.
Now, the condition {\ccg} implies that there exists a finite index normal subgroup $L \lhd G$
such that $L \le K$ and $C_{G/L}(\psi(g)) \subseteq \psi(C_G(g)K)$, where $\psi:G \to G/L$ is
the natural epimorphism.

We claim that $\psi(y) \notin \psi(g^{H})$ in $G/L$. Indeed, if $\psi(y)= \psi(h g h^{-1})$ for some
$h \in H$, then $\psi(z^{-1} h) \in C_{G/L} (\psi(g))$. Hence
$\psi(z^{-1}) \in C_{G/L} (\psi(g)) \psi(H) \subseteq \psi (C_G(g) KH)$,
i.e., $z^{-1} \in  C_G(g) KHL=C_G(g) HK$ because $L \le K\lhd G$. But this yields a contradiction
with the construction of $K$.

Therefore we have found an epimorphism $\psi$ from $G$ to a finite group $G/L$
such that the image of $y$
does not belong to the image of $g^H$. Hence $g^H$ is separable in $G$.
\end{proof}

Observe that for a subgroup $H$ of a group $G$ and
any subset $S \subseteq H$, if $S$ is closed in $\PT(G)$, then $S$ is closed in
$\PT(H)$. Therefore Lemma \ref{lem:CCG->sep_c_c_for_sbgps} immediately implies the following:

\begin{cor} \label{cor:CC->_c_s_for_sbgps} Let $G$ be a conjugacy separable
group satisfying {\cc}, and let $H \le G$
be a subgroup such that $C_G(h)H$ is separable in $G$ for every $h \in H$.
Then $H$ is conjugacy separable. Moreover,
for each $h \in H$ the $H$-conjugacy class $h^H$ is closed in $\PT(G)$.
\end{cor}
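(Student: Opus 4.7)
The plan is to obtain this as a direct application of Lemma \ref{lem:CCG->sep_c_c_for_sbgps}, with the role of ``$G$'' there played by $G$ itself and the role of ``$H$'' played by our subgroup $H$. All three hypotheses of that lemma need to be verified for every $h\in H$.

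First I would fix an arbitrary $h\in H$ and check that the pair $(G,h)$ satisfies \ccg. Comparing Definitions \ref{df:CC} and \ref{df:CCG}, one sees that the Centralizer Condition \cc\ for the group $G$ is literally the statement that for every $g \in G$ the pair $(G,g)$ satisfies \ccg\ (take $H=G$ in Definition \ref{df:CCG}); so this is immediate from the assumption that $G$ satisfies \cc. Next, since $G$ is conjugacy separable, the conjugacy class $h^G$ is closed in $\PT(G)$, i.e.\ separable in $G$. Finally, the separability of $C_G(h)H$ in $G$ is part of the hypothesis. Therefore Lemma \ref{lem:CCG->sep_c_c_for_sbgps} applies and yields that $h^H$ is separable in $G$, i.e.\ closed in $\PT(G)$.

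To finish, I would invoke the general observation recorded immediately before the corollary: any subset $S\subseteq H$ that is closed in $\PT(G)$ is automatically closed in $\PT(H)$, because the inclusion $H\hookrightarrow G$ is continuous from $\PT(H)$ to $\PT(G)$ (finite index normal subgroups of $G$ intersect $H$ in finite index normal subgroups of $H$). Applied to $S=h^H\subseteq H$, this gives at once that $h^H$ is closed in $\PT(H)$. Since $h\in H$ was arbitrary, $H$ is conjugacy separable; and the stronger ``moreover'' assertion that $h^H$ is closed in $\PT(G)$ is exactly what Lemma \ref{lem:CCG->sep_c_c_for_sbgps} produced in the previous step.

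There is essentially no obstacle here: the corollary is a packaging of Lemma \ref{lem:CCG->sep_c_c_for_sbgps} under the simultaneous hypotheses that the pair condition holds for \emph{every} $h$ (which is the content of \cc) and that $h^G$ is separable for every $h$ (which is conjugacy separability of $G$). The only mildly non-trivial point is the ``profinite topology restricts to profinite topology'' observation, but this is explicitly noted in the paragraph preceding the corollary and requires nothing beyond the definitions.
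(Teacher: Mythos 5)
Your proof is correct and matches the paper's own argument exactly: the paper derives this corollary precisely by applying Lemma \ref{lem:CCG->sep_c_c_for_sbgps} to each $h\in H$ (with {\cc} giving the pair condition and conjugacy separability giving separability of $h^G$), together with the noted observation that a subset of $H$ closed in $\PT(G)$ is closed in $\PT(H)$.
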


It is not difficult to see that Lemma \ref{lem:CCG->sep_c_c_for_sbgps} has a partial converse
(we leave its proof as an exercise for the reader):

\begin{rem}\label{rem:sep_c_c->sep_centr_double_coset} Assume that $H$ is a subgroup of a
group $G$ and $g \in G$ is an arbitrary element.
If $g^H$ is separable in $G$ then the double coset $C_G(g)H$ is separable in $G$.
\end{rem}

In this paper we are going to use a different converse to Lemma \ref{lem:CCG->sep_c_c_for_sbgps}:
\begin{lemma}\label{lem:for_a_given_K_sep_cc->CCG} Let $G$ be a group.
Suppose that $H \le G$, $g \in G$, $K\lhd G$ and $|G:K|<\infty$. If the
subset $g^{H \cap K}$ is separable in $G$, then there is a finite index normal subgroup $L \lhd G$
such that $L \le K$ and $ C_{\bar H} (\bar g)  \subseteq \psi\left (C_H(g) K\right)$
in $G/L$ (in the notations of Definition \ref{df:CCG}).
\end{lemma}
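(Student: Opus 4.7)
The plan is to translate the separability hypothesis for $g^{H\cap K}$ into the required inclusion by choosing $L$ to simultaneously separate finitely many specific elements from the set $g^{H\cap K}$. Since $|G:K|<\infty$, the subgroup $H\cap K$ has finite index in $H$, so $H$ is a finite disjoint union of cosets of $H\cap K$. Because translating a representative by an element of $H\cap K\subseteq K$ does not change its coset modulo $K$, each coset $h(H\cap K)$ lies entirely inside $C_H(g)K$ or entirely outside it. I would pick representatives $h_1,\dots,h_m\in H$ for exactly those cosets which avoid $C_H(g)K$.

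The key observation is that for every such $h_i$ one has $h_i^{-1}gh_i\notin g^{H\cap K}$. Indeed, if $h_i^{-1}gh_i=xgx^{-1}$ for some $x\in H\cap K$, then $h_ix\in C_G(g)\cap H=C_H(g)$, and hence $h_i=(h_ix)x^{-1}\in C_H(g)(H\cap K)\subseteq C_H(g)K$, contradicting the choice of $h_i$. Invoking the separability of $g^{H\cap K}$ in $\mathcal{PT}(G)$, for each $i=1,\dots,m$ one obtains a finite index normal subgroup $L_i\lhd G$ with $h_i^{-1}gh_i\notin g^{H\cap K}L_i$. I then set
\[
L:=K\cap L_1\cap\cdots\cap L_m,
\]
which is a finite index normal subgroup of $G$ contained in $K$.

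To verify the required inclusion $C_{\bar H}(\bar g)\subseteq\psi(C_H(g)K)$, take any $h\in H$ with $[h,g]\in L$ and write $h=h_iy$ for some coset representative $h_i$ and $y\in H\cap K$. If $h_i$ was one of the representatives lying in $C_H(g)K$, then $h=h_iy\in C_H(g)K$ and there is nothing more to show. Otherwise $h_i$ is one of $h_1,\dots,h_m$, and the relation $[h_iy,g]\in L\le L_i$ rearranges (using $L_i\lhd G$) to $h_i^{-1}gh_i\in (ygy^{-1})L_i\subseteq g^{H\cap K}L_i$, contradicting the choice of $L_i$. This shows $h\in C_H(g)KL=C_H(g)K$, as required.

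The main obstacle, modest as it is, lies in the initial bookkeeping: recognising that only finitely many cosets of $H\cap K$ matter, and correctly extracting from $[h_iy,g]\in L_i$ the statement $h_i^{-1}gh_i\equiv ygy^{-1}\pmod{L_i}$ using the normality of $L_i$ in $G$. Once these are in hand, the separability hypothesis applies cleanly and the intersection $L=K\cap L_1\cap\cdots\cap L_m$ furnishes the subgroup demanded by the conclusion.
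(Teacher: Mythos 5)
Your proof is correct and follows essentially the same approach as the paper: partition $H$ into finitely many cosets of $H\cap K$, note that for a representative $h$ the conditions $h\in C_H(g)K$ and $h^{-1}gh\in g^{H\cap K}$ are equivalent, use separability to separate the finitely many "bad" conjugates from $g^{H\cap K}$, and intersect to obtain $L$. The paper packages the separating subgroups into a single $L$ from the start rather than naming $L_1,\dots,L_m$ individually, but this is a cosmetic difference.
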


\begin{proof} Denote $k:=|H:(H\cap K)|\le |G:K|<\infty$. Then $H=\bigsqcup_{i=1}^k z_i(H \cap K)$ for some $z_1,\dots,z_k \in H$. Renumbering the elements
$z_i$, if necessary, we can suppose that there is $l \in \{0,1,\dots,k\}$ such that whenever
$1 \le i \le l$, $z_i^{-1} g z_i \notin g^{H\cap K}$, and whenever
$l+1 \le j \le k$, $z_j^{-1} g z_j \in g^{H\cap K}$ in $G$.

By the assumptions, there exists a finite index normal subgroup $L \lhd G$ such that
$z_i^{-1} g z_i \notin g^{H\cap K} L$ whenever $1\le i \le l$.
Moreover, after replacing $L$ with $L \cap K$, we can assume that $L \le K$.

Let $\psi$ be the natural epimorphism from $G$ to $G/L$ and consider any element $\bar x \in C_\bH(\bg)$.
Then $\bar x = \psi(x)$ for some $x \in H$, and
$\psi(x^{-1}gx)=\psi(g)$ in $G/L$, i.e., $x^{-1} g x \in gL$ in $G$. As we know, there is
$i \in \{1,\dots,k\}$ and $y \in {H\cap K}$ such that $x = z_i y$. Consequently,
$z_i^{-1} g z_i \in y g Ly^{-1} = y g y^{-1}L \subseteq g^{H\cap K} L$. Hence, $i \ge l+1$, that is,
$z_i^{-1}g z_i = u g u^{-1}$ for some $u \in {H\cap K}$.

Thus $z_i u  \in C_H(g)$ and $x=z_i y = ( z_i u) (u^{-1}y) \in C_H(g) (H\cap K)
\subseteq C_H(g)K$. Therefore we proved that $\bx \in \psi(C_H(g)K)$ in $G/L$ for every $\bx \in C_\bH(\bg)$.
This yields the inclusion
$ C_{\bar H} (\bar g)  \subseteq \psi\left (C_H(g) K\right)$ in $G/L$, as required.
\end{proof}

We are now ready to prove Proposition \ref{prop:her_c_s-CC}.

\begin{proof}[Proof of Proposition \ref{prop:her_c_s-CC}]
First let us assume (b). Consider an arbitrary finite index subgroup $H \le G$. For every $h \in H$
the double coset $C_G(h)H$ is a finite union of left cosets modulo $H$, hence it is
separable in $G$. Therefore, by Corollary \ref{cor:CC->_c_s_for_sbgps}, $H$ is conjugacy separable.
That is, (b) implies (a).

Now, assume that $G$ is hereditarily conjugacy separable. We need to show that $G$ satisfies {\cc}.
Take any $g \in G$ and any $K \lhd G$ with $|G:K|<\infty$.
Observe that the subgroup $H:=K \langle g \rangle \le G$ has finite index in $G$, and $g^H=g^K=g^{H\cap K}$.
Since $H$ is conjugacy separable, $g^H$ is closed in $\PT(H)$, but then it is also closed in
$\PT(G)$ because any finite index subgroup of $H$ has finite index in $G$. Therefore
$g^{H\cap K}=g^H$ is separable in $G$, and so we can apply Lemma \ref{lem:for_a_given_K_sep_cc->CCG}
to find the finite index normal subgroup $L \lhd G$ from its claim.
Hence the group $G$ satisfies \cc.
\end{proof}


\section{Commuting retractions}\label{sec:comm_retr}
In this section we establish certain properties of commuting retractions
that constitute the core of our approach to studying residual properties of
right angled Artin groups. This approach is based on a simple observation
that canonical retractions of a right angled Artin group onto its special subgroups
pairwise commute (see Remark \ref{rem:spec_retr-comm} in Section \ref{sec:RAAG}).

Let $G$ be a group and let $H$ be a subgroup of $G$.
Recall that an endomorphism $\rho_H: G \to G$ is called a \textit{retraction} of $G$ onto $H$ if
$\rho_H(G)=H$ and $\rho_H(h)=h$ for every $h \in H$. In this case $H$ is said to be a \textit{retract} of $G$.
Note that $\rho_H \circ \rho_H=\rho_H$. If $H$ is a retract and $g \in G$, then the subgroup
$F:=gHg^{-1} \le G$, conjugate to $H$ in $G$, is also a retract.
The corresponding retraction $\rho_F \in End(G)$
is given by the formula $\rho_F(x):=g \rho_H(g^{-1} x g) g^{-1}$ for all $x \in G$.

The following observation is very useful:
\begin{lemma} \label{lem:induced_retr} Let $H$ be a retract of a group $G$ and let $\rho_H: G \to G$ be the
corresponding retraction. Suppose that $M\lhd G$ satisfies $\rho_H(M) \subseteq M$. Then the retraction $\rho_H$
canonically induces a retraction $\rho_{\bar H}: G/M \to G/M$ of $G/M$ onto the natural image $\bar{H}$ of $H$ in
$G/M$, defined by the formula $\rho_{\bar H}(gM)=\rho_H(g)M$ for all $gM \in G/M$.
\end{lemma}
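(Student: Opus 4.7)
The plan is to verify the formula $\rho_{\bH}(gM) := \rho_H(g)M$ directly, by splitting the argument into four standard checks: well-definedness of $\rho_{\bH}$, the homomorphism property, surjectivity onto $\bH$, and the identity-on-$\bH$ property. There is essentially no choice of strategy here; the only real question is where the hypothesis $\rho_H(M) \subseteq M$ enters.

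The main (and only) substantive point of the argument is well-definedness, which is the sole step using $\rho_H(M) \subseteq M$. Given $g_1 M = g_2 M$, I would write $g_2 = g_1 m$ for some $m \in M$; applying the homomorphism $\rho_H$ yields $\rho_H(g_2) = \rho_H(g_1)\rho_H(m)$, and the containment $\rho_H(m) \in \rho_H(M) \subseteq M$ then gives $\rho_H(g_1) M = \rho_H(g_2) M$, as needed. I do not expect a genuine obstacle at this step; without the hypothesis $\rho_H(M) \subseteq M$ the map simply would not descend to $G/M$, so the statement essentially dictates what has to be checked.

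The three remaining checks are immediate. The homomorphism property of $\rho_{\bH}$ follows formally from that of $\rho_H$ together with the definition of multiplication in $G/M$, via the computation $\rho_{\bH}(g_1 M \cdot g_2 M) = \rho_H(g_1 g_2)M = \rho_H(g_1)\rho_H(g_2)M = \rho_{\bH}(g_1 M)\rho_{\bH}(g_2 M)$. For the image, $\rho_{\bH}(G/M) = \rho_H(G)M/M = HM/M$, which is exactly the natural image $\bH$ of $H$ in $G/M$. Finally, for any $h \in H$ we have $\rho_H(h) = h$, so $\rho_{\bH}(hM) = hM$, i.e., $\rho_{\bH}$ restricts to the identity on $\bH$. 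Combining these four pieces gives the desired retraction $\rho_{\bH}$ of $G/M$ onto $\bH$.
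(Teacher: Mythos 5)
Your proof is correct and follows essentially the same route as the paper: the well-definedness check via $\rho_H(M)\subseteq M$ is the only substantive point, and the paper treats the remaining three verifications as evident while you spell them out. No issues.
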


\begin{proof} Evidently, it is enough to check that $\rho_{\bar{H}}$ is well-defined. If $g_1M=g_2M$ for some $g_1,g_2 \in G$,
then $f=g_2^{-1}g_1 \in M$, $g_1=g_2 f$ and $\rho_H(f) \in M$.
Hence $$\rho_{\bar{H}}(g_1M)=\rho_H(g_1)M=\rho_H(g_2) \rho_H(f)M =\rho_H(g_2)M=\rho_{\bar{H}}(g_2M) ,$$
as required.
\end{proof}

Assume that $H$ and $F$ are two retracts of a group $G$ and $\rho_H,\rho_F\in End(G)$ are the corresponding retractions. We will say
$\rho_H$ {\it commutes} with $\rho_F$ if they commute as elements of the monoid of
endomorphisms $End(G)$, i.e., if $\rho_H ( \rho_F(g))=\rho_F ( \rho_H (g))$ for all $g \in G$.

\begin{rem}\label{rem:inter_comm_retr} If the retractions $\rho_H$ and $\rho_F$ commute then
$\rho_H(F)=H \cap F= \rho_F(H)$ and
the endomorphism $\rho_{H \cap F}:=\rho_H \circ \rho_F=\rho_F \circ \rho_H$
is a retraction of $G$ onto $H \cap F$.
\end{rem}

Indeed, obviously the restriction of $\rho_{H\cap F}$ to ${H\cap F}$ is the identity map.
And $\rho_{H\cap F}(G) \subseteq \rho_H(G) \cap \rho_F(G)=H \cap F$, hence $\rho_{H\cap F}(G)=H \cap F$.
Consequently $\rho_H(F)=\rho_H(\rho_F(G))=\rho_{H \cap F}(G)=H \cap F$. Similarly, $\rho_F(H)=H \cap F$.

In the next proposition we establish an important property of commuting retractions that could be of independent interest.

\begin{prop}\label{prop:comm-retr} Let $H_1,\dots,H_m$ be retracts of a group $G$ such that the corresponding retractions
$\rho_{H_1},\dots,\rho_{H_m}$ pairwise commute. Then for any finite index normal subgroup $K \lhd G$
there is a finite index normal subgroup $M \lhd G$ such that $M \le K$ and $\rho_{H_i}(M) \subseteq M$ for each $i=1,\dots,m$.
Consequently, for every $i=1,\dots,m$, the retraction $\rho_{H_i}$ canonically induces a retraction $\rho_{\bar{H_i}}$ of $G/M$ onto the image
$\bar{H_i}$ of $H_i$ in $G/M$.
\end{prop}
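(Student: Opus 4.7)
The plan is to write down $M$ explicitly as an intersection of preimages of $K$ under all possible compositions of the given retractions, and then exploit the fact that commutativity together with idempotency cuts the collection of such compositions down to something finite.

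First I would observe that since each $\rho_{H_i}$ is idempotent and the $\rho_{H_i}$'s pairwise commute (as elements of $\mathrm{End}(G)$), any finite composition $\rho_{H_{i_1}} \circ \cdots \circ \rho_{H_{i_k}}$ depends only on the underlying set $S=\{i_1,\dots,i_k\} \subseteq \{1,\dots,m\}$. So setting $\rho_S := \rho_{H_{i_1}} \circ \cdots \circ \rho_{H_{i_k}}$ for any enumeration of $S$ (and $\rho_\emptyset := \mathrm{id}_G$) is well-defined, and there are at most $2^m$ such endomorphisms. This finiteness is the key point of the whole argument and essentially the only non-routine step; once it is identified, everything else is formal.

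Next I would define
\[ M := \bigcap_{S \subseteq \{1,\dots,m\}} \rho_S^{-1}(K). \]
Each $\rho_S$ is an endomorphism of $G$, so $\rho_S^{-1}(K)$ has finite index in $G$ (preimage of a finite-index subgroup), and it is normal because for any $g \in \rho_S^{-1}(K)$ and $h \in G$ one has $\rho_S(hgh^{-1}) = \rho_S(h)\rho_S(g)\rho_S(h)^{-1} \in K$, using $K \lhd G$. As a finite intersection of finite-index normal subgroups, $M$ is itself a finite-index normal subgroup of $G$, and the choice $S = \emptyset$ ensures $M \subseteq \rho_\emptyset^{-1}(K) = K$.

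To verify the invariance $\rho_{H_i}(M) \subseteq M$, I would take $g \in M$ and any $S \subseteq \{1,\dots,m\}$, and compute
\[ \rho_S(\rho_{H_i}(g)) = \rho_{S \cup \{i\}}(g) \in K, \]
where the equality uses the well-definedness of $\rho_{S \cup \{i\}}$ established above and the membership uses $g \in M \subseteq \rho_{S \cup \{i\}}^{-1}(K)$. Hence $\rho_{H_i}(g) \in \rho_S^{-1}(K)$ for every $S$, which gives $\rho_{H_i}(g) \in M$. The final sentence of the proposition is then immediate from Lemma \ref{lem:induced_retr} applied to each $\rho_{H_i}$ with the normal subgroup $M$.
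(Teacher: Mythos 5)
Your proof is correct, and it is genuinely simpler than the paper's. You share the key idea with the paper — namely, that commutativity plus idempotency mean only finitely many (at most $2^m$) compositions $\rho_S$ need to be considered — but from there you take a more direct route. You simply set $M := \bigcap_{S} \rho_S^{-1}(K)$ and verify the two required properties ($M \le K$ from $S = \emptyset$; $\rho_{H_i}$-invariance from the identity $\rho_S \circ \rho_{H_i} = \rho_{S \cup \{i\}}$). The paper instead builds a family of subgroups $D_J$ by a descending recursion starting from $D_I = \bigcap_i H_i \cap K$, with $D_J$ defined as a retraction of an intersection of preimages of the $D_{J \cup \{i\}}$ intersected with $K$, and then establishes by induction the inclusion chains $\rho_{\{i\}}(D_J) \subseteq D_{J \cup \{i\}} \subseteq D_J$ before concluding. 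The extra machinery in the paper arranges that each $D_J$ is a normal finite-index subgroup of $\rho_J(G) = \bigcap_{j \in J} H_j$, which is more structure than the Proposition actually asserts; for the statement as given, nothing is lost by your shortcut, and even the subsequent remark (that $G/K$ a finite $p$-group implies $G/M$ a finite $p$-group) still holds for your $M$, since each $G/\rho_S^{-1}(K)$ embeds in $G/K$. One small point of polish: when you write $\rho_S(\rho_{H_i}(g)) = \rho_{S\cup\{i\}}(g)$ you are implicitly splitting into the cases $i \in S$ (where idempotency gives $\rho_S \circ \rho_{H_i} = \rho_S = \rho_{S\cup\{i\}}$) and $i \notin S$; you flag idempotency at the start, so this is fine, but it is worth stating that both cases are covered.
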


\begin{proof} The second claim of the proposition follows from Lemma \ref{lem:induced_retr}, so it suffices to construct
the subgroup $M \lhd G$ with the needed properties.

If $J=\{i_1,\dots,i_k\}$ is a subset of the finite set $I:=\{1,2,\dots,m\}$, we define the retraction
$\rho_J$ of $G$ onto $\bigcap_{j \in J} H_j$ by $$\rho_J:=\rho_{H_{i_1}}\circ \rho_{H_{i_2}} \circ \dots \circ \rho_{H_{i_k}}.$$
This makes sense since our retractions pairwise commute. When $J=\emptyset$, $\rho_J$ will be the identity map of $G$.

Now, for every subset $J$ of $I=\{1,2,\dots,m\}$ we define the subgroup $D_J \le G$
as follows. First we set
$D_I:=\bigcap_{i=1}^m H_i \cap K$ -- a finite index normal subgroup of $(H_1 \cap \dots \cap H_m)$.
Next, if $J$ is a proper
subset  of I, we define $D_J$ recursively, according to the following formula:
\begin{equation}
\label{eq:def_of_D}
D_J:=\rho_J \left( \bigcap_{i \in I\setminus J} \rho^{-1}_{J \cup \{i\}} (D_{J \cup \{i\}}) \right) \cap K,
\end{equation}
where $\rho^{-1}_{J \cup \{i\}} (D_{J \cup \{i\}})$ denotes the full preimage (under $\rho_{J \cup \{i\}}$) of
$D_{J \cup \{i\}}$ in $G$.

Since the intersection of a finite number of finite index normal subgroups is again a finite index normal subgroup, and
images, as well as full preimages, of finite index normal subgroups under homomorphisms are again normal and of finite index (in their respective groups),
we see that $D_J$ is normal and has finite index in $\rho_J(G)=\bigcap_{j\in J} H_j$. Thus, if we set
$M:=D_\emptyset=\bigcap_{i \in I} \rho^{-1}_{H_i} (D_{\{i\}}) \cap K$, we shall have $M \lhd G$, $|G:M|<\infty$ and $M \le K$.

If $J \subset I$ and $i \in I\setminus J$, using \eqref{eq:def_of_D} and the fact that $\rho_{\{i\}}\circ \rho_J=\rho_{J \cup \{i\}}$, we  can observe that $\rho_{\{i\}} (D_J) \subseteq D_{J\cup \{i\}}$.

On the other hand,
let us show that $D_{J\cup \{i\}} \subseteq D_{J}$. We will use induction on the cardinality $|I\setminus J|$.
If $|I\setminus J|=1$ then $I=J \sqcup \{i\}$. And if $g \in D_{J\cup \{i\}}=D_I= \bigcap_{i\in I} H_i \cap K$, then $\rho_{I}(g)=g$, therefore
$g \in \rho_{I}^{-1}(D_I)$ and $g=\rho_J(g) \in \rho_J\left(\rho_{I}^{-1}(D_I)\right)$. Thus $g \in D_J$.

Suppose, now, that the statement has been proved for all proper subsets $J'$ of $I$ with $|J'|>|J|$.
Take any $i \in I\setminus J$ and consider an element $g \in D_{J\cup \{i\}}\le \bigcap_{j\in J \cup \{i\}} H_j \cap K$.
Then $\rho_{J \cup \{i\}}(g)=g$, therefore
$g \in \rho_{J \cup \{i\}}^{-1}(D_{J\cup \{i\}})$. We need to show that for any $i' \in I\setminus (J \cup \{i\})$,
$g \in \rho_{J \cup \{i'\}}^{-1}(D_{J\cup \{i'\}})$, or, equivalently, that $\rho_{J \cup \{i'\}}(g) \in D_{J\cup \{i'\}}$.
But  $$\rho_{J \cup \{i'\}}(g)=\rho_{i'}(\rho_J(g))=\rho_{\{i'\}}(g) \in \rho_{\{i'\}}(D_{J \cup \{i\}})
\subseteq D_{J\cup \{i,i'\}}.$$
And, since $D_{J\cup \{i,i'\}} \subseteq D_{J\cup \{i'\}}$ by the induction hypothesis, we can conclude that
$g \in \bigcap_{i' \in I\setminus J} \rho^{-1}_{J \cup \{i'\}} (D_{J \cup \{i'\}})$. Recalling that
$g \in \bigcap_{j\in J} H_j \cap K$, we achieve $g=\rho_J(g) \in D_J$. Thus $D_{J \cup \{i\}} \subseteq D_J$
and the inductive step is established.

We are now able to show that $\rho_{H_i}(M) \subseteq M$ for every $i \in I$. Indeed, since
$\rho_{H_i}(M) \subseteq D_{\{i\}}$, it is enough to check that $D_{\{i\}} \subseteq M$. Take any $j \in I$.
As we proved,
$\rho_{H_j}(D_{\{i\}})= \rho_{\{j\}}(D_{\{i\}}) \subseteq D_{\{i\} \cup \{j\}} \subseteq D_{\{j\}}$. Therefore,
$D_{\{i\}} \subseteq \rho^{-1}_{H_j}(D_{\{j\}})$ for each $j \in J$. By definition, $D_{\{i\}} \le K$,
consequently, for any $i \in I$, we achieve
$$\rho_{H_i}(M) \subseteq D_{\{i\}} \subseteq \bigcap_{i \in I} \rho^{-1}_{H_i} (D_{\{i\}}) \cap K=M,$$ as required.
\end{proof}

The next observation is an easy consequence of the definition of $M$ using the formula \eqref{eq:def_of_D}.
\begin{rem} In Proposition \ref{prop:comm-retr}, if $G/K$ is a finite $p$-group for some prime number $p$,
then so is $G/M$.
\end{rem}

Given two subgroups $H$ and $F$ of a group $G$, it is usually difficult to find quotient-groups
$Q$ of $G$ such that the image of the intersection of $H$ and $F$ in $Q$ coincides with the
intersection of the images of these subgroups in $Q$.
However, in the case when $H$ and $F$ are retracts and the corresponding retractions commute this will be automatic for many quotients of $G$.

\begin{lemma} \label{lem:pres-intersec} Suppose that the retractions $\rho_H,\rho_F \in End(G)$ commute,
and $M\lhd G$ is a normal subgroup
satisfying $\rho_H(M) \subseteq M$ and $\rho_F(M) \subseteq M$. Then $\varphi(H\cap F)=\varphi(H) \cap \varphi(F)$ in $G/M$, where $\varphi: G \to G/M$
is the natural epimorphism.
\end{lemma}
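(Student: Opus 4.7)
The inclusion $\varphi(H\cap F) \subseteq \varphi(H) \cap \varphi(F)$ is trivial, so the real content is the reverse inclusion. My plan is to exploit the fact that the hypotheses $\rho_H(M) \subseteq M$ and $\rho_F(M) \subseteq M$ are exactly what is needed to invoke Lemma \ref{lem:induced_retr} for both retractions simultaneously. That lemma will produce induced retractions $\rho_{\bar H}, \rho_{\bar F} \in End(G/M)$ onto $\bar H := \varphi(H)$ and $\bar F := \varphi(F)$ respectively, satisfying $\varphi \circ \rho_H = \rho_{\bar H} \circ \varphi$ and similarly for $F$.

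Given $\bar g \in \bar H \cap \bar F$, the key observation is that a retraction onto a subgroup acts as the identity on that subgroup. Hence $\rho_{\bar H}(\bar g) = \bar g$ and $\rho_{\bar F}(\bar g) = \bar g$, so
\[
\rho_{\bar H}\bigl(\rho_{\bar F}(\bar g)\bigr) = \bar g.
\]
Now lift: pick any $g \in G$ with $\varphi(g) = \bar g$ and set $x := \rho_H\bigl(\rho_F(g)\bigr) \in G$. By Remark \ref{rem:inter_comm_retr}, the composition $\rho_H \circ \rho_F$ is a retraction of $G$ onto $H \cap F$, so $x \in H \cap F$. Applying $\varphi$ and using the intertwining relations established in the previous paragraph gives
\[
\varphi(x) = \rho_{\bar H}\bigl(\rho_{\bar F}(\varphi(g))\bigr) = \rho_{\bar H}\bigl(\rho_{\bar F}(\bar g)\bigr) = \bar g,
\]
which shows $\bar g \in \varphi(H\cap F)$, as desired.

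There is no real obstacle here: the entire argument is a bookkeeping exercise that rides on Lemma \ref{lem:induced_retr} (which guarantees $\rho_{\bar H}$ and $\rho_{\bar F}$ are well-defined) and Remark \ref{rem:inter_comm_retr} (which identifies $\rho_H\circ\rho_F$ as a retraction onto $H\cap F$). The only mildly subtle point is to notice that the commutativity of $\rho_H$ and $\rho_F$ in $End(G)$ automatically passes to the induced endomorphisms of $G/M$, which is immediate from the formula $\rho_{\bar H}(gM) = \rho_H(g)M$ in Lemma \ref{lem:induced_retr}.
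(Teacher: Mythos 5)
Your proof is correct and follows essentially the same route as the paper: invoke Lemma \ref{lem:induced_retr} to get the induced retractions on $G/M$, use the intertwining identity $\varphi \circ \rho_H = \rho_{\bar H} \circ \varphi$ (and similarly for $F$), and conclude via Remark \ref{rem:inter_comm_retr} that $\rho_H(\rho_F(g)) \in H \cap F$ maps to $\bar g$. The paper simply writes this as a single chain of equalities rather than naming the lift $x$.
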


\begin{proof} By Lemma \ref{lem:induced_retr} $\rho_H$ and $\rho_F$ canonically induce retractions $\rho_{\varphi(H)}$ and $\rho_{\varphi(F)}$ of $G/M$ onto $\varphi(H)$
and $\varphi(F)$ respectively.

Clearly, $\varphi(H\cap F) \subseteq \varphi(H) \cap \varphi(F)$, and so, we only need to establish the inverse inclusion. Consider an arbitrary
$\bar g \in \varphi(H) \cap \varphi(F)$. Then $\bar g =\varphi(g)$ for some $g \in G$, and $\rho_{\varphi (F)} (\bar g)= \bar g$,
$\rho_{\varphi (H)} (\bar g)=\bar g$. Therefore
$$\bar g=\rho_{\varphi (H)}\left(\rho_{\varphi(F)}(\varphi(g)) \right)=\rho_{\varphi (H)}\left(\varphi (\rho_F(g)) \right)=
\varphi \left( \rho_H(\rho_F(g))\right) \in \varphi(H\cap F),$$
where the last inclusion follows from Remark \ref{rem:inter_comm_retr}. Thus $\varphi(H) \cap \varphi(F) \subseteq \varphi(H\cap F)$.
\end{proof}

Lemma \ref{lem:pres-intersec} allows to obtain the first interesting application of Proposition \ref{prop:comm-retr}.

\begin{cor}\label{cor:fin_image_of_intersec}
 Let $H_1,\dots,H_m$ be retracts of a group $G$ such that the corresponding retractions
$\rho_{H_1},\dots,\rho_{H_m}$ pairwise commute. Then for any finite index normal subgroup $K \lhd G$
there is a finite index normal subgroup $M \lhd G$ such that $M \le K$ and $\rho_{H_i}(M) \subseteq M$ for each $i=1,\dots,m$.
Moreover, if $\varphi: G \to G/M$ denotes the natural epimorphism, then
$\varphi(\bigcap_{i=1}^m H_i)=\bigcap_{i=1}^m \varphi(H_i)$.
\end{cor}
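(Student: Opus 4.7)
The plan is to invoke Proposition \ref{prop:comm-retr} for the existence of $M$, and then derive the intersection equality as a several-retract generalization of the argument behind Lemma \ref{lem:pres-intersec}. Applying Proposition \ref{prop:comm-retr} directly to $H_1, \dots, H_m$ and $K$ yields a finite index normal subgroup $M \lhd G$ with $M \le K$ and $\rho_{H_i}(M) \subseteq M$ for every $i=1,\dots,m$; this takes care of the first two assertions of the corollary. Let $\varphi : G \to G/M$ be the quotient map and, via Lemma \ref{lem:induced_retr}, let $\rho_{\bar H_i} : G/M \to G/M$ denote the induced retraction of $G/M$ onto $\bar H_i := \varphi(H_i)$. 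Since the original retractions pairwise commute, so do the induced ones.

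For the intersection equality, the inclusion $\varphi(\bigcap_{i=1}^m H_i) \subseteq \bigcap_{i=1}^m \bar H_i$ is immediate. For the reverse inclusion, I would take $\bar g \in \bigcap_{i=1}^m \bar H_i$, write $\bar g = \varphi(g)$ for some $g \in G$, and consider the element
\[ \tilde g := \rho_{H_1} \circ \rho_{H_2} \circ \cdots \circ \rho_{H_m}(g). \]
By pairwise commutativity of the $\rho_{H_i}$, for each fixed $j$ the above composition can be reordered so that $\rho_{H_j}$ is applied last, which forces $\tilde g \in H_j$; hence $\tilde g \in \bigcap_{i=1}^m H_i$. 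On the other hand, pushing $\varphi$ inward through each $\rho_{H_i}$ using Lemma \ref{lem:induced_retr} and using that each $\rho_{\bar H_i}$ fixes $\bar g$, one obtains
\[ \varphi(\tilde g) = \rho_{\bar H_1} \circ \rho_{\bar H_2} \circ \cdots \circ \rho_{\bar H_m}(\bar g) = \bar g. \]
Therefore $\bar g \in \varphi(\bigcap_{i=1}^m H_i)$, completing the argument.

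The heavy lifting is done by Proposition \ref{prop:comm-retr}, whose conclusion guarantees that the $M$ produced is simultaneously invariant under all the $\rho_{H_i}$; the remainder is essentially the $m$-fold analogue of Lemma \ref{lem:pres-intersec}. The only point requiring care is the reordering step that places $\tilde g$ inside each individual $H_j$, but this is an immediate consequence of pairwise commutativity of the retractions (the same observation that made the definition of $\rho_J$ in the proof of Proposition \ref{prop:comm-retr} well posed), so there is no serious obstacle.
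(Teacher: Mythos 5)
Your proof is correct. For the first two assertions you do exactly what the paper does: invoke Proposition \ref{prop:comm-retr}. For the intersection equality, however, you take a genuinely different route. The paper proceeds by induction on $m$, repeatedly invoking Lemma \ref{lem:pres-intersec} with the pair $\bigl(F,H_m\bigr)$ where $F:=\bigcap_{i<m} H_i$ and $\rho_F:=\rho_{H_1}\circ\dots\circ\rho_{H_{m-1}}$; this requires a separate chaining argument to verify that $\rho_F(M)\subseteq M$ before Lemma \ref{lem:pres-intersec} can be applied. You instead run the argument behind Lemma \ref{lem:pres-intersec} directly in $m$ variables in a single pass: set $\tilde g=\rho_{H_1}\circ\dots\circ\rho_{H_m}(g)$, land in $\bigcap_i H_i$ by reordering (each $\rho_{H_j}$ can be moved to the outside), and recover $\bar g=\varphi(\tilde g)$ by pushing $\varphi$ through each retraction via Lemma \ref{lem:induced_retr} and using that $\rho_{\bar H_i}(\bar g)=\bar g$ at every stage because $\bar g\in\bar H_i$. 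This is marginally cleaner than the paper's version, since it bypasses both the induction and the auxiliary check that the composite retraction onto $\bigcap_{i<m}H_i$ preserves $M$, and it makes transparent that the only facts used are pairwise commutativity and simultaneous invariance of $M$. The one point worth stating explicitly, which you gesture at, is the identity $\varphi\circ\rho_{H_i}=\rho_{\bar H_i}\circ\varphi$ supplied by Lemma \ref{lem:induced_retr}; with that spelled out the argument is complete.
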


\begin{proof} First we apply Proposition \ref{prop:comm-retr} to find the finite index normal subgroup $M$ from its claim.
The last statement of the corollary will be proved by induction on $m$.
If $m=1$ there is nothing to prove. So let us assume that $m \ge 2$ and we have already shown that
$\varphi(\bigcap_{i=1}^{m-1} H_i)=\bigcap_{i=1}^{m-1} \varphi(H_i)$. Using Remark \ref{rem:inter_comm_retr} we see that
the map $\rho_F :=\rho_{H_1} \circ \dots \circ \rho_{H_{m-1}} \in End(G)$ is a retraction of $G$ onto $F:=\bigcap_{i=1}^{m-1} H_i$.
By Proposition \ref{prop:comm-retr}, $\rho_{H_i}(M) \subseteq M$ for each $i=1,\dots,m$, therefore
\begin{multline*}
\rho_F (M) =(\rho_{H_1} \circ \dots \circ \rho_{H_{m-2}}) \bigl(\rho_{H_{m-1}}(M) \bigr) \subseteq \\(\rho_{H_1} \circ \dots \circ \rho_{H_{m-3}})
\bigl(\rho_{H_{m-2}}(M)\bigr) \subseteq
\dots \subseteq \rho_{H_{1}}(M) \subseteq M.$$
\end{multline*}

By the assumptions, the retractions $\rho_F$ and $\rho_{H_m}$ commute, hence we can apply Lemma~\ref{lem:pres-intersec} to conclude that
$\varphi(F\cap H_m)=\varphi(F) \cap \varphi(H_m)$. But $\varphi(F)=\bigcap_{i=1}^{m-1} \varphi(H_i)$ by the induction hypothesis, consequently
$\varphi(\bigcap_{i=1}^m H_i)=\varphi(F\cap H_m)=\bigcap_{i=1}^m \varphi(H_i)$, and the proof is finished.
\end{proof}

Let us now give an example which shows that the statements of Corollary \ref{cor:fin_image_of_intersec}
and Lemma \ref{lem:pres-intersec} are no longer true if the retractions do not commute.

\begin{ex}\label{ex:non-comm_retr} Let $S$ be any infinite simple group, and let $H$ be an arbitrary group possessing
non-trivial finite quotients.
Set $G:=H * S$, fix an element $s \in S\setminus \{1\}$ and denote $F:=s H s^{-1} \le G$. Evidently $H$ is a
retract of $G$, where the retraction $\rho_H: G \to G$ of
$G$ onto $H$ is the identity on $H$ and trivial on $S$. Clearly the endomorphism $\rho_F \in End(G)$ defined by
$\rho_F(g):=s \rho_H  (s^{-1} g s) s^{-1}$ for every $g \in G$, is a retraction of $G$ onto $F$.

It is not difficult to see that the retractions $\rho_H$ and $\rho_F$ do not commute
(for instance, because $(\rho_H \circ \rho_F)(G)=H$, $(\rho_F \circ \rho_H)(G)=F$ and $H \cap F= \{1\}$).

If $K \lhd G$ is an arbitrary proper normal subgroup of finite index, then $S \subset K$
(because $S$ has no non-trivial
finite quotients), hence the kernel $\ker(\rho_H)$ (which is equal to the normal closure of $S$ in $G$) is
contained in $K$. Consequently, $\rho_H(K) \subseteq \rho_H^{-1}(\rho_H(K)) \subseteq K$.
Similarly, $\rho_F(K) \subseteq K$.

Observe  that $H \cap F= \{1\}$ by construction. Denote by $Q$ the quotient $G/K$ and let
$\varphi:G \to Q$ be the natural epimorphism.
Since $s \in S \le  \ker(\varphi)$ we see that
$$\varphi(H) \cap \varphi(F)=\varphi(H)=Q \neq \{1\}=\varphi(H \cap F).$$
That is, in any non-trivial finite quotient $Q$ of $G$ the intersection of the images
of $H$ and $F$ is strictly larger than the image of $H \cap F$.
\end{ex}


\section{Implications for the profinite topology}\label{sec:conseq_prof_top}
Throughout this section we will assume that $A$ and $B$ are retracts of a group $G$ such that
the corresponding retractions $\rho_A \in End(G)$ and $\rho_B \in End(G)$ commute.
Our goal here is to establish several consequences of these settings for the profinite topology on $G$.

\begin{lemma} \label{lem:empty-inter} 
For arbitrary elements $x,y \in G$ define
$\alpha := \rho_A\left(\rho_B(x) x^{-1}\right) x \rho_B \left(x^{-1}\right) \in A x B \subseteq G$ and
$\beta := \rho_A\left(\rho_B(y) y^{-1}\right) y \rho_B\left(y^{-1}\right) \in A y B \subseteq G$.
Then the following two conditions
are equivalent:

\begin{itemize}
\item[\rm (i)] $y \in A x B$;
\item[\rm (ii)] $\beta \in \alpha^{A \cap B}$.
\end{itemize}
\end{lemma}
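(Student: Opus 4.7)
The plan is to read off the containments $\alpha\in AxB$ and $\beta\in AyB$ directly from the definitions, prove the easy direction (ii)$\Rightarrow$(i) by a one-line double-coset computation, and then carry out the main direction (i)$\Rightarrow$(ii) by a careful substitution that exploits the commutativity of $\rho_A$ and $\rho_B$ together with Remark \ref{rem:inter_comm_retr}.

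First, I would observe that $\rho_A(\rho_B(x)x^{-1})\in A$ and $\rho_B(x^{-1})\in B$, so the defining formula already displays $\alpha$ as a product $a\cdot x\cdot b$ with $a\in A$, $b\in B$, i.e.\ $\alpha\in AxB$; similarly, $\beta\in AyB$. In particular, $y\in A\beta B$ and $x\in A\alpha B$. For (ii)$\Rightarrow$(i), if $\beta=c\alpha c^{-1}$ with $c\in A\cap B$, then $\beta\in c(AxB)c^{-1}=AxB$ (using $c\in A$ on the left and $c^{-1}\in B$ on the right), and therefore $y\in A\beta B\subseteq AxB$, giving (i).

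For the substantive direction (i)$\Rightarrow$(ii), assume $y=a_0xb_0$ with $a_0\in A$, $b_0\in B$. The key facts I will use are:  by Remark \ref{rem:inter_comm_retr}, $\rho_B(A)=A\cap B$ (so $\rho_B(a_0)\in A\cap B$), $\rho_A(A\cap B)=A\cap B$, and $\rho_A$ fixes $A$ while $\rho_B$ fixes $B$. Using these together with multiplicativity, I would compute
\begin{align*}
\rho_B(y)\,y^{-1} &= \rho_B(a_0)\,\rho_B(x)\,b_0\cdot b_0^{-1}x^{-1}a_0^{-1} = \rho_B(a_0)\bigl(\rho_B(x)x^{-1}\bigr)a_0^{-1},\\
\rho_A\bigl(\rho_B(y)y^{-1}\bigr) &= \rho_B(a_0)\cdot \rho_A\bigl(\rho_B(x)x^{-1}\bigr)\cdot a_0^{-1},\\
\rho_B(y^{-1}) &= b_0^{-1}\,\rho_B(x^{-1})\,\rho_B(a_0)^{-1},
\end{align*}
where I have used $\rho_A(\rho_B(a_0))=\rho_B(a_0)$ (since $\rho_B(a_0)\in A\cap B$) and $\rho_A(a_0^{-1})=a_0^{-1}$. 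Substituting into the definition of $\beta$, the factors $a_0^{-1}\cdot a_0$ and $b_0\cdot b_0^{-1}$ telescope and I obtain
\[
\beta \;=\; \rho_B(a_0)\cdot\Bigl(\rho_A\bigl(\rho_B(x)x^{-1}\bigr)\,x\,\rho_B(x^{-1})\Bigr)\cdot\rho_B(a_0)^{-1}\;=\;\rho_B(a_0)\,\alpha\,\rho_B(a_0)^{-1}.
\]
Since $\rho_B(a_0)\in A\cap B$, this gives $\beta\in\alpha^{A\cap B}$, proving (ii).

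The main obstacle is purely computational: keeping track of which retraction fixes which factor when expanding $\rho_A(\rho_B(y)y^{-1})$ and $\rho_B(y^{-1})$ after the substitution $y=a_0xb_0$. The commutativity of $\rho_A$ and $\rho_B$ (via Remark \ref{rem:inter_comm_retr}) is precisely what makes the inner and outer $a_0$-factors collapse into a single conjugating element $\rho_B(a_0)\in A\cap B$; without it, one could only hope for a conjugator in $A$ or in $B$, not in $A\cap B$.
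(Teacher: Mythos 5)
Your proof is correct, and the main direction (i)$\Rightarrow$(ii) takes a genuinely different route from the paper's. The paper first reduces (i) to the statement $\beta \in A\alpha B$, then assumes a putative double-coset equality $\beta = a\alpha b$ with $a\in A$, $b\in B$, and extracts the constraints on $a$ and $b$ by applying $\rho_A$ and $\rho_B$ to both sides; the crucial observation driving that computation is that $\rho_A(\alpha)=\rho_B(\alpha)=1$ (and likewise for $\beta$), which makes $\alpha$ a normalized double-coset representative and forces $a=\rho_A(b^{-1})\in A\cap B$ and $b=a^{-1}$. You instead work forward from an explicit decomposition $y=a_0xb_0$ and compute $\beta$ head-on, obtaining the clean identity $\beta=\rho_B(a_0)\,\alpha\,\rho_B(a_0)^{-1}$. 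Your route has the advantage of being constructive --- it names the conjugating element $\rho_B(a_0)\in A\cap B$ explicitly as a function of the chosen decomposition of $y$ --- while the paper's route is slightly slicker and additionally shows that \emph{any} expression $\beta=a\alpha b$ with $a\in A$, $b\in B$ is automatically a conjugation by an element of $A\cap B$. Both hinge on the same two facts, $\rho_B(A)=A\cap B$ and the commutativity $\rho_A\circ\rho_B=\rho_B\circ\rho_A$, so the underlying mechanism is the same even though the bookkeeping is organized differently. Your (ii)$\Rightarrow$(i) argument, via $c(AxB)c^{-1}=AxB$ for $c\in A\cap B$, is also fine and matches the paper's in spirit.
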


\begin{proof} Observe that $y \in A x B$ if and only if $A y B = A x B$, which is equivalent to
$A \beta B=A \alpha B$. 
Thus $y \in A x B$ if and only if  $\beta \in A \alpha B$.

To show that (i) implies (ii), suppose that there are $a \in A$ and $b \in B$ such that
\begin{equation}\label{eq:bet-alph} \beta=a \alpha b. \end{equation}
By definition, $\rho_A( \alpha)=1=\rho_A(\beta)$, hence $1=\rho_A(a) \rho_A(b)$. Therefore, \eqref{eq:bet-alph} implies that
$a = \rho_A(a) =\rho_A(b^{-1}) \in \rho_A(B) = A \cap B$ (by Remark \ref{rem:inter_comm_retr}).

Now, since $\rho_B \circ \rho_A=\rho_A \circ \rho_B$ we have $\rho_B(\alpha)=1=\rho_B(\beta)$. Therefore,
applying $\rho_B$ to both sides of the equality \eqref{eq:bet-alph}, we get
$1=\rho_B(a) \rho_B(b)=a b$ because $a,b \in B$. Hence $b=a^{-1} \in A \cap B$ and
$\beta=a \alpha a^{-1} \in \alpha^{A \cap B}$.

Now, suppose that $\beta \in \alpha^{A \cap B}$. Then $\beta \in (A \cap B) \alpha (A \cap B) \subseteq
A \alpha B$. Thus (ii) implies (i).
\end{proof}

Let us look at the proof of the above lemma in the particular case when $y=x$.
Then we see that $\beta = \alpha$, and
$$A \cap xBx^{-1}= \gamma^{-1} \left( A \cap \alpha B \alpha^{-1} \right) \gamma, ~\mbox{ where }
\gamma:=\rho_A\left(\rho_B(x) x^{-1}\right) \in A. $$

We also see that $a \in A \cap \alpha B \alpha^{-1}$ if and only if there is $b \in B$ such that
$\alpha = a^{-1} \alpha b$. But, as we showed in the proof of
Lemma~\ref{lem:empty-inter}, this can happen only if
$b=a \in A \cap B$. I.e,  $\alpha a = a \alpha$ and $a \in A \cap B$,
which is equivalent to $a \in C_{A \cap B}(\alpha)$. Thus, in this particular case
we obtain the following statement:

\begin{lemma}\label{lem:inter_of_conjug} If 
$x \in G$ is an arbitrary element, then
$$A \cap xBx^{-1}= \gamma^{-1} C_{A \cap B}(\alpha) \gamma ~\mbox{ in } G,$$ { where }
$\alpha := \rho_A\left(\rho_B(x) x^{-1}\right) x \rho_B \left(x^{-1}\right) \in A x B$ { and }~
$\gamma:=\rho_A\left(\rho_B(x) x^{-1}\right) \in A$ .
\end{lemma}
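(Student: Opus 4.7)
The plan is to specialize the preceding Lemma~\ref{lem:empty-inter} to the case $y=x$ and then extract the normalizer-type statement from what its proof already establishes; in fact, most of the work has essentially been done in the discussion preceding the lemma's statement.

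First, observe that by definition $\alpha = \gamma\cdot x\cdot \rho_B(x^{-1})$, where $\gamma = \rho_A(\rho_B(x)x^{-1}) \in A$ and $\rho_B(x^{-1}) \in B$. Since $\rho_B(x^{-1})$ centralizes $B$ by conjugation (being an element of $B$), we get $\alpha B \alpha^{-1} = \gamma\, (xBx^{-1})\, \gamma^{-1}$. Using also that $\gamma \in A$ (so $\gamma^{-1} A \gamma = A$), I conjugate to obtain
\[
A \cap xBx^{-1} \;=\; \gamma^{-1}\bigl(A \cap \alpha B \alpha^{-1}\bigr)\gamma.
\]
This reduces the lemma to proving the identity $A \cap \alpha B \alpha^{-1} = C_{A\cap B}(\alpha)$.

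The forward inclusion is the heart of the matter, and the method is identical to that used in the proof of Lemma~\ref{lem:empty-inter}. Suppose $a \in A$ and $a = \alpha b \alpha^{-1}$ for some $b \in B$; equivalently, $\alpha = a^{-1}\alpha b$. As in the proof of Lemma~\ref{lem:empty-inter}, one checks that $\rho_A(\alpha)=1$ and $\rho_B(\alpha)=1$ (this uses $\rho_A\circ\rho_B=\rho_B\circ\rho_A$ together with Remark~\ref{rem:inter_comm_retr}, exactly the computation taking $\beta=\alpha$). Applying $\rho_A$ to the equation $\alpha = a^{-1}\alpha b$ gives $1 = a^{-1}\rho_A(b)$, so $a = \rho_A(b) \in \rho_A(B) = A\cap B$. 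Applying $\rho_B$ gives $1 = \rho_B(a^{-1})\,b$, so $b = \rho_B(a) = a$ since $a \in A\cap B \subseteq B$. Therefore $a\alpha = \alpha a$ and $a \in A\cap B$, i.e., $a \in C_{A\cap B}(\alpha)$. The reverse inclusion is immediate: if $a \in C_{A\cap B}(\alpha)$ then $a \in A$ and $a = \alpha a \alpha^{-1} \in \alpha B \alpha^{-1}$.

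Combining the two steps yields the stated formula. There is no real obstacle here: the lemma is an unpacking of the specialization $y=x$ of Lemma~\ref{lem:empty-inter}, with the role of the condition $\beta \in \alpha^{A\cap B}$ now played by the fact that $a \in A$ simultaneously lies in $\alpha B \alpha^{-1}$ precisely when $a \in A\cap B$ and commutes with $\alpha$. The only subtlety one must be careful about is recording the correct conjugating element $\gamma$ arising from the decomposition $\alpha = \gamma\cdot x\cdot \rho_B(x^{-1})$, which accounts for the appearance of $\gamma^{-1}(\,\cdot\,)\gamma$ in the final expression rather than a bare equality with $C_{A\cap B}(\alpha)$.
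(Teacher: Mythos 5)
Your proof is correct and follows essentially the same route as the paper: the paper also derives Lemma~\ref{lem:inter_of_conjug} by specializing Lemma~\ref{lem:empty-inter} to $y=x$, first reducing via the conjugation by $\gamma$ to the identity $A \cap \alpha B \alpha^{-1} = C_{A\cap B}(\alpha)$ and then extracting that from the computation inside the proof of Lemma~\ref{lem:empty-inter}. (One small wording point: $\rho_B(x^{-1})$ \emph{normalizes} rather than ``centralizes'' $B$, and likewise $\gamma$ normalizes $A$; the equations you write are correct.)
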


Combining Lemma \ref{lem:empty-inter} with Corollary \ref{cor:fin_image_of_intersec}
we achieve

\begin{lemma}\label{lem:double_coset_sep} 
Consider any $x \in G$ and denote $\alpha := \rho_A\left(\rho_B(x) x^{-1}\right) x \rho_B \left(x^{-1}\right) \in A x B \subseteq G$. If the conjugacy class $\alpha^{A \cap B}$
is separable in $G$, then the double coset $AxB$ is also separable in $G$.
\end{lemma}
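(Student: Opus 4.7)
The plan is to prove that $AxB$ is closed in $\PT(G)$, which amounts to producing, for any given $y \in G \setminus AxB$, a finite-index normal subgroup $M \lhd G$ such that $\varphi(y) \notin \varphi(A)\,\varphi(x)\,\varphi(B)$ in $G/M$, where $\varphi: G \to G/M$ is the canonical projection.

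First I would introduce the element $\beta := \rho_A(\rho_B(y)y^{-1})\, y\, \rho_B(y^{-1}) \in AyB$ as in Lemma \ref{lem:empty-inter}. Since $y \notin AxB$, the equivalence (i)$\Leftrightarrow$(ii) of that lemma gives $\beta \notin \alpha^{A\cap B}$. The separability hypothesis on $\alpha^{A\cap B}$ then supplies a finite-index normal subgroup $K \lhd G$ with $\beta \notin \alpha^{A\cap B} K$.

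Next I would invoke Corollary \ref{cor:fin_image_of_intersec} applied to the two commuting retractions $\rho_A, \rho_B$ and the subgroup $K$, obtaining a finite-index normal subgroup $M \lhd G$ with $M \le K$, $\rho_A(M)\subseteq M$, $\rho_B(M) \subseteq M$, and the crucial identity $\varphi(A \cap B) = \varphi(A) \cap \varphi(B)$. By Lemma \ref{lem:induced_retr}, $\rho_A$ and $\rho_B$ descend to retractions $\rho_{\bA}, \rho_{\bB}$ of $G/M$ onto $\bA := \varphi(A)$ and $\bB := \varphi(B)$, and these induced retractions still commute (commutation passes to the quotient since $\rho_A,\rho_B$ commute on $G$ and $\varphi$ is surjective).

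Finally, assume for contradiction that $\varphi(y) \in \bA\, \varphi(x)\, \bB$ inside $G/M$. Applying Lemma \ref{lem:empty-inter} in the quotient to $\bx := \varphi(x)$ and $\by := \varphi(y)$, and observing that $\varphi$ intertwines the retractions so that $\varphi(\alpha) = \rho_{\bA}(\rho_{\bB}(\bx)\bx^{-1})\,\bx\,\rho_{\bB}(\bx^{-1})$ and similarly $\varphi(\beta)$ is the element built from $\by$, we would get $\varphi(\beta) \in \varphi(\alpha)^{\bA \cap \bB}$. Combined with $\bA \cap \bB = \varphi(A\cap B)$, this yields an element $c \in A\cap B$ with $\varphi(\beta) = \varphi(c\alpha c^{-1})$, hence $\beta \in \alpha^{A\cap B}\, M \subseteq \alpha^{A\cap B}\, K$, contradicting the choice of $K$. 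Thus $\varphi(y) \notin \varphi(A)\varphi(x)\varphi(B) = \varphi(AxB)$, proving the separability of $AxB$.

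The main obstacle — and the reason the commuting-retractions machinery of Section \ref{sec:comm_retr} was developed — is that passing to an arbitrary finite quotient can both enlarge $\bA \cap \bB$ beyond $\varphi(A\cap B)$ (breaking the return from (ii) to a statement about $\alpha^{A\cap B}$) and garble the formulas defining $\alpha$ and $\beta$. Once $M$ is chosen via Corollary \ref{cor:fin_image_of_intersec} so that both retractions descend compatibly and the intersection is preserved, the proof reduces to a direct transfer of Lemma \ref{lem:empty-inter} through $\varphi$.
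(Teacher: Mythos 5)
Your proof is correct and follows essentially the same route as the paper's: translate $y \notin AxB$ into $\beta \notin \alpha^{A\cap B}$ via Lemma \ref{lem:empty-inter}, use separability to find $K$, then use Corollary \ref{cor:fin_image_of_intersec} to find $M \le K$ so that the retractions descend and $\varphi(A\cap B)=\varphi(A)\cap\varphi(B)$, and finish by applying Lemma \ref{lem:empty-inter} again in $G/M$. The only cosmetic difference is that you cast the final step as a proof by contradiction, while the paper argues directly via the same biconditional.
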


\begin{proof} Suppose that an element $y \in G$ satisfies $y \notin AxB$. By Lemma \ref{lem:empty-inter},
this is equivalent to $\beta \notin \alpha^{A \cap B}$, where
$\beta := \rho_A\left(\rho_B(y) y^{-1}\right) y \rho_B\left(y^{-1}\right)$.
Since $\alpha^{A \cap B}$ is separable, there is a finite index normal subgroup
$K \lhd G$ such that $\psi(\beta) \notin \psi \left(\alpha^{A \cap B} \right)=
\psi (\alpha)^{\psi(A \cap B)}$,
where $\psi:G \to G/K$ is the canonical epimorphism.

By Corollary  \ref{cor:fin_image_of_intersec}, there exists a finite index normal subgroup $M \lhd G$ such that
$M \le K$, $\rho_A(M) \subseteq M$, $\rho_B(M) \subseteq M$ and $\varphi(A \cap B)=\varphi(A) \cap \varphi(B)$,
where $\varphi$ is the natural epimorphism from $G$ to $G/M$.
Since $\psi$ factors through $\varphi$, we can conclude that
$\varphi(\beta) \notin \varphi (\alpha)^{\varphi(A \cap B)}=
\varphi (\alpha)^{\varphi(A) \cap \varphi(B)}$. But by Lemma \ref{lem:induced_retr}, there are canonically
induced  commuting retractions $\rho_{\bar A}$ and $\rho_{\bar B}$ of $G/M$ onto $\bar A:=\varphi(A)$
and $\bar B:=\varphi(B)$ respectively. Moreover, letting $\bar x := \varphi(x)$, $\bar y := \varphi(y)$
and using the definition of the retractions $\rho_{\bar A}$ and $\rho_{\bar B}$, we obtain
$\varphi(\alpha)=\rho_{\bar A}\left(\rho_{\bar B}(\bar x) \bar x^{-1}\right) \bar x
\rho_{\bar B} \left(\bar x^{-1}\right)$ and $\varphi(\beta)=\rho_{\bar A}\left(\rho_{\bar B}(\bar y) \bar y^{-1}\right) \bar y
\rho_{\bar B} \left(\bar y^{-1}\right)$. Therefore, by Lemma \ref{lem:empty-inter}, applied to
the retracts $\bar A$ and $\bar B$ in $G/M$, we have $\bar y \notin \bar A \bar x \bar B$.
That is, $\varphi(y) \notin \varphi(AxB)$. Hence the double coset $AxB$ is separable in $G$.
\end{proof}

Since the $1^{A\cap B}=\{1\}$ is separable in $G$ whenever $G$ is residually finite, we have the following
immediate consequence of Lemma \ref{lem:double_coset_sep}.

\begin{cor}\label{cor:prod_retr-sep} If $A$ and $B$ are retracts of a residually finite group $G$ such
that the corresponding retractions commute, then the double coset $AB$ is separable in $G$.
\end{cor}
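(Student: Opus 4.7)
The plan is to deduce this corollary as the immediate specialization of Lemma \ref{lem:double_coset_sep} to the case $x = 1$. First I would substitute $x = 1$ into the formula for $\alpha$: since $\rho_A$ and $\rho_B$ are endomorphisms, they send the identity to the identity, so
\[
\alpha = \rho_A\bigl(\rho_B(1)\cdot 1^{-1}\bigr)\cdot 1 \cdot \rho_B\bigl(1^{-1}\bigr) = \rho_A(1)\cdot 1 \cdot 1 = 1.
\]
Consequently the $(A \cap B)$-conjugacy class $\alpha^{A\cap B}$ degenerates to the singleton $\{1\}$.

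Next I would observe that, because $G$ is residually finite, the intersection of all finite index normal subgroups of $G$ is trivial, so $\{1\}$ is closed in $\PT(G)$; in other words, $\{1\}$ is separable in $G$. This is exactly the hypothesis required by Lemma \ref{lem:double_coset_sep} with the above choice of $x$. Applying the lemma, I would conclude that the double coset $A\cdot 1\cdot B = AB$ is separable in $G$, which is the desired statement.

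There is essentially no obstacle here: the entire technical content has already been absorbed into Lemma \ref{lem:double_coset_sep} (whose proof rests on Lemma \ref{lem:empty-inter} together with the intersection-preserving Corollary \ref{cor:fin_image_of_intersec}, itself a consequence of the commuting retractions machinery of Section \ref{sec:comm_retr}). The only thing to check in passing from $AxB$ to $AB$ is that residual finiteness supplies the trivial-class separability needed to feed the lemma, which is precisely the role played by the hypothesis on $G$.
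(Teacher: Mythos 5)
Your proof is correct and is exactly the argument the paper gives: the corollary is stated as an immediate consequence of Lemma \ref{lem:double_coset_sep} with $x=1$, using residual finiteness of $G$ to ensure that $1^{A\cap B}=\{1\}$ is closed in $\PT(G)$.
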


The statement of Corollary \ref{cor:prod_retr-sep} has been known before -- see, for example,
\cite[Lemma 9.3]{H-W_1}, but the proof that we have presented here is new.

The following statement is well-known:
\begin{lemma}\label{lem:sep_subset_of_retr} Suppose that $G$ is a residually finite group
and $A \le G$ is a retract of $G$. If a subset $S \subseteq A$ is closed in
$\PT(A)$, then $S$ is closed in $\PT(G)$.
\end{lemma}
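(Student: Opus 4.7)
The plan is to show that for every $g \in G \setminus S$ we can construct a homomorphism $\varphi$ from $G$ to a finite group $Q$ with $\varphi(g) \notin \varphi(S)$. Let $\rho_A : G \to A$ denote the retraction of $G$ onto $A$. The natural dichotomy to consider is whether $\rho_A(g)$ lies in $S$ or not, and these two cases will be handled by essentially dual mechanisms.

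\textbf{Case 1: $\rho_A(g) \notin S$.} Here I would use only the closedness of $S$ in $\PT(A)$ to find a finite index normal subgroup $N \lhd A$ such that $\rho_A(g) N \cap S = \emptyset$. Letting $\pi_N : A \to A/N$ be the quotient map, define the homomorphism $\psi := \pi_N \circ \rho_A : G \to A/N$. Because $\rho_A$ restricts to the identity on $A$, we have $\psi(s) = \pi_N(s)$ for every $s \in S$, so $\psi(S) = \pi_N(S)$, while $\psi(g) = \pi_N(\rho_A(g))$. The choice of $N$ immediately gives $\psi(g) \notin \psi(S)$.

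\textbf{Case 2: $\rho_A(g) \in S$.} This is the case I expect to be the main obstacle, since we are trying to separate $g$ from its own image under $\rho_A$. Because $g \notin S$ but $\rho_A(g) \in S$, we have $g \rho_A(g)^{-1} \neq 1$. By residual finiteness of $G$ there exists a finite index normal subgroup $K \lhd G$ with $g \rho_A(g)^{-1} \notin K$. Now I would apply Proposition \ref{prop:comm-retr} to the single retract $A$ (commutation of retractions is trivial for $m=1$) to obtain a finite index normal subgroup $M \lhd G$ with $M \le K$ and $\rho_A(M) \subseteq M$. Let $\varphi : G \to G/M$ be the quotient and $\bar A := \varphi(A)$. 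By Lemma \ref{lem:induced_retr}, $\rho_A$ descends to a retraction $\rho_{\bar A}$ of $G/M$ onto $\bar A$ satisfying $\rho_{\bar A}(\varphi(x)) = \varphi(\rho_A(x))$ for all $x \in G$. The inclusion $M \le K$ ensures $\varphi(g) \neq \varphi(\rho_A(g)) = \rho_{\bar A}(\varphi(g))$, which forces $\varphi(g) \notin \bar A$ (any element of $\bar A$ is fixed by $\rho_{\bar A}$). Since $S \subseteq A$ gives $\varphi(S) \subseteq \bar A$, we conclude $\varphi(g) \notin \varphi(S)$, as required.

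Combining the two cases, every element outside $S$ is separated from $S$ by a homomorphism to a finite group, which is equivalent to $S$ being closed in $\PT(G)$. The only non-routine ingredient is Proposition \ref{prop:comm-retr}, used to pass from an arbitrary finite index normal subgroup to one that is $\rho_A$-invariant; this is precisely what allows the retraction to survive in the finite quotient and witness that $\varphi(g) \notin \bar A$.
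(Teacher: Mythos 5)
Your proof is correct, and it arrives at the conclusion by a somewhat different route than the paper. The paper first invokes Corollary~\ref{cor:prod_retr-sep} (with $A=B$ and $x=1$) to see that the retract $A$ is closed in $\PT(G)$; it follows that the closure of $S$ in $\PT(G)$ is contained in $A$, and all that remains is to separate $a\in A\setminus S$ from $S$, which is done by composing a separating homomorphism of $A$ with $\rho_A$ --- exactly your Case~1 restricted to $g\in A$. Your Case~2 replaces the appeal to Corollary~\ref{cor:prod_retr-sep} by a direct argument: for $g\notin A$ one has $g\rho_A(g)^{-1}\neq 1$, residual finiteness of $G$ supplies a finite-index $K\lhd G$ avoiding this element, and passing to a $\rho_A$-invariant $M\le K$ yields an induced retraction $\rho_{\bar A}$ of $G/M$ whose fixed-point set $\bar A\supseteq\varphi(S)$ misses $\varphi(g)$. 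This is in effect a self-contained, lighter proof that a retract of a residually finite group is closed, bypassing the double-coset machinery behind Corollary~\ref{cor:prod_retr-sep}. One small economy you could make: Proposition~\ref{prop:comm-retr} is more than you need for a single retract, since $M:=K\cap\rho_A^{-1}(K\cap A)$ is already normal, of finite index, contained in $K$, and satisfies $\rho_A(M)\subseteq K\cap A\subseteq M$, so Lemma~\ref{lem:induced_retr} applies directly. Both proofs rest on the same two ideas --- composing with $\rho_A$, and descending $\rho_A$ to suitable finite quotients --- but organize the case split differently.
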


\begin{proof} We will show that $S$ coincides with its closure $cl_G(S)$ in the profinite topology on $G$.
By Corollary \ref{cor:prod_retr-sep} the subgroup $A=AA$ is closed in $\PT(G)$,
hence $cl_G(S) \subseteq A$. Now, if $a \in A \setminus S$, then there is a homomorphism
$\phi: A \to Q$ from $A$ to a finite group $Q$ such that $\phi(a) \notin \phi(S)$.
Since $A$ is a retract of $G$, we have a homomorphism $\psi: G \to Q$ defined by
$\psi:=\phi\circ\rho_A$. Evidently $\psi(a)=\phi(a) \notin \phi(S)=\psi(S)$, hence $a \notin cl_G(S)$.
Thus $S=cl_G(S)$, as required.
\end{proof}

Now, the reason why we need Lemma \ref{lem:inter_of_conjug}, is because it tells us that if one can control
the $A \cap B$-centralizers in $G$, then one can also control the intersections of conjugates of the retracts
$A$ and $B$. As it can be seen from Example \ref{ex:non-comm_retr}, in general we may not be able to
find a finite quotient $Q$ of $G$, in which the image of the intersection of two particular
conjugates of $A$ and $B$ is equal to the intersection of their images. However, provided that a certain
Centralizer Condition is satisfied, we can find many finite quotients $Q$ of $G$ where these two sets
are very close to each other.

\begin{lemma}\label{lem:cond_(I)} Let $x$ be an element of $G$ and let $\alpha:=\rho_A\left(\rho_B(x) x^{-1}\right) x \rho_B \left(x^{-1}\right) \in G$. Suppose that the pair $(A\cap B, \alpha)$ satisfies the
Centralizer Condition in $G$. Then for any finite index normal subgroup $K \lhd G$ there exists
a finite index normal subgroup $M \lhd G$ such that $M \le K$, $\rho_A(M) \subseteq M$,
$\rho_B(M) \subseteq M$ and
$\varphi(A) \cap \varphi(xBx^{-1}) \subseteq \varphi(A \cap xBx^{-1}) \varphi(K)$ in $G/M$, where
$\varphi:G \to G/M$ is the natural epimorphism.
\end{lemma}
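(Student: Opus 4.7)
The plan is to combine three earlier results: the structural description $A\cap xBx^{-1}=\gamma^{-1}C_{A\cap B}(\alpha)\gamma$ from Lemma \ref{lem:inter_of_conjug}, the intersection-preservation under commuting retractions from Corollary \ref{cor:fin_image_of_intersec}, and the Centralizer Condition hypothesis on the pair $(A\cap B,\alpha)$. The trick will be to nest these correctly: first shrink $K$ to an $L$ where CC does its job, then shrink $L$ to an $M$ where the commuting retractions are respected and $\varphi(A\cap B)=\varphi(A)\cap\varphi(B)$.

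More concretely, I would proceed in the following order. First, apply the \ccg\ hypothesis for the pair $(A\cap B,\alpha)$ with respect to the given $K$ to obtain a finite index normal subgroup $L\lhd G$ with $L\le K$ and
\[ C_{\psi(A\cap B)}(\psi(\alpha))\subseteq\psi\bigl(C_{A\cap B}(\alpha)\,K\bigr)\quad\text{in }G/L, \]
where $\psi:G\to G/L$ is the natural epimorphism. Second, apply Corollary \ref{cor:fin_image_of_intersec} to the commuting retractions $\rho_A,\rho_B$ and the subgroup $L$: this gives a finite index normal subgroup $M\lhd G$ with $M\le L$, $\rho_A(M)\subseteq M$, $\rho_B(M)\subseteq M$, and $\varphi(A\cap B)=\varphi(A)\cap\varphi(B)$ in $G/M$, where $\varphi:G\to G/M$.

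Third, transport the situation into $G/M$. By Lemma \ref{lem:induced_retr}, $\rho_A$ and $\rho_B$ induce commuting retractions $\rho_{\bar A}$ and $\rho_{\bar B}$ of $G/M$ onto $\bar A=\varphi(A)$ and $\bar B=\varphi(B)$, and a direct calculation shows that the associated elements computed in $G/M$ from $\bar x=\varphi(x)$ are exactly $\varphi(\alpha)$ and $\varphi(\gamma)$. Applying Lemma \ref{lem:inter_of_conjug} inside $G/M$, together with the identity $\varphi(A\cap B)=\bar A\cap\bar B$, yields
\[ \varphi(A)\cap\varphi(xBx^{-1})=\varphi(\gamma)^{-1}\,C_{\varphi(A\cap B)}(\varphi(\alpha))\,\varphi(\gamma). \]

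Fourth, lift and conclude. Pick any $\varphi(c)\in C_{\varphi(A\cap B)}(\varphi(\alpha))$ with $c\in A\cap B$. Then $[\alpha,c]\in M\le L$, so $\psi(c)\in C_{\psi(A\cap B)}(\psi(\alpha))$, and Step~1 yields $c\in C_{A\cap B}(\alpha)KL=C_{A\cap B}(\alpha)K$; write $c=c_0k$ with $c_0\in C_{A\cap B}(\alpha)$ and $k\in K$. By Lemma \ref{lem:inter_of_conjug} applied in $G$, $\gamma^{-1}c_0\gamma\in A\cap xBx^{-1}$, and $\gamma^{-1}k\gamma\in K$ since $K\lhd G$. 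Therefore
\[ \varphi(\gamma)^{-1}\varphi(c)\varphi(\gamma)=\varphi(\gamma^{-1}c_0\gamma)\,\varphi(\gamma^{-1}k\gamma)\in\varphi(A\cap xBx^{-1})\,\varphi(K), \]
establishing the required inclusion.

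The only delicate point — which I would call the main conceptual step rather than a genuine obstacle — is to get the order of the two applications right. One cannot first apply Corollary \ref{cor:fin_image_of_intersec} to $K$ and then CC, because then the centralizer control produced by CC would live in a possibly smaller quotient than $G/M$ and would not be directly usable; going the other way, one uses CC to manufacture $L\le K$ first, then refines to $M\le L$ so that in $G/M$ the intersection $A\cap xBx^{-1}$ admits the clean description from Lemma \ref{lem:inter_of_conjug}, and any centralizer element encountered in $G/M$ can be pushed down to $G/L$ where CC is available and the correction term lies in $\varphi(K)$.
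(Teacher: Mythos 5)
Your proof is correct and follows essentially the same route as the paper's: apply \ccg\ for the pair $(A\cap B,\alpha)$ with respect to $K$ first to get $L\le K$, then refine via Corollary \ref{cor:fin_image_of_intersec} to get $M\le L$ with the retraction-invariance and intersection-preservation properties, then use Lemma \ref{lem:inter_of_conjug} in both $G$ and $G/M$ (noting that $\varphi(\alpha)$, $\varphi(\gamma)$ are the corresponding elements downstairs) to pass between the centralizers. The paper phrases the final step in terms of $\varphi^{-1}$-preimages while you argue element-wise by picking a preimage $c\in A\cap B$ and decomposing $c=c_0k$, but these are two renderings of the same computation. Your concluding remark about the order of the two applications being the crux is also exactly the point the paper's proof is organized around.
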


\begin{proof} By Lemma \ref{lem:inter_of_conjug}, $A \cap xBx^{-1}= \gamma^{-1} C_{A \cap B}(\alpha) \gamma$,
where $\gamma:=\rho_A\left(\rho_B(x) x^{-1}\right) \in A$.
Since the pair $(A\cap B, \alpha)$ has \ccg, there is a subgroup $L \lhd G$ of finite
index in $G$, such that $L \le K$  and
$\psi^{-1} \left ( C_{\psi(A \cap B)} (\psi(\alpha)) \right) \subseteq C_{A \cap B}(\alpha) K$ in $G$, where
$\psi: G \to G/L$ is the natural epimorphism. Applying Corollary \ref{cor:fin_image_of_intersec}
to $A$, $B$ and $L$ we find a finite index normal subgroup $M \lhd G$, together with the
epimorphism $\varphi:G \to G/M$, such that
$M \le L \le K$, $\rho_A(M) \subseteq M$, $\rho_B(M) \subseteq M$ and
$\varphi(A) \cap \varphi(B) =\varphi(A \cap B)$.

By Lemma \ref{lem:induced_retr}, $\rho_A$ and $\rho_B$ canonically induce retractions $\rho_\bA$ and
$\rho_\bB$ of $G/M$ onto $\bA:=\varphi(A)$ and $\bB:=\varphi(B)$ respectively.
Obviously $\rho_\bA$ commutes with $\rho_\bB$ in $End(G/M)$, because $\rho_A$ commutes with $\rho_B$
in $End(G)$.

Denote $\bar x:= \varphi(x)$, $\bar \alpha = \rho_\bA
\left(\rho_\bB(\bar x) \bar x^{-1}\right) \bar x \rho_\bB \left(\bar x^{-1}\right) \in G/M$ and
$\bar \gamma:=\rho_\bA\left(\rho_\bB(\bx) \bx^{-1}\right) \in \bA$. Observe that
$\bar \alpha = \varphi(\alpha)$ and $\bar \gamma =\varphi(\gamma)$ by the definitions of $\rho_\bA$ and
$\rho_\bB$. Then by Lemma~\ref{lem:inter_of_conjug},
$\bA \cap \bx \bB \bx^{-1} =\bar \gamma^{-1} C_{\bA \cap \bB} (\bar \alpha) \bar \gamma$ in $G/M$.
Therefore, recalling that $\bA \cap \bB= \varphi(A \cap B)$, we get
$$  \varphi^{-1} \left( \bA \cap \bx \bB \bx^{-1} \right)=
\varphi^{-1} \left( \bar \gamma^{-1} C_{\bA \cap \bB} (\bar \alpha) \bar \gamma \right)=
\gamma^{-1} \varphi^{-1} \left( C_{\varphi(A \cap B)} (\bar \alpha) \right)  \gamma.$$

But since $\psi$ factors through $\varphi$ (as $\ker(\varphi)=M \le L=\ker(\psi)$), we obviously have
$$\varphi^{-1} \left( C_{\varphi(A \cap B)} (\bar \alpha) \right) \subseteq
\psi^{-1} \left ( C_{\psi(A \cap B)} (\psi(\alpha)) \right) \subseteq C_{A \cap B}(\alpha) K.$$
Hence we can
conclude that $\varphi^{-1} \left( \bA \cap \bx \bB \bx^{-1} \right)
\subseteq \gamma^{-1} C_{A \cap B}(\alpha) \gamma K= \left( A \cap xBx^{-1} \right) K$.
Consequently, $\varphi(A) \cap \varphi (xBx^{-1})=\bA \cap \bx \bB \bx^{-1}
\subseteq \varphi(A \cap xBx^{-1}) \varphi(K)$, and the lemma is proved.
\end{proof}

In this paper we will need one more criterion for separability of specific double cosets in $G$. In
a certain sense it generalizes Remark \ref{rem:sep_c_c->sep_centr_double_coset}.

\begin{lemma}\label{lem:centr-coset_sep} Consider arbitrary elements  $x,g \in G$. Denote $D:=xBx^{-1} \le G$ and $\alpha:=\rho_A\left(\rho_B(x) x^{-1}\right) x \rho_B \left(x^{-1}\right) \in G$.
Suppose that the conjugacy classes $\alpha^{A \cap B}$ and $g^{A \cap D}$ are separable
in $G$, and the pair $(A\cap B,\alpha)$ satisfies \ccg. Then the double coset $C_A(g) D$ is separable in $G$.
\end{lemma}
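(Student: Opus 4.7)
The natural strategy is to split into two cases according to whether $y \in G \setminus C_A(g)D$ lies in the ambient double coset $AD = A\cdot xBx^{-1}$.

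If $y \notin AD$, note that $AD = (AxB)x^{-1}$ and right translation is a homeomorphism of $\PT(G)$, so the separability of $AxB$, which is already given by Lemma \ref{lem:double_coset_sep} under the hypothesis that $\alpha^{A\cap B}$ is separable, yields a finite quotient of $G$ in which the image of $y$ misses the image of $AD$, and hence also misses the image of $C_A(g)D$.

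Suppose instead that $y \in AD$, and write $y = a_0 d_0$ with $a_0 \in A$ and $d_0 \in D$. Comparing this decomposition with a hypothetical $y = cd$, $c \in C_A(g)$, $d \in D$, shows that $y \in C_A(g)D$ is equivalent to $a_0 \in C_A(g)(A\cap D)$; and since $a_0 \in A$, this is in turn equivalent to $a_0 \in C_G(g)(A\cap D)$. Hence the assumption $y \notin C_A(g)D$ forces $a_0 \notin C_G(g)(A\cap D)$. Applying Remark \ref{rem:sep_c_c->sep_centr_double_coset} with $H = A \cap D$ and the assumed separability of $g^{A\cap D}$, the double coset $C_G(g)(A\cap D)$ is separable in $G$, so one can choose a finite index normal subgroup $K \lhd G$ with $a_0 \notin C_G(g)(A\cap D)K$.

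I would then feed this $K$ into Lemma \ref{lem:cond_(I)}, whose Centralizer Condition hypothesis is supplied by the assumption that $(A\cap B,\alpha)$ satisfies \ccg, to produce a finite index normal subgroup $M \lhd G$ with $M \le K$, $\rho_A(M), \rho_B(M) \subseteq M$, and the crucial intersection property $\varphi(A) \cap \varphi(D) \subseteq \varphi(A \cap D)\varphi(K)$, where $\varphi: G \to G/M$. To finish, assume for contradiction that $\varphi(y) \in \varphi(C_A(g))\varphi(D)$, so $\varphi(y) = \varphi(c)\varphi(d)$ for some $c \in C_A(g)$ and $d \in D$. Equating with $\varphi(y) = \varphi(a_0)\varphi(d_0)$ and rearranging gives $\varphi(c^{-1}a_0) = \varphi(dd_0^{-1}) \in \varphi(A)\cap\varphi(D) \subseteq \varphi(A\cap D)\varphi(K)$, so $\varphi(a_0) \in \varphi(C_G(g)(A\cap D)K)$ and hence $a_0 \in C_G(g)(A\cap D)KM = C_G(g)(A\cap D)K$, contradicting the choice of $K$. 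The one delicate point, and the reason Lemma \ref{lem:cond_(I)} is invoked at all, is the need to arrange both approximations simultaneously inside a single finite quotient: the intersection $\varphi(A)\cap\varphi(D)$ must be pinched between $\varphi(A\cap D)$ and a tolerance $\varphi(K)$, while $K$ itself has already been pinned down in order to exclude $a_0$ from $C_G(g)(A\cap D)K$.
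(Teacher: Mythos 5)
Your proof is correct and follows essentially the same route as the paper's: you split on whether the given element lies in $AD$, handle the easy case with Lemma \ref{lem:double_coset_sep}, then in the remaining case reduce to excluding $a_0$ from a double coset, choose $K$ via separability of $g^{A\cap D}$, and pin down $\varphi(A)\cap\varphi(D)$ with Lemma \ref{lem:cond_(I)} before deriving a contradiction. The only cosmetic difference is that you invoke Remark \ref{rem:sep_c_c->sep_centr_double_coset} to get separability of $C_G(g)(A\cap D)$ and phrase the contradiction via double-coset membership, whereas the paper inlines that argument and phrases the contradiction via the conjugacy class $\bg^{\bA\cap\bD}$; these are logically interchangeable.
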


\begin{proof} Consider any $z \in G$ with $z \notin C_A(g) D$. First, suppose that $z \notin AD$.
Since $\alpha^{A \cap B}$ is separable in $G$, Lemma \ref{lem:double_coset_sep} implies that $AxB$
is separable, hence $AD=(AxB) x^{-1}$ is separable as well (because multiplication by a
fixed group element on the right is a homeomorphisms of $G$ with respect to the profinite topology).
Therefore there is a finite index normal subgroup $N\lhd G$ such that $z\notin ADN$, hence
$z\notin C_A(g)D N$ because $C_A(g) \le A$.

Thus we can assume that $z \in AD$, i.e., there exist $a_0\in A$ and $d_0 \in D$ such that $z=a_0d_0$.
Since $z \notin C_A(g) D$, $y:=z d_0^{-1} \notin C_A(g)(A \cap D)$. Consequently,
for every $h \in A \cap D$, $(yh) g (yh)^{-1} \neq g$, i.e., $y^{-1} g y \neq h g h^{-1}$,
implying that $y^{-1}g y \notin g^{A\cap D}$ in $G$.

Now, the separability of $g^{A\cap D}$ in $G$ implies that there is a finite index normal subgroup
$K \lhd G$ such that $y^{-1} g y \notin g^{A \cap D}K$. And, by Lemma \ref{lem:cond_(I)}, we can find
a finite index normal subgroup $M \lhd G$ such that $M \le K$ and
$\varphi(A) \cap \varphi(D) \subseteq \varphi(A \cap D) \varphi(K)$, where $\varphi:G \to G/M$ is the
natural epimorphism. Let $\bK$, $\bA$, $\bD$, $\by$ and $\bg$ denote the $\varphi$-images of
$K$, $A$, $D$, $y$ and $g$ respectively.

Since $\bK \lhd G/M$, we have $\bg^{\bA \cap \bD} \subseteq \bg^{\varphi(A \cap D)\bK}
\subseteq \bg^{\varphi(A \cap D)}\bK$ and $\by^{-1} \bg \by \notin  \bg^{\varphi(A \cap D)}\bK$ as $M \le K$.
Hence $\by^{-1} \bg \by \notin \bg^{\bA \cap \bD}$.

To finish the proof, it remains to show that $\varphi(z) \notin \varphi (C_A(g) D)$. Suppose, on the contrary,
that there exist $a \in C_A(g)$ and $d \in D$ such that $\varphi(z)=\varphi(ad)$. Then
$\varphi(a_0d_0)=\varphi(ad)$, thus $\bh:=\varphi(a^{-1}a_0)=\varphi(dd_0^{-1}) \in \bA \cap \bD$, and
$\varphi(z)=\varphi(a) \bh \varphi(d_0)$. Consequently,
$\by=\varphi(z) \varphi(d_0^{-1}) =\varphi(a) \bh$ and $$\by^{-1} \bg \by=
\bh^{-1} \varphi(a^{-1} g a) \bh =\bh^{-1} \bg \bh\in \bg^{\bA \cap \bD},$$
contradicting to our construction.

Thus, for every $z \notin C_A(g) D$ we found $M \lhd G$ with
$|G:M|<\infty$ such that $z \notin C_A(g) D M$. Therefore the double coset $C_A(g) D$ is separable in $G$.
\end{proof}


\section{Some properties of right angled Artin groups}\label{sec:RAAG}
In this section we recall a few properties
of right angled Artin groups, which will be used in
the proof of the main result. At the end of the section we prove that every
pointwise inner automorphism of a right angled Artin group is inner.

Let $\Gamma$ be a finite graph (without loops or multiple edges) with the set of vertices $\mathcal{V}$.
For any vertex $v \in \mathcal{V}$ its \textit{star} $\star(v)$ consists of all vertices
(including $v$ itself) that are adjacent to
$v$ in $\Gamma$. If $\mathcal S \subseteq \mathcal V$, then
$\star(\mathcal S):=\bigcap_{v \in \mathcal S} \star(v)$. Observe that for two subsets
$\mathcal S, \mathcal T \subseteq \mathcal V$, $\mathcal T \subseteq \star(\mathcal S)$
happens if and only if $\mathcal S \subseteq \star(\mathcal T)$.

Let $G=G(\Gamma)$ be the associated right angled Artin group. To simplify
notation, we will identify elements of $\mathcal{V}$ with the corresponding generators of $G$.
Then for each $v \in \mathcal V$, $\star(v)$ contains precisely those elements from $\mathcal V$
that commute with $v$ in $G$.
For any subset $A$ of $G$, $A^{\pm 1}$ will denote the union $A \cup A^{-1} \subseteq G$.
Thus every element
$g \in G$ can be represented as a word $W$ in letters from $\mathcal{V}^{\pm 1}$.
The \textit{support} $\supp(W)$ is the set of all $v \in \mathcal{V}$ such that $v^{\pm 1}$ appears as a letter in $W$.
A word $W$ is said to be {\it graphically reduced} if it has no
subwords of the form $v U v^{-1}$ or $v^{-1} U v$, where $v \in \mathcal{V}$ and $\supp(U) \subseteq \star(v)$.
Evidently, if the word $W$ is not graphically reduced, then one can find a shorter word representing
the same element of the group $G$. This process will eventually terminate (because the length of
$W$ is finite), hence for each element $g \in G$ there exists a graphically reduced word representing it
in $G$.

E. Green \cite{Green} proved that if two graphically reduced words $W$ and $W'$
represent the same element $g \in G$, then $W$ and $W'$ have the same length and $\supp(W)=\supp(W')$.
Moreover, for any given $g \in G$,
graphically reduced words are precisely the shortest possible words representing $g$ in $G$
(proofs of these facts using re-writing systems can also be found in \cite[Sec. 2.2]{Esyp-Kaz-Rem}).
Therefore, for any element $g$ we can define its \textit{length} $|g|$ as the length of any graphically
reduced word $W$ representing $g$ in $G$, and its \textit{support} $\supp(g)$ as $\supp(W)$.

Finally, for any $g \in G$ define $\FL(g)$  -- \textit{the set of first letters of} $g$ -- as
the set of all letters $a \in \mathcal{V}^{\pm 1}$ such that $a$ appears as the first letter of some graphically reduced
word $W$ representing $g$ in $G$.  Similarly, we define \textit{the set of last letters} $\LL(g)$ of $g$
as those $a \in \mathcal{V}^{\pm 1}$ that appear as a last letter of some graphically reduced word
representing $g$ in $G$. A useful fact observed by Green in \cite{Green} states that for any
$g \in G$ the letters in $\FL(g)$ pairwise commute (in $G$). Evidently, $\LL(g)=(\FL(g^{-1}))^{-1}$.

Consider any subset $\mathcal{S}$ of $\mathcal{V}$ and let $\Delta$ be the full subgraph of $\Gamma$ on the vertices from $\mathcal{S}$.
Let $H$ denote the right angled Artin group
corresponding to $\Delta$.
The identity map on $\mathcal S$ can be regarded as a map from the generating set of $H$
into $G$. Since $\Delta$ is a subgraph of $\Gamma$ all the relations between these generators of
$H$ hold between their images in $G$.
Therefore, by von Dyck's Theorem, there is a homomorphism $\xi:H \to G$ extending the identity map
on $\mathcal S$.

On the other hand, since $\Delta$ is a full subgraph of $\Gamma$,
by von Dyck's Theorem, the map $\rho_{\mathcal{S}}: \mathcal{V} \sqcup \{1\}\to \mathcal{V} \sqcup \{1\}$ defined by $\rho_{\mathcal{S}}(1):=1$ and
\begin{equation} \label{eq:rho_S}
\rho_{\mathcal{S}}(v):=\left\{ \begin{array}{ll}
         v & \mbox{if $v \in \mathcal{S}$}\\
        1 & \mbox{if $v \in \mathcal{V} \setminus \mathcal{S}$}\end{array} \right.,
\end{equation}
can be extended to a homomorphism $\rho_H: G \to H$. 
Obviously, the composition $\rho_H \circ \xi: H \to H$ is the identity map on $H$. Therefore $\xi$
is injective, hence it is an isomorphism between $H$ and the subgroup of $G$ generated by
$\mathcal S$. Consequently, $\rho_H$, regarded as an endomorphism of $G$, becomes a (canonical) retraction
of $G$ onto $\langle \mathcal S \rangle \le G$.

For any $\mathcal{S} \subseteq \mathcal{V}$ the subgroup $H:=\langle \mathcal{S} \rangle\le G$ is called
\textit{special} (or \textit{full}, or \textit{canonically parabolic}, depending on the source).
Note that the trivial subgroup $\{ 1\} \le G$ is also special
and corresponds to the empty subset of $\mathcal V$.
 As we saw above, any special subgroup is a right angled Artin group
itself, and is a retract of $G$.
It is easy to see that if ${\mathcal{S}},{\mathcal{T}}$ are two subsets of $\mathcal{V}$
then the corresponding maps
$\rho_{\mathcal{S}}$ and $\rho_{\mathcal{T}}$ defined by \eqref{eq:rho_S} commute
with each other. This leads to the following important observation.
\begin{rem}\label{rem:spec_retr-comm} If $H$ and $F$ are special subgroups of a right angled Artin
group $G$, then $H$ and $F$ are retracts of $G$ and
the corresponding canonical retractions $\rho_H, \rho_F \in End(G)$ commute.
\end{rem}

\begin{rem} \label{rem:inter_special-sbgps}
If ${\mathcal{S}},{\mathcal{T}} \subseteq \mathcal{V}$ then $\langle {\mathcal{S}} \rangle \cap \langle {\mathcal{T}} \rangle =
\langle {\mathcal{S}} \cap {\mathcal{T}} \rangle.$
\end{rem}
Indeed, by Remarks \ref{rem:spec_retr-comm} and  \ref{rem:inter_comm_retr}, we have
$$\langle {\mathcal{S}} \rangle \cap \langle {\mathcal{T}} \rangle = \rho_{\langle {\mathcal{T}} \rangle} (\langle {\mathcal{S}} \rangle)=
\langle \rho_{\langle {\mathcal{T}} \rangle}({\mathcal{S}})\rangle = \langle \rho_{{\mathcal{T}} }({\mathcal{S}})\rangle=
\langle {\mathcal{S}} \cap {\mathcal{T}} \rangle.$$

Recall that a group $G$ is said to have the \textit{Unique Root property} if for any positive integer
$n$ and arbitrary elements $x,y \in G$ the equality $x^n=y^n$ implies $x=y$ in $G$. The group $G$ is called
\textit{bi-orderable} if $G$ can be endowed with a total order $\preceq$, which is \textit{bi-invariant},
i.e., for any $x,y,z \in G$, if $x \preceq y$, then $zx \preceq zy$ and
$xz \preceq yz$.

\begin{lemma}\label{lem:RAAG_URP} Right angled Artin groups have the Unique Root property.
\end{lemma}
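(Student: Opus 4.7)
The plan is to deduce the Unique Root property from bi-orderability, which is the reason the definition of bi-orderable groups was introduced immediately before the lemma statement. The argument splits into two parts: first, a purely order-theoretic implication valid in any bi-orderable group; second, the fact that every right angled Artin group admits a bi-invariant order.

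For the first part I would argue as follows. Let $(G, \preceq)$ be any bi-orderable group and suppose, for contradiction, that there exist $x, y \in G$ and $n \ge 1$ with $x^n = y^n$ but $x \neq y$. Since $\preceq$ is a total order we may assume $x \prec y$. Left-invariance applied to $x \prec y$ gives $x^2 \prec xy$, while right-invariance applied to the same inequality gives $xy \prec y^2$; combining these yields $x^2 \prec y^2$. A straightforward induction, multiplying through by $x$ on the left and $y^{k}$ on the right, shows $x^{k+1} \prec y^{k+1}$ whenever $x^k \prec y^k$, and hence $x^n \prec y^n$, contradicting the assumed equality.

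For the second part I would invoke the classical result that every right angled Artin group $G=G(\Gamma)$ is bi-orderable. The standard route, which I would cite rather than reprove, is via the Magnus-type embedding of $G$ into the ring of formal power series in the non-commuting variables $\mathcal V$ (quotiented by the commutation relations coming from $\Gamma$), sending each $v \in \mathcal V$ to $1+v$. This simultaneously embeds $G$ into a residually torsion-free nilpotent quotient and, by ordering the monomials lexicographically together with the coefficients, produces an explicit bi-invariant order on $G$. Equivalently, one may note that the lower central quotients $\gamma_k(G)/\gamma_{k+1}(G)$ are free abelian, so $G$ is residually torsion-free nilpotent; since torsion-free nilpotent groups are bi-orderable and the class of bi-orderable groups is closed under residual extensions, $G$ itself is bi-orderable.

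The main obstacle, should one wish to keep the proof self-contained, is exactly step two: constructing the bi-order on $G(\Gamma)$ without citing external literature. I would nevertheless be content to quote this as a known result, since it is well documented and not the focus of the paper. With bi-orderability in hand, the Unique Root property drops out immediately from the order-theoretic argument above.
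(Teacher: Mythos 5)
Your proposal is correct and follows essentially the same route as the paper: invoke the Duchamp--Krob result that right angled Artin groups are bi-orderable, then derive the Unique Root property from bi-orderability by the elementary inductive argument showing $x \prec y$ forces $x^n \prec y^n$. The paper even notes, as you do, that one could alternatively go through residual torsion-free nilpotence.
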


\begin{proof} G. Duchamp and D. Krob \cite{Duch-Krob}
(see also \cite{Duch-Thib}) proved that right angled Artin groups are bi-orderable.
Let $G$ be a right angled Artin group, and let $\preceq$
be a total bi-invariant order on $G$.

Suppose that $x^n=y^n$ for some $x,y \in G$, $n \in \N$, and $x \neq y$. Without loss of
generality we can assume that $x \prec y$. Let us show that $x^k \prec y^k$ for every $k \in \N$.
This is true for $k=1$, so, proceeding by induction on $k$, suppose that $k \ge 2$ and
$x^{k-1} \prec y^{k-1}$ has already been shown. Then
$x^k=x^{k-1} x \prec x^{k-1} y \prec y^{k-1} y =y^k$, where we used the induction hypothesis
together with the bi-invariance of the order.

Hence, we have proved that $x^n \prec y^n$, contradicting to $x^n=y^n$. Thus $x=y$.
\end{proof}

The Unique Root property for right angled Artin groups can also be easily established
using the fact that these groups are residually torsion-free nilpotent, which was also proved in
\cite{Duch-Krob}.

\begin{lemma}\label{lem:special_sbgp_not_f_i} Let $H$ be a conjugate of a special subgroup in a
right angled Artin group $G$. If $K \le G$ is a subgroup such that $|K: (K\cap H)|<\infty$ then
$K \subseteq H$.
\end{lemma}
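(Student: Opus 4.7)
The plan is to reduce the statement to the following root-closure property of special subgroups: if $P$ is a special subgroup of $G$ and $x \in G$ satisfies $x^n \in P$ for some $n \in \N$, then $x \in P$. The reduction is immediate: given $k \in K$, the finite index assumption produces $n \ge 1$ with $k^n \in K \cap H \subseteq H$; and since $H = gPg^{-1}$ for some special subgroup $P \le G$ and some $g \in G$, setting $x := g^{-1}kg$ reduces to showing $x^n \in P \Rightarrow x \in P$, which then gives $k \in H$ and hence $K \subseteq H$.

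To prove the root-closure property, the key observation is that special subgroups are retracts. I would apply the canonical retraction $\rho_P : G \to G$ onto $P$, described in the paragraph above Remark \ref{rem:spec_retr-comm}. Set $y := \rho_P(x) \in P$. Since $\rho_P$ is a homomorphism fixing $P$ pointwise, we have
\[
y^n = \rho_P(x)^n = \rho_P(x^n) = x^n,
\]
the last equality because $x^n \in P$. Thus $x$ and $y$ are two elements of $G$ with the same $n$-th power.

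At this point I invoke Lemma \ref{lem:RAAG_URP}, the Unique Root property for right angled Artin groups, to conclude $x = y \in P$. This finishes the proof. There is no real obstacle here; the whole argument rests on (i) the existence of the canonical retraction onto a special subgroup and (ii) the Unique Root property, both already established earlier in the section. The only mild subtlety is noticing that the conjugate $H = gPg^{-1}$ can be handled by passing to $g^{-1}kg$, which costs nothing since conjugation is an automorphism.
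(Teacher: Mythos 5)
Your proof is correct and follows essentially the same route as the paper: retract onto the special subgroup, compare $n$-th powers, and invoke the Unique Root property (Lemma \ref{lem:RAAG_URP}). The only cosmetic difference is that you conjugate to reduce to the special-subgroup case, whereas the paper directly uses the fact (stated in Section \ref{sec:comm_retr}) that a conjugate of a retract is itself a retract, so $\rho_H$ can be applied without first passing to $g^{-1}kg$.
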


\begin{proof} By the assumptions, $H$ is a retract of $G$. Let $\rho_H \in End(G)$ denote the
corresponding retraction. Take any $x \in K$. Since  $|K: (K\cap H)|<\infty$, there is $n \in \N$
such that $x^n \in H$. Therefore, setting $y:=\rho_H(x) \in H$, we obtain
$x^n=\rho_H(x^n)=y^n$. And the Unique Root property for $G$ implies that $x=y \in H$. Thus $K \subseteq H$.
\end{proof}

After writing down the proof of the
next technical result (Lemma \ref{lem:inter_conj_to_spec_sbgps}), the author learned that it
has already been established by A. Duncan, I. Kazachkov and V. Remeslennikov in their
recent paper \cite[Prop. 2.6]{Dun-Kaz-Rem}.
However, the proof presented here is somewhat different, and the author
decided to keep it in this work for completeness.

\begin{lemma} \label{lem:inter_conj_to_spec_sbgps} Let $G$ be a right angled Artin group associated
to a finite graph $\Gamma$ with vertex set $\mathcal{V}$. Suppose that ${\mathcal{S}}, {\mathcal{T}} \subseteq \mathcal{V}$ and $g \in G$.
Then there are ${\mathcal{P}} \subseteq {\mathcal{T}}$ and $h \in \langle {\mathcal{T}} \rangle$ such that
$g \langle {\mathcal{S}} \rangle g^{-1} \cap \langle {\mathcal{T}} \rangle= h \langle {\mathcal{P}} \rangle h^{-1}$ in $G$. Consequently,
the intersection of conjugates of two special subgroups in $G$ is a conjugate of a special subgroup of $G$.
\end{lemma}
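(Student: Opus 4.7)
The plan is to induct on the length $|g|$ of the graphically reduced form of $g$. The base case $g=1$ is immediate from Remark~\ref{rem:inter_special-sbgps}, which gives $\langle\mathcal S\rangle\cap\langle\mathcal T\rangle=\langle\mathcal S\cap\mathcal T\rangle$; I take $\mathcal P:=\mathcal S\cap\mathcal T$ and $h:=1$. For the inductive step I consider two easy reductions that exploit cancellation ``visible'' to the ambient subgroups. If some letter $t\in\FL(g)\cap\mathcal T^{\pm 1}$ exists, write $g=tg'$ with $|g'|=|g|-1$; since $t\in\langle\mathcal T\rangle$, conjugation by $t$ preserves $\langle\mathcal T\rangle$, and
\[
g\langle\mathcal S\rangle g^{-1}\cap\langle\mathcal T\rangle=t\bigl(g'\langle\mathcal S\rangle(g')^{-1}\cap\langle\mathcal T\rangle\bigr)t^{-1}.
\]
The induction hypothesis applied to $g'$ yields $\mathcal P\subseteq\mathcal T$ and $h'\in\langle\mathcal T\rangle$, after which I set $h:=th'\in\langle\mathcal T\rangle$. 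Symmetrically, if some $s\in\LL(g)\cap\mathcal S^{\pm 1}$ exists, then $g=g's$ and $g\langle\mathcal S\rangle g^{-1}=g'\langle\mathcal S\rangle(g')^{-1}$, so induction on $g'$ again applies directly.

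The main obstacle is the remaining ``minimal'' case: $\FL(g)\cap\mathcal T^{\pm 1}=\emptyset$ and $\LL(g)\cap\mathcal S^{\pm 1}=\emptyset$ simultaneously. In this situation I aim for the sharper identity
\[
g\langle\mathcal S\rangle g^{-1}\cap\langle\mathcal T\rangle=\bigl\langle\mathcal S\cap\mathcal T\cap\star(\supp(g))\bigr\rangle,
\]
so that $h:=1$ and $\mathcal P:=\mathcal S\cap\mathcal T\cap\star(\supp(g))\subseteq\mathcal T$ do the job. The inclusion ``$\supseteq$'' is transparent, because any $v\in\mathcal S\cap\mathcal T$ commuting with every letter of $g$ satisfies $v=gvg^{-1}$. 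For the converse, write a typical element as $x=gyg^{-1}\in\langle\mathcal T\rangle$ with $y\in\langle\mathcal S\rangle$, and fix graphically reduced words $W_x, W_g, W_y$ representing $x,g,y$. The two minimality conditions rule out cancellation at either seam, so $W_xW_g$ and $W_gW_y$ are already graphically reduced and both represent $xg=gy$. By Green's theorem they have the same multiset of letters; removing the letters of $W_g$ from both sides identifies the multiset of labels of $W_x$ with that of $W_y$, and since these labels live in $\mathcal T^{\pm 1}$ and $\mathcal S^{\pm 1}$ respectively, I conclude $\supp(x)\subseteq\mathcal S\cap\mathcal T$.

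The subtler half is upgrading this to $\supp(x)\subseteq\star(\supp(g))$, which I plan to handle by a secondary induction on $|x|$. Pick any $a\in\FL(x)$: since $W_xW_g$ is graphically reduced, $a\in\FL(xg)=\FL(gy)$. The vertex of $a$ lies in $\mathcal T$, but $\FL(g)\cap\mathcal T^{\pm 1}=\emptyset$ rules out $a\in\FL(g)$, so the occurrence of $a$ witnessing $a\in\FL(gy)$ must come from $W_y$; bringing it to the front of $W_gW_y$ through Green-style commutation swaps requires $a$ to commute with every letter of $W_g$, placing the vertex of $a$ in $\star(\supp(g))$. Peeling off this letter (using that it commutes with $g$ and lies in $\mathcal S\cap\mathcal T$) writes $x=ax'$ with $x'=g(a^{-1}y)g^{-1}\in\langle\mathcal T\rangle\cap g\langle\mathcal S\rangle g^{-1}$ and $|x'|<|x|$, and the secondary induction finishes Case~3. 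The final assertion of the lemma is then a formal consequence: conjugating the established equality, the intersection of any two conjugates of special subgroups is itself a conjugate of a special subgroup of $G$.
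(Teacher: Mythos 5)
Your proof is correct, and it takes a genuinely different (and in some respects sharper) route than the paper's. Both arguments begin with the reduction when $\FL(g)\cap\mathcal T^{\pm 1}\neq\emptyset$, but the paper then also reduces whenever $\LL(g)\cap\bigl(\mathcal S\cup\star(\mathcal S)\bigr)^{\pm 1}\neq\emptyset$, i.e.\ it strips last letters lying in $\star(\mathcal S)$ as well as in $\mathcal S$, whereas you only strip those in $\mathcal S^{\pm 1}$. Because the paper reduces more aggressively, its residual case is more constrained, and there it argues indirectly: it shows that no element of full support $\mathcal S$ survives conjugation into $\langle\mathcal T\rangle$, deduces that the intersection is covered by the finitely many subgroups $g\langle\mathcal S\setminus\{s\}\rangle g^{-1}\cap\langle\mathcal T\rangle$, invokes B.~Neumann's covering theorem to find one of finite index, and finishes with Lemma~\ref{lem:special_sbgp_not_f_i} (which rests on the Unique Root property). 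You instead prove the explicit identity $g\langle\mathcal S\rangle g^{-1}\cap\langle\mathcal T\rangle=\langle\mathcal S\cap\mathcal T\cap\star(\supp(g))\rangle$ in your residual case, with an induction being run a second time inside it; this avoids B.~Neumann's theorem and the Unique Root property entirely and yields a concrete description of the intersection, which is a genuine gain. One small point worth spelling out: your ``same multiset of letters'' step is slightly stronger than the statement of Green's theorem quoted in the paper (which is about length and set-theoretic support), but it is valid because any two graphically reduced words representing the same element differ only by commutation moves, and these preserve the multiset of signed letters (equivalently, once you know $W_xW_g$ and $W_gW_y$ are both graphically reduced, $|x|+|g|=|g|+|y|$ gives $|x|=|y|$, and the commutation-equivalence of the two words lets you cancel the $W_g$ contribution from the multisets). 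With that noted, the inductions on $|g|$ and then on $|x|$ close correctly, and the final reduction to the general statement about conjugates of special subgroups is formal.
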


\begin{proof} We will use induction on $(|\mathcal S|+|g|)$, where $|\mathcal S|$ denotes the cardinality of
$\mathcal S$. If $\mathcal S = \emptyset$, then $\langle {\mathcal{S}} \rangle=\{1\}$ and the statement
is trivial. If $|g|=0$, i.e., $g=1$ in $G$, then
$g \langle {\mathcal{S}} \rangle g^{-1} \cap \langle {\mathcal{T}} \rangle=
 \langle {\mathcal{S}} \rangle  \cap \langle {\mathcal{T}} \rangle= \langle \mathcal{S} \cap
{\mathcal{T}} \rangle$ by Remark \ref{rem:inter_special-sbgps}.

Thus we can assume that $\mathcal S \neq \emptyset$ and $g \neq 1$, $n:=|\mathcal S|+|g|\ge 2$,
and the claim has been proved for all $\mathcal{S}$ and $g$ with  $|\mathcal S|+|g| < n$.

If there is $a \in \FL(g) \cap \mathcal{T}^{\pm 1}$, set $f:=a^{-1}g$.
Then  $|f| <|g|$, $a \langle \mathcal T \rangle a^{-1}= \langle \mathcal T \rangle$ and
$$g \langle {\mathcal{S}} \rangle g^{-1} \cap \langle {\mathcal{T}} \rangle=
a\left( f \langle {\mathcal{S}} \rangle f^{-1} \cap \langle {\mathcal{T}} \rangle\right)
a^{-1}= (ah) \langle \mathcal P \rangle (ah)^{-1}
$$ for some $h \in \langle \mathcal T \rangle $ and some $\mathcal P \subseteq \mathcal T$
by the induction hypothesis.

If, on the other hand, there is $b \in \LL(g)  \cap \bigl( \mathcal{S} \cup
\star(\mathcal S) \bigr)^{\pm 1}$, set $f:=gb^{-1}$.
Then   $|f| <|g|$, $b \langle \mathcal S \rangle b^{-1}= \langle \mathcal S \rangle$ and
$g \langle {\mathcal{S}} \rangle g^{-1} \cap \langle {\mathcal{T}} \rangle=
f \langle {\mathcal{S}} \rangle f^{-1} \cap \langle {\mathcal{T}} \rangle$ and
we can apply the induction hypothesis once again.

Therefore, we can suppose that $\FL(g) \cap \mathcal{T}^{\pm 1}=\emptyset$ and
$\LL(g)  \cap \bigl( \mathcal{S} \cup \star(\mathcal S) \bigr)^{\pm 1}=\emptyset$.
We assert that in this case \begin{equation}
\label{eq:reduction}
 g \langle {\mathcal{S}} \rangle g^{-1} \cap \langle {\mathcal{T}} \rangle=
\bigcup_{s \in \mathcal{S}} \left(g\left\langle {\mathcal{S}\setminus \{s\}} \right\rangle g^{-1}
\cap \langle {\mathcal{T}} \rangle \right).\end{equation}
Indeed, if \eqref{eq:reduction} is false, then there exist $x \in \langle \mathcal S \rangle$ and
$y \in \langle \mathcal T \rangle$ such that $\supp(x)=\mathcal S$ and $gxg^{-1}=y$ in $G$.

Choose graphically reduced words $W,X$ and $Y$ representing in $G$ the elements
$g,x$ and $y$ respectively, so that $\supp(X)=\mathcal S$ and  $\supp(Y) \subseteq \mathcal T$.
Let $a$ be the first letter of $W$, then $a=v^{\pm 1}$ for some $v \in \mathcal V$.
According to our assumptions,
$v \in \supp(WXW^{-1}) \setminus\supp(Y)$ and $WXW^{-1}=Y$ in $G$.
Hence the left-hand side of the latter equality
cannot be graphically reduced.

Note that no letter $a$ of $W$ can be cancelled with a letter of $X$ in the word $WXW^{-1}$,
because this would mean
that $a \in \supp(X)^{\pm 1}=\mathcal S^{\pm 1}$ and
$a$ commutes with the suffix of $W$ after it, hence $a \in \LL(g) \cap \mathcal{S}^{\pm 1}=\emptyset$.
Similarly, no letter from $X$ can cancel with a letter from $W^{-1}$, therefore
a reduction
in $WXW^{-1}$ can occur only from the presence of a subword $c U c^{-1}$,
where $c$ is a letter from the initial copy of $W$ and $U$ contains $X$ as a subword. Thus,
$c=w^{\pm 1}$ for some $w \in \mathcal V$,
$\supp(W) \subseteq \supp(U) \subseteq \star(w)$.
Consequently, $\mathcal S \subseteq \star(w)$,
and $c$ is a last letter of $g$, because it commutes with the suffix of $W$ after it.
This implies that $c \in \LL(g) \cap \star(\mathcal S)^{\pm 1} \neq \emptyset$ contradicting
to our assumption.

Therefore  \eqref{eq:reduction} is true, implying that the group
$K:= g \langle {\mathcal{S}} \rangle g^{-1} \cap \langle {\mathcal{T}}\rangle \le G$
is covered by a finite union of its subgroups. A classical theorem of B. Neumann \cite{Neumann}
claims that in this case one of these subgroups must have finite index in $K$. Thus
there is $s_0 \in \mathcal S$ such that
$|K: \left(g\left\langle {\mathcal{S}\setminus \{s_0\}} \right\rangle g^{-1}
\cap \langle {\mathcal{T}} \rangle \right)|<\infty$. Using the induction hypothesis, we can find
$\mathcal P \subseteq \mathcal T$ and $h \in\langle \mathcal T \rangle$ such that
$g\left\langle {\mathcal{S}\setminus \{s_0\}} \right\rangle g^{-1}
\cap \langle {\mathcal{T}} \rangle = h \langle\mathcal P \rangle h^{-1}$. Therefore
$h \langle\mathcal P \rangle h^{-1} \le K$ and $|K:h \langle\mathcal P \rangle h^{-1}|< \infty$.
Hence Lemma \ref{lem:special_sbgp_not_f_i} can be applied to achieve the required equality
$K=h \langle\mathcal P \rangle h^{-1}$.
\end{proof}

Let us recall a few more facts about right angled Artin groups.

An element $g \in G$ is said to be \textit{A-cyclically reduced} if it cannot be written as
$g=a h a^{-1}$, where $a \in \mathcal{V}^{\pm 1}$ and $|h|=|g|-2$ (we have added
the letter ``A'' to avoid confusion with a similar notion for special HNN-extensions
introduced in Section \ref{sec:spec_HNN}). In the paper \cite{Servat}
H. Servatius proved that for every element $g$ of a right angled Artin group $G$ there exists
a unique A-cyclically reduced element $h$ such that $g=fhf^{-1}$ for some $f \in G$ with $|g|=|h|+2|f|$.
In particular, $\supp(h) \subseteq \supp(g)$.
Therefore, if $g \in G$ is not A-cyclically reduced then $|g^2| = |f h^2 f^{-1}| \le 2|h|+2|f|<2|g|$.
Thus we obtain the following

\begin{rem}\label{rem:non-cyc_red} If an element $g \in G$ is not A-cyclically reduced, then for
any graphically reduced word $W$ representing
$g$ in $G$, the word $W^2 \equiv WW$ cannot be graphically reduced.
\end{rem}

Another consequence of the above theorem of Servatius is that every given element of $G$ is
conjugate to a unique (up to a cyclic permutation) A-cyclically reduced element.
In particular, we can make

\begin{rem}\label{rem:A-cyc_red_uniqueness} If the elements $g, h \in G$ are A-cyclically reduced
and conjugate in $G$, then $\supp(g)=\supp(h)$.
\end{rem}

A special subgroup $A$ of the right angled Artin group $G$ is said to be \textit{maximal}
if $A=\langle \mathcal S \rangle$ for some maximal proper subset  $\mathcal S$ of $\mathcal V$
(i.e., if $|\mathcal S|= |\mathcal V|-1$).

\begin{lemma}\label{lem:el_in_RAAG_non_conj_into_max_spec} For any non-trivial
element $g \in G$ there is a maximal special subgroup $A$ in $G$ such that $g \notin A^G$.
\end{lemma}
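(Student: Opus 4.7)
The plan is to use the theorem of Servatius (quoted just before Remark \ref{rem:non-cyc_red}) that every element of $G$ is conjugate to an A-cyclically reduced element, combined with Remark \ref{rem:A-cyc_red_uniqueness}, which says that A-cyclically reduced conjugate elements have equal supports. The candidate maximal special subgroup will be obtained by removing a single vertex from $\mathcal V$ that appears in the support of a cyclically reduced conjugate of $g$.

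More precisely, first I would apply Servatius's theorem to write $g = f h f^{-1}$ for some $f \in G$ and some A-cyclically reduced element $h \in G$ with $\supp(h) \subseteq \supp(g)$. Since $g \neq 1$, also $h \neq 1$, so $\supp(h) \neq \emptyset$; fix any $v \in \supp(h)$ and set $\mathcal S := \mathcal V \setminus \{v\}$ and $A := \langle \mathcal S \rangle$. By construction $A$ is a maximal special subgroup of $G$. Since $g^G = h^G$, proving $g \notin A^G$ is the same as proving $h \notin A^G$.

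Suppose, for contradiction, that $h = p a p^{-1}$ for some $a \in A$ and $p \in G$. Applying Servatius's theorem to $a$, we find an A-cyclically reduced element $h' \in G$, conjugate to $a$ in $G$, with $\supp(h') \subseteq \supp(a) \subseteq \mathcal S$. Then $h$ and $h'$ are both A-cyclically reduced and conjugate in $G$, so Remark \ref{rem:A-cyc_red_uniqueness} yields $\supp(h) = \supp(h') \subseteq \mathcal S$. This contradicts $v \in \supp(h)$ and $v \notin \mathcal S$, so $h \notin A^G$, completing the argument.

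There is no serious obstacle; the only point needing a moment of care is the degenerate case $|\mathcal V| \le 1$. If $\mathcal V = \emptyset$ there is nothing to prove since $G = \{1\}$, and if $\mathcal V = \{v\}$ the only maximal proper subset of $\mathcal V$ is empty, so $A = \{1\}$ and trivially $g \notin A^G$ for any $g \neq 1$. In all remaining cases the argument above applies verbatim.
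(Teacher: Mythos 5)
Your proof is correct and uses essentially the same two ingredients as the paper's: Servatius's theorem (existence of an A-cyclically reduced conjugate whose support is contained in the original support) and Remark \ref{rem:A-cyc_red_uniqueness} (conjugate A-cyclically reduced elements have equal support). The paper frames the argument slightly differently — assuming for contradiction that $g$ is conjugate into \emph{every} maximal special subgroup $A_1,\dots,A_n$ and then deriving a contradiction from two of the resulting A-cyclically reduced representatives — whereas you directly construct the subgroup $A=\langle \mathcal V\setminus\{v\}\rangle$ from a vertex $v$ in the support of the A-cyclic reduct of $g$; the mathematical content is identical.
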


\begin{proof} Arguing by contradiction, suppose that there are $f_1,\dots,f_n \in G$
such that $g \in \bigcap_{i=1}^n f_iA_if_i^{-1}$ in $G$, where $A_1,\dots,A_n$ is the list
of all maximal special subgroups of $G$. 
Then for each $i \in \{1,\dots,n\}$, there is an A-cyclically reduced element $a_i \in A_i \setminus \{1\}$
such that $g$ is conjugate to $a_i$ in $G$. Choose any letter $v \in \supp(a_1)$ and take
$j \in \{1,\dots,n\}$ such that $A_j=\langle \mathcal V \setminus \{v\}\rangle$. Then
$a_1$ must be conjugate to $a_j$ in $G$, which is impossible by Remark \ref{rem:A-cyc_red_uniqueness},
because $v \in \supp(a_1)\setminus \supp(a_j)$ (as $\supp(a_j) \subseteq \mathcal V \setminus \{v\}$).
This contradiction proves the lemma.
\end{proof}

Recall that an automorphism $\phi$ of a group $G$ is called \textit{pointwise inner} if
for each $g \in G$ there exists $f=f(g) \in G$, such that
$\phi(G)=fgf^{-1}$ in $G$. Let $Aut_{pi}(G)$ denote
the set of all pointwise inner automorphisms of $G$. It is easy to see that
$Aut_{pi}(G)$ is a normal subgroup of the full automorphism group $Aut(G)$, containing the
subgroup of all inner automorphisms $Inn(G)$.

We are now going to prove that the the group of pointwise inner automorphisms of a right angled Artin
group $G$ coincides with the group of inner automorphisms of $G$.

\begin{prop}\label{prop:pi=inn} For any right angled Artin group $G$ we have $Aut_{pi}(G)=Inn(G)$.
\end{prop}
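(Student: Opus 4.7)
The plan is to proceed by strong induction on $n=|\mathcal V|$, the number of vertices of $\Gamma$. The base case $n\le 1$ is immediate since then $G$ is abelian and $Aut_{pi}(G)=\{\mathrm{id}\}$. For the inductive step, fix $\phi\in Aut_{pi}(G)$; the aim is to produce $g\in G$ so that $\phi$ is conjugation by $g$, by composing $\phi$ with inner automorphisms until it becomes the identity.

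I would first dispose of the case in which $\Gamma$ has a central vertex $v$, i.e.\ $\star(v)=\mathcal V$. Then $v$ is central in $G$, so its conjugacy class is $\{v\}$, forcing $\phi(v)=v$. Writing $G=\langle v\rangle\times G'$ with $G'=\langle\mathcal V\setminus\{v\}\rangle$ and decomposing any conjugator as $v^kg'$ with $g'\in G'$, one sees that $\phi$ restricts to a pointwise inner automorphism of $G'$. Induction supplies $h\in G'$ such that $\phi|_{G'}$ is conjugation by $h$, and since $h$ commutes with $v$, the whole $\phi$ is conjugation by $h$.

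Otherwise $\star(v)\subsetneq\mathcal V$ for every $v\in\mathcal V$. Pick any $v$ and, after composing $\phi$ with an inner automorphism, assume $\phi(v)=v$. Then $\phi$ stabilises $\langle\star(v)\rangle=C_G(v)$ setwise. The key sub-claim is that $\phi|_{\langle\star(v)\rangle}$ is a pointwise inner automorphism \emph{of} $\langle\star(v)\rangle$, i.e.\ for each $x\in\langle\star(v)\rangle$, $\phi(x)$ is conjugate to $x$ within $\langle\star(v)\rangle$ and not merely in $G$. This reduces to the \emph{parabolic-conjugacy-closed} property of right angled Artin groups: elements of a special subgroup $H=\langle\mathcal S\rangle$ that are conjugate in $G$ are already conjugate in $H$. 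I would derive this from Servatius's theorem, using that A-cyclically reduced words in $H$ remain A-cyclically reduced in $G$ (since the $\FL$ and $\LL$ sets are intrinsic to the element), and that the cyclic permutations realising the conjugacy use only letters from the element, hence stay in $H$. The inductive hypothesis, applied to $\langle\star(v)\rangle$ (which has $|\star(v)|<n$ generators), then yields some $h\in\langle\star(v)\rangle$ such that $\phi|_{\langle\star(v)\rangle}$ is conjugation by $h$; composing $\phi$ with conjugation by $h^{-1}$ (which commutes with $v$ since $h$ does), I may assume $\phi$ fixes every element of $\langle\star(v)\rangle$.

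The hard part will be showing that after these reductions $\phi$ must also fix every remaining vertex $w\in\mathcal V\setminus\star(v)$. Writing $\phi(w)=g_w w g_w^{-1}$, the goal is $g_w\in\langle\star(w)\rangle=C_G(w)$. For each $u\in\star(v)\cap\star(w)$ the identities $\phi(u)=u$ and $[\phi(w),\phi(u)]=\phi([w,u])=1$ yield $g_w^{-1}ug_w\in\langle\star(w)\rangle$; combining this with Lemma \ref{lem:inter_conj_to_spec_sbgps} (which identifies $\langle\star(w)\rangle\cap g_w\langle\star(w)\rangle g_w^{-1}$ as a conjugate of a special subgroup containing $\langle\star(v)\cap\star(w)\rangle$), and applying parabolic-conjugacy-closedness to deduce $\phi(w)=\rho w\rho^{-1}$ for some $\rho$ in the special subgroup $\langle T\rangle$, where $T\supseteq\{v,w\}\cup(\star(v)\cap\star(w))$ consists of vertices commuting with all of $\star(v)\cap\star(w)$, one should be able to pin $g_w$ down modulo $\langle\star(w)\rangle$ and conclude $\phi(w)=w$. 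Iterating this over all such $w$ gives $\phi=\mathrm{id}$, so the original $\phi$ is inner, establishing $Aut_{pi}(G)=Inn(G)$.
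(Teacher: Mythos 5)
Your inductive strategy is genuinely different from the paper's (which works directly with graphically reduced words and A-cyclic reduction, choosing $\mu \in Inn(G)\phi$ with maximal fixed vertex set and deriving a contradiction from the length of an explicitly constructed element $TSTw$). Your approach is appealing, and several of its ingredients are sound: the reduction to the case with no central vertex, the fact that $\phi(v)=v$ forces $\phi$ to stabilize $\langle\star(v)\rangle = C_G(v)$, and the ``parabolic conjugacy closed'' property of special subgroups (which does follow from the Servatius normal form as you sketch). But there is a genuine gap at the final step.

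After you compose with conjugation by $h^{-1}$ so that the reduced $\phi$ fixes $\langle\star(v)\rangle$ pointwise, you claim that $\phi$ must then fix every remaining vertex $w$, so $\phi=\mathrm{id}$. This is false. An inner automorphism $\mathrm{conj}_z$ with $z$ in the (typically non-trivial) free abelian subgroup $\langle\star(\star(v))\rangle = C_G(\langle\star(v)\rangle)$ fixes $\langle\star(v)\rangle$ pointwise but need not fix vertices outside $\star(v)$. Concretely, take $\Gamma$ a $4$-cycle on $\{a,b,c,d\}$ (so $G \cong F_2\times F_2$), let $v=a$ and $\phi=\mathrm{conj}_a$. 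Then $\star(a)=\{a,b,d\}$ and $\phi$ fixes $\langle a,b,d\rangle$ pointwise, yet $\phi(c)=aca^{-1}\ne c$; your final claim ``$\phi(w)=w$'' fails. The source of the trouble is that the inductive hypothesis only determines $h$ up to the center $Z(\langle\star(v)\rangle)=\langle\star(\star(v))\rangle$ of $\langle\star(v)\rangle$, and you have no mechanism for selecting the ``correct'' representative. To repair the argument you would need to prove the weaker (but sufficient) statement that after the reduction $\phi$ is conjugation by \emph{some} element of $\langle\star(\star(v))\rangle$, which means showing that the various conjugators $g_w$ (each only determined modulo $\langle\star(w)\rangle$) can be aligned to a single $z$; the sentence ``one should be able to pin $g_w$ down modulo $\langle\star(w)\rangle$ and conclude $\phi(w)=w$'' is precisely where the missing work lies, and as written the conclusion is wrong rather than merely unjustified.
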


\begin{proof} Suppose there exists $\phi \in Aut_{pi}(G) \setminus Inn(G)$.
Since $\phi$
maps every generator of $G$ to its conjugate, we can replace $\phi$ by its composition with an inner
automorphism of $G$ to assume that $\phi(v)=v$ for some $v \in \mathcal V$.

Then there is an automorphism $\mu$ lying in the right coset
$Inn(G) \phi \subset Aut_{pi}(G)$ such that the set ${\rm Fix}(\mu)$ is maximal,
where ${\rm Fix}(\mu)$ denotes the subset of all elements in $\mathcal V$ fixed by $\mu$.
Note that ${\rm Fix}(\mu) \subsetneqq \mathcal V$ because $\mu \neq id_G$ (the identity map $id_G$
of $G$ does not belong to the coset $Inn(G) \phi$ by our assumption). And
${\rm Fix}(\mu) \neq \emptyset$ since $v \in {\rm Fix}(\phi) \neq \emptyset$.

Let ${\rm Fix}(\mu)=\{v_1,\dots,v_k\} \subset \mathcal V$ and pick any $w \in \mathcal{V}\setminus
{\rm Fix}(\mu)$. By the assumptions, there is $f \in G$ such that $\mu(w)=fwf^{-1}$.
Choose a shortest element $f$ with this property.


 First, suppose that there is $a \in \FL(f) \cap
\left(\bigcap_{i=1}^k \star(v_i)\right)^{\pm 1}$.
 Then $a \in \bigcap_{i=1}^k C_G(v_i)$, hence after
defining a new automorphism $\lambda \in Inn(G) \phi$  by
$\lambda(g):=a^{-1} \mu(g) a$ for all $g \in G$, we will have ${\rm Fix}(\mu) \subseteq {\rm Fix}(\lambda)$,
$\lambda(w)=(a^{-1}f) w (a^{-1} f)^{-1}$ and $|a^{-1} f|<|f|$.  Note that $a^{-1} f \neq 1$
because, otherwise, we would have $w \in {\rm Fix}(\lambda)$ contradicting to the maximality of ${\rm Fix}(\mu)$.
Thus we can replace $\mu$ with $\lambda$, making
$f$ shorter. We can continue doing the same for $\lambda$, and so on. Eventually we will end up
with an automorphism $\mu \in Inn(G) \phi$ (we keep the same notation for it,
although the actual automorphism
may be different) such that $\mu(w)=f w f^{-1}$, $f$ is a shortest element
with this property,  $f \neq 1$, 
and 
\begin{equation}
\label{eq:FL(u)-empty_inter}
\FL(f) \cap \star\left({\rm Fix}(\mu)\right)^{\pm 1}= \FL(f) \cap
\left(\bigcap_{i=1}^k \star(v_i)\right)^{\pm 1}=\emptyset .\end{equation}

Denote $\mathcal S := {\rm Fix}(\mu) \cap \FL(f)^{\pm 1} \subset \mathcal V$.
Using \eqref{eq:FL(u)-empty_inter}, we see that for every $s \in \mathcal S$ there is
$v(s) \in {\rm Fix}(\mu)$ such that $v(s) \notin \star(s)$.
Note that in this case
$v(s) \notin \FL(f)^{\pm 1}$
because $s \in \FL(f)^{\pm 1}$ and any two elements of $\FL(f)^{\pm 1}$ commute.
Set $\mathcal T := \{v(s)~|~s \in \mathcal S\} \subseteq {\rm Fix}(\mu) \setminus \FL(f)^{\pm 1}$,
and write ${\rm Fix}(\mu)=\{t_1,\dots,t_l\} \sqcup \{s_1,\dots,s_m\}$, where
$\mathcal T=\{t_1,\dots,t_l\}$ (consequently, $\mathcal S \subseteq \{s_1,\dots,s_m\}$).
Finally, define the words $T:=t_1 \dots t_l$, $S:=s_1 \dots s_m$ and let $g$ be the element
represented by the word by $TST w$ in $G$.

Since $\mu \in Aut_{pi}(G)$, there exists $x \in G$ such that $\mu(g)=xgx^{-1}$.
On the other hand, $\mu(g)=TST U w U^{-1}$, for some graphically reduced non-empty
word $U$ representing $f$ in $G$.
Note that the word $UwU^{-1}$ is graphically reduced (otherwise, we could make $U$, and hence $f$,
shorter) and $w \notin \supp(TST)={\rm Fix}(\mu)$.
Therefore the only possible reduction which could occur in the word
$W \equiv TST UwU^{-1}$ would
arise from cancellation of a letter from $TST$ with  a letter from $U$ or $U^{-1}$.
However, no letter $t$ from the first copy of $T$ could cancel with a letter from $U$ or $U^{-1}$
in $W$, because $\supp(S) \not\subset \star(t)$ (as $t=v(s)$ for some $s \in \mathcal S$
and $s \in \supp(S)\setminus \star(v(s))$). On the other hand, if some letter $t$ from the
second copy of $T$ in $W$ cancelled with some letter $a$ from $U$, then $a \in \FL(f)$,
but this would contradict to $t \in \mathcal T$ and $\mathcal T \cap \FL(f)^{\pm 1}=\emptyset$.
If this letter $t$ cancelled with a letter $a$ from $U^{-1}$ in $W$, then we would have
$\supp(U) \subset \star(t)$, which is impossible as $t=v(s)$ for some $s \in \supp(U)$ and
$s \notin \star(v(s))$. Therefore, if $W$ is not graphically reduced, then a letter $s$ from $S$
must cancel with a letter $a=s^{-1}$ from $U$ or $U^{-1}$, in particular,
$\mathcal T=\supp(T) \subseteq \star(s)$, implying that $s \notin \mathcal S$.
However, if $s$ cancels with a letter from $U$, we see that $a \in \FL(f)$,
hence $s \in \mathcal S$, which is false. And if $s$ cancelled with a letter $a$
from $U^{-1}$, then we would get a contradiction with the fact that $UwU^{-1}$ is graphically
reduced, because $a^{-1}=s$ is a letter of $U$ lying between $s$ and $a$ in $W$.

Therefore $W \equiv TSTUwU^{-1}$ is a graphically reduced word representing $\mu(g)$ in $G$.
Consequently, $|\mu(g)|=2\|T\|+\|S\|+2\|U\|+1 > 2\|T\|+\|S\|+1=|g|$ since $\|U\|>0$ ($\|U\|$
denotes the length of the word $U$). But $\mu(g)=xgx^{-1}$, hence $\mu(g)$ is not A-cyclically reduced.
By Remark \ref{rem:non-cyc_red}, a reduction can be made in the word
$W^2 \equiv TSTUwU^{-1}TSTUwU^{-1}$. But an argument similar to the above shows that this is impossible.

Thus we have arrived to a contradiction, which proves that $Aut_{pi}(G)=Inn(G)$, as needed.
\end{proof}

\begin{rem}\label{rem:end_RAAG-inner} The reader could have noticed that in the proof
of Proposition \ref{prop:pi=inn} we have actually shown more than it claims. In fact, we
have proved that any endomorphism $\phi$ of a right angled Artin group $G$, which
maps each conjugacy class of $G$ into itself, is an inner automorphism of $G$.
\end{rem}


\section{Special HNN-extensions}\label{sec:spec_HNN}
The purpose of this section is to develop necessary tools for dealing with special
HNN-extensions.

Let $A$ be a group and let $H \le A$ be a subgroup.

\begin{df}\label{df:spec_HNN}
The \textit{special HNN-extension of $A$ with respect to $H$} is the group $G$ given by
the presentation \begin{equation} \label{eq:spec_HNN-def}
G=\langle A,t\,\|\,tht^{-1}=h~\mbox{ for every } h \in H \rangle.\end{equation}
\end{df}

In other words, the special HNN-extension $G$ is a particularly simple HNN-extension of $A$, where both of
the associated subgroups are equal to $H$ and the isomorphism between these subgroups is the identity map
on $H$.

Let $\Gamma$ be a
finite graph with the set of vertices  $\mathcal V$ of cardinality $n \in N$.
The reason why we are interested in special HNN-extensions is the the observation below.

\begin{rem}\label{rem:constr_of_RAAG}
Let $G$ be the right angled Artin group associated to $\Gamma$. Then $G$ can be obtained from the trivial
group via a sequence of special HNN-extensions. More precisely, there are right angled Artin groups
$\{1\}=G_0$, $G_1$, $\dots$, $G_n=G$ such that
$G_{i+1}$ is a special HNN-extension of $G_i$ with respect to some
special subgroup $H_i \le G_i$ for every $i=0,\dots,n-1$.
\end{rem}

The groups $G_i$ can be constructed as follows.
Let $\mathcal V=\{v_1,\dots,v_n\}$ and denote
$\mathcal S_{i+1}:=\{v_1,\dots,v_i\} \cap \star(v_{i+1}) \subset \mathcal V$ for $i=1,\dots,n-1$.
Set $G_0:=\{1\}$, $G_1:=\langle v_1 \rangle$ (the infinite cyclic group generated by $v_1$),
and
$$G_{i+1}:=\langle G_i,v_{i+1} \, \|\, v_{i+1} s v_{i+1}^{-1}=s ~\mbox{ for every } s \in \mathcal{S}_{i+1}\rangle, ~i=1,\dots,n-1.$$ Clearly, $G=G_n$ and for each $i$, $G_{i+1}$
is a special HNN-extension of $G_i$ with respect to
the special subgroup $\langle \mathcal S_{i+1}\rangle$ of $G_i$, and
$G_i$ is a special subgroup of $G$ generated by $\{v_1,\dots,v_i\}$.

\begin{rem}\label{rem:RAAG_spec_HNN_over_max_sbgp} If $G$ is a right angled Artin group
associated to $\Gamma$, then for any maximal special subgroup $A \le G$, $G$
splits as a special $HNN$-extension
\eqref{eq:spec_HNN-def} of $A$ with respect to a certain special subgroup $H$ of $A$.
\end{rem}

Indeed,  if $A=\langle \mathcal S \rangle$, where $\mathcal S \subset \mathcal V$
and $\mathcal V=\mathcal S \sqcup \{t\}$, set $\mathcal U:=\star(t) \setminus \{t\} \subseteq \mathcal S$.
Then $G=\langle A,t\,\|\,tut^{-1}=u,~\forall\, u \in \mathcal U \rangle$ is
a special HNN-extension of $A$ with respect to the subgroup
$H:=\langle \mathcal U  \rangle \le A$.

Special HNN-extensions are usually much easier to deal with than general HNN-extensions.
Throughout this section we will limit ourselves to considering only the former ones,
even though most of the statements can be re-formulated in the general situation.

Let $G$ be the special HNN-extension given by \eqref{eq:spec_HNN-def}. von Dyck's Theorem
yields the following \textit{Universal Property} of special HNN-extensions, which will
be important for us:

\begin{rem}\label{rem:univ_spec_HNN} For any group $B$, every homomorphism
$\psi:A \to B$ can be naturally
extended to a homomorphism $\tilde\psi: G \to P$, where
$P:=\langle B, s \,\|\, s x s^{-1}=x, ~\forall\, x\in \psi(H)\rangle$
is the special HNN-extension of $B$ with respect to $\psi(H)$, so that
$\tilde \psi|_A=\psi$ and $\tilde\psi(t) = s$.
\end{rem}

\begin{lemma} \label{lem:ker_of_tilde_psi} In the notations of Remark \ref{rem:univ_spec_HNN},
$\ker(\tilde\psi)=N$, where $N\lhd G$ is the normal closure of $\ker(\psi) \le A\le G$ in $G$.
\end{lemma}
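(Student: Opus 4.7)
The plan splits the proof into two inclusions: $N \subseteq \ker(\tilde\psi)$ and $\ker(\tilde\psi) \subseteq N$. The first is immediate. Since $\tilde\psi|_A = \psi$, we have $\ker(\psi) \subseteq \ker(\tilde\psi)$; and because $\ker(\tilde\psi) \lhd G$, it must contain the normal closure $N$ of $\ker(\psi)$ in $G$.

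For the reverse inclusion, the plan is to construct a homomorphism $\varphi$ from a suitable HNN-extension onto $G/N$, and then verify that $\varphi \circ \tilde\psi$ coincides with the canonical projection $\pi: G \to G/N$; the inclusion $\ker(\tilde\psi) \subseteq \ker(\pi) = N$ will then follow automatically. I first reduce to the case where $B = \psi(A)$ as follows. Let $P' := \langle \psi(A), s' \,\|\, s'xs'^{-1}=x, \forall x \in \psi(H)\rangle$ be the special HNN-extension of $\psi(A)$ with respect to $\psi(H)$, with stable letter $s'$. Applying Remark \ref{rem:univ_spec_HNN} to $\psi: A \to \psi(A)$ produces an extension $\tilde\psi': G \to P'$ with $\tilde\psi'|_A = \psi$ and $\tilde\psi'(t) = s'$. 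Applying Remark \ref{rem:univ_spec_HNN} to the inclusion $\psi(A) \hookrightarrow B$ yields a homomorphism $\beta: P' \to P$ with $\beta(s') = s$; a standard normal-form argument (Britton's lemma) shows that $\beta$ is injective, because it sends any reduced sequence over $\psi(A)$ with stable letter $s'$ to a sequence in $P$ that is still reduced (the associated subgroup is $\psi(H)$ in both HNN-extensions). Since $\tilde\psi = \beta \circ \tilde\psi'$ agrees on the generating set $A \cup \{t\}$, we obtain $\ker(\tilde\psi) = \ker(\tilde\psi')$, so it suffices to prove $\ker(\tilde\psi') \subseteq N$.

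To construct $\varphi: P' \to G/N$, observe that the restriction $\pi|_A: A \to G/N$ vanishes on $\ker(\psi)$ (as $\ker(\psi) \subseteq N$), hence factors uniquely as $\pi|_A = \bar\pi \circ \psi$, where $\bar\pi: \psi(A) \to G/N$ sends $\psi(a) \mapsto aN$. Because $tht^{-1} = h$ in $G$ for every $h \in H$, the element $tN \in G/N$ satisfies
$$
(tN)\,\bar\pi(\psi(h))\,(tN)^{-1} = t h t^{-1} N = hN = \bar\pi(\psi(h)),
$$
so $tN$ centralizes $\bar\pi(\psi(H))$ in $G/N$. By von Dyck's theorem applied to the defining presentation of $P'$, the data $\bar\pi$ together with $s' \mapsto tN$ determine a homomorphism $\varphi: P' \to G/N$. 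Checking on generators, $(\varphi \circ \tilde\psi')(a) = \varphi(\psi(a)) = aN = \pi(a)$ for each $a \in A$, and $(\varphi \circ \tilde\psi')(t) = \varphi(s') = tN = \pi(t)$; hence $\varphi \circ \tilde\psi' = \pi$ on all of $G$, yielding $\ker(\tilde\psi') \subseteq \ker(\pi) = N$, as desired.

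The only mildly delicate point in this plan is the reduction to $B = \psi(A)$ via the injectivity of $\beta$, which keeps the universal-property computation for $\varphi$ clean (without it, one would have to worry about extending $\bar\pi$ from $\psi(A)$ to the whole of $B$, which need not be possible). Everything else is bookkeeping: two applications of Remark \ref{rem:univ_spec_HNN} and one application of von Dyck's theorem.
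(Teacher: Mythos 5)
Your proof is correct and follows essentially the same strategy as the paper: both arguments show $N \subseteq \ker(\tilde\psi)$ trivially, then factor $\tilde\psi$ through the natural projection $\pi\colon G \to G/N$ and use von Dyck's theorem to build a left-inverse (the paper's $\tilde\xi$, your $\varphi$) mapping back to $G/N$, which forces $\ker(\tilde\psi) \subseteq N$. The one place you diverge is the preliminary reduction to $B = \psi(A)$ via the injection $\beta\colon P' \to P$. This is a worthwhile refinement: the lemma as stated covers an arbitrary homomorphism $\psi\colon A \to B$, and without the reduction one cannot apply von Dyck directly (the target HNN-extension $P$ is generated by $B \cup \{s\}$, not merely by $\psi(A) \cup \{s\}$, so prescribing $\tilde\xi$ only on $\psi(A)$ and $s$ does not determine a map on all of $P$). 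The paper's proof applies von Dyck on $\psi(A) \cup \{s\}$ without comment, which is legitimate only when $\psi$ is surjective; that is indeed the case in every application in the paper, so nothing goes wrong downstream, but your version handles the general statement cleanly. Your Britton's-lemma check that $\beta$ is injective (a reduced $s'$-sequence over $\psi(A)$ maps to a reduced $s$-sequence over $B$, since the associated subgroup $\psi(H)$ is the same) is exactly right.
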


\begin{proof} Obviously, $N \le \ker(\tilde\psi)$, and hence $N \cap A= \ker(\tilde\psi) \cap A=\ker(\psi)$.
Let $\phi:G \to Q:=G/N$ be the natural epimorphism with $\ker(\phi)=N$. Consequently, if we define
$\theta: Q \to P$ to be the natural
epimorphism with the kernel $\phi(\ker(\tilde\psi))$, then we will have
$\tilde\psi=\theta \circ \phi$.

Observe that
$\phi(t)\phi(x)\phi(t)^{-1}=\phi(t x t^{-1})=\phi(x)$ in $Q$ for every $x \in H$,
and the map $\xi: \psi(A) \to \phi(A)$ defined by
$\xi(\psi(a)):=\phi(a)$ for all $a \in A$, is an isomorphism, since $\ker(\psi)=\ker(\phi)\cap A$.
Therefore,
by von Dyck's Theorem, there is a homomorphism $\tilde\xi: P \to Q$ such that
$\tilde\xi (\psi(a)) = \phi(a)$ for every $a \in A$ and $\tilde\xi(s)=\phi(t)$. It is easy to see that
$\tilde\xi \circ \theta: Q \to Q$ is the identity map on $Q$. Hence, $\theta$ is injective,
that is $\{1\}=\ker(\theta)=\phi(\ker(\tilde\psi))$ in $Q$, implying that
$\ker(\tilde\psi) \le \ker(\phi)=N$. Thus $\ker(\tilde\psi)=N$.
\end{proof}

\begin{lemma}\label{lem:spec_HNN_retractions} Suppose that $\rho \in End(A)$ is a retraction of
$A$ onto its subgroup $B$. Then there is a retraction $\tilde \rho_1 \in End(G)$
of $G$ onto $B \le G$ such that
$\tilde {\rho_1}|_A=\rho$.
\end{lemma}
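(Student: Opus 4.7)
The plan is to define $\tilde\rho_1$ directly by specifying its values on a generating set of $G$ and then invoke von Dyck's theorem (equivalently, the Universal Property of special HNN-extensions stated in Remark \ref{rem:univ_spec_HNN}) to conclude that these assignments extend consistently to a homomorphism.

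Concretely, I would set $\tilde\rho_1(a) := \rho(a)$ for every $a \in A$ and $\tilde\rho_1(t) := 1$. To check that this extends to a homomorphism $G \to G$, I need to verify that the images satisfy the defining relations \eqref{eq:spec_HNN-def} of $G$. The relations internal to $A$ are preserved because $\rho$ is itself a homomorphism of $A$. The HNN-relations $t h t^{-1} = h$ (for $h \in H$) are sent to $1 \cdot \rho(h) \cdot 1^{-1} = \rho(h)$, which trivially holds in $G$. Hence von Dyck's theorem supplies a well-defined homomorphism $\tilde\rho_1 : G \to G$ with $\tilde\rho_1|_A = \rho$ and $\tilde\rho_1(t) = 1$.

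It then remains to see that $\tilde\rho_1$ is a retraction onto $B$. The image $\tilde\rho_1(G)$ is generated by $\rho(A) \cup \{1\} = B$, so $\tilde\rho_1(G) \subseteq B$. Conversely, for any $b \in B$ we have $\rho(b) = b$ since $\rho$ is a retraction onto $B$; hence $\tilde\rho_1(b) = b$, so $\tilde\rho_1(G) \supseteq B$ and moreover $\tilde\rho_1 \circ \tilde\rho_1 = \tilde\rho_1$. This yields both $\tilde\rho_1(G) = B$ and $\tilde\rho_1|_B = \mathrm{id}_B$, which together mean that $\tilde\rho_1$ is the desired retraction.

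There is essentially no obstacle here: the construction is forced once one decides to send $t$ to an element of $B$, and the natural choice $t \mapsto 1$ makes the HNN-relations automatic because every relator becomes a triviality in $B$. The only point to be careful about is that we cannot appeal to Remark \ref{rem:univ_spec_HNN} directly with codomain $B$ (the remark produces a map to another special HNN-extension), so the cleanest route is the explicit application of von Dyck's theorem described above.
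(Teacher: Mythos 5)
Your proof is correct and follows essentially the same approach as the paper: define the map by $a \mapsto \rho(a)$ and $t \mapsto 1$, then invoke von Dyck's theorem to extend to an endomorphism of $G$ and observe it is a retraction onto $B$. You spell out the verification of the relations and the retraction property more explicitly, but the argument is the same.
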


\begin{proof} Define the map $\rho_1: A\sqcup\{t\} \to A \sqcup\{t\}$
by $\rho_1(a):=\rho(a)$ for each $a\in A$ and $\rho_1(t):=1$.
Then $\rho_1$ can be extended to an endomorphism $\tilde \rho_1 :G \to G$ by von Dyck's Theorem.
Obviously, $\tilde \rho_1$ is a retraction of $G$ onto $B$.
\end{proof}

Every element of the special HNN-extension $G$, given by \eqref{eq:spec_HNN-def}, is a product of the form
\begin{equation} \label{eq:word} x_0 t^{\e_1}x_1 t^{\e_2} \dots t^{\e_n}x_{n} \end{equation}
for some $n \in \N \cup \{0\}$, $x_i \in A$, $i=0,\dots,n$, and $\e_j \in \Z\setminus\{0\}$,
$j=1,\dots,n$.

The product \eqref{eq:word} is said to be \textit{reduced}, if $x_i \notin H$ for every
$i\in \{1,2,\dots,n-1\}$.
Since $t$ commutes with every element of $H$, it is easy to see that any $g \in G$
is equal to some reduced product in $G$. By Britton's Lemma (see \cite[IV.2]{L-S}) a non-empty
reduced product represents a non-trivial element in $G$. It follows, that if
two reduced products $x_0 t^{\e_1}x_1 t^{\e_2} \dots t^{\e_n}x_{n}$ and
$y_0 t^{\z_1}y_1 t^{\z_2} \dots t^{\z_m}y_{m}$ are equal in $G$, then $m=n$ and
$\e_i=\z_i$ for every $i=1,\dots,n$ (see \cite[IV.2.3]{L-S}).

Suppose that an element $g \in G$ is equal to a product $t^{\e_1}x_1 t^{\e_2} \dots t^{\e_n}x_{n}$.
Let us fix this presentation for $g$. Any product of the form
$t^{\e_k}x_k t^{\e_{k+1}} \dots t^{\e_n} x_{n} t^{\e_1} x_1 \dots
t^{\e_{k-1}} x_{k-1} \in G$, for some $k \in \{1,\dots,n\}$, is said to be a \textit{cyclic permutation}
of $g$. The product $g=t^{\e_1}x_1 t^{\e_2} \dots t^{\e_n}x_{n}$ is called \textit{cyclically reduced}
if each of its cyclic permutations is reduced.
A \textit{prefix} of $g$ is an element of the form
$t^{\e_1}x_1 t^{\e_2} \dots t^{\e_k}x_{k}$ for some $k \in \{0,1,\dots,n\}$ (if $k=0$ then
we have the empty prefix, corresponding to the trivial element of $G$). Similarly, a \textit{suffix}
of $g$ is an element of the form $t^{\e_l}x_l  \dots t^{\e_n}x_{n}$ for some
$l \in \{1,2,\dots,n+1\}$.


It is not difficult to see that every element $f \in G$ either belongs to $A^G$ in $G$
or is conjugate to some non-trivial cyclically reduced
element in $G$.

Below is the statement of Collins's Lemma (see \cite[IV.2.5]{L-S}) in the case of special HNN-extensions.

\begin{lemma}\label{lem:Collins} Suppose that $g=t^{\e_1}x_1 t^{\e_2} \dots t^{\e_n}x_{n}$ and
$f=t^{\z_1}y_1 t^{\z_2} \dots t^{\z_m}y_{m}$ are cyclically reduced in $G$, with $n\ge 1$.
Then $g \notin A^G$. And if
$f$ is conjugate to $g$ in $G$ then $m=n$ and there exist $h \in H$ and a cyclic permutation $f'$ of
$f$ such that $f'=hgh^{-1}$ in $G$.
\end{lemma}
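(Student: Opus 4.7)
The plan is to specialize the classical proof of Collins's Lemma \cite[IV.2.5]{L-S} to the present setting of a special HNN-extension, where both associated subgroups coincide with $H \le A$ and are identified by the identity map; this simplification means that the ``pinches'' in the sense of Britton's Lemma are precisely the subwords $t^{\e_i} x t^{\e_{i+1}}$ with $x \in H$ (for any signs of $\e_i, \e_{i+1}$), and each such subword collapses to $x\, t^{\e_i+\e_{i+1}}$.

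First I would establish that $g \notin A^G$. Consider the homomorphism $\alpha: G \to \Z$ defined by $\alpha(A)=0$ and $\alpha(t)=1$; if $\sum_i \e_i \neq 0$, then $\alpha(g)\neq 0=\alpha(A^G)$ and we are done. In the remaining case I would assume, toward a contradiction, that $g = w a w^{-1}$ for some $a \in A$ and some $w \in G$ of shortest possible reduced length. Writing out $w a w^{-1}$ as a product of syllables and applying Britton's Lemma, any pinch must either lie entirely inside $w$ (contradicting the reducedness of $w$), lie inside $g$ (impossible, since $g$ is cyclically reduced and $n \ge 1$), or straddle one of the interfaces $(w,g)$, $(g, w^{-1})$; the latter would allow one to push a piece of $a$ into $w$ and shorten it, contradicting minimality.

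Next I would establish the conjugacy claim by induction on the reduced length of an element $c \in G$ satisfying $f = c g c^{-1}$. For the base case $c \in A$, direct comparison of the reduced form of $c g c^{-1}$ with the reduced form $f = t^{\z_1}y_1 \dots t^{\z_m}y_m$, combined with the uniqueness of the exponent sequence in reduced HNN-words, forces $m=n$ and $\z_i=\e_i$ for every $i$; a short calculation then shows $c \in H$, yielding $f = cgc^{-1}$ with $h := c$ and $f' := f$. For the inductive step, peel off the outermost syllable $t^\delta$ of $c$: since $g$ is cyclically reduced, no interior pinch within $g$ is available, so the reduction of $c g c^{-1}$ to $f$ must consume this outermost $t^\delta$ against the first (or, symmetrically, the last) syllable of $g$, producing an element of $H$ and replacing $g$ by one of its cyclic permutations $g_1$ of the form $t^{\e_2}x_2 \dots t^{\e_n}x_n t^{\e_1}x_1$ (up to an $H$-factor), while strictly shortening $c$. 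Iterating until the conjugator lies in $H$, I obtain the desired $h \in H$ together with a cyclic permutation $f'$ of $f$ such that $f' = h g h^{-1}$ in $G$.

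The main obstacle will be the careful bookkeeping in the inductive step: one must verify that, because $g$ is cyclically reduced, the \emph{only} way to reduce $c g c^{-1}$ down to the length of $f$ is via the claimed peeling mechanism, and that the $H$-factors accumulated along the way combine cleanly into a single element $h \in H$. This is precisely where the hypothesis of cyclic (rather than mere) reducedness of $g$ is essential.
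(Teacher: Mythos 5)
The paper does not prove this lemma: it is stated as ``Collins's Lemma in the case of special HNN-extensions'' with a citation to Lyndon--Schupp, Theorem IV.2.5, so there is nothing of the paper's own to compare against. Your task was essentially to reconstruct the textbook argument, adapted to the paper's conventions (arbitrary exponents $\e_i\in\Z\setminus\{0\}$ rather than $\pm1$), and your sketch does follow the right route.

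A few specific remarks. The argument for $g\notin A^G$ is correct in spirit, though the auxiliary homomorphism to $\Z$ is unnecessary, and there is a typo: the middle term of the word is $a$, not $g$, so the interfaces you mean are $(w,a)$ and $(a,w^{-1})$. The base case $c\in A$ of the induction is right: matching the reduced form $c\,t^{\e_1}x_1\cdots t^{\e_n}(x_nc^{-1})$ against $f=t^{\z_1}y_1\cdots t^{\z_m}y_m$ forces $m=n$ and $\z_i=\e_i$, and since $y_0=1$ the normal-form ambiguity forces $c\in H$.

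The gap you yourself flag in the inductive step is genuine, and it has two pieces. First, you assert that a pinch ``must consume the outermost $t^{\delta}$ of $c$'' without ruling out the possibility that $cgc^{-1}$ is already reduced. To exclude it: if $c=z_0t^{\eta_1}\cdots t^{\eta_k}z_k$ is reduced with $k\ge1$ and $cgc^{-1}$ has no pinches, then its reduced form has $2k+n$ $t$-syllables, $A$-prefix $z_0$ and $A$-suffix $z_0^{-1}$; matching against $f$, whose $A$-prefix is trivial, forces $z_0\in H$ and hence the last $A$-part $z_0^{-1}\in H$, contradicting the cyclic reducedness of $f$ (here $m=2k+n\ge3$). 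Second, with exponents in $\Z\setminus\{0\}$ a pinch $t^{\eta_k}z_kt^{\e_1}$ collapses only to $z_kt^{\eta_k+\e_1}$; when $\eta_k+\e_1\neq 0$ the conjugator has not yet lost a full $t$-syllable, so the induction parameter must be chosen more carefully (for instance the total $t$-letter count $\sum|\eta_i|$, or one can first pass to the expansion with $\e_i=\pm1$). The paper's own proof of Lemma~\ref{lem:conj_in_spec_HNN} carries out an induction of exactly this shape in careful detail and would serve as a reliable template for closing both gaps.
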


We will also use the following description of centralizers in special HNN-extensions:

\begin{prop}\label{prop:descr_centr} Let $G$ be the special HNN-extension given by
\eqref{eq:spec_HNN-def}. Suppose that the product $g=t^{\e_1}x_1 t^{\e_2} \dots t^{\e_n}x_{n} \in G$ is
cyclically reduced and $n\ge 1$.

If $x_n \in H$ then $n=1$ and $C_G(g)=\langle t \rangle \times C_H(x_1)=\langle t \rangle \times C_H(g) \le G$.

If $x_n \in A\setminus H$, let $\{p_1,\dots,p_k\}$, $1 \le k \le n+1$,
be the set all of prefixes of $g$ satisfying $p_{i}^{-1}g p_{i} \in g^H$ in $G$.
For each $i=1,\dots,k$, choose $h_i \in H$ such that $h_ip_i^{-1} g p_ih_i^{-1} =g$, and define
the finite subset $\Omega \subset G$
by $\Omega:=\{h_ip_{i}^{-1}\,|\,i=1,\dots,k\}$. Then $C_G(g)=C_H(g) \langle g \rangle \Omega$.
\end{prop}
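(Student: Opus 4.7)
The plan is to treat the two cases of Proposition \ref{prop:descr_centr} separately, combining Britton's normal-form analysis with Collins's Lemma (Lemma \ref{lem:Collins}).

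\textbf{Case 1} ($x_n\in H$). Cyclic reducedness forces $n=1$: indeed, if $n\ge 2$, the cyclic permutation $t^{\e_n}x_n t^{\e_1}x_1\cdots t^{\e_{n-1}}x_{n-1}$ would have $x_n\in H$ in an interior position, violating reducedness. Hence $g=t^{\e_1}x_1$ with $x_1\in H$, and since $t$ commutes with every element of $H$, the subgroup $K:=\langle t\rangle H$ is isomorphic to $\langle t\rangle\times H$; evidently $C_{K}(g)=\langle t\rangle\times C_H(x_1)$, and $C_H(g)=C_H(x_1)$ because $t^{\e_1}$ centralises $H$. It therefore remains to prove $C_G(g)\subseteq K$. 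For this, take $c\in C_G(g)$ in a reduced form $c=y_0 t^{\delta_1}y_1\cdots t^{\delta_r}y_r$ (so $y_i\notin H$ for $1\le i\le r-1$) and compare the reduced normal forms of $cg$ and $gc$, using $t^{\e_1}x_1=x_1t^{\e_1}$. Britton's Lemma, together with the uniqueness of reduced expressions up to $H$-shifts across the $t$-symbols, will force $\delta_i=\e_1$ and $y_i\in H$ for all $i$; in particular $r\le 1$ and $c\in K$, as required.

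\textbf{Case 2} ($x_n\in A\setminus H$). The inclusion $C_H(g)\langle g\rangle\Omega\subseteq C_G(g)$ is immediate from the definitions: $C_H(g)$ centralises $g$ by choice, $\langle g\rangle$ does so trivially, and each $h_ip_i^{-1}\in\Omega$ satisfies $(h_ip_i^{-1})\,g\,(h_ip_i^{-1})^{-1}=g$ by construction. For the converse, take $c\in C_G(g)$ and put it in reduced form. Exploiting the $\langle g\rangle$-freedom, multiply $c$ on the right by an appropriate power $g^{-m}$ so that the reduced $t$-syllable length of $c':=c\,g^{-m}$ becomes at most $n$; more precisely, arrange it to equal the length of some prefix $p_j$ of $g$. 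Applying Collins's Lemma to the equation $c'\,g\,(c')^{-1}=g$, together with Britton's Lemma, will force the prefix $p_j$ to satisfy $p_j^{-1}g p_j\in g^H$ and $c'$ to have the form $h_j p_j^{-1}\cdot h$ for some $h\in C_H(g)$. Assembling the pieces yields $c=h\,g^m\,h_j p_j^{-1}\in C_H(g)\langle g\rangle\Omega$.

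The principal difficulty lies in the reverse direction of Case 2: organising the shortening step so that $c'$ actually takes the shape $h_j p_j^{-1}\cdot h$, and correctly identifying which prefix $p_j$ emerges from Collins's Lemma. The cleanest bookkeeping seems to be a Bass--Serre argument on the tree associated to the HNN decomposition $G=A*_H$: the element $g$ acts as a hyperbolic isometry with a translation axis, $c\in C_G(g)$ preserves this axis and translates along it by some integer $s$, and after subtracting off $m$ copies of $g$ one is left with a translation by $s-mn\in\{0,1,\dots,n-1\}$, which must be realised by some $h_j p_j^{-1}$; the residual ellipticity is then absorbed into an element of $C_H(g)$.
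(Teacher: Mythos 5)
Your proposal correctly identifies the structure of the argument (Britton-style analysis for Case~1, and in Case~2 the easy inclusion $C_H(g)\langle g\rangle\Omega\subseteq C_G(g)$ plus a shortening of $c$ by powers of $g$), but the key step---the reverse inclusion in Case~2---is left unproved, as you acknowledge yourself. Collins's Lemma (Lemma~\ref{lem:Collins}) as stated only controls the relation between cyclically reduced conjugate elements (one must be an $H$-conjugate of a cyclic permutation of the other); it does \emph{not} by itself pin down the shape of the conjugating element $c'$, which is exactly what you need. The paper instead proves a separate, stronger statement (Lemma~\ref{lem:conj_in_spec_HNN}), established by induction on the $t$-syllable length of $c$: if $cgc^{-1}=f$ with $g,f$ cyclically reduced and $x_n\notin H$, then $c=hp^{-1}g^{l}$ for a prefix $p$ of $g$, $h\in H$, $l\in\Z$ (or symmetrically with a suffix). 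Proposition~\ref{prop:descr_centr} then falls out by taking $f=g$ and noticing $p$ must be one of the $p_i$. Your Bass--Serre sketch (translation length along the axis of $g$, subtracting off multiples of $g$ to land in $\{0,\dots,n-1\}$, and absorbing the elliptic residue into a vertex/edge stabilizer) is a plausible alternative route to the same Lemma~\ref{lem:conj_in_spec_HNN}, but you would still have to carry it out: in particular, you have to show the elliptic residue lies in the \emph{specific} edge stabilizer $H$ rather than some conjugate, and that the translation is implemented exactly by an element of the form $h_jp_j^{-1}$; none of this is automatic.

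A smaller point in Case~1: your claim that Britton's Lemma forces $\delta_i=\e_1$ for all $i$ is not right and not needed---$\langle t\rangle$ is entirely contained in $C_G(g)$, so $c$ can carry an arbitrary $t$-power. What the Britton analysis (the paper's Lemma~\ref{lem:centr-partic_case}) actually shows is that the reduced form of $c$ has $t$-syllable length $m\le 1$, that $z_m\in H$, and that ultimately $c\in\langle t\rangle C_H(x_1)$. The target $K=\langle t\rangle H$ you aim for is also slightly too coarse: one must land in $\langle t\rangle C_H(x_1)$, and that last step (pushing from $H$ down to $C_H(x_1)$) uses one more application of Britton's Lemma at the $m=1$ stage. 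So Case~1 also needs its computation completed, though the gap there is routine compared with Case~2.
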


In order to prove Proposition \ref{prop:descr_centr} we will need two lemmas below.
The proof of the next statement is similar to the proof of Collins's Lemma.

\begin{lemma}\label{lem:conj_in_spec_HNN} Let $g=t^{\e_1}x_1 t^{\e_2} \dots t^{\e_n}x_{n}$ and
$f=t^{\z_1}y_1 t^{\z_2} \dots t^{\z_n}y_{n}$ be cyclically reduced elements of $G$,
with $n\ge 1$ and $x_n \notin H$.
Assume that $c g c^{-1}=f$ in $G$, where the element $c$ is equal to the reduced product
$z_0t^{\eta_1}z_1 t^{\eta_2} \dots t^{\eta_m}z_{m}$. Then there are the following three
mutually exclusive possibilities.
\begin{itemize}
	\item[a)] $m=0$ and $c \in H$;
	\item[b)] $m \ge 1$, $z_m \in H$ 
						and there is a prefix $p$ of $g$
						such that $c=hp^{-1}g^l$ in $G$, for some $h \in H$ and $l \in \Z$, $l \le 0$;
	\item[c)] $m \ge 1$, $x_nz_m^{-1} \in H$ 
						and there is a suffix $s$ of $g$
						such that $c=hsg^l$ in $G$, for some $h \in H$ and $l \in \Z$, $l \ge 0$.
\end{itemize}
\end{lemma}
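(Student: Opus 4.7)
The plan is to proceed by induction on $m$. For the base case $m=0$ we have $c = z_0 \in A$; since $g$ is cyclically reduced with $x_n \notin H$, the intermediate $A$-syllables $x_1,\dots,x_{n-1}$ automatically also lie outside $H$, so the expression $z_0 \cdot t^{\e_1} x_1 \cdots t^{\e_n} x_n \cdot z_0^{-1}$ is already reduced. Comparing it to the reduced form of $f$, whose implicit leading $A$-syllable is $1$, the uniqueness of reduced forms in an HNN-extension (which in the special HNN setting permits only $H$-shuffles across $t$'s) forces $z_0 \in H$, yielding case~(a).

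For the inductive step with $m \geq 1$, write $c = c_{-1} t^{\eta_m} z_m$ in reduced form and count syllables in $cgc^{-1}$: unreduced it has $2m+n$ $t$-syllables, while $f$ has only $n$, so $2m$ pinches must occur. Since $c$, $c^{-1}$, and $g$ are internally (cyclically) reduced, every first-level pinch must sit at one of the two outer junctions. The left junction (between $t^{\eta_m}$ and $t^{\e_1}$) pinches iff $z_m \in H$ and $\eta_m = -\e_1$, whereas the right junction (between $t^{\e_n}$ and $t^{-\eta_m}$) pinches iff $x_n z_m^{-1} \in H$ and $\e_n = \eta_m$. The hypothesis $x_n \notin H$ makes these two alternatives mutually exclusive, and at least one must hold, so exactly one does; this already gives the dichotomy between cases (b) and (c).

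I would treat the left-junction case in detail and invoke symmetry for the right. Put $p_1 := t^{\e_1} x_1$ (a length-one prefix of $g$) and $g' := p_1^{-1} g p_1 = t^{\e_2} x_2 \cdots t^{\e_n} x_n t^{\e_1} x_1$. Then $g'$ is again cyclically reduced of $t$-length $n$ with last $A$-syllable $x_1 \notin H$, so it satisfies the standing hypothesis of the lemma. Set $c' := c p_1$; the conditions $z_m \in H$ and $\eta_m = -\e_1$ collapse $t^{\eta_m} z_m t^{\e_1}$ to $z_m$, so $c' = c_{-1} z_m x_1$ has only $m-1$ $t$-syllables in reduced form, while $c' g' c'^{-1} = cgc^{-1} = f$. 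Applying the inductive hypothesis to $(c', g')$: case~(a) for $c'$ gives $c = h p_1^{-1}$, which is case~(b) with $l = 0$; case~(b) for $c'$ gives $c' = h q^{-1} (g')^{l'}$, which via $(g')^{l'} = p_1^{-1} g^{l'} p_1$ rewrites as $c = h (p_1 q)^{-1} g^{l'}$, where $p_1 q$ is either a prefix of $g$ directly, or (when $q = g'$) equals $g p_1$, which absorbs into an extra $g^{-1}$ factor and still gives case~(b). Case~(c) for $c'$ is ruled out by a coset argument: it would force the left $H$-coset of the last $A$-syllable of $c'$ to be $H x_1$, hence $z_{m-1} z_m \in H$, hence $z_{m-1} \in H$, contradicting the reducedness of $c$ (the corner case $m=1$ is trivial because then $c'$ has zero $t$-syllables and only case~(a) applies via the base case).

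The main obstacle is precisely this bookkeeping: once the outer pinch is resolved, the inner element of the conjugation becomes a cyclic permutation of $g$ whose final $A$-syllable is trivial, so the lemma does not apply verbatim. The renormalization $c \mapsto c p_1$, $g \mapsto g' = p_1^{-1} g p_1$ is the technical key that restores the standing hypothesis $x_n \notin H$ and lets the induction proceed on a statement of the same shape; the remaining care goes into ruling out the spurious case-(c) conclusion for $c'$ via the $H$-coset computation on the last $A$-syllable, and into translating prefixes of $g'$ faithfully back to prefixes of $g$ through $c = c' p_1^{-1}$.
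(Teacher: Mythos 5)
Your argument is essentially the paper's: both hinge on the renormalization $c' = c\,t^{\e_1}x_1$, $g' = (t^{\e_1}x_1)^{-1}g\,(t^{\e_1}x_1)$ (the paper's equation \eqref{eq:h_1_in_ind} is exactly $c'g'c'^{-1}=f$ with $h_1=z_{m-1}z_m x_1$ the final $A$-syllable of $c'$), rule out the spurious inductive case for $c'$ via $z_{m-1}z_m\notin H$, and translate prefixes of $g'$ back to prefixes of $g$ through $q\mapsto t^{\e_1}x_1q$ with the $q=g'$ corner absorbed into an extra factor of $g^{-1}$; folding $m=1$ into the $m=0$ base case is a tidy simplification over the paper's separate treatment. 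One minor imprecision worth flagging: in this special HNN-extension a pinch at the left junction is triggered by $z_m\in H$ alone (the subword $t^{\eta_m}z_mt^{\e_1}$ collapses to $z_mt^{\eta_m+\e_1}$ regardless of exponents), and the further condition $\eta_m=-\e_1$ is forced only afterwards by the demand for additional cancellation of $t$-syllables -- exactly as the paper derives it after its first reduction step -- so the ``iff'' you state compresses two logically distinct steps.
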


\begin{proof} First, suppose that $m=0$, i.e., $c=z_0 \in A$. Then $f^{-1}cgc^{-1}=1$ in $G$. Yielding
$$y_n^{-1} t^{-\z_n} \dots y_{1}^{-1}t^{-\z_1} z_0 t^{\e_1}x_1 t^{\e_2} \dots t^{\e_n}x_{n} z_0^{-1}=1.$$
Therefore the left-hand side is not reduced, hence $c=z_0 \in H$.

Assume now that $m \ge 1$.
The equality $c g c^{-1}=f$ in $G$ gives rise to the equation
$$z_0t^{\eta_1}  \dots 
t^{\eta_m}z_{m} t^{\e_1}x_1 t^{\e_2} \dots t^{\e_n}x_{n}
z_m^{-1}t^{-\eta_m} 
\dots t^{-\eta_1}z_{0}^{-1}=
t^{\z_1}y_1  \dots t^{\z_n}y_{n}. $$
The left-hand side cannot be reduced because it contains more $t$-letters than the right-hand side.
Hence either $z_m \in H$ or $x_nz_m^{-1} \in H$
(note that both of these inclusions cannot happen
simultaneously since $x_n \notin H$).
Let us consider the case when $z_m \in H$, as the second case is similar. Then
$t^{\eta_m}z_{m} t^{\e_1}=z_m t^{\eta_m+\e_1}$ and, thus, we get
\begin{equation}
\label{eq:ind_step}
z_0t^{\eta_1}  \dots t^{\eta_{m-1}}(z_{m-1} z_m )t^{\eta_m+\e_1}x_1 t^{\e_2} \dots t^{\e_n}x_{n}
z_m^{-1}t^{-\eta_m} z_{m-1}^{-1} \dots t^{-\eta_1}z_{0}^{-1}=
t^{\z_1}y_1 \dots t^{\z_n}y_{n}.\end{equation}
Once again, we see that the left-hand side of the above equation cannot be reduced.
In the case when $m=1$, this implies that $\eta_m+\e_1=0$. On the other hand, if $m>1$, then
$z_{m-1} \in A\setminus H$, hence $z_{m-1}z_m \notin H$, and again, in order for a
reduction to be possible we must have $\eta_m+\e_1=0$. Hence $\e_1=-\eta_m$ and
$z_m^{-1}t^{-\eta_m}z_{m-1}^{-1}=t^{\e_1} x_1 h_1^{-1}$, where $h_1:=z_{m-1}z_{m}x_1 \in A$.
And \eqref{eq:ind_step} becomes
\begin{equation} \label{eq:h_1_in_ind}
(z_0t^{\eta_1}  \dots z_{m-2}t^{\eta_{m-1}}h_1) (t^{\e_2} x_2 \dots t^{\e_n}x_{n}t^{\e_1}x_1)
(z_0t^{\eta_1}  \dots t^{\eta_{m-1}}h_1)^{-1}= \\
t^{\z_1}y_1 \dots t^{\z_n}y_{n}.\end{equation}

Now, if $m=1$, i.e., $c=z_0t^{-\e_1}z_1=z_0z_1t^{-\e_1}$, then we have
$$1=f^{-1}cgc^{-1}=(y_n^{-1} t^{-\z_n} \dots y_{1}^{-1}t^{-\z_1})h_1 (t^{\e_2}x_2 \dots t^{\e_n}x_{n}t^{\e_1}x_1)h_1^{-1}.$$ Hence $h_1 \in H$ and $c=h_1 (t^{\e_1}x_1)^{-1}$,
where $t^{\e_1}x_1$ is a prefix of $g$.

Otherwise, if $m=M \ge 2$ we will use induction on $m$ to prove the claim b) of the lemma. Thus, we will
assume that b) has been established for all elements $c$ with $1\le m \le M-1$.

Note that $x_1 h_1^{-1}=z_m^{-1}z_{m-1}^{-1} \notin H$ since $z_m \in H$
and $z_{m-1} \in A\setminus H$ (because $m\ge 2$ and the product $z_0t^{\eta_1}\dots
t^{\eta_{m-1}} z_{m-1}t^{\eta_m}z_{m}$ was assumed to be reduced).

Let us look at the equation \eqref{eq:h_1_in_ind}. Since $m-1\ge 1$,
the left-hand side cannot be reduced.
And a reduction in it can only occur if $h_1 \in H$ because $x_1 h_1^{-1}\notin H$. Hence
we are in the case b) of the lemma, and
can apply the induction hypothesis to \eqref{eq:h_1_in_ind}. Thus there is a prefix $p$
of the element $g_1:=t^{\e_2}x_2 \dots t^{\e_n}x_{n}t^{\e_1}x_1$, $h \in H$ and $l \in \Z$, $l\le 0$,
such that $z_0t^{\eta_1}  \dots z_{m-2}t^{\eta_{m-1}}h_1=hp^{-1}g_1^l$. Consequently,
$$c=z_0t^{\eta_1}  \dots t^{\eta_{m-1}}z_{m-1} t^{\eta_m}z_m=
z_0t^{\eta_1}  \dots t^{\eta_{m-1}}h_1 x_1^{-1}t^{-\e_1}=hp^{-1}g_1^l x_1^{-1}t^{-\e_1}=hq^{-1}g^{l},$$
where $q=t^{\e_1} x_1 p$. It is easy to see that either $q$ is a prefix of $g$, or
$q=g t^{\e_1} x_1$. In the latter case, $c=h(t^{\e_1} x_1)^{-1}g^{l-1}$ and $t^{\e_1} x_1$
is a prefix of $g$. Thus the step of induction is established, and the proof of the lemma is finished.
\end{proof}

The next lemma treats the case which was not covered by Lemma \ref{lem:conj_in_spec_HNN}.

\begin{lemma}\label{lem:centr-partic_case} Suppose that $g=t^\e x\in G$, where $\e \in \Z\setminus\{0\}$
and $x \in H$. Then $C_G(g)=\langle t \rangle \times C_H(x)=\langle t \rangle \times C_H(g)$.
In particular, $C_G(t^\e)= \langle t \rangle \times H \le G$.
\end{lemma}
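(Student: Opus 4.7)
The plan is to prove both the easy inclusion $\langle t\rangle\times C_H(x)\subseteq C_G(g)$ and the reverse $C_G(g)\subseteq\langle t\rangle\times H$, then intersect. For the easy inclusion, since $x\in H$ commutes with $t$ by the defining relations of the special HNN-extension, both $t$ and every $h\in C_H(x)$ commute with $g=t^\varepsilon x$; and Britton's Lemma gives $\langle t\rangle\cap H=\{1\}$ (a reduced product $t^k$ with $k\neq 0$ is not in $A$), so $\langle t,H\rangle$ is the internal direct product $\langle t\rangle\times H$. Also, $C_H(g)=C_H(x)$ is immediate from $t^\varepsilon\in C_G(H)$.

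For the reverse inclusion I would argue by induction on the number $m\geq 0$ of $t$-factors in a reduced expression $c=z_0 t^{\eta_1} z_1\cdots t^{\eta_m} z_m$ for a given $c\in C_G(g)$. In the base case $m=0$, $c\in A$, and using that $x$ commutes with $t$ the identity $cgc^{-1}g^{-1}=1$ rewrites as $(cx)\,t^\varepsilon\,c^{-1}\,t^{-\varepsilon}\,x^{-1}=1$. Britton's Lemma forces $c^{-1}\in H$ (otherwise the displayed word is a nonempty reduced product of $t$-length $2$), after which the identity collapses to the commutator $[c,x]=1$, yielding $c\in C_H(x)$.

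For the inductive step ($m\geq 1$) I would expand both $cg$ and $gc$ as reduced products, using the commutativity of $x$ and $t$, and compare them via the uniqueness of the number and sequence of $t$-exponents in reduced products. A case analysis on whether $z_0,z_m\in H$ shows a mismatch of $t$-lengths in every configuration save two: (a) $z_0,z_m\notin H$, where both products acquire $m+1$ $t$-factors and uniqueness of exponents forces $\eta_i=\varepsilon$ for every $i$ --- ruled out by applying Britton's Lemma to $cgc^{-1}g^{-1}$, which expands to a nonempty reduced word of $t$-length $2m+2$; and (b) $z_0,z_m\in H$, in which either $c\in\langle t\rangle\times H$ is immediate (when $m=1$) or, for $m\geq 2$, the matching $t$-lengths force $\eta_1=\eta_m=-\varepsilon$ and then $\eta_i=-\varepsilon$ for every $i$. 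In configuration (b) with $m\geq 2$, the auxiliary element $c^{(3)}:=t^\varepsilon z_0^{-1} c\, z_m^{-1}$ lies in $C_G(g)$ (because $z_0,z_m\in H$ and $t^\varepsilon$ all centralize $g$) and simplifies to the reduced product $z_1 t^{-\varepsilon} z_2\cdots t^{-\varepsilon} z_{m-1} t^{-\varepsilon}$ of $t$-length $m-1$; the induction hypothesis then gives $c^{(3)}\in\langle t\rangle\times H$, whence $c=z_0 t^{-\varepsilon} c^{(3)} z_m\in\langle t\rangle\times H$ as well.

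Intersecting $C_G(g)\subseteq\langle t\rangle\times H$ with the easy inclusion, together with the straightforward observation $C_{\langle t\rangle\times H}(g)=\langle t\rangle\times C_H(x)$, yields the first equality of the lemma; the second equality $C_G(t^\varepsilon)=\langle t\rangle\times H$ is the special case $x=1$, in which $C_H(x)=H$. The main obstacle will be the combinatorial bookkeeping in the inductive step: carefully tracking the possible cancellations at both ends of $cg$ and $gc$ as the $x$'s and $z_i$'s move across $t$-letters, and in particular applying Britton's Lemma to $cgc^{-1}g^{-1}$ to rule out configuration (a), the one case where the $t$-exponents of $cg$ and $gc$ would otherwise naively match.
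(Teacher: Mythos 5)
Your overall plan (first show $C_G(g)\subseteq\langle t\rangle\times H$, then compute $C_{\langle t\rangle\times H}(g)=\langle t\rangle\times C_H(x)$) is a nice reorganization, and your base case $m=0$ and the elimination of configurations (a), (ii), (iii) are sound. But there is a genuine error in the inductive step, configuration (b) with $m\ge 2$: you assert that $c^{(3)}:=t^\e z_0^{-1}c\,z_m^{-1}$ lies in $C_G(g)$ ``because $z_0,z_m\in H$ and $t^\e$ all centralize $g$.'' That is false --- elements of $H$ centralize $t$, not $g=t^\e x$; only $C_H(x)$ centralizes $g$, and you have no information putting $z_0,z_m$ in $C_H(x)$. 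Concretely, $z_m^{-1}gz_m=t^\e\,z_m^{-1}xz_m$, which is not $g$ unless $z_m\in C_H(x)$, so $c^{(3)}gc^{(3)-1}$ is not visibly $g$. As a result the inductive reduction from $m$ to $m-1$ is unjustified and the argument breaks down precisely where it needs to do its work.

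In fact no induction is needed here, and there are at least two quick repairs within your own framework. One: in configuration (b) with $m\ge2$ and all $\eta_i=-\e$, pass to $c'':=t^\e c\in C_G(g)$ (this really is in the centralizer, since $t^\e$ and $c$ both are); after the obvious collapse $c''=(z_0z_1)t^{-\e}z_2\cdots t^{-\e}z_m$ has $t$-length $m-1\ge 1$, first coefficient $z_0z_1\notin H$ and last coefficient $z_m\in H$, which is exactly your already-excluded configuration (iii) --- contradiction. Two (more direct): with all $\eta_i=-\e$, expand $cgc^{-1}$ using $t^{-\e}z_mt^\e=z_m$; the resulting word is reduced of $t$-length $2m-1$ (the middle coefficient $z_{m-1}z_mxz_m^{-1}\notin H$ because $z_{m-1}\notin H$), and equating it to $g$, of $t$-length $1$, forces $m=1$. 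For comparison, the paper's proof is shorter and avoids the $cg$ versus $gc$ comparison entirely: it writes out $cgc^{-1}=g$, deduces $z_m\in H$ from Britton's Lemma and $x\in H$, commutes $z_m$ past $t$, and immediately gets a $t$-length contradiction once $m\ge 2$, then treats $m=1$ by one more application of Britton's Lemma.
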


\begin{proof} For any $c \in C_G(g)$ we have $c g c^{-1}=g$. Let $z_0t^{\eta_1}z_1 t^{\eta_2} \dots t^{\eta_m}z_{m}$  be a reduced product  representing $c$ in $G$. Then we have
\begin{equation}
\label{eq:c-equality}
z_0t^{\eta_1}z_1 t^{\eta_2} \dots t^{\eta_m}z_{m}  t^\e x z_{m}^{-1}t^{-\eta_m}\dots t^{-\eta_2}z_1^{-1}t^{-\eta_1}z_0^{-1} = t^\e x.\end{equation}
Arguing as in the proof of Lemma  \ref{lem:conj_in_spec_HNN}, we see that if $m=0$ then $c=z_0 \in H$ and
$t^\e x=z_0 t^\e x z_0^{-1}=t^\e z_0xz_0^{-1}$, thus $1=z_0xz_0^{-1}$, i.e., $c=z_0\in C_H(x)$.

So, assume now that $m \ge 1$. Then the equation \eqref{eq:c-equality} implies that either
$z_m \in H$ or $xz_m^{-1} \in H$. But either of these inclusions leads to $z_m \in H$ because
$x \in H$ by the assumptions. Therefore  $t^{\eta_m}z_{m}  t^\e x z_{m}^{-1}t^{-\eta_m}=
z_{m} x z_{m}^{-1} t^\e$ in $G$ and \eqref{eq:c-equality} becomes
$$z_0t^{\eta_1} \dots z_{m-2}t^{\eta_{m-1}} (z_{m-1} z_{m}   x  z_{m}^{-1})
t^\e z_{m-1}^{-1}t^{\eta_{m-1}} z_{m-2}^{-1} \dots t^{-\eta_1}z_0^{-1} = t^\e x.$$
If $m \ge 2$, then $z_{m-1} \in A\setminus H$, hence $z_{m-1} z_{m}   x  z_{m}^{-1} \notin H$,
and the above equation contradicts to Britton's Lemma: the left-hand side is reduced, but
contains more $t$-letters than the right-hand side.

Therefore $m=1$, i.e., $c=z_0t^{\eta_1}z_1$
and $z_1 \in H$. Consequently, $$1=g^{-1}cgc^{-1}=x^{-1}t^{-\e} z_0t^{\eta_1}z_1 t^\e x z_1^{-1}
t^{-\eta_1} z_0^{-1}=x^{-1}t^{-\e} (z_0z_1  x z_1^{-1}) t^\e z_0^{-1}.$$
Applying Britton's Lemma again, we achieve $z_0z_1  x z_1^{-1} \in H$, implying that $z_0 \in H$ and
$1=x^{-1}z_0z_1  x z_1^{-1}z_0^{-1}$ in $G$. That is, $z_0z_1 \in C_H(x)$, and
$c=z_0t^{\eta_1}z_1=t^{\eta_1}z_0z_1 \in \langle t \rangle C_H(x)$.

Thus we proved that $C_G(g) \subseteq \langle t \rangle C_H(x)$.
Finally, since $t$ commutes with every element from $H$, it is clear that $t \in C_G(g)$ and
$C_H(x) \subset C_H(g) \subset C_G(g)$. Hence
$C_G(g) = \langle t \rangle C_H(x)=\langle t \rangle \times C_H(x) \le G$. The equality
$C_H(g)=C_H(x)$ is immediate.
\end{proof}

\begin{proof}[Proof of Proposition \ref{prop:descr_centr}] If $x_n \in H$, $g$ can be
cyclically reduced only when $n=1$, and the claim follows from Lemma \ref{lem:centr-partic_case}.

So, we can assume that $x_n \in A\setminus H$. Therefore we are able to apply
Lemma \ref{lem:conj_in_spec_HNN} to $g$ and $f:=g$, which tells us that for any $c \in C_G(g)$
there exist $h \in H$ and $l \in \Z$ such that either there is a prefix $p$ of $g$ with
$c=h p^{-1} g^l$,  or there is a suffix $s$ of $g$ with $c=h s g^l$.
Note that in the latter case there is a prefix $p$ of $g$
such that $s=p^{-1} g$. Hence we can assume that $c=h p^{-1} g^l$ for some prefix $p$ of $g$, $h \in H$
and $l \in \Z$.

But then $g=cgc^{-1}=h p^{-1}g p h^{-1}$, hence $p=p_i$ for some $i \in \{1,\dots,k\}$.
The equalities $h p_i^{-1}g p_i h^{-1}=g=h_i p_i^{-1}g p_i h_i^{-1}$ yield
$h^{-1}g h=p_i^{-1}g p_i=h_i^{-1} g h_i$. Consequently $hh_i^{-1} \in C_H(g)$ and $h \in C_H(g)h_i$.
Thus $c=hp_{i}^{-1}g^l \in C_H(g)h_i p_{i}^{-1}g^l \subseteq C_H(g) \Omega \langle g \rangle$.

We have shown that $C_G(g) \subseteq C_H(g) \Omega \langle g \rangle$. Observe that
$\Omega \subset C_G(g)$ by definition, hence  $C_G(g)= C_H(g) \Omega \langle g \rangle=
C_H(g)  \langle g \rangle \Omega$.
\end{proof}



Now we formulate a criterion for conjugacy in special HNN-extensions.

\begin{lemma}\label{lem:conj_crit} Let $G$ be the special HNN-extension \eqref{eq:spec_HNN-def}.
Suppose that $B \le A$ is a subgroup and  $g,f \in G$ are elements represented by reduced products
$x_0t^{\e_1}x_1 t^{\e_2} \dots t^{\e_n}x_{n}$ and $y_0t^{\z_1}y_1 t^{\z_2} \dots t^{\z_m}y_{m}$
respectively, with $n \ge 1$. Then $f \in g^B$ in $G$ if and only if all of the following conditions hold:
\begin{itemize}
  \item[(i)] $m=n$ and $\e_i=\z_i$ for all $i=1,\dots,n$;
	\item[(ii)] $y_0y_1 \dots y_n \in (x_0x_1 \dots x_n)^B$ in $A$;
	\item[(iii)] for every $b_0 \in B$ with $y_0y_1 \dots y_n = b_0 (x_0x_1 \dots x_n)b_0^{-1}$ in $A$,
				the intersection $I \subseteq A$ is non-empty, where
				$$I:=b_0 C_B(x_0 \dots x_n) \cap y_0 H x_0^{-1} \cap (y_0y_1)H(x_0x_1)^{-1} \cap \dots \cap
				(y_0\dots y_{n-1})H(x_0 \dots x_{n-1})^{-1}.$$
\end{itemize}
\end{lemma}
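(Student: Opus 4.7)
The plan is to prove both implications by exploiting the Britton normal form for the special HNN-extension $G$, using the key fact that $t$ commutes with all of $H$, so any $h \in H$ slides freely across any $t^{\e}$.

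For the forward direction, I would assume $f = bgb^{-1}$ for some $b \in B$ and rewrite this as $f = (bx_0)t^{\e_1}x_1 \dots t^{\e_n}(x_n b^{-1})$. Since the middle entries $x_1,\dots,x_{n-1}$ are unchanged and still lie outside $H$, this is still a reduced product. Comparing with the given reduced form of $f$, the length-and-exponent uniqueness for reduced products (mentioned in the paper above) yields (i): $m=n$ and $\z_i=\e_i$ for all $i$. A finer uniqueness statement, which says that any two reduced expressions of the same element differ only by the elementary moves $a_i \mapsto a_i h$, $a_{i+1} \mapsto h^{-1} a_{i+1}$ with $h\in H$, then produces $h_1,\dots,h_n \in H$ satisfying $y_0 = bx_0 h_1$, $y_i = h_i^{-1} x_i h_{i+1}$ for $1\le i \le n-1$, and $y_n = h_n^{-1} x_n b^{-1}$. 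Telescoping collapses the $h_i$'s and gives $y_0 \dots y_n = b(x_0 \dots x_n) b^{-1}$, establishing (ii). For (iii), for any $b_0 \in B$ satisfying $y_0 \dots y_n = b_0 (x_0 \dots x_n) b_0^{-1}$, the element $b_0^{-1} b$ centralizes $x_0 \dots x_n$ in $B$, so $b \in b_0 C_B(x_0\dots x_n)$; and the partial-product identities $y_0 \dots y_i = b(x_0\dots x_i) h_{i+1}$ yield $b = (y_0 \dots y_i) h_{i+1}^{-1} (x_0 \dots x_i)^{-1} \in (y_0\dots y_i) H (x_0\dots x_i)^{-1}$ for each $i = 0,\dots,n-1$. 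Hence $b \in I$ and $I \neq \emptyset$.

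For the converse, I would take any $b_0$ as in (ii) and pick $b \in I$ (nonempty by (iii)), and check that $f = bgb^{-1}$. The memberships $b \in (y_0\dots y_i) H (x_0\dots x_i)^{-1}$ allow me to define $h_{i+1} := (x_0\dots x_i)^{-1} b^{-1} (y_0 \dots y_i) \in H$ for $0 \le i \le n-1$, while $b \in b_0 C_B(x_0\dots x_n)$ forces $b(x_0\dots x_n) b^{-1} = y_0 \dots y_n$. A short unwinding then recovers $y_0 = bx_0 h_1$, $y_i = h_i^{-1} x_i h_{i+1}$ for $1\le i\le n-1$, and $y_n = h_n^{-1} x_n b^{-1}$. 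Writing $bgb^{-1} = (bx_0) t^{\e_1} x_1 \dots t^{\e_n}(x_n b^{-1})$, I would insert $h_i h_i^{-1} = 1$ between consecutive syllables and slide each $h_i^{-1}$ across the adjacent $t^{\e_i}$ using $h t^{\e} = t^{\e} h$, which rewrites this product as $y_0 t^{\e_1} y_1 \dots t^{\e_n} y_n = f$, proving $f \in g^B$.

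The main obstacle is the fine uniqueness statement for reduced products invoked in the forward direction: one has to justify that two reduced expressions of the same element differ only by the stated $H$-moves. This will be established by applying Britton's Lemma inductively to the equation $(y_0 t^{\e_1} \dots t^{\e_n} y_n)^{-1} (bx_0) t^{\e_1} x_1 \dots t^{\e_n}(x_n b^{-1}) = 1$, where every forced cancellation across a $t$-boundary pins down a specific element of $H$. Once this uniqueness is in place, the remainder of the argument is purely bookkeeping inside $A$.
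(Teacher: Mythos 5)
Your proof is correct and follows essentially the same route as the paper's: both directions are driven by Britton's Lemma applied to $f^{-1}bgb^{-1}=1$ and the fact that $t$ commutes with all of $H$. The only cosmetic difference is that you make the intermediate $H$-elements $h_i = (x_0\cdots x_{i-1})^{-1}b^{-1}(y_0\cdots y_{i-1})$ explicit and formulate the forward direction via a "two reduced forms differ by $H$-shuffles" lemma (which, as you note, is itself proved by the same inductive Britton argument the paper runs directly), whereas the paper just reduces the word $f^{-1}bgb^{-1}$ step by step without naming the $h_i$'s; likewise for condition (ii) the paper uses the retraction $\rho_A$ (with $\rho_A(t)=1$) instead of telescoping the $h_i$'s, but these amount to the same computation.
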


\begin{proof} First we establish the sufficiency. Assume that the conditions (i),(ii) and (iii) hold.
Take any $b_0 \in B$ satisfying (iii) (it exists by (ii)) and let $I$ be the corresponding intersection.
Then there exists an element $b \in I$. The  inclusion
$b \in b_0 C_B(x_0 \dots x_n)$ implies that $y_0y_1 \dots y_n = b (x_0x_1 \dots x_n)b^{-1}$.
We will show that $f^{-1}b g b^{-1}=1$, which is equivalent (in view of (i)) to
\begin{equation} \label{eq:resolution}
y_n^{-1}t^{-\e_n}  \dots t^{-\e_2}y_1^{-1} t^{-\e_1}y_{0}^{-1} b x_0t^{\e_1}x_1 t^{\e_2}
\dots t^{\e_n}x_{n} b^{-1}=1.\end{equation}
Note that since $b \in I$, $y_{0}^{-1} b x_0 \in H$, hence $y_1^{-1} t^{-\e_1}y_{0}^{-1} b x_0t^{\e_1}x_1=
y_1^{-1} y_{0}^{-1} b x_0x_1 \in H$. Therefore we can continue reducing the left-hand side
of \eqref{eq:resolution}, until it becomes $y_n^{-1} \dots y_1^{-1}y_0^{-1} b x_0x_1 \dots x_n b^{-1}$,
which is equal to $1$ in $G$. Thus the sufficiency is proved.

To obtain the necessity, assume that $f=bgb^{-1}$ for some $b \in B$, that is,
$$b x_0t^{\e_1}x_1 t^{\e_2} \dots t^{\e_n}x_{n} b^{-1}=y_0t^{\z_1}y_1 t^{\z_2} \dots t^{\z_m}y_{m}.$$
Since both of the sides of the above equality are reduced, applying Britton's Lemma we obtain (i).
Therefore the equation \eqref{eq:resolution} holds in $G$.

Note that there is a canonical retraction $\rho_A \in End(G)$ of $G$ onto $A$, such that
$\rho_A(t)=1$ (apply Lemma \ref{lem:spec_HNN_retractions} to the identity map on $A$). Hence,
$y_0\dots y_n=\rho_A(f)=\rho_A(bgb^{-1})=b(x_0\dots x_n)b^{-1}$, yielding (ii).

To achieve (iii), take any $b_0 \in B$ satisfying $y_0y_1 \dots y_n = b_0 (x_0x_1 \dots x_n)b_0^{-1}$.
Then $b_0^{-1}b \in C_B(x_0 \dots x_n)$, i.e., $b \in b_0C_B(x_0 \dots x_n)$. By Britton's Lemma
the left-hand side of  \eqref{eq:resolution} cannot be reduced, therefore $y_0^{-1}bx_0 \in H$ and
$b \in y_0H x_0^{-1}$. Consequently, if $n \ge 2$,
$y_1^{-1}t^{-\e_1}y_{0}^{-1} b x_0t^{\e_1}x_1=y_1^{-1}y_{0}^{-1} b x_0x_1$ and  \eqref{eq:resolution}
becomes
$$y_n^{-1}t^{-\e_n}  \dots t^{-\e_2}(y_1^{-1} y_{0}^{-1} b x_0x_1 )t^{\e_2}
\dots t^{\e_n}x_{n} b^{-1}=1.$$
Applying Britton's Lemma to the above equation, we see again that $y_1^{-1} y_{0}^{-1} b x_0x_1 \in H$,
hence $b \in (y_0y_1)H(x_0x_1)^{-1}$. Clearly, we can continue this process, showing
that $b \in (y_0\dots y_{i})H(x_0\dots x_{i})^{-1}$ for every $i \in \{0,\dots,n-1\}$. Thus,
$b \in I\neq \emptyset$ and the condition (iii) is satisfied.
\end{proof}

Next comes a similar statement about centralizers.
\begin{lemma}\label{lem:centr_crit} Let $G$ be the special HNN-extension \eqref{eq:spec_HNN-def}.
Suppose that $B \le A$ is a subgroup, and an element $g\in G$ is represented by a reduced product
$x_0t^{\e_1}x_1 t^{\e_2} \dots t^{\e_n}x_{n}$ in $G$, with $n \ge 1$. Then the equality $C_B(g)=I$ holds
in $G$, where
$$I:=C_B(x_0 \dots x_n) \cap x_0 H x_0^{-1} \cap (x_0x_1)H(x_0x_1)^{-1} \cap \dots \cap
				(x_0\dots x_{n-1})H(x_0 \dots x_{n-1})^{-1}.$$
\end{lemma}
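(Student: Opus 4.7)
The plan is to deduce this statement essentially as a direct specialization of the conjugacy criterion (Lemma \ref{lem:conj_crit}) to the case $f=g$, with the choice $b_0 = 1$.

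First I would set $f := g$ in Lemma \ref{lem:conj_crit}, so that $y_i = x_i$ and $\z_i = \e_i$ for all $i$; conditions (i) and (ii) of that lemma are then trivially satisfied, and the identity element $b_0 = 1 \in B$ satisfies the equation $y_0\dots y_n = b_0(x_0\dots x_n)b_0^{-1}$ in (ii). With this choice, the intersection $I$ appearing in (iii) of Lemma \ref{lem:conj_crit} coincides exactly with the intersection $I$ in the statement of the present lemma.

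For the inclusion $C_B(g) \subseteq I$, take any $b \in C_B(g)$. Then $bgb^{-1} = g$, so $g \in g^B$, and the necessity part of Lemma \ref{lem:conj_crit} (applied with $b_0 = 1$) forces $b \in I$. Strictly speaking, the statement of Lemma \ref{lem:conj_crit} only asserts that $I \neq \emptyset$, but its proof actually shows that the conjugating element itself lies in $I$; I would either reference this fact directly from the proof of Lemma \ref{lem:conj_crit} or rerun the short Britton's Lemma argument: the equation $b x_0 t^{\e_1}x_1 \dots t^{\e_n} x_n b^{-1} = x_0 t^{\e_1}x_1\dots t^{\e_n} x_n$ forces a cascade of reductions, giving $b \in (x_0 \dots x_i) H (x_0 \dots x_i)^{-1}$ for each $i = 0, \dots, n-1$, and applying the canonical retraction $\rho_A : G \to A$ (which kills $t$) yields $b \in C_B(x_0 \dots x_n)$.

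For the reverse inclusion $I \subseteq C_B(g)$, take any $b \in I$. Then the sufficiency direction of Lemma \ref{lem:conj_crit}, applied with $f = g$ and $b_0 = 1$, gives $bgb^{-1} = g$, hence $b \in C_B(g)$. Concretely, the inclusion $b \in x_0 H x_0^{-1}$ lets one write $x_0^{-1}b^{-1} x_0 \in H$, which commutes with $t^{\e_1}$, allowing the reduction of $t^{-\e_1} x_0^{-1} b^{-1} x_0 t^{\e_1} x_1 = x_0^{-1} b^{-1} x_0 x_1$; the next inclusion $b \in (x_0 x_1) H (x_0 x_1)^{-1}$ allows the analogous step, and iterating produces the telescoping identity $b^{-1} g b = (x_0 \dots x_n)^{-1} b^{-1}(x_0 \dots x_n) \cdot g \cdot (x_0 \dots x_n)^{-1} b (x_0 \dots x_n)$; the final condition $b \in C_B(x_0 \dots x_n)$ then yields $b^{-1} g b = g$. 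Since there are no serious obstacles, the only point requiring care is bookkeeping the reductions in the right order, which is routine.
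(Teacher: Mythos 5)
Your proposal follows exactly the same route as the paper: specialize Lemma \ref{lem:conj_crit} to $f=g$ with $b_0=1$, and invoke the \emph{proofs} (not just the statements) of the sufficiency and necessity directions, since the statement of (iii) only asserts $I\neq\emptyset$ whereas the proof shows the conjugator itself lies in $I$. One small slip in your verification sketch: the ``telescoping identity'' you write is not correct as stated — carrying $b$ through $g$ from the left actually gives $bg=g\cdot(x_0\cdots x_n)^{-1}b(x_0\cdots x_n)$, hence $bgb^{-1}=g\cdot(x_0\cdots x_n)^{-1}b(x_0\cdots x_n)\cdot b^{-1}$, and $b\in C_B(x_0\cdots x_n)$ then collapses the tail to the identity; the formula you wrote reduces, under the same hypothesis, to the tautology $b^{-1}gb=b^{-1}gb$ and would not finish the argument. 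This does not affect the overall correctness of the approach, which coincides with the paper's.
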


\begin{proof} Basically, we have already shown this while proving Lemma \ref{lem:conj_crit}.
Indeed, denote $f:=g$. For any $b \in I$, if we take $b_0=1$,
the proof of sufficiency in Lemma \ref{lem:conj_crit} asserts that $bgb^{-1}=g$, i.e., $b \in C_B(g)$.
On the other hand, if $bgb^{-1}=g$ for some $b\in B$, then the proof of necessity in
Lemma \ref{lem:conj_crit} claims that $b \in I$. Therefore $C_B(g)=I$.
\end{proof}


\section{Proof of the main result}
Throughout this section we assume that $G$ is a right angled Artin group associated to some
fixed finite graph $\Gamma$ with the set of vertices $\mathcal V$.
The \textit{rank} $\rank(G)$ of $G$ is, by definition, the number of elements
in $\mathcal V$.

Our proof of the main result will make use of the following two statements below.
The next Lemma \ref{lem:RAAG-rf} was proved by Green in \cite{Green}.
It can be easily demonstrated by induction on the number of vertices in the graph associated to
a right angled Artin group using Remark \ref{rem:constr_of_RAAG} and Britton's Lemma.
On the other hand, it also follows from the linearity of such groups, which was
established by S. Humphries in \cite{Hump}.

\begin{lemma}\label{lem:RAAG-rf} Right angled Artin groups are residually finite.
\end{lemma}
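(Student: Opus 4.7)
I would argue by induction on $\rank(G) = |\mathcal V|$, using Remark \ref{rem:constr_of_RAAG} (equivalently, Remark \ref{rem:RAAG_spec_HNN_over_max_sbgp}) to realize $G$ as a special HNN-extension of a right angled Artin group of strictly smaller rank. The base case $\rank(G) = 0$ is trivial. For the inductive step, write
\[
G = \langle A,t \,\|\, tht^{-1} = h,\ \forall\, h \in H \rangle,
\]
where $A$ is a maximal special subgroup of $G$ (a right angled Artin group of rank $\rank(G)-1$, and therefore residually finite by the inductive hypothesis) and $H$ is a special subgroup of $A$.

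Next I would show that $H$ is closed in $\PT(A)$. Since $H$ is a retract of $A$ with retraction $\rho_H$, one has $H = \{a \in A \,|\, a^{-1}\rho_H(a)=1\}$, which is the preimage of the singleton $\{1\}$ under the continuous map $a \mapsto a^{-1}\rho_H(a)$ (continuity is automatic because $\PT(A)$ makes $A$ a topological group and $\rho_H$ is a homomorphism). As $A$ is residually finite, $\{1\}$ is closed, hence so is $H$. In particular, for every $x \in A \setminus H$ there is a finite index normal subgroup $N \lhd A$ with $x \notin HN$.

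Given a non-trivial element $g \in G$, represent it by a reduced product $g = x_0 t^{\e_1} x_1 \cdots t^{\e_n} x_n$, so that $x_i \in A \setminus H$ for $1 \le i \le n-1$. Combining residual finiteness of $A$ with the separability of $H$ just established, I can intersect finitely many finite index normal subgroups of $A$ to obtain a single finite index normal subgroup $N \lhd A$ such that $\psi(x_i) \notin \psi(H)$ for each $i \in \{1,\dots,n-1\}$ and, in the special case $n = 0$, $\psi(g) \neq 1$, where $\psi \colon A \to A/N$ is the canonical epimorphism. By the Universal Property (Remark \ref{rem:univ_spec_HNN}), $\psi$ extends to a homomorphism $\tilde\psi \colon G \to P$, where $P$ is the special HNN-extension of the finite group $A/N$ with respect to $\psi(H)$, sending $t$ to the stable letter $s$ of $P$. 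The image
\[
\tilde\psi(g) = \psi(x_0) s^{\e_1} \psi(x_1) \cdots s^{\e_n} \psi(x_n)
\]
is a non-empty reduced product in $P$ (either because $n \ge 1$ and the internal letters avoid $\psi(H)$, or because $n=0$ and $\psi(g) \neq 1$), so Britton's Lemma gives $\tilde\psi(g) \neq 1$ in $P$.

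Finally, $P$ is an HNN-extension of the finite group $A/N$ with finite associated subgroup $\psi(H)$, and hence acts on its Bass--Serre tree with finite stabilizers; thus $P$ is virtually free and, in particular, residually finite. Composing $\tilde\psi$ with a finite quotient of $P$ separating $\tilde\psi(g)$ from $1$ produces the desired finite quotient of $G$. The only real obstacle in this plan is ensuring that $\tilde\psi$ preserves the reduced form of $g$, which is exactly where the separability of the retract $H$ inside the inductively residually finite group $A$ enters; the remainder is a direct application of the Universal Property combined with Britton's Lemma.
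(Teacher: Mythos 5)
Your proof is correct and fills in exactly the inductive argument the paper sketches (the paper attributes the result to Green and only outlines the proof: induction on the number of vertices via Remark \ref{rem:constr_of_RAAG} and Britton's Lemma). Your direct argument that the retract $H$ is closed in $\PT(A)$ — as the preimage of $\{1\}$ under the continuous map $a \mapsto a^{-1}\rho_H(a)$ — is a clean shortcut for the special case you need, bypassing the commuting-retractions machinery of Corollary \ref{cor:prod_retr-sep} that the paper invokes for the same purpose elsewhere.
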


We are also going to use the following important fact proved by Dyer in \cite{Dyer}.
\begin{lemma}\label{lem:virt_free-cs} Virtually free groups are conjugacy separable.
\end{lemma}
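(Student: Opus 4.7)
The plan is to establish this classical result in two stages: first handling finitely generated free groups directly, then extending to arbitrary virtually free groups.

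For the first stage, fix a finitely generated free group $F$ with a chosen free basis, and suppose $x, y \in F$ are non-conjugate. Every element is conjugate to a cyclically reduced element, and two cyclically reduced elements are conjugate in $F$ iff they are cyclic permutations of one another. In particular, the conjugacy class $x^F$ is determined (up to taking the union of $F$-conjugates) by the finite set $C_x$ of cyclic permutations of the cyclic reduction of $x$. To produce a finite quotient separating $\varphi(x)^Q$ from $\varphi(y)$, I would invoke subgroup separability of $F$ (Marshall Hall's theorem) applied to the cyclic subgroups $\langle w \rangle$ for $w \in C_x$ together with a sufficiently large ball in $F$. This yields a homomorphism $\varphi : F \to Q$ onto a finite group in which $\varphi(y)$ fails to be conjugate to any $\varphi(w)$, hence $\varphi(y) \notin \varphi(x)^Q$.

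For the second stage, let $G$ be virtually free. Replacing a given finite index free subgroup by the intersection of its $G$-conjugates, I may assume there is a normal free subgroup $F \lhd G$ of finite index. Choosing a finite transversal $T$ of $F$ in $G$, for any $x \in G$ one has the decomposition
\[
x^G = \bigcup_{t \in T} (txt^{-1})^F,
\]
a finite union of $F$-conjugacy classes. By stage one each $(txt^{-1})^F$ is closed in $\PT(F)$. The finite union is then closed in $\PT(F)$, and I claim it remains closed in $\PT(G)$, which is exactly the conjugacy separability of $G$.

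The one technical point requiring care is the inheritance of closedness from $\PT(F)$ to $\PT(G)$. Given $S \subseteq F$ closed in $\PT(F)$ and $y \in G \setminus S$: if $y \notin F$, the subgroup $F$ is closed in $\PT(G)$ as a finite index subgroup and separates $y$ from $S$; if $y \in F$, choose a finite index $H \le F$ with $y \notin SH$ in $F$, replace $H$ by its normal core $N$ in $G$ (still of finite index in $G$ and contained in $F$), and note that since $SN \subseteq F$, the relation $y \notin SN$ in $F$ forces $y \notin SN$ in $G$, giving the required finite quotient. The main obstacle in the whole argument is really just the free case (stage one); stage two is a routine transfer once one has the residual finiteness of $G$ and the closedness-inheritance observation above.
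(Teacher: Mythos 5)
The paper does not prove this lemma; it simply cites Dyer's 1979 result. Your attempt to reconstruct a proof contains a genuine gap in Stage~2, and it occurs at exactly the point where the statement is non-trivial. You write $x^G=\bigcup_{t\in T}(txt^{-1})^F$ and then claim each piece is ``closed in $\PT(F)$ by stage one,'' but when $x\notin F$ (which is the interesting case), the set $(txt^{-1})^F$ is contained in the non-identity coset $txt^{-1}F$ and hence is not even a subset of $F$; Stage~1, which concerns $F$-conjugacy classes of elements \emph{of} $F$, says nothing about it. What you would actually need is separability in $\PT(F)$ of the set $\{z^{-1}fzf^{-1}:f\in F\}$ for $z=txt^{-1}\notin F$, i.e.\ a twisted conjugacy separability statement for the automorphism of $F$ induced by $z$, which is a strictly stronger assertion than ordinary conjugacy separability of $F$. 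This is precisely where Dyer's paper does its work, typically via the Karrass--Pietrowski--Solitar structure of virtually free groups as fundamental groups of finite graphs of finite groups. The danger here is not hypothetical: as the present paper itself recalls, Goryaga constructed a group that is not conjugacy separable yet contains a conjugacy separable subgroup of index $2$, so conjugacy separability simply does not pass up a finite extension by the kind of soft argument you give.

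Stage~1 is also thinner than you suggest. Knowing that $y$ is not a cyclic permutation of the cyclic reduction of $x$, and invoking subgroup separability of $F$ for the cyclic subgroups $\langle w\rangle$ plus separating a large ball, does not by itself produce a finite quotient $Q$ in which $\varphi(y)\notin\varphi(x)^Q$: a homomorphism can keep $\varphi(y)$ out of $\varphi(\langle w\rangle)$ and still make it conjugate to $\varphi(w)$. Subgroup separability controls double cosets, not conjugacy classes, and these are genuinely different (cf.\ the role of the Centralizer Condition in Section~\ref{sec:h_c_s_and_CC} of this paper, which is exactly the extra control needed to pass from double-coset separability to conjugacy-class separability). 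Conjugacy separability of finitely generated free groups is of course true, but the classical proofs (Blackburn, Stebe) make a direct argument with cyclic words and explicitly constructed finite quotients, not a reduction to Marshall Hall's theorem.
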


The main Theorem \ref{thm:RAAG-main} will be proved by induction on the rank of the right Artin group
$G$. The lemma below will be used to establish the inductive step:

\begin{lemma}\label{lem:ind_step} Assume that every special subgroup $B$ of $G$
satisfies the condition {\ccg} from Definition \ref{df:CCG}, and for each $g \in G$, the
$B$-conjugacy class $g^B$ is separable in $G$.

Suppose  that $A_1,\dots,A_n$ are special subgroups of $G$, $A_0$ is a conjugate of a special subgroup of
$G$, and $b,x_0,\dots,x_n,y_1,\dots,y_n \in G$ are arbitrary elements. Then for any finite index normal
subgroup $K\lhd G$ there exists a finite index normal subgroup $L \lhd G$ such that $L \le K$ and
\begin{equation} \label{eq:main_ind_step}
 \bb C_{\bA_0}(\bx_0) \cap \bigcap_{i=1}^n \bx_i \bA_i \by_i  \subseteq
\psi \left(\left[ b C_{A_0}(x_0) \cap \bigcap_{i=1}^n x_i A_i y_i\right] K\right) \,\mbox{ in } G/L,
\end{equation} where $\psi: G \to G/L $ is the natural epimorphism, $\bb:=\psi(b)$,
$\bA_i:=\psi(A_i)$, $\bx_i:=\psi(x_i)$, $i=0,\dots,n$, and $\by_j:=\psi(y_j)$, $j=1,\dots,n$.
\end{lemma}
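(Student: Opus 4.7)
The plan is induction on $n$. For the base case $n=0$, after multiplying both sides of \eqref{eq:main_ind_step} by $\bar b^{-1}$ the assertion reduces to $C_{\bar A_0}(\bar x_0)\subseteq \psi\bigl(C_{A_0}(x_0)K\bigr)$ in $G/L$. Writing $A_0=g_0 S_0 g_0^{-1}$ with $S_0\le G$ special and $g_0\in G$, and applying the hypothesis {\ccg} for $S_0$ to the pair $(S_0,g_0^{-1}x_0 g_0)$ with normal subgroup $K$ produces a finite index $L\lhd G$ with $L\le K$ such that $C_{\psi(S_0)}\bigl(\psi(g_0^{-1}x_0 g_0)\bigr)\subseteq \psi\bigl(C_{S_0}(g_0^{-1}x_0 g_0)K\bigr)$; conjugating both sides by $\psi(g_0)$ and invoking $K\lhd G$ delivers the base case.

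For the inductive step (assuming the lemma for $n-1$ coset terms) I absorb the $n$-th coset $\bar x_n\bar A_n\bar y_n$ via the substitution $\bar a:=\bar x_n^{-1}\bar w\bar y_n^{-1}$. Setting $A_0':=y_n A_0 y_n^{-1}$ (still a conjugate of $S_0$), $x_0':=y_n x_0 y_n^{-1}$, $b':=x_n^{-1}b y_n^{-1}$, $x_i':=x_n^{-1}x_i$ and $y_i':=y_i y_n^{-1}$ for $1\le i\le n-1$, and using the identity $y_n C_{A_0}(x_0)y_n^{-1}=C_{A_0'}(x_0')$, one checks that ``$\bar w$ lies in the left-hand side of \eqref{eq:main_ind_step}'' is equivalent to ``$\bar a\in\bar A_n\cap\bar b' C_{\bar A_0'}(\bar x_0')\cap\bigcap_{i=1}^{n-1}\bar x_i'\bar A_i\bar y_i'$'', with the analogous equivalence for the bracketed set on the right. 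The two constraints $\bar a\in\bar A_n$ and $\bar a\in\bar x_1'\bar A_1\bar y_1'$ can then be merged into a single constraint of the form permitted by \eqref{eq:main_ind_step}: by Remark \ref{rem:inter_special-sbgps} the group $A_n\cap A_1$ is special, and by Lemma \ref{lem:inter_conj_to_spec_sbgps} the intersection $A_n\cap x_1'A_1 x_1'^{-1}$ is a conjugate of some special subgroup $T$; hence when $A_n\cap x_1'A_1 y_1'$ is non-empty it equals $(A_n\cap x_1'A_1 x_1'^{-1})z_0$ for any $z_0$ in it and thus rewrites as $\tilde x T\tilde y$ for suitable $\tilde x,\tilde y\in G$ and special $T$. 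Combining Lemma \ref{lem:cond_(I)} (applicable because the pair $(A_n\cap A_1,\alpha)$ satisfies {\ccg} for the canonical element $\alpha$) with the inductive hypothesis applied to the merged $(n-1)$-coset data then yields the required $L$ in this case.

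The main obstacle is the degenerate case $A_n\cap x_1'A_1 y_1'=\emptyset$ in $G$, where the image intersection $\psi(A_n)\cap\psi(x_1'A_1 y_1')$ in $G/L$ must likewise be made empty. Equivalently, one must choose $L$ disjoint from the set $A_n y_1'^{-1}A_1 x_1'^{-1}$, which does not contain $1$ by the assumption. Its separability in $\PT(G)$ follows from Lemma \ref{lem:double_coset_sep} combined with separability of the conjugacy class $\alpha^{A_n\cap A_1}$ in $G$ for the canonical $\alpha$ built from $A_n$, $A_1$ and $y_1'^{-1}$ as in Section \ref{sec:conseq_prof_top}, which is guaranteed by the hypothesis that $g^B$ is separable in $G$ for every special subgroup $B\le G$ and every $g\in G$. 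Intersecting the finitely many finite-index normal subgroups produced at each step yields the single $L\lhd G$ required by the lemma.
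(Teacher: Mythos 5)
Your base case $n=0$ is correct and matches the paper. The inductive step, however, takes a genuinely different route from the paper's, and as written it contains two gaps.

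First, the route. The paper's inductive step for $n\ge 2$ leaves the $n$-th coset alone: it uses Remark \ref{rem:inter_cosets-gen} repeatedly to repackage the first $n-1$ cosets together with the centralizer piece as a single translated centralizer $aC_E(x_0)$, and then plays that off against $x_nA_ny_n$ via a separately established base case $n=1$ (Cases 1 and 2 in the paper). You instead change variables to turn the $n$-th coset into the subgroup constraint $\bar a\in\bar A_n$ and then merge $\bar A_n$ with $\bar x_1'\bar A_1\bar y_1'$ into a single coset of a conjugate of a special subgroup, decreasing the count of cosets by one. These are honestly different decompositions, and your change of variables and the identity $y_nC_{A_0}(x_0)y_n^{-1}=C_{A_0'}(x_0')$ are all correct.

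The first gap is the case $n=1$. Your merge is between two ordinary cosets $\bar A_n$ and $\bar x_1'\bar A_1\bar y_1'$, which presupposes $n\ge 2$. For $n=1$ the absorbed constraints reduce to $\bar a\in\bar A_1\cap\bar b'C_{\bar A_0'}(\bar x_0')$, and there is no second coset available. Since you only established $n=0$, the induction cannot start. Closing this would require merging $\bar A_1$ with the \emph{centralizer} coset instead — writing $A_1\cap b'C_{A_0'}(x_0')=a_0C_{A_0'\cap A_1}(x_0')$ when non-empty, invoking Lemma \ref{lem:inter_conj_to_spec_sbgps} for $A_0'\cap A_1$, and invoking Lemma \ref{lem:cond_(I)} to control $\bar A_1\cap\bar A_0'$ modulo $\psi(K)$. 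This is precisely the content of the paper's separate $n=1$ base case, so it is not something your inductive step can piggyback on for free.

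The second gap is in how you combine Lemma \ref{lem:cond_(I)} with the inductive hypothesis. After the merge, the $(n-1)$-coset inductive hypothesis controls elements satisfying $\bar a\in\psi(A_n\cap x_1'A_1y_1')$, whereas the constraint coming from the left-hand side of \eqref{eq:main_ind_step} is $\bar a\in\bar A_n\cap\bar x_1'\bar A_1\bar y_1'$, which is a priori the larger set (image of an intersection sits inside the intersection of the images, not conversely). Lemma \ref{lem:cond_(I)} only bridges this discrepancy up to a factor $\psi(K)$, and pushing that extra factor through the inductive hypothesis is exactly the delicate pull-back-to-$G$, compose-inclusions, re-descend bookkeeping that occupies the final displayed chain of inclusions in the paper's proof. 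Your single sentence ``Combining Lemma \ref{lem:cond_(I)} $\ldots$ with the inductive hypothesis $\ldots$ then yields the required $L$'' asserts the conclusion of that bookkeeping without carrying it out; it is not immediate that the accumulated $K$-factors collapse correctly, and in the paper this verification is done by choosing the normal subgroups in a specific nested order and tracking the inclusions in $G$ rather than in $G/L$. Your handling of the degenerate (empty intersection) case via Lemma \ref{lem:double_coset_sep} is fine and parallels the paper's Case 1.
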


\begin{proof} By the assumptions, $A_0=hAh^{-1}$ for some special subgroup $A$ of $G$ and
some $h\in G$. The proof will proceed by induction on $n$.

If $n=0$, then the existence of a finite index normal subgroup $L\lhd G$, $L \le K$,
enjoying \eqref{eq:main_ind_step}, follows
from the fact that the pair $(A_0,x_0)$ satisfies the Centralizer Condition \ccg, because
the pair $(A,g)$ has  \ccg, where $g:=h^{-1} x_0 h \in G$,
and $b C_{A_0}(x_0)=bh C_A(g) h^{-1}$ in $G$.

\fbox{\textit{Base of induction:}} assume $n=1$.

\underline{\textit{Case 1:}} suppose that $bC_{A_0}(x_0) \cap x_1 A_1 y_1 = \emptyset$, which
is equivalent to the condition
$x_1 \notin bC_{A_0}(x_0) y_1^{-1}A_1= bh\left[C_A(g)D\right]h^{-1}y_1^{-1}$,
where $D:=( y_1h)^{-1} A_1 (y_1h)$.

In this case, by Lemma \ref{lem:inter_conj_to_spec_sbgps}, the intersections $A \cap A_1$ and
$A \cap D$ are conjugates of special subgroups in $G$,
hence for any $f \in G$, the conjugacy classes $f^{A \cap A_1}$ and $f^{A \cap D}$
are separable in $G$
($A \cap D=cSc^{-1}$ for some special subgroup $S$ of $G$, hence $f^{A \cap D}=
c\left[(c^{-1}fc)^S\right]c^{-1}$), and the pair $(A\cap A_1,f)$ satisfies \ccg. Therefore,
Lemma \ref{lem:centr-coset_sep} allows us to conclude that the double coset $C_A(g)D$ is separable
in $G$. Thus the double coset $bC_{A_0}(x_0) y_1^{-1}A_1$  is separable as well, implying that
there is a finite index normal subgroup $N \lhd G$ such that $x_1\notin bC_{A_0}(x_0) y_1^{-1}A_1N$.
After replacing $N$ with $N \cap K$ we can assume that $N \le K$.

Now, since the pair $(A_0,x_0)$ has \ccg, there exists a finite index normal subgroup $L \lhd G$
such that $L \le N \le K$ and $\psi^{-1} \left( C_{\bA_0}(\bx_0) \right) \subseteq C_{A_0}(x_0)N$ in $G$
(using the same notations as in the formulation of the lemma).
Therefore, $\psi^{-1} \left( \bb C_{\bA_0}(\bx_0) \by_1^{-1} \bA_1  \right) \subseteq
bC_{A_0}(x_0) y_1^{-1}A_1N$, yielding that
$x_1 \notin \psi^{-1} \left( \bb C_{\bA_0}(\bx_0) \by_1^{-1} \bA_1  \right)$. Hence
$\bb C_{\bA_0}(\bx_0) \cap \bx_1\bA_1 \by_1 = \emptyset$ in $G/L$, implying that \eqref{eq:main_ind_step}
holds in this first case.

\underline{\textit{Case 2:}} $bC_{A_0}(x_0) \cap x_1 A_1 y_1 \neq \emptyset$ in $G$.

Let us make the following general observation:
\begin{rem}\label{rem:inter_cosets-gen} Let $H,F$ be subgroups of a group $G$ such that
$bH \cap xFy \neq \emptyset$ in $G$ for some elements $b,x,y \in G$. Then for any
$a \in bH \cap xFy$ we have $bH \cap xFy=a(H \cap y^{-1} F y)$.
\end{rem}

Thus we can pick any $a \in bC_{A_0}(x_0) \cap x_1 A_1 y_1$, and according to
Remark \ref{rem:inter_cosets-gen}, we will have
$bC_{A_0}(x_0) \cap x_1 A_1 y_1 = a\left( C_{A_0}(x_0) \cap y_1^{-1} A_1 y_1\right)=
aC_{E}(x_0)$ in $G$, where $E:=A_0 \cap y_1^{-1} A_1 y_1 \le G$.
By Lemma \ref{lem:inter_conj_to_spec_sbgps}, $E=cSc^{-1}$ for some special subgroup $S$ of $G$ and
some $c \in A_0$.

As we saw in the beginning of the proof, it follows from our assumptions that
the pair $(E,x_0)$ satisfies \ccg. Hence there must exist a finite index normal subgroup
$M\lhd G$ such that $M \le K$ and
$C_{\varphi(E)}(\varphi(x_0)) \subseteq \varphi(C_E(x_0)K)$ in $G$,
where $\varphi: G \to G/M$ denotes the natural epimorphism. On the other hand, by
Lemma \ref{lem:cond_(I)}, there is $L \lhd G$ such that $|G:L|<\infty$, $L \le M \le K$ and
$\psi(A) \cap \psi\bigl((y_1h)^{-1} A_1 y_1h_1\bigr)
\subseteq \psi\bigl(A \cap (y_1h)^{-1} A_1 y_1h_1\bigr)\psi(M)$ in $G/L$.
Therefore
\begin{multline}\label{eq:ca_0-inclusion}
\bA_0 \cap \by_1^{-1} \bA_1 \by_1 = \psi(h) \left[ \psi(A) \cap \psi\bigl((y_1h)^{-1} A_1
y_1h_1\bigr)\right]
\psi(h^{-1}) \subseteq \\ \psi(h) \left[\psi\bigl(A \cap (y_1h)^{-1} A_1 y_1h_1\bigr)\psi(M)\right]
\psi(h^{-1})=\psi(A_0 \cap y_1^{-1} A_1 y_1)\psi(M)=\psi(E)\psi(M) \end{multline}
(in the notations from the formulation of the lemma).
Observe that
$\ba:=\psi(a) \in \bb C_{\bA_0}(\bx_0) \cap \bx_1 \bA_1 \by_1$, hence, by Remark \ref{rem:inter_cosets-gen},
$\bb C_{\bA_0}(\bx_0) \cap \bx_1 \bA_1 \by_1=\ba\left( C_{\bA_0}(\bx_0) \cap \by_1^{-1} \bA_1 \by_1\right)$,
and applying \eqref{eq:ca_0-inclusion} we achieve
\begin{equation}
\label{eq:one_more_eq_main}
\bb C_{\bA_0}(\bx_0) \cap \bx_1 \bA_1 \by_1 \subseteq \ba C_{\psi(E M)}(\bx_0).\end{equation}

Since $L \le M$, there is an epimorphism $\xi: G/L \to G/M$ such that $\ker(\xi)=\psi(M)$
and $\varphi=\xi \circ \psi$. Note that
$\xi( \psi(EM))=\xi( \psi(E) \psi(M))=\varphi(E)$ and $\xi(\bx_0)=
\varphi(x_0)$. Consider any $z \in C_{\psi(EM)}(\bx_0)$ in $G/L$. Then
$$\xi(z) \in C_{\varphi(E)}(\varphi(x_0)) \subseteq \varphi (C_E(x_0)K)=\xi
\bigl(\psi (C_E(x_0)K) \bigr).$$ Hence $z \in \psi (C_E(x_0)K)\ker (\xi)=\psi (C_E(x_0)K)$
because $\ker(\xi)=\psi(M) \le \psi(K)$. Thus we have shown that
$ C_{\psi(EM)}(\bx_0) \subseteq \psi (C_E(x_0)K)$ in $G/L$.
Finally, combining this with
\eqref{eq:one_more_eq_main}, we obtain $$\bb C_{\bA_0}(\bx_0) \cap \bx_1 \bA_1 \by_1 \subseteq
\ba \psi (C_E(x_0)K)=\psi(aC_E(x_0)K)=\psi(\left[bC_{A_0}(x_0) \cap x_1 A_1 y_1\right] K),$$
therefore \eqref{eq:main_ind_step} holds in  Case 2.

Thus we have established the base of induction.

\fbox{\textit{Step of induction:}} suppose that $n \ge 2$ and
the statement of the lemma has been proved for $n-1$.

If $ b C_{A_0}(x_0) \cap \bigcap_{i=1}^{n-1} x_i A_i y_i=\emptyset$ in $G$, then, by the
induction hypothesis, there is a finite index
normal subgroup $L\lhd G$ such that $L \le K$ and
$$
 \bb C_{\bA_0}(\bx_0) \cap \bigcap_{i=1}^{n-1} \bx_i \bA_i \by_i  \subseteq
\psi \left(\left[ b C_{A_0}(x_0) \cap \bigcap_{i=1}^{n-1} x_i A_i y_i\right] K\right)=\emptyset
~\mbox{ in } G/L.$$
Hence, the left-hand side of \eqref{eq:main_ind_step} will also be empty, and thus \eqref{eq:main_ind_step}
will be true.

Therefore, we can assume that $ b C_{A_0}(x_0) \cap \bigcap_{i=1}^{n-1} x_i A_i y_i \neq \emptyset$ in $G$.
But in this case we can apply Remark \ref{rem:inter_cosets-gen} $(n-1)$ times to find some $a \in G$
such that $ b C_{A_0}(x_0) \cap \bigcap_{i=1}^{n-1} x_i A_i y_i=
a\left( C_{A_0}(x_0) \cap \bigcap_{i=1}^{n-1} y_i^{-1} A_i y_i\right)=aC_E(x_0)$,
where $E:=A_0 \cap \bigcap_{i=1}^{n-1} y_i^{-1} A_i y_i$ is a conjugate of
a special subgroup in $G$ by Lemma \ref{lem:inter_conj_to_spec_sbgps}. Now we can use the base
of induction $n=1$, to find a finite index normal subgroup $M \lhd G$ such that $M \le K$ and
for the natural epimorphism $\varphi:G \to G/M$ we have
\begin{equation} \label{eq:main_lem_M}
\varphi^{-1} \left( \varphi(a) C_{\varphi(E)}(\varphi(x_0)) \cap \varphi(x_n A_n y_n)\right)
\subseteq \left[aC_E(x_0) \cap x_n A_n y_n \right]K ~\mbox{ in } G.\end{equation}

By the induction hypothesis, there exists a finite index
normal subgroup $L\lhd G$ such that $L \le M \le K$ and
\begin{equation} \label{eq:ind_step_within_main_ind_step}
\psi^{-1} \left( \bb C_{\bA_0}(\bx_0) \cap \bigcap_{i=1}^{n-1} \bx_i \bA_i \by_i \right) \subseteq
\left[ b C_{A_0}(x_0) \cap \bigcap_{i=1}^{n-1} x_i A_i y_i\right] M~\mbox{ in } G.
\end{equation}

Combining \eqref{eq:ind_step_within_main_ind_step} with \eqref{eq:main_lem_M}
and recalling that $\ker(\psi)=L \le M=\ker(\varphi)$ we obtain the following in $G$:
\begin{multline*} \psi^{-1} \left( \bb C_{\bA_0}(\bx_0) \cap \bigcap_{i=1}^{n} \bx_i \bA_i \by_i \right)=
\psi^{-1} \left( \bb C_{\bA_0}(\bx_0) \cap \bigcap_{i=1}^{n-1} \bx_i \bA_i \by_i \right) \cap
\psi^{-1} \left( \bx_n \bA_n \by_n \right) \subseteq \\
\left[ b C_{A_0}(x_0) \cap \bigcap_{i=1}^{n-1} x_i A_i y_i\right] M \cap
x_n A_n y_n L \subseteq aC_E(x_0)M \cap x_n A_n y_n M
\subseteq \\ \varphi^{-1} \left( \varphi(a) C_{\varphi(E)}(\varphi(x_0))\right) \cap
\varphi^{-1} \left( \varphi(x_n A_n y_n) \right)=
\varphi^{-1} \left( \varphi(a) C_{\varphi(E)}(\varphi(x_0))
\cap \varphi(x_n A_n y_n)\right)
\subseteq \\ \left[aC_E(x_0) \cap x_n A_n y_n \right]K=
\left[b C_{A_0}(x_0) \cap \bigcap_{i=1}^{n} x_i A_i y_i\right] K.
\end{multline*}

Hence \eqref{eq:main_ind_step} holds in $G/L$ and
we have verified the inductive step, finishing the proof of the lemma.
\end{proof}

The next two statements basically establish the main result. These Lemmas \ref{lem:main-CS} and
\ref{lem:main-CC} will be proved by simultaneous induction on $\rank(G)$.
The proofs of each of these two lemmas when $\rank(G)=r$
will use both of their conclusions about right angled Artin groups of ranks
strictly less than $r$. 

\begin{lemma}\label{lem:main-CS} If $G$ is a right angled Artin group of rank $r$, then for any special
subgroup $B$ of $G$ and any element $g \in G$, the $B$-conjugacy class $g^B$ is separable in $G$.
\end{lemma}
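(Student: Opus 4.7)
Plan: Induct on $r = \rank(G)$ simultaneously with Lemma \ref{lem:main-CC}. The base case $r = 0$ is trivial. For the inductive step with $r \ge 1$, use Remark \ref{rem:RAAG_spec_HNN_over_max_sbgp} to present $G$ as a special HNN-extension
\[
G = \langle A, t \mid tht^{-1} = h,\ \forall\, h \in H \rangle,
\]
with $A$ a maximal special subgroup of $G$ (so $\rank(A) = r-1$) and $H \le A$ a special subgroup. By the inductive hypothesis, both Lemmas \ref{lem:main-CS} and \ref{lem:main-CC} hold for $A$, so $A$ satisfies the hypothesis of Lemma \ref{lem:ind_step} (applied to $A$ in place of $G$).

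Fix a special subgroup $B \le G$, an element $g \in G$, and an element $y \in G \setminus g^B$; the goal is to produce a finite index normal subgroup $L \lhd G$ with $y \notin g^B L$. Write $g$ and $y$ as reduced HNN-products $g = x_0 t^{\e_1} x_1 \cdots t^{\e_n} x_n$ and $y = y_0 t^{\z_1} y_1 \cdots t^{\z_m} y_m$ with $x_i, y_j \in A$. First handle the easy case in which the $t$-exponent sequences of $g$ and $y$ are incompatible with $B$-conjugacy (directly if $t \notin B$, up to cyclic rotation if $t \in B$): residual finiteness of $G$ (Lemma \ref{lem:RAAG-rf}) supplies a finite quotient recording the $t$-profile in which the image of $y$ lies outside the image of $g^B$. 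In the remaining case the $t$-profiles are compatible; after possibly cyclically permuting $g$ by a power of $t$ when $t \in B$, we are reduced to deciding whether $y \in g^{B \cap A}$ in $G$, where $B \cap A$ is a special subgroup of $A$.

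Now apply Lemma \ref{lem:conj_crit} to this $(B \cap A)$-conjugacy question: the failure $y \notin g^{B \cap A}$ means condition (ii) or (iii) of that lemma fails in $A$. If (ii) fails, the products $x_0 \cdots x_n$ and $y_0 \cdots y_n$ are not $(B \cap A)$-conjugate in $A$; the inductive hypothesis (Lemma \ref{lem:main-CS} for $A$) supplies a finite index normal subgroup $N \lhd A$ separating them. If (iii) fails, then for every $b_0 \in B \cap A$ satisfying the equation in (ii) the intersection $I$ (of a $(B \cap A)$-centralizer coset with cosets of conjugates of $H$) is empty in $A$; Lemma \ref{lem:ind_step}, applied to $A$, produces a finite index normal subgroup $N \lhd A$ in whose quotient each such image of $I$ remains empty. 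In either case, by Corollary \ref{cor:fin_image_of_intersec} we may shrink $N$ so that $\rho_H(N) \subseteq N$, where $\rho_H$ is the canonical retraction of $A$ onto $H$.

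Finally, by Remark \ref{rem:univ_spec_HNN} and Lemma \ref{lem:ker_of_tilde_psi}, the epimorphism $A \to A/N$ extends to an epimorphism $\tilde\psi : G \to \bar G$ with $\ker(\tilde\psi)$ of finite index in $G$, where $\bar G = \langle A/N, \bar t \mid \bar t \bar h \bar t^{-1} = \bar h,\ \bar h \in H/N \rangle$ is a special HNN-extension of the finite group $A/N$ with finite associated subgroup. Hence $\bar G$ acts on its Bass--Serre tree with finite vertex stabilizers, so is virtually free, and therefore conjugacy separable by Dyer's Lemma \ref{lem:virt_free-cs}. Our choice of $N$ ensures that the image of $y$ in $\bar G$ lies outside the image of $g^{B \cap A}$; one further finite quotient of $\bar G$, furnished by its conjugacy separability, yields the sought $L \lhd G$ of finite index with $y \notin g^B L$. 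The main obstacle is case (iii) of Lemma \ref{lem:conj_crit}, which requires delicate control over intersections of centralizer cosets with cosets of conjugates of parabolic subgroups; this is exactly what Lemma \ref{lem:ind_step} was constructed to provide.
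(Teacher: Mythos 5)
Your reduction to the HNN-extension over a maximal special subgroup $A$ is the right move, and the $t \notin B$ branch of your argument is essentially sound and follows the spirit of the paper's Case $1$. But the $t \in B$ branch has a genuine gap. You claim that when $t \in B$, "after possibly cyclically permuting $g$ by a power of $t$, we are reduced to deciding whether $y \in g^{B \cap A}$." This reduction is false: an element $b \in B$ with $t \in B$ need not decompose as $t^k \cdot (\text{element of } B\cap A)$ (consider $b = a_1 t a_2 t^{-1}$ with $a_1,a_2 \in B\cap A$, $a_2 \notin H$), so $B$-conjugacy is not captured by cyclic $t$-rotations followed by $(B\cap A)$-conjugacy. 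The correct tool here, when $B=G$, is Collins's Lemma, which reduces $G$-conjugacy of two cyclically reduced elements to $H$-conjugacy of a cyclic permutation — not $(B \cap A)$-conjugacy — and this requires cyclic reduction, which is a nontrivial extra normalization. The paper handles this as a separate Case $2$ with additional machinery: first it invokes Lemma \ref{lem:el_in_RAAG_non_conj_into_max_spec} to pick the maximal special subgroup $A$ so that $g \notin A^G$ (ensuring $g$ is conjugate to a cyclically reduced element with at least one $t$-syllable); then it applies Collins's Lemma together with conjugacy separability of the virtually free quotient. Your proposal never chooses $A$ with this care.

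You also didn't notice that when $B \neq G$ you are free to choose $A$ containing $B$ (pick a vertex absent from $B$'s defining set and let $A$ be its complementary maximal special subgroup), so the $t \in B$ branch is only unavoidable when $B = G$. Observing this would let you split the argument cleanly into the two cases the paper uses, rather than attempting a uniform treatment. Finally, a smaller point: in your last paragraph you invoke conjugacy separability of $\bar G$ to separate the image of $y$ from the image of $g^{B\cap A}$, but this is a $\psi(B\cap A)$-conjugacy class, not a full $\bar G$-conjugacy class, so Dyer's theorem does not directly apply. What actually works (and what the paper uses for Case $1$) is that $\psi(B) \subseteq A/N$ is finite, hence the relevant image is a finite set, and residual finiteness of $\bar G$ suffices; Dyer's theorem is only needed in the $B=G$ case.
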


\begin{lemma}\label{lem:main-CC} Let $G$ be a right angled Artin group of rank $r$. Then every special
subgroup $B$ of $G$ satisfies the Centralizer Condition {\ccg} from Definition \ref{df:CCG}.
\end{lemma}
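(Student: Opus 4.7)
The plan is to prove Lemmas \ref{lem:main-CS} and \ref{lem:main-CC} simultaneously by induction on $r=\rank(G)$. When $r=0$ there is nothing to check, so assume $r\ge 1$ and that both statements hold for all right angled Artin groups of rank less than $r$. Using Remark \ref{rem:RAAG_spec_HNN_over_max_sbgp}, I would fix a maximal special subgroup $A\le G$ and write $G=\langle A,t\mid tht^{-1}=h,\ h\in H\rangle$ as a special HNN-extension of $A$ over some special subgroup $H\le A$. Since $\rank(A)=r-1$, the inductive hypothesis supplies both conclusions for $A$, and this is precisely what is needed to activate Lemma \ref{lem:ind_step} inside the ambient group $A$.

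Now fix a special subgroup $B\le G$, an element $g\in G$, and a finite index normal subgroup $K\lhd G$. The first reduction is to replace $g$ by a suitable conjugate so that it is cyclically reduced in HNN-form $g=x_0t^{\e_1}x_1\cdots t^{\e_n}x_n$, adjusting $B$ by the same conjugation where necessary and keeping track of the change at the end. The plan is then to split cases on $n$ and on whether $t\in B$. When $n=0$ and $t\notin B$, the element $g$ lies in $A$, $B$ is a special subgroup of $A$, and the pair $(B,g)$ already satisfies \ccg{} inside $A$ by induction; the corresponding finite quotient of $A$ is then pushed up to a finite quotient of $G$ via the Universal Property (Remark \ref{rem:univ_spec_HNN}), producing a special HNN-extension of a \emph{finite} group, which is virtually free. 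When $n\ge 1$, I would use Proposition \ref{prop:descr_centr} to describe $C_G(g)$ as $C_H(g)\langle g\rangle\Omega$ (or as $\langle t\rangle\times C_H(g)$ in the exceptional subcase $x_n\in H$, $n=1$), and Lemma \ref{lem:centr_crit} to further expand $C_H(g)$ as an intersection of a centralizer in $H$ with finitely many conjugates of $H$ inside $A$. Intersecting with $B$ writes $C_B(g)$ as a finite union of cosets in exactly the form that Lemma \ref{lem:ind_step} is designed to approximate.

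With this description in hand, the plan is to apply Lemma \ref{lem:ind_step} inside $A$ to each of those finitely many cosets, producing a finite index normal subgroup $M\lhd A$ in which the image of $C_B(g)$ modulo $K\cap A$ already captures the centralizer $C_{\bar B}(\bar g)$ computed in $A/M$. Extending the quotient $A\to A/M$ to $\tilde\psi:G\to P$ via Remark \ref{rem:univ_spec_HNN} yields a quotient $P$ of $G$ which is a special HNN-extension of a finite group, hence virtually free; a further passage to a suitable finite quotient of $P$, using Lemma \ref{lem:virt_free-cs} together with residual finiteness of virtually free groups (to separate the finite set $\Omega$ and to give the generator of the cyclic factor a large enough order), delivers the desired $L\lhd G$ with $L\le K$ and $C_{\bar B}(\bar g)\subseteq\psi(C_B(g)K)$. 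The case $t\in B$ is treated analogously, by observing that $B$ is itself a special HNN-extension $B=\langle B\cap A,t\rangle$ and running the same analysis with $B\cap A$ in place of $B$.

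The main obstacle will be controlling the infinite cyclic factors that appear in $C_G(g)$: the factor $\langle t\rangle$ when $x_n\in H$, and the factor $\langle g\rangle$ in the generic cyclically reduced case. A finite quotient that identifies a nontrivial power of the corresponding generator too aggressively will cause $\psi(C_B(g)K)$ to be strictly smaller than $C_{\bar B}(\bar g)$, so in the refinement step one must ensure the chosen generator has sufficiently large order in $G/L$. This infinite-cyclic bookkeeping is exactly where the extra strength of \ccg{} over plain conjugacy separability is essential, and it is also why the two conclusions \ref{lem:main-CS} and \ref{lem:main-CC} have to be carried through the induction together rather than separately.
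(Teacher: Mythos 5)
Your high-level architecture (simultaneous induction on rank, passage to a special HNN-extension over a maximal special subgroup, approximation through a virtually free special HNN-extension of a finite group, and the use of Lemma~\ref{lem:ind_step}) matches the paper's. However, there are two genuine gaps in the middle of the argument.

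The first is the plan to conjugate $g$ to a cyclically reduced HNN-form while ``adjusting $B$ by the same conjugation.'' This breaks the structure you need. If $B\le A$ is special and $g=fg_0f^{-1}$ with $g_0$ cyclically reduced, then $C_B(g)=fC_{f^{-1}Bf}(g_0)f^{-1}$, and $f^{-1}Bf$ is in general no longer contained in $A$; so Lemma~\ref{lem:centr_crit}, which you later invoke and which explicitly requires $B\le A$, no longer applies to the conjugated pair. The paper circumvents this precisely by \emph{not} conjugating $g$ when $B\ne G$: it works with an arbitrary reduced (not cyclically reduced) HNN-form $g=x_0t^{\e_1}x_1\cdots t^{\e_n}x_n$ and applies Lemma~\ref{lem:centr_crit} directly, since that lemma does not need cyclic reducedness. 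Cyclic reduction of $g$ is only used in the case $B=G$, where conjugating $g$ costs nothing because $B$ is conjugation-invariant, and where a further device (Lemma~\ref{lem:el_in_RAAG_non_conj_into_max_spec}, yielding a maximal $A$ with $g\notin A^G$) is needed to guarantee $n\ge 1$.

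The second gap is in how you obtain a tractable description of $C_B(g)$ for a proper special subgroup $B$. You propose to describe $C_G(g)$ via Proposition~\ref{prop:descr_centr} as $C_H(g)\langle g\rangle\Omega$ and then intersect with $B$. That intersection is not a ``finite union of cosets in exactly the form that Lemma~\ref{lem:ind_step} is designed to approximate'': the product $C_H(g)\langle g\rangle\Omega$ is not a subgroup, and intersecting it with $B\le A$ does not decompose along cosets of special subgroups of $A$. The correct move, and the one the paper makes inside Lemma~\ref{lem:claim_C}, is to apply Lemma~\ref{lem:centr_crit} with $B$ itself playing the role of the subgroup: it writes $C_B(g)$ directly as an intersection $C_B(x_0\cdots x_n)\cap\bigcap_i(x_0\cdots x_{i-1})H(x_0\cdots x_{i-1})^{-1}$ of exactly the type Lemma~\ref{lem:ind_step} was built to approximate. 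Proposition~\ref{prop:descr_centr} is needed only in the $B=G$ case (Lemma~\ref{lem:claim_D}). Relatedly, your final ``refinement'' step through the virtually free $P$ is closer to the paper in the $B=G$ case (where $P$ satisfies $\cc$ because it is hereditarily conjugacy separable); in the $B\le A$ case the paper instead uses Lemma~\ref{lem:for_a_given_K_sep_cc->CCG}, exploiting that $\tilde\psi(B)\cap\tilde\psi(K)$ is finite in $Q$, which sidesteps your worry about the order of cyclic generators altogether.
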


The base of induction for both lemmas is $r=0$, that is,  when $G$ is the trivial group.
In this case the two  statements are trivial.
Therefore the proofs of Lemmas \ref{lem:main-CS} and \ref{lem:main-CC} start with assuming that both of
their claims have been established for all right angled Artin groups of rank $<r$, and
will aim to prove the inductive step by considering the case when $\rank(G)=r \ge 1$.

The proofs of Lemmas \ref{lem:main-CS} and \ref{lem:main-CC} make use of the four auxiliary statements below.
These statements -- Lemmas \ref{lem:claim_A} through \ref{lem:claim_D} -- start with a right angled Artin
group $G$ of rank $r$ (presented as a special HNN-extension 
\begin{equation}\label{eq:def_of_A-main}
G=\langle A,t \,\|\, tht^{-1}=h, ~\forall\,h \in H\rangle.\end{equation}
of a maximal special subgroup $A \le G$ with respect to some special subgroup $H \le A$ -- see
Remark \ref{rem:RAAG_spec_HNN_over_max_sbgp}), and assume that Lemmas \ref{lem:main-CS} and
\ref{lem:main-CC} have already been established for $A$, since $\rank(A)=r-1<r=\rank(G)$.

\begin{lemma}\label{lem:claim_A} Suppose that $B$ is a special subgroup
of $G$ contained in $A$, $g \in G \setminus A$ and  $f \in G \setminus g^B$. Then
there exists an epimorphism $\psi$ from $A$ onto a finite group $Q$
such that 
for the corresponding extension $\tilde \psi: G \to P$ from $G$ onto the
special HNN-extension $P$ of $Q$ (with respect to $\psi(H)$), obtained according to
Remark \ref{rem:univ_spec_HNN},
we have $\tilde\psi(f) \notin \tilde\psi(g)^{\tilde\psi(B)}$ in $P$.
\end{lemma}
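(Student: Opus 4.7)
The plan is to use the conjugacy criterion for special HNN-extensions (Lemma \ref{lem:conj_crit}) together with the inductive assumption that Lemmas \ref{lem:main-CS} and \ref{lem:main-CC} hold for the maximal special subgroup $A$ of $G$. Write $g = x_0 t^{\e_1} x_1 \cdots t^{\e_n} x_n$ as a reduced product; since $g \in G\setminus A$, we have $n \ge 1$. The aim is to produce an epimorphism $\psi:A \to Q$ onto a finite group $Q$ which (a) keeps the reduced forms of $g$ and $f$ reduced after passing to $P$, and (b) simultaneously preserves whatever obstruction prevents $f \in g^B$.

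First I would dispose of the easy sub-case $f \in A$: conjugation by any $b\in B\subseteq A$ cannot alter the $t$-exponent sequence of a reduced expression, so $bgb^{-1}$ still has $n\ge 1$ stable letters, forcing $f \notin g^B$ automatically. Residual finiteness of $A$ (Lemma \ref{lem:RAAG-rf}) together with the separability of the retract $H$ in $A$ (automatic for retracts of residually finite groups) lets me pick $\psi$ with $\psi(x_i) \notin \psi(H)$ for $i=1,\dots,n-1$. Then $\tilde\psi(g)$ is a reduced product in $P$ with at least one occurrence of $s := \tilde\psi(t)$, so Britton's Lemma prevents it from being conjugated into $Q \supseteq \tilde\psi(f)$ by any element of $Q$.

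Assume now $f \notin A$ and write $f = y_0 t^{\z_1} y_1 \cdots t^{\z_m} y_m$ in reduced form. If the signatures $(\e_i)$ and $(\z_j)$ do not coincide, the same idea works by choosing $\psi$ to also separate every middle $y_j$ from $H$; both reduced forms then survive in $P$ with distinct signatures, which by Britton's Lemma cannot be conjugate. Otherwise the signatures coincide and the fact $f \notin g^B$ means that condition (ii) or (iii) of Lemma \ref{lem:conj_crit} (applied in $G$) must fail. If (ii) fails, i.e.\ $y_0\cdots y_n \notin (x_0 \cdots x_n)^B$ in $A$, the induction hypothesis Lemma \ref{lem:main-CS} for $A$ produces a finite quotient of $A$ in which this non-containment persists. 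If (iii) fails, there is $b_0 \in B$ with $y_0\cdots y_n = b_0(x_0\cdots x_n)b_0^{-1}$ in $A$ for which the intersection $I$ from (iii) is empty; I would then invoke Lemma \ref{lem:ind_step} applied to $A$ in place of $G$ (its hypotheses are precisely the inductive statements for $A$) with $b := b_0$, $A_0 := B$, $A_i := H$ for $1 \le i \le n$ and $K := A$, obtaining a finite index $L \lhd A$ for which the barred version of $I$ in $A/L$ remains empty, witnessed by $\psi(b_0)$.

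In every sub-case I would intersect $L$ with the (finitely many) further finite-index normal subgroups of $A$ needed to separate the middle entries of $g$ and $f$ from $H$, and let $\psi:A \to Q:=A/L$. Extending to $\tilde\psi:G\to P$ via Remark \ref{rem:univ_spec_HNN}, the reduced forms of $g$ and $f$ in $G$ map to reduced products in $P$; re-applying Lemma \ref{lem:conj_crit} inside $P$ shows that at least one of conditions (i)--(iii) fails for $\tilde\psi(g)$ and $\tilde\psi(f)$ with respect to $\tilde\psi(B)$, so $\tilde\psi(f) \notin \tilde\psi(g)^{\tilde\psi(B)}$. The hard step is the sub-case in which (iii) fails: the obstruction is not a single non-membership that residual finiteness could directly witness, but a simultaneous emptiness of an intersection of translates of $H$ with a centralizer coset, which is exactly what Lemma \ref{lem:ind_step} was designed to preserve in a single finite quotient of $A$.
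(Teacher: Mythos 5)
Your proof is correct and follows essentially the same strategy as the paper: case analysis according to which of the three conditions of Lemma \ref{lem:conj_crit} fails, with Lemma \ref{lem:main-CS} for $A$ handling the failure of (ii) and Lemma \ref{lem:ind_step} handling the failure of (iii), followed by re-interpreting Lemma \ref{lem:conj_crit} inside the virtually free quotient $P$. Two small presentational differences are worth flagging. First, for the sub-case where (ii) fails, you preserve the reduced forms of $g$ and $f$ in $P$ so that Lemma \ref{lem:conj_crit} applies there; the paper instead invokes the retraction $\tilde\rho_Q \in End(P)$ of Lemma \ref{lem:spec_HNN_retractions}, which bypasses any reducedness requirement (if $\tilde\psi(f) = \bar b\,\tilde\psi(g)\,\bar b^{-1}$ then applying $\tilde\rho_Q$ directly contradicts $\psi(y)\notin\psi(x)^{\psi(B)}$). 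Your variant works because you explicitly separate the middle entries from $H$, but the retraction argument is a touch more robust. Second, for the sub-case where (iii) fails, you take $K:=A$ in Lemma \ref{lem:ind_step} and then pass to a finer quotient to restore reducedness; this is fine since the emptiness of the barred intersection persists under refinement of $L$, but the paper's choice of $K$ as the finite-index normal subgroup that already separates the middle $x_i,y_i$ from $H$ handles both requirements in one step. Neither variation introduces a gap; they are cosmetic reorganizations of the same argument.
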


\begin{proof}
Let $x_0t^{\e_1}x_1 t^{\e_2} \dots t^{\e_n}x_{n}$ and $y_0t^{\z_1}y_1 t^{\z_2} \dots t^{\z_m}y_{m}$
be reduced products representing $g$ and $f$ in $G$ respectively. Since $g \notin A$ we have $n \ge 1$.

\underline{\textit{Case 1:}} suppose, at first, that the condition (i)
from Lemma \ref{lem:conj_crit} does not hold.
By Corollary \ref{cor:prod_retr-sep} and Lemma \ref{lem:RAAG-rf}, the special subgroup $H=HH$ of $A$
is closed in $\PT(A)$, hence there is a finite index normal subgroup $L \lhd A$ and the corresponding
epimorphism $\psi:A \to Q:=A/L$
such that such that $\psi(x_i) \notin \psi(H)$, $i=1,\dots,n-1$, and
$\psi(y_j) \notin \psi(H)$, $j=1,\dots,m-1$.
Let \begin{equation} \label{eq:def_of_P}
P:=\langle Q,s\,\|\, sqs^{-1}=q,~\forall\, q \in \psi(H) \rangle\end{equation} be the special
HNN-extension of $Q$ with respect to $\psi(H)$. By Remark \ref{rem:univ_spec_HNN}, $\psi$
can be extended to a homomorphism $\tilde \psi:G \to P$ such that $\tilde \psi|_A=\psi$ and $\tilde\psi(t)=s$.
Therefore,
$\tilde \psi(g)=\bx_0s^{\e_1}\bx_1 s^{\e_2} \dots s^{\e_n}\bx_{n}$,
$\tilde \psi(f)=\by_0s^{\z_1}\by_1 s^{\z_2} \dots s^{\z_m}\by_{m}$ and these products are reduced in $P$,
where $\bx_i:=\psi(x_i)$ and $\by_j:=\psi(y_j)$ for all $i=0,\dots,n$, $j=0,\dots,m$.
And since the condition (i) did not hold for $g$ and $f$, this condition will not
hold for $\tilde\psi(g)$ and $\tilde \psi(f)$. Therefore,
$\tilde\psi(f) \notin \tilde\psi(g)^{\tilde\psi(B)}$ by Lemma  \ref{lem:conj_crit}.

Thus we can now assume that $n=m$ and $\e_i=\z_i$ for $i=1,\dots,n$.
Denote $x:=x_0 \dots x_n \in A$ and $y:=y_0\dots y_n \in A$.

\underline{\textit{Case 2:}} suppose that $y\notin x^B$ in $A$. Then,
by the induction hypothesis, $x^B$ is separable in $A$, hence there is a finite group $Q$
and an epimorphism $\psi:A \to Q$ such that $\psi(y) \notin \psi(x)^{\psi(B)}$.
Let $P$ be the special HNN-extension of $Q$ defined by \eqref{eq:def_of_P}, and let
$\tilde\psi:G \to P$ be the corresponding extension of $\psi$ with $\tilde\psi(t)=s$.
By Lemma \ref{lem:spec_HNN_retractions}, there is a retraction $\tilde\rho_Q \in End(P)$
of $P$ onto $Q$ (extending the identity map on $Q$) satisfying $\tilde\rho_Q(s)=1$.
Therefore, using the above notations, we have
$$\tilde\rho_Q \left(\tilde\psi(f)\right) =\tilde\rho_Q(\by_0s^{\e_1}\by_1 s^{\e_2} \dots s^{\e_n}\by_{n})=
\by_0 \by_1 \dots \by_n=\psi(y) \in Q,$$
similarly, $\tilde\rho_Q \left(\tilde\psi(g)\right)=\psi(x) \in Q$. And since
$\tilde\rho_Q \left(\tilde\psi(B)\right)=\psi(B)$
and $\psi(y) \notin \psi(x)^{\psi(B)}$ we can conclude that
$\tilde\psi(f) \notin \tilde\psi(x)^{\tilde\psi(B)}$.

\underline{\textit{Case 3:}} we can now assume that both of the conditions (i) and (ii) of
Lemma~\ref{lem:conj_crit} are satisfied. Choose any $b_0 \in B$ such that $y=b_0 x b_0^{-1}$.
As $f \notin g^B$ in $G$, according to  Lemma~\ref{lem:conj_crit}
we must have $I=\emptyset$ in $A$, where
$$I:=b_0 C_B(x) \cap y_0 H x_0^{-1} \cap (y_0y_1)H(x_0x_1)^{-1} \cap \dots \cap
(y_0\dots y_{n-1})H(x_0 \dots x_{n-1})^{-1}.$$

As we saw earlier, $H$ is separable in $A$, therefore there is a finite index normal subgroup
$K \lhd A$ such that $x_i \notin HK$ and $y_i \notin HK$ for $1\le i \le n-1$. Now, since
$\rank(A)=r-1<r$, the right angled Artin group $A$ satisfies the claims of Lemmas \ref{lem:main-CS}
and \ref{lem:main-CC} by the induction hypothesis.
Consequently, we can apply Lemma \ref{lem:ind_step} to $A$ and $K$, finding a finite index normal
subgroup $L \lhd A$ such that $L \le K$ and
\begin{equation}\label{eq:IK-empty}
\bb_0 C_{\bB}(\bx) \cap \bigcap_{i=1}^n \bx_i \bH \by_i \subseteq
\psi(IK)=\emptyset ~\mbox{ in } Q:=A/L,\end{equation}
where $\bb_0, \bB, \bx, \bx_i, \bH,\by_i$ denote the $\psi$-images of
$b_0, B, x, x_i, H,y_i$ in $Q$ respectively.
As before we can extend $\psi$ to a homomorphism $\tilde\psi:G \to P$, where $P$ is given by
\eqref{eq:def_of_P}, and $\tilde\psi(t)=s$. Since $L \le K$ we have
$\bx_i,\by_i \notin \psi(H)$ for $i=1,\dots,n$, and so
$\bx_0s^{\e_1}\bx_1 s^{\e_2} \dots s^{\e_n}\bx_{n}$ and
$\by_0s^{\e_1}\by_1 s^{\e_2} \dots s^{\e_n}\by_{n}$ are reduced products
in $P$ representing the elements $\tilde \psi(g)$ and $\tilde \psi(f)$ respectively.
Thus, Lemma \ref{lem:conj_crit}, in view of \eqref{eq:IK-empty}, implies that
$\tilde\psi(f)\notin \tilde\psi(g)^{\tilde\psi(B)}$ in $P$. And Lemma \ref{lem:claim_A} is proved.
\end{proof}

\begin{lemma}\label{lem:claim_B} Suppose that $g_0,f_0,f_1,\dots, f_m \in G$,
and the products $g_0=t^{\e_1}x_1 \dots t^{\e_n}x_{n}$, $f_0=t^{\z_1}y_1  \dots t^{\z_k}y_{k}$
are cyclically reduced in $G$, with $n \ge 1$. If $f_j \notin g_0^H$ for every $j=1,\dots,m$, then
there is a finite group $Q$ and an epimorphism $\psi: A \to Q$
such that for the corresponding epimorphism $\tilde \psi:G \to P$, extending $\psi$,
with $\tilde\psi(t)=s$ (where $P$ is the special HNN-extension
given by \eqref{eq:def_of_P}), all of the following are true:
\begin{itemize}
	\item $\tilde\psi(f_j) \notin \tilde\psi(g_0)^{\tilde\psi(H)}$ in $P$, for each $j \in \{1,\dots,m\}$;
	\item the products $\tilde\psi(g_0)=s^{\e_1}\bx_1  \dots s^{\e_n}\bx_{n}$ and
			  $\tilde\psi(f_0)=s^{\z_1}\by_1  \dots s^{\z_k}\by_{k}$ are
			  cyclically reduced in $P$, where $\bx_i:=\tilde\psi(x_i)$, $i=1,\dots,n$,
			  $\by_l:=\tilde\psi(y_l)$, $l=1,\dots,k$.										
\end{itemize}
\end{lemma}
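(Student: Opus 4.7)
The plan is to produce the epimorphism $\psi$ as the quotient map of $A$ by the intersection of finitely many finite-index normal subgroups of $A$: one that guarantees the cyclic reducedness of the images of $g_0$ and $f_0$, and one produced by Lemma \ref{lem:claim_A} for each $j\in\{1,\dots,m\}$ to separate $f_j$ from the $H$-conjugacy class of $g_0$ in the corresponding extended quotient. The heavy lifting has already been done in Lemma \ref{lem:claim_A}; Lemma \ref{lem:claim_B} is, essentially, a packaging statement.

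First, for cyclic reducedness: since $H$ is a retract of $A$ (it is a special subgroup of the right angled Artin group $A$), and $A$ is residually finite by Lemma \ref{lem:RAAG-rf}, Corollary \ref{cor:prod_retr-sep} implies that $H$ is closed in $\PT(A)$. Cyclic reducedness of the original products forces $x_i\notin H$ for each $i\in\{1,\dots,n\}$ whenever $n\ge 2$, and $y_l\notin H$ for each $l\in\{1,\dots,k\}$ whenever $k\ge 2$. Separability of $H$ therefore provides a finite-index normal subgroup $L_0\lhd A$ with $x_i\notin HL_0$ and $y_l\notin HL_0$ for every such index; when $n=1$, $k=1$, or $k=0$ the corresponding indices are vacuous and no constraint is needed.

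Second, to separate conjugacy classes: for each $j\in\{1,\dots,m\}$ I will apply Lemma \ref{lem:claim_A} with $B:=H$, which is a special subgroup of $G$ (it is generated by a subset of $\mathcal V$) contained in $A$. The hypotheses are met: $g_0\in G\setminus A$ because $n\ge 1$, and $f_j\notin g_0^H$ by assumption. This yields an epimorphism $\psi_j:A\to Q_j$ whose extension $\tilde\psi_j:G\to P_j$ satisfies $\tilde\psi_j(f_j)\notin\tilde\psi_j(g_0)^{\tilde\psi_j(H)}$ in $P_j$. Set $L_j:=\ker\psi_j\lhd A$.

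Now define $L:=L_0\cap L_1\cap\cdots\cap L_m$, a finite-index normal subgroup of $A$, and let $\psi:A\to Q:=A/L$ be the natural epimorphism, with $\tilde\psi:G\to P$ its extension to the special HNN-extension $P$ of $Q$ with respect to $\psi(H)$, as in Remark \ref{rem:univ_spec_HNN}. Since $L\le L_0$, we have $\psi(x_i),\psi(y_l)\notin\psi(H)$ for all indices relevant to cyclic reducedness, so the products $s^{\e_1}\bx_1\cdots s^{\e_n}\bx_n$ and $s^{\z_1}\by_1\cdots s^{\z_k}\by_k$ remain cyclically reduced in $P$. For each $j$, since $L\le L_j$ the map $\psi_j$ factors as $\psi_j=\theta_j\circ\psi$ for a unique epimorphism $\theta_j:Q\to Q_j$; by Remark \ref{rem:univ_spec_HNN} applied to $\theta_j$, this extends canonically to $\tilde\theta_j:P\to P_j$ with $\tilde\psi_j=\tilde\theta_j\circ\tilde\psi$. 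If $\tilde\psi(f_j)$ were $\tilde\psi(H)$-conjugate to $\tilde\psi(g_0)$ in $P$, then applying $\tilde\theta_j$ would produce the corresponding $\tilde\psi_j(H)$-conjugacy in $P_j$, contradicting the choice of $\psi_j$. Thus $\tilde\psi$ has all the required properties, and the only genuine obstacle -- separating a single $f_j$ from $g_0^H$ after a finite quotient while also arranging that cyclic reducedness is respected -- has been handled upstream inside Lemma \ref{lem:claim_A}.
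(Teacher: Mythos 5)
Your proof is correct and takes essentially the same approach as the paper: apply Lemma \ref{lem:claim_A} to each pair $(g_0,f_j)$ with $B=H$, use separability of $H$ in $A$ to protect cyclic reducedness of the images of $g_0$ and $f_0$, intersect the resulting finite-index normal subgroups of $A$, and invoke the universal property of special HNN-extensions to factor each $\tilde\psi_j$ through the resulting $\tilde\psi$.
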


\begin{proof} For every $j=1,\dots,m$, since $f_j \notin g_0^H$ in $G$,
we can apply Lemma \ref{lem:claim_A} (as $H \le A$ is a special subgroup of $G$),
to find a finite index normal subgroup $L_j \lhd A$,
such that $\tilde\psi_j(f_j) \notin \tilde\psi_j(g_0)^{\tilde\psi_j(H)}$ in $P_j$,
where $\tilde\psi_j: G \to P_j$
is the homomorphism (obtained according to Remark \ref{rem:univ_spec_HNN})
extending the natural epimorphism $\psi_j:A \to A/L_j$, and $P_j$ is the special
HNN-extension of $A/L_j$ with respect to $\psi_j(H)$.

Now, since $H$ is separable in $A$ (by Lemma \ref{lem:RAAG-rf} and Corollary \ref{cor:prod_retr-sep}),
there is a finite index normal subgroup $K \lhd A$
such that $x_i \notin HK$ whenever $x_i \notin H$, for all $i=1,\dots,n$, and
$y_l \notin HK$ whenever $y_l \notin H$, for all $l=1,\dots,k$. Define
the finite index normal subgroup $L$ of $A$ by $L:=L_1 \cap \dots \cap L_m \cap K$,
and let $\psi: A \to Q:=A/L$ be the natural epimorphism. Observe that for each $j$,
the map $\psi_j$ factors through the map $\psi$. Hence, once we let $\tilde\psi:G \to P$
be the extension of $\psi$ as in the formulation of Lemma \ref{lem:claim_B},
the Universal Property of special HNN-extensions (Remark \ref{rem:univ_spec_HNN})
will imply that $\tilde\psi_j$ factors through $\tilde\psi$, for every $j=1,\dots,m$.
Consequently, $\tilde\psi(f_j) \notin \tilde\psi(g_0)^{\tilde\psi(H)}$ in $P$,
for each $j \in \{1,\dots,m\}$. The second assertion of Claim B holds due to the choice
of $K$ and because $L \le K$. Thus Lemma \ref{lem:claim_B} is proved.
\end{proof}

\begin{lemma}\label{lem:claim_C} Let $K \lhd G$ be a normal subgroup of finite index,
let $B$ be a special subgroup of $G$ with $B \le A$,
and let an element $g \in G\setminus A$ be represented
by a reduced product $x_0t^{\e_1}x_1 t^{\e_2} \dots t^{\e_n}x_{n}$ in $G$, with $n \ge 1$.
Then there is a finite group $Q$ and an epimorphism $\psi: A \to Q$
such that for the corresponding homomorphism $\tilde \psi:G \to P$, extending $\psi$
and obtained according to Remark \ref{rem:univ_spec_HNN},
with $\tilde\psi(t)=s$ (where $P$ is the special HNN-extension
given by \eqref{eq:def_of_P}), all of the following are true:
\begin{itemize}
	\item $C_{\tilde\psi(B)}\left(\tilde\psi(g)\right) \subseteq \tilde\psi \left( C_B(g) K\right)~\mbox{ in }P$;
	\item $\ker(\psi) \le A \cap K$ and $\ker(\tilde\psi) \le K$.
\end{itemize}
\end{lemma}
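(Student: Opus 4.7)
The strategy is to describe both $C_B(g)$ inside $G$ and $C_{\tilde\psi(B)}(\tilde\psi(g))$ inside $P$ via Lemma \ref{lem:centr_crit}, which presents them as intersections of a centralizer in the base group with conjugates of (the image of) the special subgroup $H$. The desired inclusion then reduces to an approximation statement about intersections of cosets in $A$, exactly of the form that Lemma \ref{lem:ind_step} controls. Since $\rank(A)=r-1<r$, the induction hypothesis supplies Lemmas \ref{lem:main-CS} and \ref{lem:main-CC} for $A$, which are precisely the standing assumptions of Lemma \ref{lem:ind_step} applied inside $A$.

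Concretely, since $B\le A$ is a special subgroup of $A$ and $g$ is given by the reduced product in the statement, Lemma \ref{lem:centr_crit} yields
\[
C_B(g)=C_B(x_0\cdots x_n)\cap\bigcap_{i=1}^{n}(x_0\cdots x_{i-1})H(x_0\cdots x_{i-1})^{-1}.
\]
First I would choose a finite-index normal subgroup $L_0\lhd A$ satisfying $L_0\le A\cap K$ and $x_i\notin HL_0$ for every $i\in\{1,\ldots,n-1\}$ with $x_i\notin H$. Such an $L_0$ exists because $A$ is residually finite by Lemma \ref{lem:RAAG-rf}, while $H$ is a retract of $A$ and hence is closed in $\PT(A)$ by Lemma \ref{lem:sep_subset_of_retr}.

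Next I would apply Lemma \ref{lem:ind_step} inside $A$, taking the role of $A_0$ to be $B$, the role of $b$ to be $1$, the role of $x_0$ to be $x_0 x_1 \cdots x_n$, and for $i = 1, \dots, n$ taking the role of $A_i$ to be $H$, that of $x_i$ to be $x_0 \cdots x_{i-1}$, and that of $y_i$ to be $(x_0 \cdots x_{i-1})^{-1}$; the role of $K$ is played by $L_0$. This produces a finite-index normal subgroup $L\lhd A$ with $L\le L_0$ such that, letting $\psi\colon A\to Q:=A/L$ be the natural epimorphism,
\[
C_{\psi(B)}(\psi(x_0\cdots x_n))\cap\bigcap_{i=1}^{n}\psi(x_0\cdots x_{i-1})\psi(H)\psi(x_0\cdots x_{i-1})^{-1}\subseteq\psi(C_B(g)L_0) \text{ in } Q,
\]
where the right-hand side uses the formula for $C_B(g)$ displayed above.

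Finally, I would extend $\psi$ to $\tilde\psi\colon G\to P$ via Remark \ref{rem:univ_spec_HNN} with $\tilde\psi(t)=s$. Since $L\le L_0$, the condition $x_i\notin HL_0$ forces $\psi(x_i)\notin\psi(H)$ for every $i\in\{1,\ldots,n-1\}$ with $x_i\notin H$, so the product $\psi(x_0)s^{\e_1}\psi(x_1)\cdots s^{\e_n}\psi(x_n)$ representing $\tilde\psi(g)$ is reduced in $P$. Applying Lemma \ref{lem:centr_crit} inside $P$ now identifies $C_{\tilde\psi(B)}(\tilde\psi(g))$ with exactly the left-hand side of the displayed inclusion, and the right-hand side is contained in $\tilde\psi(C_B(g)K)$ since $L_0\le A\cap K$. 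The inclusion $\ker(\psi)=L\le A\cap K$ is immediate, while $\ker(\tilde\psi)\le K$ follows from Lemma \ref{lem:ker_of_tilde_psi}: $\ker(\tilde\psi)$ is the normal closure in $G$ of $\ker(\psi)=L\subseteq K$, and $K$ is normal in $G$. The main technical point is ensuring that the reduced form of $g$ is preserved under $\tilde\psi$, which is why the separability of $H$ in $A$ must be built into the choice of $L_0$.
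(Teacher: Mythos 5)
Your proposal is correct and follows essentially the same route as the paper's own proof: use separability of $H$ in $A$ to fix a finite-index $L_0\lhd A$ inside $A\cap K$ that keeps the $x_i$ outside $H$ in the quotient, express $C_B(g)$ via Lemma \ref{lem:centr_crit} as an intersection of $C_B(x_0\cdots x_n)$ with conjugates of $H$, apply Lemma \ref{lem:ind_step} inside $A$ (licensed by the inductive hypothesis for rank $r-1$) to control the image of that intersection modulo $L_0$, reapply Lemma \ref{lem:centr_crit} in the quotient HNN-extension $P$, and finish with Lemma \ref{lem:ker_of_tilde_psi} for the kernel of $\tilde\psi$. The only cosmetic difference is notation (your $L_0$ and $L$ are the paper's $M_1$ and $L_1$), and the qualifier "with $x_i\notin H$" in your choice of $L_0$ is redundant since the product for $g$ is reduced.
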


\begin{proof}
Since $A$ is residually finite (Lemma \ref{lem:RAAG-rf}), the special subgroup $H=HH$ is closed in $\PT(A)$
by Corollary \ref{cor:prod_retr-sep}. Therefore there exists a finite index normal subgroup
$M_1 \lhd A$ such that $x_i \notin HM_1$ in $A$ for all $i=1,\dots,n-1$. As usual, we
can replace $M_1$ with $M_1 \cap K_1$, to make sure that $M_1 \le K_1$, where $K_1:=A \cap K$.

Note that, according to Lemma \ref{lem:centr_crit}, $C_B(g)=I$ in $G$, where
$$I:=C_B(x) \cap x_0 H x_0^{-1} \cap (x_0x_1)H(x_0x_1)^{-1} \cap \dots \cap
				(x_0\dots x_{n-1})H(x_0 \dots x_{n-1})^{-1},$$
and $x:=x_0 \dots x_n\in A$.

Since $\rank(A)=r-1<r$, by the induction hypothesis the claims of Lemmas \ref{lem:main-CS}
and \ref{lem:main-CC} hold for $A$. Hence,
we can use Lemma \ref{lem:ind_step} to find a finite index normal subgroup
$L_1\lhd A$ such that $L_1 \le M_1 \le K_1$ and, for the corresponding epimorphism $\psi:A \to Q:=A/L_1$,
we have
$$J:=C_\bB(\bx) \cap \bx_0 \bH \bx_0^{-1} \cap (\bx_0\bx_1)\bH(\bx_0\bx_1)^{-1} \cap \dots \cap
				(\bx_0\dots \bx_{n-1})\bH(\bx_0 \dots \bx_{n-1})^{-1} \subseteq \psi(IM_1)$$
in $Q$, where $\bB$, $\bx$, $\bH$ and $\bx_i$ denote the $\psi$-images of $B$, $x$, $H$ and $x_i$ in $Q$, $i=0,\dots,n$.

Let $P$ be the special HNN-extension of $Q$ given by \eqref{eq:def_of_P}, and let
$\tilde\psi:G \to P$ be the extension of $\psi$ provided by Remark \ref{rem:univ_spec_HNN},
with $\tilde \psi(t)=s$. Since $x_i \notin HL_1$ in $A$ for $i=1,\dots,n-1$,
the product $\bx_0s^{\e_1}\bx_1  \dots s^{\e_n}\bx_{n}$ is reduced and
represents the element $\tilde\psi(g)$ in $P$. Consequently, Lemma \ref{lem:centr_crit}
tells us that $C_{\tilde\psi(B)}\left(\tilde\psi(g)\right)=J$ in $P$. And
noting that $\psi(M_1) \le \psi(K_1) =\tilde\psi(K_1) \le \tilde\psi(K)$, we achieve
$$C_{\tilde\psi(B)}\left(\tilde\psi(g)\right)=J \subseteq \psi(I) \psi(M_1) \subseteq
\tilde\psi(I) \tilde\psi (K)= \tilde\psi\left( C_B(g) K\right)~\mbox{ in }P.$$

Finally, observe that $\ker(\psi)=L_1  \le K_1= A \cap K$ and
$\ker(\tilde\psi)$ is the normal closure of $L_1$ in $G$ (by Lemma \ref{lem:ker_of_tilde_psi}).
And since $L_1 \le K \lhd G$, we see that
$\ker(\tilde\psi)\le K$ in $G$.
Thus Lemma \ref{lem:claim_C} has been established.
\end{proof}

\begin{lemma}\label{lem:claim_D} Let $K \lhd G$ be a normal subgroup of finite index and let $g_0=t^{\e_1}x_1 \dots t^{\e_n}x_{n}$ be a cyclically
reduced product in $G$, with $n \ge 1$. Then
there exists an epimorphism $\psi$ from $A$ onto a finite group $Q$
such that 
for the corresponding extension $\tilde \psi: G \to P$ from $G$ onto the
special HNN-extension $P$ of $Q$ (given by \eqref{eq:def_of_P}), with $\tilde\psi|_A=\psi$
and $\tilde\psi(t)=s$, we have
$$\ker(\psi) \le A \cap K, ~\ker(\tilde\psi) \le K~\mbox{in $G$, and }~
C_P\left(\tilde\psi(g_0)\right) \subseteq  \tilde\psi\bigl(C_G(g_0)K\bigr) \,\mbox{ in }P.$$
\end{lemma}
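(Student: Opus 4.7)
The plan is to apply Proposition \ref{prop:descr_centr} twice, once in $G$ to $g_0$ and once in $P$ to $\tilde\psi(g_0)$, and to match the two centralizer descriptions by choosing $\psi$ carefully. The argument splits according to Proposition \ref{prop:descr_centr}: either $x_n \in H$ (which forces $n = 1$) or $x_n \in A\setminus H$.

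\textbf{Case 1:} $x_n \in H$. Then $n = 1$, $g_0 = t^{\e_1}x_1$, and Proposition \ref{prop:descr_centr} gives $C_G(g_0) = \langle t\rangle \times C_H(x_1)$. Since $\rank(A) = r - 1 < r$, the induction hypothesis (Lemma \ref{lem:main-CC} for $A$) supplies the Centralizer Condition for the pair $(H, x_1)$ in $A$. Applied to the finite index normal subgroup $A \cap K \lhd A$, this yields an epimorphism $\psi : A \to A/L$ with $L \le A \cap K$ and $C_{\psi(H)}(\psi(x_1)) \subseteq \psi\bigl(C_H(x_1)(A \cap K)\bigr)$. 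Extending $\psi$ to $\tilde\psi : G \to P$ via Remark \ref{rem:univ_spec_HNN}, Proposition \ref{prop:descr_centr} applied in $P$ yields $C_P(\tilde\psi(g_0)) = \langle s\rangle \times C_{\psi(H)}(\psi(x_1))$, from which the required inclusion is immediate; the kernel bounds follow from Lemma \ref{lem:ker_of_tilde_psi} since $L \le K \lhd G$.

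\textbf{Case 2:} $x_n \in A\setminus H$. Enumerate the prefixes of $g_0$ as $p_0 = 1, p_1, \dots, p_n = g_0$, and set $J_H := \{j : p_j^{-1} g_0 p_j \in g_0^H \text{ in } G\}$. For each $j \in J_H$ choose $h_j \in H$ with $h_j p_j^{-1} g_0 p_j h_j^{-1} = g_0$, and put $\Omega := \{h_j p_j^{-1} : j \in J_H\}$, so that Proposition \ref{prop:descr_centr} gives $C_G(g_0) = C_H(g_0) \langle g_0\rangle \Omega$. I aim to choose $\psi$ so that the same formula, with the same index set $J_H$, describes $C_P(\tilde\psi(g_0))$. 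This requires three conditions on $\psi$: (i) $\tilde\psi(g_0)$ is cyclically reduced in $P$; (ii) $\tilde\psi(p_j)^{-1}\tilde\psi(g_0)\tilde\psi(p_j) \notin \tilde\psi(g_0)^{\psi(H)}$ for every $j \in \{0,\dots,n\}\setminus J_H$; and (iii) the inclusion $C_{\psi(H)}(\tilde\psi(g_0)) \subseteq \tilde\psi\bigl(C_H(g_0) K\bigr)$ holds in $P$. Conditions (i) and (ii) are supplied by Lemma \ref{lem:claim_B} applied with $f_0 := g_0$ and $\{f_j := p_j^{-1} g_0 p_j\}_{j \notin J_H}$ (each of which lies outside $g_0^H$ by the very definition of $J_H$); condition (iii) is supplied by Lemma \ref{lem:claim_C} applied to the special subgroup $H \le A$ and the element $g_0 \in G \setminus A$. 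Each invocation produces a finite index normal subgroup of $A$; intersecting them with each other and with $A \cap K$ yields a common $L \lhd A$ whose associated $\psi, \tilde\psi$ satisfy all three properties simultaneously.

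With such $\psi$ fixed, Proposition \ref{prop:descr_centr} applied in $P$ gives
\[ C_P(\tilde\psi(g_0)) = C_{\psi(H)}(\tilde\psi(g_0))\,\langle \tilde\psi(g_0)\rangle\,\tilde\psi(\Omega), \]
where the representatives $\tilde\psi(h_j) \in \psi(H)$ witness the defining equation for $\tilde\psi(\Omega)$ by construction, and condition (ii) guarantees that no prefixes outside $J_H$ contribute. Combining with (iii) and the observation that $\tilde\psi(\Omega) \subseteq \tilde\psi(C_G(g_0))$, I obtain
\[ C_P(\tilde\psi(g_0)) \subseteq \tilde\psi\bigl(C_H(g_0)K\bigr)\,\tilde\psi\bigl(\langle g_0\rangle \Omega\bigr) \subseteq \tilde\psi\bigl(C_G(g_0)K\bigr), \]
while the kernel bounds again follow from Lemma \ref{lem:ker_of_tilde_psi}. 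The main obstacle is condition (ii): the finite quotient $\tilde\psi$ could a priori introduce new prefix conjugations that artificially enlarge the centralizer of $\tilde\psi(g_0)$ in $P$ beyond what is visible in $G$; Lemma \ref{lem:claim_B}, which itself rests on the inductive Lemma \ref{lem:main-CS} for $A$, is precisely what rules this out.
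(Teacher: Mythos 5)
Your proof is correct and follows essentially the same route as the paper's: invoke Lemma \ref{lem:claim_B} (for cyclic reducedness and to prevent spurious prefix-conjugations in the quotient) and Lemma \ref{lem:claim_C} (to control the $\psi(H)$-centralizer), intersect the resulting kernels, and then match the descriptions of $C_G(g_0)$ and $C_P(\tilde\psi(g_0))$ given by Proposition \ref{prop:descr_centr}. The only organizational difference is that in Case 1 you apply the inductive Centralizer Condition for $(H,x_1)$ in $A$ directly rather than routing through Lemma \ref{lem:claim_C}, and you perform the case split before constructing $\psi$ rather than after — both harmless variations.
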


\begin{proof} Clearly there exists $m \in \{0,1,\dots,n\}$ such that we can enumerate all the prefixes
$p_1,\dots,p_{n+1}$ of $g_0$ so that $p_j^{-1} g_0 p_j \notin g_0^H$ in $G$ whenever
$1 \le j \le m$, and $p_j^{-1} g_0 p_j \in g_0^H$ in $G$ whenever
$m+1 \le j \le n+1$. For each $j \in \{m+1,m+2,\dots, n+1\}$, choose $h_j \in H$ such that
$h_jp_j^{-1} g_0 p_jh_j^{-1}=g_0$ in $G$, and set $\Omega:=\{h_jp_j^{-1} \,|\,m+1 \le j \le n+1\} \subset G$.

Let $f_0:=g_0=t^{\e_1}x_1 \dots t^{\e_n}x_{n}$ and
$f_j:=p_j^{-1} g_0 p_j$ for $j=1,\dots,m$. Applying Lemma~\ref{lem:claim_B} to $g_0,f_0,\dots,f_m \in G$
we find a finite group $Q_1$, an epimorphism $\psi_1: A \to Q_1$, the special HNN-extension
$P_1$ of $Q_1$ with respect to $\psi_1(H)$, and the corresponding extension $\tilde\psi_1: G \to P_1$
of $\psi_1$ (obtained by Remark \ref{rem:univ_spec_HNN}), such that
$\tilde\psi_1(f_j) \notin \tilde\psi_1(g_0)^{\tilde\psi_1(H)}$ in $P_1$,
for each $j \in \{1,\dots,m\}$,
and the element $\tilde\psi_1(g_0)$ is cyclically reduced in $P_1$.

On the other hand, by Lemma \ref{lem:claim_C}, there exist a finite group $Q_2$,
an epimorphism $\psi_2: A \to Q_2$, the special HNN-extension
$P_2$ of $Q_2$ with respect to $\psi_2(H)$, and the corresponding extension $\tilde\psi_2: G \to P_2$
of $\psi_2$, such that
$C_{\tilde\psi_2(H)}\left(\tilde\psi_2(g_0)\right) \subseteq \tilde\psi_2 \left( C_H(g_0) K\right)$ in
$P_2$, $\ker(\psi_2) \le A \cap K$ and $\ker(\tilde\psi_2) \le K$.

Define a finite index normal subgroup $L_0 \lhd A$ by
$L_0:=\ker(\psi_1) \cap \ker(\psi_2)\le A \cap K$, and let $\psi: A \to Q:=A/L_0$ be the natural epimorphism.
By Remark \ref{rem:univ_spec_HNN}, there is an epimorphism $\tilde\psi:G \to P$,
extending $\psi$ so that $\tilde\psi(t)=s$, where $P$ is the special HNN-extension of $Q$
given by \eqref{eq:def_of_P}. Since $\ker(\psi)=L_0 \le\ker(\psi_i)$,
the maps $\psi_i: A \to Q_i$ factor through $\psi$ for $i=1,2$. Consequently, according to the
Universal Property of special HNN-extensions (Remark \ref{rem:univ_spec_HNN}),
the maps $\tilde\psi_i: G \to P_i$ factor through $\tilde\psi:G \to P$ for $i=1,2$.
Therefore we have
\begin{equation} \label{eq:oooh1} \tilde\psi(g_0)\mbox{ is cyclically reduced and}~
\tilde\psi(f_j) \notin \tilde\psi(g_0)^{\tilde\psi(H)}
~\mbox{ in $P$, $\forall\, j \in \{1,\dots,m\}$}.
\end{equation}

On the other hand, since $\ker(\tilde\psi) \le \ker(\tilde\psi_2) \le K$ in $G$, we also have
$$ \tilde\psi^{-1}\left(C_{\tilde\psi(H)}\left(\tilde\psi(g_0)\right) \right)
\subseteq \tilde\psi_2^{-1}\left(C_{\tilde\psi_2(H)}\left(\tilde\psi_2(g_0)\right) \right) \subseteq
C_H(g_0) K~\mbox{ in } G,
$$ which implies
\begin{equation} \label{eq:oooh2}
C_{\tilde\psi(H)}\left(\tilde\psi(g_0)\right) \subseteq \tilde\psi \left( C_H(g_0) K\right)~\mbox{ in }P.
\end{equation}

\underline{\textit{Case 1:}} $x_n \in H$. Then, according to Proposition \ref{prop:descr_centr},
$n=1$, $C_G(g_0)=\langle t \rangle C_H(g_0)$ in $G$, and
$C_P \left(\tilde\psi(g_0)\right)=\langle s \rangle  C_{\tilde\psi(H)}\left(\tilde\psi(g_0)\right)$ in $P$.
Recalling \eqref{eq:oooh2}, we see that
$$C_P \left(\tilde\psi(g_0)\right) \subseteq \left\langle \tilde\psi(t) \right\rangle \,
\tilde\psi
\left( C_H(g_0) K\right)=
\tilde\psi \left( C_G(g_0) K\right)~\mbox{ in }P.$$

\underline{\textit{Case 2:}} $x_n \in A\setminus H$. In this case \eqref{eq:oooh1}
implies that $\tilde\psi(p_{m+1}), \dots, \tilde\psi(p_{n+1})$ is the list of all prefixes
of $\tilde\psi(g_0)$ satisfying $\tilde\psi(p_j)^{-1} \tilde\psi(g_0) \tilde\psi(p_j)
\in \tilde\psi(g_0)^{\tilde\psi(H)}$, because if $1 \le j \le m$, then
$\tilde\psi(p_j)^{-1} \tilde\psi(g_0) \tilde\psi(p_j)=\tilde\psi(p_j^{-1}g_0p_j)=\tilde\psi(f_j)
\notin \tilde\psi(g_0)^{\tilde\psi(H)}$ in $P$.

Therefore, by Proposition \ref{prop:descr_centr},
$C_P\left(\tilde\psi(g_0)\right)=C_{\tilde\psi(H)}\left(\tilde\psi(g_0)\right)
\left\langle \tilde\psi(g_0) \right\rangle \bar \Omega$, where
$\bar\Omega:=\{\tilde\psi(h_j)\tilde\psi(p_j)^{-1} \,|\,m+1 \le j \le n+1\} =\tilde\psi(\Omega)\subset P$.
Thus, recalling \eqref{eq:oooh2}, we achieve
$$C_P\left(\tilde\psi(g_0)\right) \subseteq \tilde\psi\bigl(C_H(g_0) K \langle g_0 \rangle \Omega\bigr)
= \tilde\psi(C_G(g_0)K)~\mbox{ in }P.$$

In either of the two cases we have shown that
$C_P\left(\tilde\psi(g_0)\right) \subseteq  \tilde\psi(C_G(g_0)K)$ in $P$. This completes the
proof of Lemma \ref{lem:claim_D}.
\end{proof}

We are finally ready to prove the two main Lemmas  \ref{lem:main-CS} and \ref{lem:main-CC}
announced above.

\begin{proof}[Proof of Lemma \ref{lem:main-CS}] There are two separate cases to consider.

\fbox{\textit{Case 1:}} $B \neq G$.

Choose a maximal special subgroup $A$ of $G$
containing $B$. Then $A$ is a right angled Artin group of rank $r-1$,
and, according to Remark \ref{rem:RAAG_spec_HNN_over_max_sbgp}, $G$ splits as a special HNN-extension \eqref{eq:def_of_A-main} of $A$ with respect to
some special subgroup $H$ of $A$.

If $g \in A$, then $g^B$ is closed in $\PT(A)$ by the induction hypothesis. Since $G$ is residually finite
(Lemma \ref{lem:RAAG-rf}), $g^B$ is separable in $G$ by Lemma \ref{lem:sep_subset_of_retr}.

Thus we can suppose that $g \in G \setminus A$.
Take any element $f \in G \setminus g^B$. 
Let $Q$, $P$, $\psi: A \to Q$ and $\tilde \psi:G \to P$ be given by Lemma \ref{lem:claim_A},
so that $\tilde\psi(f) \notin \tilde\psi(g)^{\tilde\psi(B)}$ in $P$.

Observe that $P$ is a virtually free group as an HNN-extension of the finite group $Q$,
hence $P$ is residually finite. Since $\tilde\psi(B)=\psi(B) \subseteq Q$ is finite,
$\tilde\psi(g)^{\tilde\psi(B)}$ is a finite subset of $P$. Hence there is a homomorphism
$\xi: P \to R$ from $P$ to a finite group $R$ such that
$\xi\left(\tilde\psi(f)\right)\notin \xi\left(\tilde\psi(g)^{\tilde\psi(B)}\right)$ in $R$.
Consequently, the homomorphism $\varphi: G \to R$, defined by $\varphi:=\xi \circ \tilde\psi$,
satisfies the condition $\varphi(f) \notin \varphi(g^B)$. Therefore we have shown that
$g^B$ is separable in $G$ in Case 1.

\fbox{\textit{Case 2:}} $B=G$.

If $g=1$ then $g^G=\{1\}$ is separable in $G$ because $G$ is residually finite (Lemma \ref{lem:RAAG-rf}).
Hence we can suppose that $g \neq 1$. But then, by Lemma \ref{lem:el_in_RAAG_non_conj_into_max_spec},
there exists a maximal special subgroup $A$ of $G$ such that $g \notin A^G$. The group $G$
is a special HNN-extension \eqref{eq:def_of_A-main} of $A$ with respect to a certain special subgroup
$H \le A$ (by Remark \ref{rem:RAAG_spec_HNN_over_max_sbgp}).
Obviously, $g$ is conjugate in $G$ to some cyclically reduced product
$g_0=t^{\e_1}x_1 \dots t^{\e_n}x_{n}$ with $n \ge 1$, because $g \notin A^G$.
This implies that $g^G=g_0^G$ in $G$.

To show that $g^G$ is closed in $\PT(G)$, consider any element $f \in G\setminus g^G$.

\underline{\textit{Sub-case 2.1:}} suppose, at first, that $f \notin A^G$.
Then we can find a cyclically reduced product
$f_0=t^{\z_1}y_1  \dots t^{\z_m}y_{m} \in f^G$.
Let $f_1,f_2,\dots,f_m$ be the list of all cyclic permutations of $f_0$ in $G$.

Observe that $f_j \notin g_0^H \subset g^G$ for every $j=1,\dots,m$, because $f_0 \notin g^G$.
Therefore we can apply Lemma \ref{lem:claim_B} to find $Q$, $P$, $\psi:A \to Q$ and
$\tilde\psi: G \to P$ from its claim.

Since $\tilde\psi(f_1), \dots, \tilde\psi(f_m)$ is the list of all cyclic permutations
of $\tilde\psi(f_0)$ in $P$,  Lemma \ref{lem:claim_B}, together with Lemma \ref{lem:Collins},
imply that $\tilde\psi(f_0) \notin \tilde\psi(g_0)^P$ in $P$. Now, according to
Lemma \ref{lem:virt_free-cs}, there is a homomorphism $\xi: P \to R$ such that
$R$ is a finite group and $\xi\left(\tilde\psi(f_0)\right) \notin \xi\left(\tilde\psi(g_0)\right)^R$.
Therefore, defining the homomorphism $\varphi:G \to R$ by $\varphi:=\xi\circ \tilde\psi$ we achieve
$\varphi(f_0) \notin \varphi(g_0)^R$ in $R$.
But since $\varphi(f)$ is conjugate to $\varphi(f_0)$ and $\varphi(g_0)$ is conjugate to
$\varphi(g)$ in $R$, we can conclude that $\varphi(f) \notin \varphi(g)^R=\varphi(g^G)$ in $R$.

To finish proving Case 2, it remains to consider

\underline{\textit{Sub-case 2.2:}} $f \in A^G$. Set $m:=0$ and denote $f_0=g_0 \in G$.
Applying Lemma \ref{lem:claim_B} to $g_0$ and $f_0$,
we can find a homomorphism $\tilde \psi$ from $G$ to a special $HNN$-extension $P$
of a finite group $Q$ such that $\tilde\psi(g_0)=s^{\e_1}\tilde\psi(x_1)  \dots s^{\e_n}\tilde\psi(x_{n})$
is cyclically reduced in $P$. Since $n\ge 1$, by Lemma \ref{lem:Collins} we have
$\tilde\psi(g_0) \notin \tilde\psi(A)^P=\tilde\psi(A^G)$ in $P$,
hence $\tilde\psi(f)\notin \tilde\psi(g_0)^{P}=\tilde\psi(g)^{P}$ in $P$.
Arguing as above, we can find a finite quotient $R$ of $P$ (and, hence, of $G$) such that
the images of $f$ and $g$ are not conjugate in $R$.

We can now conclude that the conjugacy class $g^G$ is closed in $\PT(G)$. Thus Case 2 is considered.
This finishes the proof of Lemma \ref{lem:main-CS}.
\end{proof}

\begin{proof}[Proof of Lemma \ref{lem:main-CC}] Take any element $g \in G$ and
any finite index normal subgroup $K\lhd G$.
As in  Lemma \ref{lem:main-CS}, the proof splits into two main cases.

\fbox{\textit{Case 1:}} $B \neq G$.

Choose a maximal special subgroup $A$ of $G$
containing $B$. Then $A$ is a right angled Artin group of rank $r-1<r$,
and $G$ is the special HNN-extension \eqref{eq:def_of_A-main} of $A$
with respect to a certain special subgroup $H\le A$
(by Remark \ref{rem:RAAG_spec_HNN_over_max_sbgp}).
Define the finite index normal subgroup $K_1$ of $A$ by $K_1:=K \cap A$.

\underline{\textit{Sub-case 1.1:}} $g \in A$. Then, according to the induction hypothesis,
the pair $(B,g)$ satisfies the Centralizer Condition ${\rm CC}_A$ in $A$, hence
there exists $L_1 \lhd A$ such that $|A:L_1|<\infty$, $L_1 \le K_1$, and the natural epimorphism
$\psi: A \to Q:=A/L_1$ satisfies
\begin{equation}\label{eq:cond_CC_A}
C_{\psi(B)}(\psi(g)) \subseteq \psi \left( C_B(g) K_1\right)~\mbox{ in }Q.
\end{equation}

Let $\rho_A: G \to A$ be the canonical retraction and set $L:=\rho_A^{-1}(L_1) \cap K$.
Then  $L \lhd G$, $|G:L|<\infty$, $L \le K$ and $\rho_A(L)=L_1\le K_1$
(since $K_1 = K \cap A \subseteq \rho_A(K)$).
Let $\varphi:G \to R:=G/L$ be the natural epimorphism.
Observe that $\ker(\psi)=\ker(\varphi) \cap A$ in $G$.
Indeed, $\ker(\psi)=L_1$, $\ker(\varphi)=L$, and
$L_1\subseteq \rho_A^{-1}(L_1) \cap K \cap A=L\cap A$,
$L\cap A \subseteq \rho_A(L)= L_1$.

Therefore, without loss
of generality, we can assume that $Q \le R$, and
the restriction of $\varphi$ to $A$ coincides with $\psi$.
Then we have
$\psi(K_1)=\varphi(K_1)\subseteq \varphi(K)$ in $R$. Since $g \in A$ and $B \le A$,
\eqref{eq:cond_CC_A} implies that
$$C_{\varphi(B)}(\varphi(g))= C_{\psi(B)}(\psi(g)) \subseteq \psi \left( C_B(g)\right) \psi(K_1)
\subseteq \varphi \left( C_B(g)\right) \varphi(K)\,\mbox{ in }R,$$ which shows that the pair
$(B,g)$ has {\ccg} in Sub-case 1.1.

\underline{\textit{Sub-case 1.2:}} $g \in G\setminus A$. Then the element $g$ can be represented
as a reduced product $x_0t^{\e_1}x_1 t^{\e_2} \dots t^{\e_n}x_{n}$ in $G$, with $n \ge 1$.
Therefore we can find the groups $Q$, $P$ and the maps $\psi:A \to Q$,
$\tilde\psi: G \to P$ from the claim Lemma \ref{lem:claim_C}, so that all of the
assertions of that lemma hold.

Note that the subgroup
$\tilde\psi(B) \cap \tilde\psi(K) \le Q \le P$ is finite, therefore, since $P$ is residually
finite (as a virtually free group), the finite set $\tilde\psi(g)^{\tilde\psi(B) \cap \tilde\psi(K)}$
is separable in $P$. Consequently, by Lemma \ref{lem:for_a_given_K_sep_cc->CCG},
there exists a finite group $R$ and an epimorphism $\xi: P \to R$ such that $\ker(\xi) \le \tilde\psi(K)$
and
$$C_{\xi\left(\tilde\psi(B)\right)}\left(\xi\left(\tilde\psi(g)\right)\right)
\subseteq \xi \left( C_{\tilde\psi(B)}\left(\tilde\psi(g)\right) \tilde\psi(K)\right) ~\mbox{ in } R.$$

Define the epimorphism $\varphi:G \to R$ by $\varphi:=\xi \circ \tilde\psi$, and
observe that $\ker(\varphi) = \tilde\psi^{-1} \left(\ker(\xi)\right) \subseteq
\tilde\psi^{-1} \left( \tilde\psi(K) \right)=K \ker(\tilde\psi)$. But $\ker(\tilde\psi) \le K$
according to the second assertion of  Lemma \ref{lem:claim_C}, hence
$L:=\ker(\varphi) \le K$ in $G$.

Finally, recalling the first assertion of  Lemma \ref{lem:claim_C}, we see that
\begin{multline*} C_{\varphi(B)}\left(\varphi(g)\right)= C_{\xi\left(\tilde\psi(B)\right)}\left(\xi\left(\tilde\psi(g)\right)\right)\subseteq
\xi \left( C_{\tilde\psi(B)}\left(\tilde\psi(g)\right) \tilde\psi(K)\right) \\ \subseteq
\xi \left( \tilde\psi \left( C_B(g) K\right)\tilde\psi(K)\right) = \varphi(C_B(g)K)~\mbox{ in } R.
\end{multline*}
Thus we have shown that the pair $(B,g)$ has {\ccg} in Sub-case 1.2.
Therefore, $B$ has {\ccg} in Case 1.

\fbox{\textit{Case 2:}} $B = G$.

The pair $(G,1)$ evidently satisfies {\ccg}, therefore we can assume that $g \neq 1$.
In this case, by Lemma \ref{lem:el_in_RAAG_non_conj_into_max_spec},
there exists a maximal special subgroup $A$ of $G$ such that $g \notin A^G$. By
Remark \ref{rem:RAAG_spec_HNN_over_max_sbgp}, $G$ splits
as a special HNN-extension \eqref{eq:def_of_A-main} of $A$ with respect to a certain special subgroup
$H \le A$. Obviously, there exists $z \in G$ such that
$g=zg_0 z^{-1}$ in $G$, for some cyclically reduced element
$g_0=t^{\e_1}x_1 \dots t^{\e_n}x_{n}$, where $n \ge 1$ because $g \notin A^G$.

Now we apply Lemma \ref{lem:claim_D} to find $Q$, $P$, $\psi:A \to Q$ and $\tilde\psi:G \to P$
from its claim, so that $\ker(\tilde\psi) \le K$ and
$C_P\left(\tilde\psi(g_0)\right) \subseteq \tilde\psi(C_G(g_0)K)$ in $P$. Note that $P$ is virtually
free (being an HNN-extension of a finite group $Q$), hence every subgroup of $P$ is virtually
free as well. Therefore, by Lemma \ref{lem:virt_free-cs}, $P$ is hereditarily conjugacy
separable, and, thus, by Proposition \ref{prop:her_c_s-CC},
$P$ satisfies the Centralizer Condition \cc.

Consequently, there exists a finite group $R$ and an epimorphism $\xi: P \to R$ such that $\ker(\xi) \le \tilde\psi(K)$
and
$$C_{R}\left(\xi\left(\tilde\psi(g_0)\right)\right)
\subseteq \xi \left( C_{P}\left(\tilde\psi(g_0)\right) \tilde\psi(K)\right) ~\mbox{ in } R.$$

Defining the epimorphism $\varphi:G \to R$ by $\varphi:=\xi \circ \tilde\psi$, and arguing in
the same manner as in Sub-case 1.2, we can show that $L:=\ker(\varphi) \le K$ and
$C_{R}\left(\varphi(g_0)\right) \subseteq \varphi(C_G(g_0)K)$ in $R$. Conjugating both sides of the
latter inclusion by $\varphi(z)$ (and recalling that $g=zg_0z^{-1}$ in $G$), we achieve
$C_{R}\left(\varphi(g)\right) \subseteq \varphi(C_G(g)K)$.

Hence $B=G$ has {\ccg} in Case 2,
and Lemma \ref{lem:main-CC} is proved.
\end{proof}

Thus Lemmas  \ref{lem:main-CS} and  \ref{lem:main-CC} have been proved when $\rank(G)=r$.
Therefore, by induction they are true for all $r \in \N \cup \{0\}$, and we are
ready to prove the main result of this paper.

\begin{proof}[Proof of Theorem \ref{thm:RAAG-main}]
Let $G$ be a right angled Artin group associated to a finite simplicial graph $\Gamma$.
Then for every
$g \in G$, the conjugacy class $g^G$ is separable in $G$ by
Lemma  \ref{lem:main-CS}. And Lemma \ref{lem:main-CC} tells us that $G$ satisfies
the Centralizer Condition \cc. Therefore, by Proposition \ref{prop:her_c_s-CC}, $G$ is hereditarily
conjugacy separable.
\end{proof}


\section{Applications to separability properties}\label{sec:appl_sep}
The first two applications that we mention do not directly follow from the statement of
Theorem  \ref{thm:RAAG-main}, but are consequences of its proof.

\begin{cor}\label{cor:RAAG-double_coset_sep} Let $A$ and $B$ be conjugates of special subgroups
of a right angled Artin group $G$. Then for any element $x \in G$,
the double coset $AxB$ is separable in $G$.
\end{cor}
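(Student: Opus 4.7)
The plan is to invoke Lemma \ref{lem:double_coset_sep} from Section \ref{sec:conseq_prof_top}, feeding it the strong separability of conjugacy classes established for special subgroups in Lemma \ref{lem:main-CS}.

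First, I would reduce to the case where $A$ and $B$ are themselves special subgroups of $G$. Writing $A = h A_0 h^{-1}$ and $B = k B_0 k^{-1}$ for some special subgroups $A_0, B_0 \le G$ and some $h, k \in G$, we have
\[ AxB = h \bigl( A_0 (h^{-1} x k) B_0 \bigr) k^{-1} \mbox{ in } G. \]
Since $G$ equipped with $\PT(G)$ is a topological group (see the Introduction), left and right multiplication by fixed elements are homeomorphisms of $G$. Hence $AxB$ is closed in $\PT(G)$ if and only if the double coset $A_0 (h^{-1} x k) B_0$ is. Thus, replacing $A, B, x$ with $A_0, B_0, h^{-1} x k$, we may assume $A$ and $B$ are special.

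Under this assumption, Remark \ref{rem:spec_retr-comm} tells us that $A$ and $B$ are retracts of $G$ whose canonical retractions $\rho_A, \rho_B \in End(G)$ commute. Set
\[ \alpha := \rho_A \bigl( \rho_B(x) x^{-1} \bigr) x \rho_B (x^{-1}) \in AxB, \]
as in Lemma \ref{lem:double_coset_sep}. By Remark \ref{rem:inter_special-sbgps}, the intersection $A \cap B$ is again a special subgroup of $G$. Therefore Lemma \ref{lem:main-CS} (already established through the induction of Section 8) implies that the conjugacy class $\alpha^{A \cap B}$ is separable in $G$. Applying Lemma \ref{lem:double_coset_sep} to $A$, $B$ and $x$, we conclude that $AxB$ is separable in $G$, as required.

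The main work was already absorbed into Lemma \ref{lem:main-CS}; the corollary is essentially a packaging of that result with the commuting-retractions machinery of Section \ref{sec:conseq_prof_top}. The only genuine obstacle is the reduction step at the beginning, but this is routine because translations are $\PT(G)$-homeomorphisms.
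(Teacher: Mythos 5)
Your proof is correct and follows essentially the same approach as the paper: reduce to $A,B$ special, invoke Remark \ref{rem:spec_retr-comm} and Remark \ref{rem:inter_special-sbgps} to see that the retractions commute and $A\cap B$ is special, then combine Lemma \ref{lem:main-CS} with Lemma \ref{lem:double_coset_sep}. You simply spell out the reduction step, which the paper dismisses as evident.
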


\begin{proof} Evidently, it is enough to consider the case when $A$ and $B$ are special
subgroups of $G$. Then $A$ and $B$ are retracts of $G$ and the corresponding retractions
commute (Remark \ref{rem:spec_retr-comm}). By Remark \ref{rem:inter_special-sbgps},
$A\cap B$ is also a special subgroup of $G$, hence Lemma  \ref{lem:main-CS} implies that
the subset $\alpha^{A \cap B}$ is separable in $G$ for every $\alpha \in G$. Therefore,
$AxB$ is separable in $G$ by Lemma \ref{lem:double_coset_sep}.
\end{proof}

In the case when $x=1$ and
$A$,$B$ are special subgroups of $G$ (not conjugates of them),
Corollary \ref{cor:RAAG-double_coset_sep} was proved by
Haglund and Wise in \cite[Cor. 9.4]{H-W_1} using
different arguments, based on Niblo's
criterion for separability of double cosets (see \cite{Niblo}). Unfortunately, in general
this criterion cannot be used to prove separability of double cosets of the form $AxB$ if $x \in G$
is an arbitrary element (because the retractions onto $A$ and $xBx^{-1}$ may no longer commute).

Similarly, using Lemmas \ref{lem:main-CS} and \ref{lem:main-CC}
together with Lemmas \ref{lem:inter_conj_to_spec_sbgps} and \ref{lem:centr-coset_sep},
we can obtain the following:

\begin{cor}\label{cor:RAAG-centr_double_coset_sep} Suppose that
$A$ and $D$ are conjugates of special subgroups in a right angled Artin group $G$, and
$g \in G$ is an arbitrary element. Then the double coset $C_A(g)D$ is separable in $G$.
\end{cor}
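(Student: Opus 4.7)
The plan is to apply Lemma \ref{lem:centr-coset_sep} after a small reduction to the case where $A$ is a special subgroup and $D=xBx^{-1}$ for some special subgroup $B$ and some $x \in G$. Writing $A=hA'h^{-1}$ with $A'$ special, one checks that
\[
C_A(g) D = h\,C_{A'}(h^{-1}gh)\,h^{-1}\,xBx^{-1},
\]
and since left and right multiplication by a fixed element are homeomorphisms of $\PT(G)$, this double coset is separable if and only if $C_{A'}(g')\,x'Bx'^{-1}$ is separable, where $g':=h^{-1}gh$ and $x':=h^{-1}x$. So we may assume from the outset that $A$ itself is a special subgroup, and $D=xBx^{-1}$ with $B$ a special subgroup of $G$.

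In this setting $A$ and $B$ are retracts of $G$ whose canonical retractions commute (Remark \ref{rem:spec_retr-comm}), and by Remark \ref{rem:inter_special-sbgps} the intersection $A\cap B$ is again a special subgroup. Set
\[
\alpha := \rho_A\bigl(\rho_B(x)x^{-1}\bigr)\,x\,\rho_B(x^{-1}) \in AxB,
\]
so that Lemma \ref{lem:centr-coset_sep} becomes directly applicable once we verify its three hypotheses: (i) $\alpha^{A\cap B}$ is separable in $G$, (ii) $g^{A\cap D}$ is separable in $G$, and (iii) the pair $(A\cap B,\alpha)$ satisfies \ccg.

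Conditions (i) and (iii) are immediate from our two main lemmas: since $A\cap B$ is a special subgroup, Lemma \ref{lem:main-CS} gives the separability of $\alpha^{A\cap B}$, and Lemma \ref{lem:main-CC} gives the Centralizer Condition for the pair $(A\cap B,\alpha)$. For (ii), Lemma \ref{lem:inter_conj_to_spec_sbgps} tells us that $A\cap D = A \cap xBx^{-1} = cSc^{-1}$ for some $c \in G$ and some special subgroup $S \le G$. Then
\[
g^{A\cap D} = c\,(c^{-1}gc)^{S}\,c^{-1},
\]
and Lemma \ref{lem:main-CS} applied to the special subgroup $S$ and the element $c^{-1}gc$ shows that $(c^{-1}gc)^S$ is closed in $\PT(G)$; conjugating by the fixed element $c$ (a homeomorphism of $\PT(G)$) yields the separability of $g^{A\cap D}$ in $G$. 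With all three hypotheses verified, Lemma \ref{lem:centr-coset_sep} produces the required separability of $C_A(g) D$.

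There is no real obstacle here beyond packaging: the delicate work has already been carried out inside Lemmas \ref{lem:main-CS}, \ref{lem:main-CC}, \ref{lem:inter_conj_to_spec_sbgps}, and \ref{lem:centr-coset_sep}. The one point that requires a moment's care is the initial reduction from arbitrary conjugates of special subgroups to special subgroups themselves, since the element $x$ defining the conjugate $D$ changes under this reduction; but because Lemma \ref{lem:centr-coset_sep} is formulated for an arbitrary element $x\in G$, this causes no difficulty.
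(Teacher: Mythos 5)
Your proof is correct and follows the paper's intended route exactly: the paper itself just cites Lemmas \ref{lem:main-CS}, \ref{lem:main-CC}, \ref{lem:inter_conj_to_spec_sbgps}, and \ref{lem:centr-coset_sep} as the ingredients, and your write-up simply spells out the reduction to the case of a special $A$ (needed so the retractions onto $A$ and $B$ commute) and verifies the three hypotheses of Lemma \ref{lem:centr-coset_sep} in the same way.
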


The rest of applications in this section discuss conjugacy separability of various groups.
Let us start with the following well-known observation:

\begin{lemma}\label{lem:retr_in_c_s} If $H$ is a retract of a conjugacy separable group $G$,
then $H$ is conjugacy separable.
\end{lemma}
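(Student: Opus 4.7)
The plan is to reduce $H$-conjugacy separation of elements of $H$ to $G$-conjugacy separation by exploiting the retraction $\rho_H: G \to G$ onto $H$. The key preliminary observation is that for any two elements $h_1, h_2 \in H$, being conjugate in $G$ is equivalent to being conjugate in $H$. One implication is trivial. For the other, suppose $h_2 = g h_1 g^{-1}$ for some $g \in G$; applying $\rho_H$ to both sides and using $\rho_H(h_i) = h_i$ gives $h_2 = \rho_H(g)\, h_1\, \rho_H(g)^{-1}$ with $\rho_H(g) \in H$. Thus $h_1^G \cap H = h_1^H$ for every $h_1 \in H$.

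With this in hand, conjugacy separability of $H$ follows immediately. Take $h_1, h_2 \in H$ with $h_2 \notin h_1^H$; by the observation above, $h_2 \notin h_1^G$. Since $G$ is conjugacy separable, there exists a homomorphism $\phi: G \to Q$ to a finite group $Q$ such that $\phi(h_2) \notin \phi(h_1)^Q$. The restriction $\phi|_H: H \to Q$ is then a homomorphism to a finite group whose image separates $h_1$ and $h_2$ in the sense that $\phi(h_2) \notin \phi(h_1)^{\phi(H)}$ (since $\phi(H) \subseteq Q$). Hence $H$ is conjugacy separable.

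There is no real obstacle here: the argument is entirely formal once one notices that retractions preserve conjugacy in both directions when restricted to the retract. In fact, the same proof shows more: any finite quotient of $G$ distinguishing two $G$-conjugacy classes of elements of $H$ automatically distinguishes their $H$-conjugacy classes, so separability of conjugacy classes in $H$ can always be witnessed by finite quotients of $G$ itself.
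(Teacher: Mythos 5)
Your proof is correct and follows essentially the same route as the paper: use the retraction to show $h_1^G \cap H = h_1^H$, then invoke conjugacy separability of $G$ and restrict the separating finite quotient to $H$.
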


\begin{proof}
Indeed, let $\rho_H \in End(G)$ be a retraction of $G$ onto $H$. Suppose that
$x,y \in H$ and $y \notin x^H$ in $H$. If there existed $g \in G$ such that $y=gxg^{-1}$ in $G$,
then we would have $y=\rho_H(y)=\rho_H(g) \rho_H(x)\rho_H(g)^{-1}=\rho_H(g) x\rho_H(g)^{-1}$ in $H$,
contradicting to our assumption. Therefore, $y \notin x^G$, and since $G$ is conjugacy separable,
there is a finite group $R$ and a homomorphism $\varphi:G \to R$ such that $\varphi(y) \notin \varphi(x)^R$.
Let $Q:=\varphi(H)$ and $\psi:H \to Q \le R$ be the restriction of $\varphi$ to $H$. By construction,
we have that $\psi(y) \notin \psi(x)^Q$ in $Q$. Therefore $H$ is conjugacy separable.
\end{proof}

\begin{rem}\label{rem:f_i_sbgp_virt_retr} If $F$ is a finite index subgroup in a virtual retract $H$
of a group $G$, then $F$ itself is a virtual retract of $G$.
\end{rem}

Indeed, let $K \le G$ be a finite index subgroup containing $H$,
and let $\rho_H$ be a retraction of $K$ onto $H$. Then $M:=\rho_H^{-1}(F) \le K$ has finite index in $K$,
and, hence, in $G$.
Evidently the restriction of $\rho_H$ to $M$ is a retraction of $M$ onto $F$.

Combining Remark \ref{rem:f_i_sbgp_virt_retr} with Lemma  \ref{lem:retr_in_c_s} we obtain the following
statement (cf. \cite[Thm. 3.4]{Chag-Zal}):

\begin{lemma}\label{lem:virt_retract_h_c_s} A virtual retract of a hereditarily conjugacy separable
group is hereditarily conjugacy separable itself.
\end{lemma}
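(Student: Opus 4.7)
The plan is to chain together the two immediately preceding results, so the argument reduces to an easy unwinding of definitions. Suppose $G$ is hereditarily conjugacy separable and $H$ is a virtual retract of $G$. To show $H$ is hereditarily conjugacy separable, pick an arbitrary finite index subgroup $F \le H$; my goal is to prove that $F$ is conjugacy separable.

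First I would apply Remark \ref{rem:f_i_sbgp_virt_retr} to the pair $(F,H)$ to conclude that $F$ is itself a virtual retract of $G$. Concretely, this supplies a finite index subgroup $M \le G$ together with a retraction $\rho_F : M \to F$ of $M$ onto $F$. Since $|G:M|<\infty$ and $G$ is hereditarily conjugacy separable by hypothesis, the subgroup $M$ is conjugacy separable.

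Now $F$ is a retract of the conjugacy separable group $M$, so Lemma \ref{lem:retr_in_c_s} applies directly and yields that $F$ is conjugacy separable. As $F$ was an arbitrary finite index subgroup of $H$, this is exactly what is required for $H$ to be hereditarily conjugacy separable.

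There is no real obstacle here; the whole content of the lemma is already packaged in Remark \ref{rem:f_i_sbgp_virt_retr} (which lifts the ``virtual retract'' status from $H$ down to $F$) and Lemma \ref{lem:retr_in_c_s} (which transfers conjugacy separability along a retraction). The only point worth double-checking is that the finite index witness for $F$ being a virtual retract of $G$ can be taken inside the original witness $K$ for $H$, but this is immediate from the construction $M := \rho_H^{-1}(F)$ in the proof of Remark \ref{rem:f_i_sbgp_virt_retr}.
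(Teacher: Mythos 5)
Your proof is correct and follows exactly the route the paper intends: it combines Remark \ref{rem:f_i_sbgp_virt_retr} (to pass from $H$ to $F$ as a virtual retract of $G$) with Lemma \ref{lem:retr_in_c_s} (to transfer conjugacy separability along the retraction from the finite-index subgroup $M$). The closing aside about fitting the new witness $M$ inside the original witness $K$ is unnecessary — the argument only needs $|G:M|<\infty$, which you already have — but it does no harm.
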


Next comes a classical fact about conjugacy separable groups:
\begin{lemma}\label{lem:UR-c_s} Suppose $G$ is a group satisfying the Unique Root property.
If $G$ contains a conjugacy separable subgroup $H$ of finite index, then $G$ is conjugacy separable.
\end{lemma}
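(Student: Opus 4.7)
The plan is, given $x, y \in G$ with $y \notin x^G$, to exhibit a finite-index normal subgroup $M \lhd G$ whose quotient separates their conjugacy classes. First I would fix a positive integer $n$ such that $g^n \in H$ for every $g \in G$; for instance $n = [G:H]!$ works, because the left action of $G$ on $G/H$ induces a homomorphism to the finite symmetric group on $G/H$ whose kernel lies in $H$ and has index dividing $[G:H]!$. Then $x^n, y^n \in H$, and the Unique Root property forces $y^n \notin (x^n)^G$: otherwise $y^n = (gxg^{-1})^n$ for some $g \in G$ would yield $y = gxg^{-1}$, contradicting our assumption.

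Next I would show that $(x^n)^G$ is closed in $\PT(G)$. Since $H$ is conjugacy separable, the class $(x^n)^H$ is closed in $\PT(H)$. Using that $H$ has finite index in $G$, the subgroup $H$ itself is closed in $\PT(G)$ (it is a finite union of cosets of its normal core, which is a finite-index normal subgroup of $G$), and a routine two-case argument upgrades the closedness of $(x^n)^H$ from $\PT(H)$ to $\PT(G)$: a point of $G \setminus H$ is separated from $H \supseteq (x^n)^H$ by a finite-index normal subgroup of $G$, while a point $z \in H \setminus (x^n)^H$ is separated from $(x^n)^H$ in $\PT(H)$ by some finite-index $N \lhd H$, whose normal core in $G$ refines the separation to a finite-index normal subgroup of $G$. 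Writing $G = \bigsqcup_{i=1}^{k} g_i H$ via left coset representatives, I would then use the decomposition
\[
(x^n)^G \;=\; \bigcup_{i=1}^{k} g_i \, (x^n)^H \, g_i^{-1},
\]
together with the fact that conjugation is a homeomorphism of the topological group $(G, \PT(G))$, to conclude that $(x^n)^G$ is a finite union of $\PT(G)$-closed sets and therefore closed.

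The closedness of $(x^n)^G$ in $\PT(G)$, combined with $y^n \notin (x^n)^G$, yields a finite-index normal subgroup $M \lhd G$ with $y^n \notin (x^n)^G M$. In $Q := G/M$, writing $\bar g$ for the image of $g \in G$, this reads $\bar y^{\,n} \notin (\bar x^{\,n})^Q$; any equality $\bar y = \bar g\, \bar x\, \bar g^{-1}$ would force $\bar y^{\,n} = \bar g\, \bar x^{\,n}\, \bar g^{-1}$, a contradiction. Hence $\bar y \notin \bar x^Q$, completing the proof. The key conceptual step is the Unique Root reduction in the first paragraph: without it, separation of the powers could not be transferred back to $x$ and $y$, and the conjugacy separability of $H$ could not be invoked at all, since $x$ and $y$ themselves need not lie in $H$. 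Everything else is a routine topological manipulation enabled by the finite-index hypothesis.
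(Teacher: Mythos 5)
Your proposal is correct and follows essentially the same route as the paper: reduce to the $n$-th powers $x^n, y^n \in H$, use the Unique Root property to see $y^n \notin (x^n)^G$, show $(x^n)^G$ is $\PT(G)$-closed via the finite coset decomposition $(x^n)^G=\bigcup_i g_i (x^n)^H g_i^{-1}$, and then transfer the separation back to $x$ and $y$ in the finite quotient. The only cosmetic differences are your explicit choice $n=[G:H]!$ and the more detailed write-out of the upgrade from $\PT(H)$ to $\PT(G)$, which the paper leaves implicit.
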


\begin{proof} Consider any element $x \in G$. We need to show that the conjugacy class
$x^G$ is separable in $G$.

Assume, first, that $x \in H$. Then $x^H$ is closed in $\PT(H)$, and since $|G:H|<\infty$,
it is also closed in $\PT(G)$.
Choose $z_1,\dots,z_k \in G$ so that $G=\bigsqcup_{i=1}^k z_i H$. Then
$x^G= \bigcup_{i=1}^k z_i x^H z_i^{-1}$ is a finite union of closed sets in $\PT(G)$,
hence $x^G$ is separable in $G$.

Now, if $x \in G$ is an arbitrary element, then there is $n \in \N$ such that
$g:=x^n \in H$, and, as we have
shown above, $g^G$ is separable in $G$. Take any $y \in G \setminus x^G$. Since $G$ has the
Unique Root property, we see that $y^n \notin g^G$. Hence, there exists a finite index normal subgroup
$N\lhd G$ such that $y^n \notin g^G N$ in $G$. Consequently, $y \notin x^GN$
(because the inclusion $y \in x^G N$ implies the inclusion $y^n \in (x^n)^GN$).
Thus $G$ is conjugacy separable.
\end{proof}

It is easy to see that Lemma \ref{lem:UR-c_s} can be generalized as follows:
\begin{lemma}\label{lem:URP-h_c_s} If a group $G$ has the Unique Root property and
contains a hereditarily conjugacy separable subgroup of finite index, then
$G$ is itself hereditarily conjugacy separable.
\end{lemma}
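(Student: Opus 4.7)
The plan is to reduce Lemma \ref{lem:URP-h_c_s} directly to Lemma \ref{lem:UR-c_s}. Let $H \le G$ be a hereditarily conjugacy separable subgroup of finite index, and let $F \le G$ be an arbitrary finite index subgroup; our goal is to show that $F$ is conjugacy separable.

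First I would form the intersection $F \cap H$. Since both $F$ and $H$ have finite index in $G$, the subgroup $F \cap H$ has finite index in $G$, and in particular it has finite index in $H$. Because $H$ is hereditarily conjugacy separable, $F \cap H$ is conjugacy separable. Next, I would observe that the Unique Root property is obviously inherited by subgroups: if $x^n = y^n$ in $F$, then the same equality holds in $G$, so $x = y$. Hence $F$ is a group with the Unique Root property that contains the conjugacy separable subgroup $F \cap H$ of finite index. Applying Lemma \ref{lem:UR-c_s} to $F$ and $F \cap H$ yields that $F$ is conjugacy separable. Since $F$ was an arbitrary finite index subgroup of $G$, the group $G$ is hereditarily conjugacy separable.

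There is no real obstacle here; the argument is a one-line reduction once one notices that both the Unique Root property and hereditary conjugacy separability of $H$ pass to the relevant subgroups. The only mildly delicate point, already handled by Lemma \ref{lem:UR-c_s}, is that the Unique Root property is precisely what is needed to propagate conjugacy separability from a finite index subgroup to the ambient group (via the standard trick of replacing $y \notin x^F$ by $y^n \notin (x^n)^F$ for a suitable power $n$ with $x^n \in F \cap H$).
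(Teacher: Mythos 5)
Your proof is correct and is exactly the reduction the paper has in mind: the paper states Lemma \ref{lem:URP-h_c_s} as an "easy to see" generalization of Lemma \ref{lem:UR-c_s} without writing out a proof, and your argument (pass to $F\cap H$, note that $F$ inherits the Unique Root property, invoke Lemma \ref{lem:UR-c_s}) is the standard way to fill that in.
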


Note that the assumption of Lemma \ref{lem:UR-c_s} demanding $G$ to satisfy the Unique Root property
is important: in \cite{Gor} A. Goryaga  constructed an example of a finitely generated group $G$ which
is not conjugacy separable, but contains a conjugacy separable subgroup of index $2$.

\begin{cor}\label{cor:UR_AVR->h_c_s} If a group $G \in \avr$ has the Unique Root property,
then $G$ is hereditarily conjugacy separable.
\end{cor}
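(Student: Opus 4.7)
The plan is to chain together three ingredients already established in the paper, in the natural order dictated by the definitions. First I would unpack what the hypothesis $G \in \avr$ gives: by definition of the class $\avr$, the group $G$ contains a finite index subgroup $H \le G$ with $H \in \vr$, which in turn means that $H$ is a virtual retract of some finitely generated right angled Artin group. This is a purely definitional step, but it is the reason the result reduces to facts already proved.

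Next I would feed $H$ into Corollary \ref{cor:virt_retract_RAAG-h_c_s}, which asserts that every virtual retract of a right angled Artin group is hereditarily conjugacy separable. Thus $H$ is a hereditarily conjugacy separable subgroup of finite index in $G$. Finally, since by hypothesis $G$ itself has the Unique Root property, Lemma \ref{lem:URP-h_c_s} applies directly to the pair $(G,H)$ and yields that $G$ is hereditarily conjugacy separable.

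There is no genuine obstacle here; the corollary is really a packaging statement. The only points worth verifying carefully are that the Unique Root hypothesis is in place for $G$ itself (which is exactly what Lemma \ref{lem:URP-h_c_s} demands, and which is given to us) and that the finitely generated qualifier built into the definition of $\vr$ does not cause any mismatch with Corollary \ref{cor:virt_retract_RAAG-h_c_s}, whose conclusion is stated for virtual retracts of arbitrary right angled Artin groups and therefore applies \textit{a fortiori} in the finitely generated setting.
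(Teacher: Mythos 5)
Your argument is correct and follows essentially the same route as the paper's: both reduce to the finite-index subgroup $H \in \vr$, invoke Theorem \ref{thm:RAAG-main} via Lemma \ref{lem:virt_retract_h_c_s} (which is what Corollary \ref{cor:virt_retract_RAAG-h_c_s} packages), and then lift back to $G$ using the Unique Root property; the paper spells out the argument for an arbitrary finite-index $K \le G$ by hand using Remark \ref{rem:f_i_sbgp_virt_retr} and Lemma \ref{lem:UR-c_s}, while you shortcut through Lemma \ref{lem:URP-h_c_s}, which encapsulates exactly that step. One small imprecision in your closing remark: Corollary \ref{cor:virt_retract_RAAG-h_c_s} rests on Theorem \ref{thm:RAAG-main}, which is proved only for right angled Artin groups on \emph{finite} graphs, so it is not a statement about arbitrary (possibly infinitely generated) right angled Artin groups --- but since $\vr$ is defined using finitely generated ones anyway, this does not affect the proof.
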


\begin{proof} Let  $K \le G$ be a subgroup of finite index. By definition, $G$ contains
a finite index subgroup $H$ which is a virtual retract of some right angled Artin group $A$.
Since $|H:(K\cap H)|\le |G:K|\le \infty$, $K\cap H$ is a virtual retract of $A$ by
Remark \ref{rem:f_i_sbgp_virt_retr}. But the index $|K:(K \cap H)|$ is also finite,
hence $K \in \avr$.

Now, Theorem \ref{thm:RAAG-main} and Lemma \ref{lem:virt_retract_h_c_s} imply
that $K \cap H$ is conjugacy separable. Hence $K$ is conjugacy separable by Lemma \ref{lem:UR-c_s}.
Thus $G$ is hereditarily conjugacy separable.
\end{proof}

Recall that two groups $G_1$ and $G_2$ are said to be \textit{commensurable},
if there exist finite index subgroups $H_1 \le G_1$ and $H_2 \le G_2$
such that $H_1$ is isomorphic to $H_2$.
The proof of Corollary \ref{cor:UR_AVR->h_c_s} allows to conclude that the class $\avr$
is closed under passing to subgroups of finite index. Therefore we can make

\begin{rem} \label{rem:AVR_f_i_sbgps} If $G_1$ is commensurable to $G_2$ and $G_1 \in \avr$,
then $G_2 \in \avr$.
\end{rem}

As we observed in Lemma \ref{lem:RAAG_URP}, right angled Artin groups have the Unique Root property.
Another well-known class of groups with this property is the class of torsion-free word hyperbolic groups.

\begin{lemma}\label{lem:hyp-URP} Torsion-free word hyperbolic groups have the Unique Root property.
\end{lemma}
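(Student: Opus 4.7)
The plan is to exploit the well-known structure of centralizers in torsion-free word hyperbolic groups. Specifically, it is a standard fact (due to Gromov, see \cite{Gromov}) that in any word hyperbolic group $G$, the centralizer $C_G(g)$ of a non-trivial element $g$ is virtually cyclic. If, in addition, $G$ is torsion-free, then every virtually cyclic subgroup of $G$ is infinite cyclic, so $C_G(g)$ is infinite cyclic for each non-trivial $g \in G$.

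Given this, I would argue as follows. Suppose $x, y \in G$ satisfy $x^n = y^n$ for some $n \in \N$. First, handle the trivial case: if $x^n = 1$, then since $G$ is torsion-free, $x = 1$; similarly $y = 1$, and the conclusion is immediate. So assume $z := x^n = y^n \neq 1$. Both $x$ and $y$ lie in the centralizer $C_G(z)$, which by the above is infinite cyclic; let $c$ be a generator. Write $x = c^a$ and $y = c^b$ for some $a, b \in \Z$. Then the equation $x^n = y^n$ becomes $c^{an} = c^{bn}$, and since $c$ has infinite order this forces $an = bn$, hence $a = b$ and $x = y$.

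The only ingredient that is not entirely mechanical is the cited structural fact about centralizers in torsion-free hyperbolic groups, but it is entirely standard and can simply be referenced. Thus no real obstacle arises; the lemma follows in a few lines once the centralizer structure is in hand.
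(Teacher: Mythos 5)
Your proof is correct and is essentially the same as the paper's. The paper works with the unique maximal virtually cyclic subgroup $E(g)$ containing the infinite-order element $g=x^n$, argues $E(x)=E(y)=E(g)$ and concludes this is cyclic since $G$ is torsion-free; you work with the centralizer $C_G(g)$ and cite that it is infinite cyclic. In a torsion-free hyperbolic group these two subgroups coincide, so the arguments are the same in substance, merely packaged around different (equivalent) standard facts.
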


\begin{proof} Let $G$ be a torsion-free word hyperbolic group.
Suppose that $x^n=y^n$ in $G$ for some $x,y \in G$ and $n \in \N$.
Since $G$ is torsion-free, we can assume that the orders of $x$ and $y$ are infinite.
It is well-known that every element $g \in G$, of infinite order,
belongs to a unique maximal virtually cyclic subgroup $E(g) \le G$ (see, for instance,
\cite[Lemma 1.16]{Olsh-G-sbgps}).

Note that the element $g:=x^n \in G$ has infinite
order and $g \in E(x) \cap E(y)$. Therefore, $E(x)=E(y)$, thus $y \in E(x)$.
But since $G$ is torsion-free, the virtually cyclic subgroup $E(x) \le G$ must be
cyclic. That is, there exists $z \in G$ such that $x=z^k$ and $y=z^l$ for some $k,l \in \Z$.
Obviously, the equality $x^n=y^n$ implies that $k=l$. Thus $x=y$, and, hence, $G$ enjoys
the Unique Root property.
\end{proof}

Combining Lemma \ref{lem:hyp-URP}  with Corollary \ref{cor:UR_AVR->h_c_s} we obtain
\begin{cor}\label{cor:hyp_t-f_AVR->h_c_s} If $G \in \avr$ is a torsion-free
word hyperbolic group, then $G$ is hereditarily conjugacy separable.
\end{cor}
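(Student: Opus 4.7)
The plan is to derive this corollary as a straightforward combination of two results established immediately beforehand in the excerpt, namely Lemma \ref{lem:hyp-URP} (which gives the Unique Root property for torsion-free word hyperbolic groups) and Corollary \ref{cor:UR_AVR->h_c_s} (which upgrades \avr-membership to hereditary conjugacy separability under the Unique Root hypothesis). No additional machinery is needed; the entire content lies in verifying that the hypotheses of Corollary \ref{cor:UR_AVR->h_c_s} are met.

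First I would note that by Lemma \ref{lem:hyp-URP}, any torsion-free word hyperbolic group satisfies the Unique Root property. In particular, the given group $G$ satisfies it. Second, by assumption $G \in \avr$. Therefore $G$ is an \avr-group with the Unique Root property, so Corollary \ref{cor:UR_AVR->h_c_s} applies directly and yields that $G$ is hereditarily conjugacy separable.

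There is essentially no obstacle here: the real work has already been performed in the main theorem (Theorem \ref{thm:RAAG-main}, propagated through Lemma \ref{lem:virt_retract_h_c_s} to virtual retracts of right angled Artin groups) and in the Unique Root argument for torsion-free hyperbolic groups (which relies on the classical fact that every element of infinite order lies in a unique maximal virtually cyclic subgroup $E(g)$, forcing $x$ and $y$ with $x^n=y^n$ to generate a common cyclic subgroup, whence $x=y$). The only thing worth being careful about is that the class \avr\ is closed under passing to finite index subgroups, so that the hereditary aspect is indeed covered by Corollary \ref{cor:UR_AVR->h_c_s}; but this closure property is precisely what is recorded in Remark \ref{rem:AVR_f_i_sbgps} and used in the proof of Corollary \ref{cor:UR_AVR->h_c_s}. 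Hence the proof reduces to a single sentence citing these two results.
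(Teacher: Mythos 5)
Your proof is correct and matches the paper's own derivation exactly: the paper obtains Corollary \ref{cor:hyp_t-f_AVR->h_c_s} precisely by combining Lemma \ref{lem:hyp-URP} with Corollary \ref{cor:UR_AVR->h_c_s}. Your additional remarks about where the real work lies and about \avr{} being closed under finite index subgroups are accurate and consistent with the surrounding text.
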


Using Osin's results from \cite{Osin-elem-sbgps_rel_hyp}, it is not difficult to
generalize Lemma \ref{lem:hyp-URP} as follows: if a group $G$ is torsion-free
and hyperbolic relative to a collection of proper subgroups, each of which has the Unique Root property,
then $G$ has the Unique Root property. As a result, Corollary \ref{cor:hyp_t-f_AVR->h_c_s}
can also be restated for this kind of relatively hyperbolic groups.

We can also establish Corollary \ref{cor:t_f_sbgp_hyp_Cox},  mentioned in Section \ref{sec:conseq_main_res}.

\begin{proof}[Proof of Corollary \ref{cor:t_f_sbgp_hyp_Cox}]
Since $H$ has finite index in $G$, it is also word hyperbolic (\cite{Gromov}),
thus, according to Lemma  \ref{lem:hyp-URP}, $H$ enjoys the Unique Root property.

By Corollary \ref{cor:Cox-sbgp-c_s}, $G$ has a hereditarily conjugacy separable
subgroup $F \le G$ of finite index. Define $K:=H \cap F \le G$. Then $K$ will be
hereditarily conjugacy separable (because $|F:K|<\infty$).
And since $|H:K|<\infty$, Lemma \ref{lem:URP-h_c_s} implies that $H$ is
hereditarily conjugacy separable.
\end{proof}


\section{Applications to outer automorphism groups}\label{sec:appl_out}

We have already discussed a few applications of Theorem \ref{thm:RAAG-main} to
outer automorphism groups in Section \ref{sec:conseq_main_res}. This section's aim is to prove
Theorem \ref{thm:rel_hyp_gp_a_v_r}.

We refer the reader to Osin's monograph \cite{Osin-rel_hyp}
for the definition and basic properties of relatively hyperbolic groups. All relatively hyperbolic
groups that we consider here are hyperbolic relative to families of \textit{proper}
subgroups. In the  sense of B. Farb \cite{Farb}, this would correspond to \textit{weak relative
hyperbolicity} together with the \textit{Bounded Coset Penetration Condition} (the equivalence
of Osin's and Farb's definitions for finitely generated groups is proved
in \cite[Thm. 6.10]{Osin-rel_hyp}).

The following lemma is not difficult to prove but its statement is very useful
(see, for example, \cite[Lemma 5.4]{G-L}).

\begin{lemma}\label{lem:out_norm_sub}
Suppose that $G$ is a finitely generated group and $N$ is a centerless normal
subgroup of finite index in $G$. Then some finite index subgroup of
$Out (G)$ is isomorphic to a quotient of a subgroup of $Out (N)$ by
a finite normal subgroup. In particular, if $Out(N)$ is residually finite,
then $Out (G)$ is residually finite.
\end{lemma}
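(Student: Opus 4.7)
The plan is to compare $Out(G)$ with $Out(N)$ through the restriction map on automorphisms that stabilise $N$ setwise. The crucial structural input is that the centraliser $K := C_G(N)$ is a finite normal subgroup of $G$: normality is automatic since $N \lhd G$, and finiteness follows from $K \cap N \subseteq Z(N) = \{1\}$, which forces $K \cong KN/N \le G/N$ to be finite.

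To pick out a finite-index subgroup of $Out(G)$, I would work with $A := Aut(G,N) = \{\phi \in Aut(G) : \phi(N) = N\}$. Since $G$ is finitely generated, there are only finitely many subgroups of $G$ of index $[G:N]$, so the $Aut(G)$-orbit of $N$ is finite and $A$ has finite index in $Aut(G)$; since $Inn(G) \le A$, the quotient $Out(G,N) := A/Inn(G)$ is the finite-index subgroup of $Out(G)$ I would aim to describe.

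The main tool is then the restriction homomorphism $r : A \to Aut(N)$, $\phi \mapsto \phi|_N$. Its kernel is finite: if $\phi|_N = \mathrm{id}$, then for every $g \in G$ the element $\phi(g) g^{-1}$ acts trivially on $N$ by conjugation and hence lies in $K$, so $\phi$ is determined by finitely many choices on a finite generating set of $G$. Composing $r$ with $Aut(N) \twoheadrightarrow Out(N)$ gives $\bar r : A \to Out(N)$, whose image is a subgroup $\bar L \le Out(N)$. A direct computation shows that $\bar r(Inn(G)) \cong G/NK$, which is a finite normal subgroup $J \lhd \bar L$: indeed, $[c_g|_N]$ is trivial in $Out(N)$ precisely when $g \in N \cdot K$. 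Hence $\bar r$ descends to a surjection $\tilde r : Out(G,N) \twoheadrightarrow \bar L/J$, whose kernel is the image of $\ker r$ in $Out(G,N)$ and is therefore finite.

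This gives the desired structural statement: up to a finite normal subgroup, $Out(G,N)$ is the quotient $\bar L/J$ of a subgroup of $Out(N)$ by a finite normal subgroup. The ``in particular'' assertion then follows by chaining three standard stability properties of residual finiteness: passing to subgroups gives $\bar L$ residually finite, passing to quotients by finite normal subgroups gives $\bar L/J$ residually finite, passing to extensions by finite groups gives $Out(G,N)$ residually finite, and finally the finite-index inclusion $Out(G,N) \le Out(G)$ yields residual finiteness of $Out(G)$. The main obstacle is the bookkeeping of the three relevant subgroups $\ker r$, $\ker \bar r = Inn_G(N) \cdot \ker r$, and $\ker \bar r \cap Inn(G) = Inn_G(NK)$, and the simultaneous verification that both $J \cong G/NK$ and $\ker \tilde r \cong \ker r / Inn_G(K)$ are finite; once these identifications are pinned down, the rest is formal.
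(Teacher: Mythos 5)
Your computations are correct up to the identification of $\ker\tilde r$ with the finite group $F$, the image of $\ker r$ in $Out(G,N)$. But this only yields a short exact sequence $1 \to F \to Out(G,N) \to \bar L/J \to 1$, not the isomorphism asserted in the lemma, and the kernel $F$ is nontrivial in general: $F \cong \ker r / (\ker r \cap Inn(G))$ with $\ker r \cap Inn(G) = \{c_k : k \in K\}$, so for instance if $G = N \times K$ with $N$ centerless and $K$ a finite group having $Out(K) \neq 1$, then $K = C_G(N)$, $\ker r = \{\mathrm{id}_N \times \alpha : \alpha \in Aut(K)\}$ and $F \cong Out(K) \neq 1$. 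Your argument for the structural claim therefore only covers the special case $C_G(N) = 1$, in which $\ker r$ is trivial and $\tilde r$ is an isomorphism.

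This matters because the last step of your derivation of the ``in particular''---``passing to extensions by finite groups gives $Out(G,N)$ residually finite''---is false as applied here: $F$ is the \emph{kernel}, not the quotient, of your sequence, and residual finiteness is not preserved under extensions with finite kernel (Deligne's central extension of $\mathrm{Sp}_{2n}(\Z)$, $n \ge 2$, by a finite cyclic group is a well-known non-residually-finite group of this form). Had you established the genuine isomorphism $O \cong L/J$ for some finite-index $O \le Out(G)$, the three operations you list---passing to a subgroup, to a quotient by a finite normal subgroup, and to a finite-index overgroup---each do preserve residual finiteness, and the conclusion would follow. As written, you would still need to either exhibit a finite-index subgroup of $Out(G)$ that meets $F$ trivially, or show directly that $F$ injects into a finite quotient of $Out(G,N)$, and neither follows from what you have done; this is precisely the point that must be handled to make the argument go through when $C_G(N)\neq 1$.
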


Recall, that a group $G$ is called \textit{elementary}, if it contains a cyclic subgroup of finite index.

\begin{lemma}\label{lem:rel_hyp-fin_center} If $G$ is a non-elementary relatively hyperbolic
group, then its center $Z(G)$ is finite.
\end{lemma}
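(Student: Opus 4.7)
The plan is to exploit the existence of \emph{hyperbolic} (loxodromic) elements in $G$ and the strong restrictions on their centralizers. First I would invoke a standard result from Osin's monograph \cite{Osin-rel_hyp} (see also \cite{Osin-elem-sbgps_rel_hyp}): since $G$ is non-elementary and relatively hyperbolic, it contains an element $g$ of infinite order that is not conjugate into any peripheral subgroup (a hyperbolic element). Moreover, every such element $g$ is contained in a unique maximal elementary subgroup $E(g) \le G$, and the centralizer $C_G(g)$ is contained in $E(g)$; in particular, $C_G(g)$ is virtually cyclic.

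The key observation is that $Z(G) \subseteq C_G(g)$ for every $g \in G$, so in particular $Z(G) \le E(g)$ for every hyperbolic $g \in G$ of infinite order. Thus it suffices to produce two hyperbolic elements $g_1,g_2 \in G$ such that $E(g_1) \cap E(g_2)$ is finite; this will force $Z(G) \le E(g_1) \cap E(g_2)$ to be finite as well.

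Next I would invoke another well-known fact from the theory of relatively hyperbolic groups (again available in \cite{Osin-elem-sbgps_rel_hyp}): in a non-elementary relatively hyperbolic group $G$, any two non-commensurable hyperbolic elements $g_1,g_2$ satisfy $|E(g_1) \cap E(g_2)| < \infty$, and such non-commensurable pairs exist in abundance. Concretely, one can find hyperbolic elements $g_1,g_2$ that generate a free (non-abelian) subgroup, and then $E(g_1) \cap E(g_2)$ is necessarily finite since otherwise $g_1$ and $g_2$ would have a common power, contradicting freeness.

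The argument is then immediate: pick such $g_1$ and $g_2$; since $Z(G)$ commutes with both $g_1$ and $g_2$, we get $Z(G) \subseteq C_G(g_1) \cap C_G(g_2) \subseteq E(g_1) \cap E(g_2)$, and the right-hand side is finite. The only delicate point is citing the correct statements from \cite{Osin-rel_hyp} and \cite{Osin-elem-sbgps_rel_hyp} for the existence of non-commensurable hyperbolic elements and the virtual cyclicity of centralizers of hyperbolic elements; no genuinely hard step is required once these tools are in place.
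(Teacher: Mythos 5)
Your proof is correct, but it takes a genuinely different route from the paper's.

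The paper argues directly from the peripheral structure of $G$. Writing $G$ as hyperbolic relative to $\{H_\lambda\}_{\lambda\in\Lambda}$, it splits into three cases: if $|\Lambda|=\infty$, then $G$ is a nontrivial free product (Osin, Thm.\ 2.44) and so $Z(G)=\{1\}$; if $\Lambda$ is finite and every $H_\lambda$ is finite, then $G$ is word hyperbolic (Osin, Cor.\ 2.41) and a non-elementary hyperbolic group has finite center; otherwise some $H_\mu$ is infinite, and almost malnormality of peripheral subgroups (Osin, Thm.\ 1.4: $H_\mu\cap gH_\mu g^{-1}$ is finite for $g\notin H_\mu$) forces first $Z(G)\subseteq H_\mu$ and then $Z(G)\subseteq H_\mu\cap gH_\mu g^{-1}$ for some $g\notin H_\mu$, which is finite. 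Your proof instead works with loxodromic (hyperbolic) elements: you use that $C_G(g)\le E(g)$ for a hyperbolic $g$ of infinite order, and that $Z(G)\le C_G(g_1)\cap C_G(g_2)\le E(g_1)\cap E(g_2)$, which is finite for two hyperbolic elements with $E(g_1)\neq E(g_2)$; non-elementarity guarantees such a pair exists. Your route avoids the case analysis and is more uniform, at the cost of invoking the theory of maximal elementary subgroups of loxodromic elements from \cite{Osin-elem-sbgps_rel_hyp}; the paper's route uses more ``elementary'' inputs (free products, malnormality of peripherals) but has to check three cases. One small point to tighten in your write-up: your claim that $E(g_1)\cap E(g_2)$ is finite because $\langle g_1,g_2\rangle$ is free is not quite the right invariant --- commensurability involves conjugators possibly lying outside $\langle g_1,g_2\rangle$. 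The cleaner statement is that if $E(g_1)\cap E(g_2)$ is infinite, it contains an infinite-order element $h$, forcing $E(g_1)=E(h)=E(g_2)$; so it suffices to pick $g_1,g_2$ with distinct maximal elementary subgroups, which non-elementarity provides.
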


\begin{proof} Suppose that $G$ is hyperbolic relative to a family of proper non-trivial subgroups
$\{H_{\lambda}\}_{\lambda \in \Lambda}$.

First, if $|\Lambda|=\infty$, then $G$ splits as a non-trivial free product by \cite[Thm. 2.44]{Osin-rel_hyp},
and, thus, $Z(G)=\{1\}$. If the set $\Lambda$ is finite and each parabolic subgroup
$H_\lambda$, $\lambda \in \Lambda$, is finite, then $G$ is word hyperbolic (in the sense of Gromov) by
\cite[Cor. 2.41]{Osin-rel_hyp}.
And it is well-known that the center of a non-elementary word hyperbolic group is finite.

Therefore we can assume that there is some $\mu \in \Lambda$ such that $|H_\mu|=\infty$.
A theorem of Osin \cite[Thm. 1.4]{Osin-rel_hyp} claims that the intersection
$H_\mu \cap g H_\mu g^{-1}$ is finite for every $g \in G\setminus H_\mu$.
If $z \in Z(G)$, then $H_\mu=H_\mu \cap z H_\mu z^{-1}$ is infinite, hence $z \in H_\mu$,
i.e., $Z(G) \subseteq H_\mu$. On the other hand, there exists $g \in G \setminus H_\mu$
because $H_\mu$ is a proper subgroup of $G$. By Osin's theorem, 
$H_\mu \cap g H_\mu g^{-1}$ is finite. And since $Z(G)\subseteq  g H_\mu g^{-1}$,
we see that $Z(G) \subseteq H_\mu \cap g H_\mu g^{-1}$ must be finite.
\end{proof}

The proof of Theorem  \ref{thm:rel_hyp_gp_a_v_r} will use the following fact, established
in \cite[Cor. 1.4]{norm-aut}:
\begin{lemma}\label{lem:rel_hyp-pi=inn} If $G$ is a torsion-free non-elementary relatively hyperbolic group,
then $Aut_{pi}(G)=Inn(G)$.
\end{lemma}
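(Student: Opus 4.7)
The plan is to reduce the statement to the main theorem of the companion paper \cite{norm-aut} on normal automorphisms of relatively hyperbolic groups; the reduction itself is essentially automatic. Recall that an automorphism $\phi \in Aut(G)$ is called a \emph{normal automorphism} if $\phi(N)=N$ for every normal subgroup $N \lhd G$. Any pointwise inner $\phi$ is normal, because an arbitrary $N \lhd G$ is a union of $G$-conjugacy classes and $\phi$ preserves each such class by assumption. Since \cite{norm-aut} establishes that, for a torsion-free non-elementary relatively hyperbolic group, every normal automorphism is inner, combining this with the trivial inclusion $Inn(G) \subseteq Aut_{pi}(G)$ yields the claimed equality.

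To sketch the strategy behind the result invoked from \cite{norm-aut}: in a torsion-free non-elementary relatively hyperbolic $G$, every loxodromic element $g$ has centralizer $C_G(g)$ equal to the unique maximal elementary subgroup $E(g)$ containing it, and by torsion-freeness $E(g)$ is infinite cyclic. By results of Osin \cite{Osin-elem-sbgps_rel_hyp}, non-elementarity guarantees an abundance of loxodromic elements $a,b$ that are \emph{independent}, meaning $E(a) \cap x E(b) x^{-1}=\{1\}$ for every $x\in G$, and loxodromic elements generate $G$. For $\phi \in Aut_{pi}(G)$, write $\phi(g)=c_g g c_g^{-1}$; then $c_g$ is unique modulo $E(g)$.

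The core step is a rigidity argument. Fix a loxodromic $a$; after composing $\phi$ with conjugation by $c_a^{-1}$, one may assume $\phi(a)=a$. Since $\phi$ preserves the cyclic group $E(a)=C_G(a)$ pointwise up to conjugacy, a straightforward check forces $\phi|_{E(a)}=id$. For any loxodromic $b$ independent of $a$, one studies products such as $a^N b$ (which is itself loxodromic for large $N$ by independence) and compares the expressions $\phi(a^N b)=a^N c_b b c_b^{-1}$ with the fact that $\phi(a^N b)$ must be conjugate to $a^N b$, whose centralizer is cyclic and, by independence, meets $E(a)$ trivially. Using almost-malnormality of elementary subgroups, this forces $c_b \in E(a)\cap E(b)=\{1\}$, i.e.\ $\phi(b)=b$. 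Iterating over a generating set of loxodromics gives $\phi=id_G$.

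The main obstacle is this last rigidity step: one must carefully combine the almost-malnormality of elementary subgroups, the existence of infinitely many independent loxodromics, and a coherence argument extending control from a single loxodromic to products and ultimately to all of $G$, while also accounting for elements whose centralizers fail to be cyclic (notably parabolic elements). This is precisely the technical core of \cite{norm-aut}, which is why Lemma \ref{lem:rel_hyp-pi=inn} is stated here as a citation rather than proved in place.
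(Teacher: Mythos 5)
Your proposal is correct and matches the paper's treatment: the paper does not prove this lemma either, but simply quotes it as \cite[Cor. 1.4]{norm-aut}, which states exactly this fact for torsion-free non-elementary relatively hyperbolic groups. Your reduction of pointwise inner automorphisms to normal automorphisms (and the observation that torsion-freeness rules out non-trivial finite normal subgroups, so the main theorem of \cite{norm-aut} applies) is a harmless, correct detour to the same source, with the technical core rightly deferred to that citation.
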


\begin{proof}[Proof of Theorem \ref{thm:rel_hyp_gp_a_v_r}] Let $G$ be a relatively hyperbolic group
from the class $\avr$. If $G$ is virtually cyclic, then $Out(G)$ is finite (cf. \cite[Lemma 6.6]{norm-aut}).
Hence we can further suppose that $G$ is non-elementary. By the assumptions, $G$ contains
a finite index subgroup $N \in \vr$, and, in view of Remark \ref{rem:f_i_sbgp_virt_retr}, we can assume
that $N\lhd G$.

Note that $N$ is finitely generated (and even finitely presented),
as a virtual retract of a finitely presented group.
And since $N$ has finite index in $G$, it is non-elementary
and relatively hyperbolic. The latter is an immediate consequence of
Bowditch's definition of relatively hyperbolic groups given in \cite[Def. 2]{Bowditch}
(which is equivalent to Osin's definition, as shown in \cite[Thm. 6.10]{Osin-rel_hyp});
this also follows
from the powerful result of C. Dru\c tu \cite[Thm. 1.2]{Drutu}, which claims that relative
hyperbolicity is invariant under quasi-isometries.

By construction, $N$ is a virtual retract of some right angled Artin group $A$. And since $A$ is
torsion-free, $N \le A$ is torsion-free as well. Therefore, according to Lemma \ref{lem:rel_hyp-fin_center},
$Z(N)=\{1\}$. The group $N$ is finitely generated and
conjugacy separable by Corollary \ref{cor:virt_retract_RAAG-h_c_s}, and
$Aut_{pi}(N)=Inn(N)$ by Lemma  \ref{lem:rel_hyp-pi=inn}. Hence we can apply Grossman's
theorem \cite[Thm. 1]{Grossman} to conclude that $Out(N)$ is residually finite. Consequently,
$Out(G)$ is residually finite by Lemma \ref{lem:out_norm_sub}.
\end{proof}


\section{Applications to the conjugacy problem}\label{sec:appl_conj_probl}
As it was shown by Mal'cev \cite{Malcev} and Mostowskii \cite{Mostow}, a finitely presented conjugacy
separable group has solvable conjugacy problem. This result can be generalized as follows:

\begin{lemma} \label{lem:dec_conj_probl} Suppose that $H$ is a finitely generated subgroup of
a finitely presented group $G$, such that for every $h \in H$ the $H$-conjugacy class $h^H$
is separable in $G$. Then the conjugacy problem for $H$ is solvable.
\end{lemma}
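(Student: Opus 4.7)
My plan is to adapt the classical Mal'cev argument (which treats the special case $H = G$ with $G$ finitely presented and conjugacy separable) to the present mixed setting, where the separability of $H$-conjugacy classes is only assumed in the ambient group $G$, not in $H$ itself.

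Fix finite generating sets $\mathcal{X}$ of $G$ and $\mathcal{Y}$ of $H$, each element of $\mathcal{Y}$ being given as a word in $\mathcal{X}^{\pm 1}$. On input of words $u,v$ in $\mathcal{Y}^{\pm 1}$ representing elements $x,y \in H$, I would run two semi-algorithms in parallel. The first enumerates all words $w$ in $\mathcal{Y}^{\pm 1}$ and, regarding $wuw^{-1}v^{-1}$ as a word in $\mathcal{X}^{\pm 1}$, tests whether it equals $1$ in $G$; since $G$ is finitely presented, the set of words representing the identity in $G$ is recursively enumerable, so this test terminates positively whenever it should. Consequently, if $x$ and $y$ are conjugate in $H$, this process will eventually halt with ``conjugate''. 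The second semi-algorithm enumerates all pairs $(Q,\phi)$ consisting of a finite group $Q$ (specified by a multiplication table) and a map $\phi:\mathcal{X}\to Q$. For each pair it verifies that $\phi$ respects the finitely many defining relations of $G$, which is decidable since $Q$ is finite, and thus determines whether $\phi$ extends to a homomorphism $G\to Q$. When it does, $\phi(H)\le Q$ is computed from $\mathcal{Y}$, the finite set $\phi(x)^{\phi(H)}\subseteq Q$ is formed, and membership of $\phi(y)$ in it is tested; if $\phi(y)\notin \phi(x)^{\phi(H)}$ is ever observed, the process halts with ``not conjugate''.

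Correctness in the affirmative direction is automatic, since the existence of an $H$-conjugator forces $\phi(y)\in \phi(x)^{\phi(H)}$ in every finite quotient. For the negative direction, the hypothesis that $x^H$ is separable in $G$ supplies a finite quotient $\phi:G\to Q$ with $\phi(y)\notin \phi(x^H)=\phi(x)^{\phi(H)}$, a witness that the second enumeration will eventually encounter. Thus exactly one of the two processes halts on every input, which gives the required decision procedure. The main point I expect to need to articulate carefully is \emph{why} one must enumerate finite quotients of $G$ rather than of $H$: since $H$ is only assumed finitely generated, its finite quotients are not a priori recursively enumerable, whereas those of the finitely presented group $G$ are, and the separability hypothesis is tailored precisely so that this coarser enumeration still suffices to distinguish non-conjugate pairs of elements of $H$.
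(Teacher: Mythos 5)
Your proposal is correct and follows essentially the same Mal'cev-type strategy as the paper: one semi-algorithm searches for an $H$-conjugator by exploiting the recursively enumerable word problem of the finitely presented ambient group $G$, while the other enumerates finite quotients of $G$ (not of $H$, which may fail to be finitely presented) and uses the separability of $h^H$ in $G$ to guarantee termination on non-conjugate inputs. You also correctly identify the key point that the separability hypothesis is formulated in $G$ precisely so that the enumeration of finite quotients of $G$ suffices, which is the same observation the paper makes.
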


\begin{proof} Without loss of generality we can assume that $G= \langle X \,\| \, R \rangle$,
for some finite set $X$ and a finite set of words $R$ over the alphabet $X^{\pm 1}$,
and $H$ is generated by a finite subset $Y$ of $X$.
Let $F(X)$ denote the free group on the set $X$ and let $F(Y)$ be the subgroup of $F(X)$ generated by $Y$.
Then the identity map on $X$ gives rise to the epimorphism $\theta: F(X) \to G$, such that
$\ker \theta= N$ is the normal closure of $R$ in $F(X)$.

Since $N$ is the normal closure of only finitely many words in $F(X)$ and $Y$ is finite, a standard argument
(cf. \cite{Mostow}) shows that there is a partial algorithm  $\mathfrak A$, which, given two reduced
words $U,W \in F(Y)$, terminates if and only
if $U \in W^{F(Y)}N$ (i.e., if $\theta(U) \in \theta(W)^H$ in $G$). The algorithm $\mathfrak A$ lists
every word from $W^{F(Y)}N$ in $F(X)$, freely reduces it and compares it with $U$; it stops once
it finds a word in $W^{F(Y)}N$ that is equal to $U$ in $F(X)$.

On the other hand, as $G \cong F(X)/N$ is finitely presented, there is an effective procedure listing all
homomorphisms $\psi$ from $F(X)$ to all finite groups $Q$, satisfying $N \subseteq \ker \psi$
(see \cite{Mostow}). Given such a homomorphism $\psi$ and any two reduced words $U,W \in F(Y)$,
one can decide in finitely many steps whether
or not $\psi(U) \in \psi(W)^{\psi(F(Y))}$ in $Q$, because $\psi(F(Y))=\langle \psi(Y) \rangle$ and $Y$
is finite.

For any $U,W \in F(Y)$ denote $u:=\theta(U)$ and $w:=\theta(W)$.
If $u \notin w^H$, the separability of $w^H$ in $G$
implies the existence of a finite group $Q$ and
a homomorphism $\phi:G \to Q$ such that $\phi(u) \notin \phi(w^H)=\phi(w)^{\phi(H)}$ in $Q$.
Thus the homomorphism $\psi:=\phi \circ \theta: F(X) \to Q$ satisfies $N \subseteq \ker \psi$ and
$\psi(U) \notin \psi(W)^{\psi(F(Y))}$ in $Q$. And, of course, the existence of such a homomorphism $\psi$
tells us that $u \notin w^H$ in $G$.

Hence, there is a partial algorithm $\mathfrak B$, which takes on input two words
$U,W \in F(Y)$ and terminates if and only if $\theta(U) \in \theta(W)^H$ in $G$. This algorithm
goes through all the homomorphisms $\psi$ from $F(X)$ to finite groups $Q$ with
$N \subseteq \ker \psi$, and stops when it finds one such that
$\psi(U) \notin \psi(W)^{\psi(F(Y))}$ in $Q$.

The solution of the conjugacy problem for $H$  amounts to taking on input
two reduced words $U,V \in F(Y)$ and running the two partial algorithms
$\mathfrak A$ and $\mathfrak B$ simultaneously. One (and only one) of these two algorithms will eventually
terminate, thus answering whether or not $\theta(U)$ is conjugate to $\theta(W)$ in $H$.
\end{proof}

\begin{cor}\label{cor:h_c_s->c_s_and_conj_prob} Let $G$ be a hereditarily conjugacy separable group.
Suppose that $H$ is a subgroup of $G$ such that the double coset $C_G(h)H$ is separable in $G$ for every
$h \in H$. Then $H$ is conjugacy separable. If, in addition, $G$ is finitely presented and $H$ is finitely
generated, then $H$ has solvable conjugacy problem.
\end{cor}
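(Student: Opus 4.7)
The plan is to obtain this corollary as an almost immediate consequence of the machinery developed earlier in Section \ref{sec:h_c_s_and_CC} together with Lemma \ref{lem:dec_conj_probl}. There is essentially no new content to establish: the two hypotheses of the corollary match exactly the hypotheses needed to invoke the earlier tools, so the work consists of chaining them together in the right order.

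First I would reduce the conjugacy separability of $H$ to Corollary \ref{cor:CC->_c_s_for_sbgps}. Since $G$ is hereditarily conjugacy separable, Proposition \ref{prop:her_c_s-CC} tells us that $G$ is conjugacy separable and satisfies the Centralizer Condition \cc{}. Combined with the assumption that $C_G(h)H$ is separable in $G$ for every $h \in H$, Corollary \ref{cor:CC->_c_s_for_sbgps} applies directly and yields not only that $H$ is conjugacy separable, but, crucially for the second part of the statement, that for every $h \in H$ the $H$-conjugacy class $h^H$ is closed in $\PT(G)$ (not merely in $\PT(H)$).

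Next I would deduce solvability of the conjugacy problem from Lemma \ref{lem:dec_conj_probl}. Under the additional hypothesis that $G$ is finitely presented and $H$ is finitely generated, the conclusion of the previous step that $h^H$ is separable in $G$ for every $h \in H$ is precisely what Lemma \ref{lem:dec_conj_probl} requires; applying it gives the solvability of the conjugacy problem for $H$.

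There is no real obstacle here, since both steps are invocations of already-proved statements. The only subtlety worth flagging is why one must pass through Corollary \ref{cor:CC->_c_s_for_sbgps} rather than only the underlying Lemma \ref{lem:CCG->sep_c_c_for_sbgps}: the stronger assertion that $h^H$ is separable in the \emph{ambient} group $G$ (and not only in $H$) is exactly what is used to feed into Lemma \ref{lem:dec_conj_probl}, whose proof relies on the existence of finite quotients of $G$ (whose finite presentation is available) rather than finite quotients of $H$ alone.
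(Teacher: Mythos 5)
Your proposal is correct and matches the paper's proof exactly: both invoke Proposition \ref{prop:her_c_s-CC} and Corollary \ref{cor:CC->_c_s_for_sbgps} for the first claim (and for the separability of $h^H$ in $\PT(G)$), then apply Lemma \ref{lem:dec_conj_probl} for the second. Your remark on why separability in the ambient group $G$ — rather than merely in $H$ — is the needed form is a nice explanatory touch, but the substance is the same.
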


\begin{proof} The first claim is a direct consequence of Proposition \ref{prop:her_c_s-CC} and
Corollary \ref{cor:CC->_c_s_for_sbgps}. They also imply that $h^H$ is separable in
$G$ for every $h \in H$. Therefore, the second claim follows from Lemma \ref{lem:dec_conj_probl}.
\end{proof}

We are now in a position to prove Theorem \ref{thm:norm_sbgps_RAAG-conj_probl}, announced in
Section \ref{sec:conseq_main_res}.

\begin{proof}[Proof of Theorem \ref{thm:norm_sbgps_RAAG-conj_probl}]
In \cite{Servat} Servatius
completely described centralizers of elements in right angled Artin groups. In particular, it follows
from his description that $C_G(h)$ is finitely generated for every $h \in G$.

Let $\psi:G \to G/N$ be the natural epimorphism and consider any $h \in N$.
Then $E:=\psi(C_G(h))$ is a finitely
generated subgroup of $G/N$, hence $E$ is closed in  $\PT(G/N)$ by the assumptions.
Since the map $\psi$ is continuous (when $G$ and $G/N$ are considered as topological groups
equipped with their profinite topologies), we can conclude that $C_G(h)N=\psi^{-1}(E)$
is closed in $\PT(G)$ for every $h \in N$.

Therefore the claim of
Theorem \ref{thm:norm_sbgps_RAAG-conj_probl} follows from Theorem \ref{thm:RAAG-main} and
Corollary~\ref{cor:h_c_s->c_s_and_conj_prob}.
\end{proof}

\begin{cor} \label{cor:norm_sbgps_with_polyc_quot} Let $N$ be a finitely generated normal subgroup
of a right angled Artin group $G$ such that the quotient $G/N$ is virtually polycyclic.
Then every finite index subgroup $K$ of $N$ is conjugacy separable and has solvable conjugacy problem.
\end{cor}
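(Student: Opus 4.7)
The plan is to reduce the claim to Corollary~\ref{cor:h_c_s->c_s_and_conj_prob}, applied to $G$ and the subgroup $K$. Since $G$ is hereditarily conjugacy separable by Theorem~\ref{thm:RAAG-main} and finitely presented as a right angled Artin group, and $K$ is finitely generated as a finite-index subgroup of the finitely generated group $N$, it will suffice to show that the double coset $C_G(k)K$ is closed in the profinite topology of $G$ for every $k \in K$.

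The key preliminary step will be to construct a subgroup $K_1\le K$ that is normal in $G$ and still of finite index in $N$. Because $N$ is finitely generated, for each positive integer $n$ it has only finitely many subgroups of index $n$ (a standard counting via transitive permutation representations). I would take $K_1$ to be the intersection of all subgroups of $N$ of index at most $[N:K]$; this yields a characteristic subgroup of $N$ of finite index, contained in $K$. Since $G$ normalizes $N$, conjugation by any element of $G$ induces an automorphism of $N$, which must preserve the characteristic subgroup $K_1$; hence $K_1\lhd G$.

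With $K_1$ in hand, the short exact sequence
\[
1 \longrightarrow N/K_1 \longrightarrow G/K_1 \longrightarrow G/N \longrightarrow 1
\]
exhibits $G/K_1$ as an extension of a virtually polycyclic group by a finite normal subgroup, so $G/K_1$ is itself virtually polycyclic and hence subgroup separable by Mal'cev's theorem. Since centralizers in right angled Artin groups are finitely generated by Servatius's description \cite{Servat}, the image of $C_G(k)$ in $G/K_1$ is finitely generated and therefore closed, which forces $C_G(k)K_1$ to be closed in $G$. Writing $K = \bigsqcup_{i=1}^{m} x_i K_1$ and using $K_1\lhd G$, I would then obtain
\[
C_G(k)K \;=\; \bigcup_{i=1}^{m} C_G(k)\, K_1\, x_i,
\]
a finite union of closed sets, which gives the required separability, and Corollary~\ref{cor:h_c_s->c_s_and_conj_prob} delivers both conclusions.

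The only delicate point in this plan is the construction of $K_1$ and the verification that it is normal in $G$: this is precisely where the finite generation of $N$ enters, and it is what bridges the gap between Theorem~\ref{thm:norm_sbgps_RAAG-conj_probl} (the non-hereditary statement for $N$ itself) and the hereditary conclusion stated here. Once $K_1$ is produced, the remainder is a routine application of machinery already developed in Sections~\ref{sec:h_c_s_and_CC} and~\ref{sec:appl_conj_probl}.
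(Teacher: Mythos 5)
Your proposal is correct and follows essentially the same route as the paper: construct a finite-index characteristic subgroup $L$ of $N$ inside $K$ (so $L\lhd G$), observe that $G/L$ is finite-by-(virtually polycyclic) hence virtually polycyclic and therefore subgroup separable, use Servatius's description of centralizers to get separability of $C_G(k)L$ and hence of $C_G(k)K$ as a finite union of cosets, and finish with Corollary~\ref{cor:h_c_s->c_s_and_conj_prob}. The only cosmetic difference is that you spell out the standard intersection construction of the characteristic subgroup, which the paper merely asserts.
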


\begin{proof} Since $N$ is finitely generated, $K$ contains a
characteristic subgroup $L$ of $N$ with $|N:L|<\infty$. And since $N \lhd G$, we can conclude that
$L \lhd G$, and the group $G/L$ is an extension of the finite group $N/L$ by the virtually polycyclic
group $G/N$.  An easy induction
on the length of the series with cyclic quotients shows that every finite-by-polycyclic group
is polycyclic-by-finite. Thus $G/L$ is virtually polycyclic,
hence it is subgroup separable -- see \cite[Ex. 11 in Chapter 1.C]{Segal}.

Arguing as in the proof of Theorem \ref{thm:norm_sbgps_RAAG-conj_probl},
we see that the double coset
$C_G(h)L$ is separable in $G$ for each $h \in G$. But $K= \bigcup_{i=1}^k L x_i$ for some
$k \in \N$ and $x_1,\dots,x_k \in K$. Therefore $C_G(h)K=\bigcup_{i=1}^k C_G(h)L x_i$
is separable in $G$ (as a finite union of separable subsets) for all $h \in K$.

Note that $K$ is finitely generated as a finite index subgroup of $N$, hence $K$
is conjugacy separable and has solvable conjugacy problem by Theorem \ref{thm:RAAG-main} and
Corollary~\ref{cor:h_c_s->c_s_and_conj_prob}.
\end{proof}


\section{Appendix: the Centralizer Condition in profinite terms}
Our intention here is to prove that for residually finite groups
the condition {\cc} from the Definition \ref{df:CC} is equivalent to the condition
\eqref{eq:CC-profinite} of Chagas and Zalesskii. We refer the reader to the book
\cite{Rib-Zal} for the background on profinite completions.

\begin{prop} \label{prop:equiv_def_CC} Let $H$ be a subgroup of a  residually finite group $G$ and
let $g \in G$. The following are equivalent:
\begin{itemize}
	\item[1)] the pair $(H,g)$ satisfies the condition {\ccg} from Definition \ref{df:CCG};
	\item[2)] $\overline{C_H(g)}=C_{\overline{H}}(g)$, where $\overline{H}\le \widehat{G}$ is the closure of
						$H$ in the profinite completion $\widehat G$ of $G$.
\end{itemize}
\end{prop}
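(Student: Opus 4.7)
My plan is to work through the standard dictionary between finite quotients of $G$ and open subgroups of the profinite completion $\widehat{G}$: for every finite index normal subgroup $L\lhd G$, its closure $\overline L$ is open in $\widehat G$ with $\overline L\cap G=L$ and $\widehat G/\overline L\cong G/L$, and as $L$ shrinks the $\overline L$'s form a neighborhood basis of the identity. Two ingredients I will use throughout are the formulas $\overline A=\bigcap_L A\overline L$ for the closure of an arbitrary $A\subseteq G$, and $\overline H\cdot\overline L=H\overline L$ for $H\le G$; the latter means that for $x\in G$, the image of $x$ in $G/L$ lies in $\psi(H)$ if and only if $x\in\overline H\cdot\overline L$ inside $\widehat G$.

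The implication (1)$\Rightarrow$(2) is the easy direction. The inclusion $\overline{C_H(g)}\subseteq C_{\overline H}(g)$ is immediate, since $C_{\overline H}(g)$ is closed in $\widehat G$ and contains $C_H(g)$. For the reverse, given $y\in C_{\overline H}(g)$ I would show $y\in C_H(g)\overline K$ for every finite index normal $K\lhd G$: applying the hypothesis \ccg\ to $K$ yields $L\le K$ such that, after projecting to $G/L$ via the natural map $\psi\colon\widehat G\to G/L$, the image $\psi(y)$ lies in $\psi(H)$ and centralizes $\psi(g)$, hence lies in $\psi(C_H(g)K)$. Lifting back gives $y\in C_H(g)K\overline L=C_H(g)\overline K$, and intersecting over all $K$ yields $y\in\overline{C_H(g)}$.

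The substantive direction is (2)$\Rightarrow$(1), which I would prove by a compactness argument. Fix a finite index normal $K\lhd G$ and define the continuous map $T\colon\widehat G\to\widehat G$ by $T(x):=x^{-1}g^{-1}xg$; thus $T^{-1}(1)=C_{\widehat G}(g)$. For each finite index normal $L\lhd G$ with $L\le K$, set
\begin{equation*}
\widehat B_L:=(\overline H\cdot\overline L)\cap T^{-1}(\overline L),
\end{equation*}
a closed subset of $\widehat G$. By the dictionary above, $\widehat B_L\cap G=\{x\in G:x\in HL,\ [x,g]\in L\}=\psi^{-1}\bigl(C_{\psi(H)}(\psi(g))\bigr)$. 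The family $\{\widehat B_L\}_{L\le K}$ is directed by reverse inclusion as $L$ shrinks, and
\begin{equation*}
\bigcap_{L\le K}\widehat B_L=\overline H\cap T^{-1}(1)=C_{\overline H}(g)=\overline{C_H(g)}\subseteq C_H(g)\overline K,
\end{equation*}
where the last inclusion uses the formula $\overline A=\bigcap_L A\overline L$ applied to $A=C_H(g)$. Since $C_H(g)\overline K$ is a union of cosets of the open subgroup $\overline K$, it is clopen, so the sets $\widehat B_L\setminus C_H(g)\overline K$ are closed; their intersection being empty in the compact space $\widehat G$ forces one of them to be empty, producing some $L_0\le K$ with $\widehat B_{L_0}\subseteq C_H(g)\overline K$. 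Intersecting with $G$ and using $C_H(g)\overline K\cap G=C_H(g)K$ then yields the conclusion of \ccg\ for the pair $(H,g)$ at level $K$.

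I expect the main point requiring care to be the verification of the two dictionary identities $\overline H\cdot\overline L=H\overline L$ and $C_H(g)\overline K\cap G=C_H(g)K$, both of which rely on residual finiteness (so that $G$ embeds densely in $\widehat G$ and $\overline L\cap G=L$); once these are in place, the compactness step in (2)$\Rightarrow$(1) and the lifting step in (1)$\Rightarrow$(2) are short and essentially formal.
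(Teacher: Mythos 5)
Your argument is correct and relies on the same core tool as the paper, namely compactness of $\widehat G$ together with the standard dictionary between finite index normal subgroups of $G$ and open subgroups of $\widehat G$. The only difference is presentational: in the direction (2)$\Rightarrow$(1) the paper argues by contradiction and extracts a cluster point from a net of witnesses $x_L$, whereas you package the same compactness step via the finite intersection property for the nested closed sets $\widehat B_L$ (and a clopen set $C_H(g)\overline K$), which is a dual formulation of the same argument.
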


\begin{proof} The profinite completion $\widehat G$ of $G$ is the inverse limit of finite quotients of $G$.
There is a canonical embedding of $\widehat G$ into the Cartesian product $\prod_{N \in \mathcal{N}} G/N$,
where $\mathcal N$ is the set of all finite index normal subgroups of $G$.
Thus $\widehat G$ can be equipped with the product topology, making it a compact 
topological group (each finite group $G/N$ is endowed with the discrete topology).

For each $N \in \mathcal N$ let $\psi_N$ denote the natural epimorphism from $G$ to $G/N$.
Then the map $\psi:G \to \widehat G$, defined by
$\psi(x) :=(\psi_N(x))_{N\in \mathcal{N}}$ for every $x \in G$, is a homomorphism. And since $G$
is residually finite, $\psi$ is injective. Therefore we can assume that $G \le \widehat G$, and
so the condition 2) makes sense. Every homomorphism $\psi_M$, $M \in \mathcal N$,
can be uniquely extended to a continuous homomorphism
$\hat{\psi}_M: \widehat G \to G/M$ ($\hat{\psi}_M$ can also be regarded as a restriction to $\widehat G$
of the canonical projection from $\prod_{N \in \mathcal{N}} G/N$ to $G/M$).

First, suppose that the pair $(H,g)$ satisfies \ccg.
Consider any $h \in \overline{H}$ such that $h \notin \overline{C_{H}(g)}$.
Then there exists $K \in \mathcal N$ such that $\hat{\psi}_K(h) \notin \psi_K(C_H(g))$. Hence,
by {\ccg}, there is $L \in \mathcal N$ satisfying $L \le K$ and
$\psi_L^{-1} \left(C_{\psi_L(H)}(\psi_L(g))\right) \subseteq C_H(g) K = \psi_K^{-1}(\psi_K(C_H(g)))$.
Therefore $\hat\psi_K$ factors through $\hat\psi_L$, hence $\hat\psi_L(h)  \notin \psi_L(C_H(g)K)$.
Consequently, $\hat\psi_L(h)  \notin C_{\psi_L(H)}(\psi_L(g))$, and so $h \notin C_{\overline{H}}(g)$.
Thus we established the inclusion $C_{\overline{H}}(g) \subseteq \overline{C_H(g)}$.
Since the inverse inclusion is evident, we have proved that 1) implies 2).

Now, let us assume that the condition 2) holds. Choose any $K \in \mathcal N$ and denote
$\mathcal{L}:=\{L \in \mathcal{N}~|~L \le K\}$. Arguing by contradiction,
suppose that for each $L \in \mathcal L$ there is $x_L \in  H$ such that
$\psi_L(x_L) \in C_{\psi_L(H)}(\psi_L(g)) \setminus \left(\psi_L(C_H(g) K)\right)$. Note that $\mathcal L$ is
a directed set (if $L_1, L_2 \in \mathcal L$ then $L_1 \preceq L_2$ if and only if $L_2 \subseteq L_1$),
hence $(x_L)_{L \in \mathcal L}$ is a net in $\widehat G$. Since $\widehat G$ is compact, this net
has a cluster point $h \in \overline{H} \le \widehat G$.

Consider any $N \in \mathcal N$ and set $L=N\cap K \in \mathcal L$. Then, according to the definition
of the topology on $\widehat G$, there is $M \in \mathcal L$
such that $M \subseteq L$ and $\psi_L(x_{M})=\hat\psi_L(h)$. By construction,
$\psi_L(x_M) \in C_{\psi_L(H)}(\psi_L(g))$, hence
$\hat\psi_L(h) \in C_{\psi_L(H)}(\psi_L(g))$,
implying that $\hat\psi_N(h) \in C_{\psi_N(H)}(\psi_N(g))$ because $L \le N$. Since the latter
holds for every $N \in \mathcal N$, we can conclude that $h \in C_{\overline{H}}(g)$.

On the other hand, since $h$ is a cluster point of the net $(x_L)_{L \in \mathcal L}$ and $K \in \mathcal L$,
there exists $M \in \mathcal L$ such that $\psi_K(x_M) = \hat \psi_K (h)$. But since $M \le K$ we have
$x_M \notin C_H(g) KM=C_H(g)K=\psi_K^{-1}(\psi_K(C_H(g)))$. Thus $\hat \psi_K (h)=\psi_K(x_M) \notin
\psi_K(C_H(g))$, which implies that $h \notin \overline{C_H(g)}$.

Thus we found an element $h \in C_{\overline{H}}(g) \setminus \overline{C_H(g)}$,
contradicting to the condition 2). Consequently, 2) implies 1).
\end{proof}

Proposition \ref{prop:equiv_def_CC} implies that for residually finite groups
the Centralizer Condition {\cc} from Definition \ref{df:CC}
is equivalent to the condition \eqref{eq:CC-profinite} introduced by Chagas and Zalesskii in \cite{Chag-Zal}:

\begin{cor}\label{cor:CC<->Chag-Zal} A is residually finite group $G$ satisfies {\cc}
if and only if $\overline{C_G(g)}=C_{\widehat{G}}(g)$ for every $g \in G$.
\end{cor}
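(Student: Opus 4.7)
The plan is that this corollary is essentially an immediate specialization of Proposition~\ref{prop:equiv_def_CC} to the case $H = G$, so the proof reduces to matching definitions and observing that the ambient closure of $G$ in $\widehat G$ is all of $\widehat G$.

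First I would unpack the Centralizer Condition. Comparing Definition~\ref{df:CC} with Definition~\ref{df:CCG}, when one takes $H = G$ in the latter, the requirement reads: for every finite index $K \lhd G$ there exists a finite index $L \lhd G$ with $L \le K$ and $C_{G/L}(\bar g) \subseteq \psi(C_G(g) K)$ in $G/L$ (since $\bar H = \psi(G) = G/L$). This is verbatim the condition in Definition~\ref{df:CC} applied to the element $g$. Consequently, $G$ satisfies \cc{} if and only if the pair $(G, g)$ satisfies \ccg{} for every $g \in G$.

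Next I would observe that since $G$ is residually finite, $G$ embeds densely in $\widehat G$, so the closure of $G$ in $\widehat G$ equals $\widehat G$ itself; that is, $\overline{H} = \widehat G$ when $H = G$. Applying Proposition~\ref{prop:equiv_def_CC} with $H = G$ to an arbitrary element $g \in G$ then yields: the pair $(G, g)$ satisfies \ccg{} if and only if $\overline{C_G(g)} = C_{\widehat G}(g)$. Quantifying over all $g \in G$ and using the matching of definitions from the previous paragraph gives the stated equivalence.

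There is no real obstacle here; the work was done in proving Proposition~\ref{prop:equiv_def_CC} (the compactness/net argument for the converse direction and the factoring-through-$\hat\psi_L$ argument for the forward direction). The corollary is a pure bookkeeping step, and the only small point to verify carefully is that Definitions~\ref{df:CC} and~\ref{df:CCG} (with $H = G$) coincide, which is immediate from their statements.
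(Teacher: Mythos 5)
Your argument is correct and matches the paper's approach exactly: the paper derives this corollary as the specialization $H = G$ of Proposition~\ref{prop:equiv_def_CC}, and your proof supplies the bookkeeping (that Definition~\ref{df:CC} is literally Definition~\ref{df:CCG} with $H = G$, and that $\overline{G} = \widehat{G}$) which the paper leaves implicit.
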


It is well known that conjugacy separability of a residually finite group $G$ is equivalent
to the condition
\begin{equation}\label{eq:c_s-prof}
g^{\widehat G} \cap G=g^G~\mbox{in $\widehat G$, for all } g \in G.
\end{equation}

In other words, the condition \eqref{eq:c_s-prof} says that two elements $g$ and $g'$ of $G$
are conjugate in $\widehat G$ if and only if they are conjugate in $G$.

We are now able to reformulate the hereditary conjugacy separability of $G$ in purely profinite terms:

\begin{cor}\label{cor:h_c_s-prof}
Suppose that $G$ is a residually finite group. Then $G$ is hereditarily conjugacy separable if and only if
for every $g \in G$ both of the following hold in the profinite completion $\widehat G$ of $G$:
\begin{itemize}
	\item $g^{\widehat G} \cap G=g^G$;
	\item $\overline{C_G(g)}=C_{\widehat{G}}(g)$.
\end{itemize}

\begin{proof} The necessity follows from Proposition \ref{prop:her_c_s-CC} and
Corollary \ref{cor:CC<->Chag-Zal}.

The sufficiency is given by the result of Chagas and Zalesskii \cite[Prop. 3.1]{Chag-Zal}.
It can also be deduced by first applying Corollary~\ref{cor:CC<->Chag-Zal} and then
Proposition  \ref{prop:her_c_s-CC}.
\end{proof}

\end{cor}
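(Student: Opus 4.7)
The plan is to package together three equivalences that have already been established or noted in the paper, so that the corollary reduces to a short bookkeeping exercise rather than any new argument. First, from the paragraph just preceding the statement, conjugacy separability of a residually finite group $G$ is equivalent to $g^{\widehat G}\cap G = g^G$ for every $g\in G$; this is the standard profinite reformulation, valid because under residual finiteness the closure of $g^G$ in $\widehat G$ coincides with $g^{\widehat G}\cap G$. Second, Corollary \ref{cor:CC<->Chag-Zal} tells us that $G$ satisfies the Centralizer Condition \cc{} if and only if $\overline{C_G(g)} = C_{\widehat G}(g)$ for every $g\in G$. Third, Proposition \ref{prop:her_c_s-CC} characterizes hereditary conjugacy separability of $G$ as the conjunction of conjugacy separability and \cc.

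With these three ingredients on the table, I would prove both directions in a single sentence each. For the necessity, assume $G$ is hereditarily conjugacy separable; Proposition \ref{prop:her_c_s-CC} yields that $G$ is conjugacy separable and satisfies \cc, which by the two profinite reformulations translate into the two bulleted equalities in $\widehat G$. For the sufficiency, assume both bulleted equalities hold for every $g\in G$; then the first bullet gives conjugacy separability of $G$ and the second, via Corollary \ref{cor:CC<->Chag-Zal}, gives \cc, so Proposition \ref{prop:her_c_s-CC} delivers hereditary conjugacy separability.

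There is essentially no obstacle here: the content of the corollary is entirely absorbed by the work done earlier, and the only step one might worry about is the passage between the finite-quotient formulation of \cc{} (Definition \ref{df:CC}) and the profinite formulation \eqref{eq:CC-profinite}, but that passage is precisely Proposition \ref{prop:equiv_def_CC} from the Appendix. I would therefore present the proof as two short lines, citing Proposition \ref{prop:her_c_s-CC} and Corollary \ref{cor:CC<->Chag-Zal}, and noting in parallel that sufficiency also follows directly from \cite[Prop.~3.1]{Chag-Zal}. If one preferred a fully self-contained route bypassing Proposition \ref{prop:her_c_s-CC}, one could instead re-run the argument of Lemma \ref{lem:CCG->sep_c_c_for_sbgps} inside $\widehat G$, using that every finite index subgroup $H\le G$ satisfies $C_{\widehat G}(g)\overline{H} = \overline{C_G(g)H}$ (as a finite union of cosets of $\overline{H}$) and then separating the $H$-conjugacy class via finite quotients; but this merely duplicates material already in Section \ref{sec:h_c_s_and_CC}, so I would not pursue it.
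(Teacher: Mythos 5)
Your proposal is correct and matches the paper's argument exactly: both directions are obtained by combining the profinite reformulation of conjugacy separability (stated just before the corollary), Corollary \ref{cor:CC<->Chag-Zal}, and Proposition \ref{prop:her_c_s-CC}, with the Chagas--Zalesskii result cited as an alternative for sufficiency. Nothing substantive differs.
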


\end{document}